\theoremstyle{definition}
\newtheorem{definition}{Definition}[section]
\newaliascnt{lemma}{definition}
\newaliascnt{theorem}{definition}
\newaliascnt{corollary}{definition}
\newaliascnt{proposition}{definition}
\newaliascnt{assumption}{definition}
\newaliascnt{remark}{definition}
\newaliascnt{example}{definition}
\newaliascnt{conjecture}{definition}
\newtheoremstyle{remark2}{}{}{}{}{\bfseries}{.}{ }{}
\theoremstyle{remark2}
\newenvironment{remark}
  {\pushQED{\qed}\remarkx}
  {\popQED\endremarkx}
\theoremstyle{plain}
\newtheorem{lemma}[lemma]{Lemma}
\newtheorem{theorem}[theorem]{Theorem}
\newtheorem{corollary}[corollary]{Corollary}
\newtheorem{assumption}[assumption]{Assumption}
\newcommand{\toetai}{\to^\eta}
\newcommand{\weaktoetai}{\weakto^\eta}
\newcommand{\R}{\ensuremath{\mathbb{R}}}
\newcommand{\N}{\ensuremath{\mathbb{N}}}
\newcommand{\dd}{\mathrm{d}}
\newcommand{\dx}{\,\mathrm{d}x}
\newcommand{\dy}{\,\mathrm{d}y}
\newcommand{\dt}{\,\mathrm{d}t}
\newcommand{\dxt}{\,\mathrm{d}x\,\mathrm{d}t}
\newcommand{\ds}{\,\mathrm{d}s}
\newcommand{\dxs}{\,\mathrm{d}x\,\mathrm{d}s}
\renewcommand{\abs}[1]{\left|{#1}\right|}
\renewcommand{\norm}[2][]{\left\|#2\right\|_{{#1}}} %
\newcommand{\inner}[3][]{\left\langle #2,#3 \right \rangle_{#1}} %
\DeclareMathOperator{\Div}{div}
\DeclareMathOperator{\cof}{cof}
\DeclareMathOperator{\tr}{tr}
\DeclareMathOperator{\Lip}{Lip}
\newcommand{\seb}[1]{{\color{blue}#1}}
\numberwithin{equation}{section}
\begin{document}
\title[Compressible fluids and 3D visco-elastic bulks]{Compressible fluids interacting with 3D visco-elastic bulk solids} %{Compressible heat-conducting fluids interacting with a nonlinear-elastic shell}
%{On compressible fluids interacting with a linear-elastic Koiter-type shell}

\author{Dominic Breit}
\address[D. Breit]{
Department of Mathematics, Heriot-Watt University, Riccarton Edinburgh EH14 4AS, UK}
\email{d.breit@hw.ac.uk}

\author{Malte Kampschulte}

\author{Sebastian Schwarzacher}
\address[M. Kampschulte \& S. Schwarzacher]{Department of Mathematical Analysis, Faculty of Mathematics and Physics, Charles University, Sokolovsk\'a 83, 186 75 Praha, Czech Republic}
\email{kampschulte@karlin.mff.cuni.cz, schwarz@karlin.mff.cuni.cz}

\begin{abstract}
We consider the physical setup of a three-dimensional fluid-structure interaction problem. A viscous compressible gas or liquid interacts with a nonlinear, visco-elastic, three-dimensional bulk solid. The latter is described by a hyperbolic evolution with a non-convex elastic energy functional. The fluid is modelled by the compressible Navier--Stokes equations with a barotropic pressure law. Due to the motion of the solid, the fluid domain is time-changing.
 Our main result is the long-time existence of a weak solution to the coupled system until the time of a collision.
 The nonlinear coupling between the motions of the two different matters is established via the method of minimising movements. The motion of both the solid and the fluid is chosen via an incrimental minimization with respect to dissipative and static potentials. These variational choices together with a careful construction of an underlying flow map for our approximation then directly result in the pressure gradient and the material time derivatives.
% For that we introduce a method for which the barotropic pressure results from  variationally chosen transport resulting from  a varoiationally chosen motion by a principle of 
%Thus optaining a discrete in time approximation  using a Lagrangian approximation
%of the material time derivative.
\end{abstract}

\subjclass[2020]{76N10, 76N06, 35Q40, 35R35}%checken!!
\keywords{Compressible fluids, Navier--Stokes equations, weak solution, elastic bulk, time dependent domains, moving boundary}

\date{\today}

 \maketitle

 \section{Introduction}
 The interaction of fluids with elastic strucures has attracted the interest of scientists from various fields due to the immense potential for applications ranging from hydro- and aero-elasticity \cite{Do} over bio-mechanics \cite{BGN} to hydrodynamics
\cite{Su}. In last two decades there has also been a huge development in the understanding of the mathematical models -- typically highly coupled systems of nonlinear partial differential equations (PDEs).\\
The interaction of
incompressible viscous fluids with elastic structures has been studied intensively. Results concerning the existence of weak solutions to the coupled system (which exist as long as the moving part of the structure does not touch the fixed part of the fluid boundary) can be found, for instance, in \cite{Ch, Gr, LeRu, MuCa1, MuCa, MuSc}. 
%All but the last result are concerned with the existence of weak solutions which exist as long as the moving part of the structure does not touch the fixed part of the fluid boundary.
The first result in \cite{Ch} is concerned with a flexible elastic plate located on one
part of the fluid boundary. The shell equation is linearised and the shell is assumed to be one-dimensional.
The existence of a weak solution to the incompressible Navier--Stokes equations coupled with a plate in flexion is proved in \cite{Gr}. In \cite{MuCa1} the incompressible Navier--Stokes equations are studied in a cylindrical wall. The movement of the latter is modelled by the one-dimensional cylindrical linearised Koiter shell model. The elastodynamics of the cylinder wall in \cite{MuCa} is governed by the one-dimensional linear wave equation modelling a thin structural layer, and by the two-dimensional equations of linear elasticity modelling a thick structural layer.
The interaction with a linear-elastic shell of Koiter-type in a general geometric set-up (where the middle surface of the shell serves as the mathematical boundary of the three-dimensional fluid domain) is studied in \cite{LeRu}. 
%Here, the middle surface of the shell serves as the mathematical boundary of the three-dimensional fluid domain. 
The result has recently been extended
to original (and fully nonlinear) Koiter model, cf. \cite{MuSc}.
%The weak solution is shown to exist so long as the magnitude of the shell's displacement stays below %a bound that rules out self-intersection. 
 \\
What all these results have in common is that the incompressible Navier--Stokes equations are coupled to a lower dimensional hyperbolic equation for the structure. We are interested in models where fluid domain and structure have the same dimension which is the case for the interaction with an elastic bulk. A major difficulty is that the associated elastic energy involves a non-convex functional. Consequently, it is not possible to apply fixed point arguments to obtain a solution %(typically applied to a regularised problem) 
and one is forced to use variational methods instead. A corresponding approach has been developed recently by the second and third author together with B.\ Bene\v{s}ov\'a in \cite{benesovaVariationalApproachHyperbolic2020}. 
They provide an approach to the fully coupled system which is based on De Giorgi's celebrated minimising movement method.
The key idea in \cite{benesovaVariationalApproachHyperbolic2020} is that the second order time derivative of the structure displacement is discretised in two steps (yielding a velocity scale and an acceleration scale) and to use a Lagrangian approximation
of the material time derivative on the time-discrete level.
The result is the existence of a global-in-time weak solution to
coupled system describing the interaction of an incompressible fluid with a three-dimensional visco-elastic bulk solid.\\
 The situation in the compressible case is completely different and the analysis of problems from fluid-structure interaction is still at the beginning. A first result has been achieved by the first and third author in \cite{BrSc}. They prove the existence of a weak solution to a coupled system describing the motion of a three-dimensional compressible fluid interacting with a two-dimensional linear elastic shell of Koiter-type. Recently, they extended the result to heat-conducting fluids and fully nonlinear shell models in \cite{BrSc2}. Results on the existence of local strong solutions appeared recently in  
 \cite{Ma,Ma2,Mi,Tr}. In this paper we aim at the natural next step and consider the interaction of a compressible fluid with a three-dimensional visco-elastic bulk solid in order to arrive at a compressible counterpart of the result in \cite{benesovaVariationalApproachHyperbolic2020}.

Let us present the model in detail. The
fluid together with the elastic structure are both confined to a fixed container -- a bounded Lipschitz domain $\Omega\subset\R^n$ with $n\geq 2$ (where $n=3$ is the most interesting case, but also $n=2$ has physical relevance). The deformation of the solid
is described by the deformation function $\eta:Q\rightarrow\Omega$, where $Q\subset\R^n$ is the reference configuration of the solid, which is assumed to be a bounded Lipschitz domain as well. We denote by $M$ the part of $Q$ which is mapped to the contact interface between the 
fluid
and the solid and set $P:=\partial Q\setminus M$.
For a given deformation, we then deal with a fluid domain $\Omega_{\eta}:=\Omega\setminus \eta(Q)$, which will be time-dependent. In $\Omega_\eta$ we observe the flow of a viscous compressible fluid subject to the volume force $f_f:I\times \Omega_\eta\rightarrow\R^n$. We seek the velocity field $v:I\times \Omega_\eta\rightarrow\R^n$ and the density $\varrho:I\times \Omega_\eta\rightarrow\R$ solving the system
%\begin{equation}
 \begin{align}
 \partial_t\varrho+\Div(\varrho v)&=0&\text{ in } &I\times \Omega_\eta,\label{eq1}
 \\ \partial_t(\varrho v)+\Div(\varrho v\otimes v)&=\Div\mathbb S(\nabla v)-\nabla p(\varrho)+{\varrho}f_f&\text{ in } &I\times\Omega_\eta,\label{eq2}
 \\
 v(t,\eta(t,x))&=\partial_t\eta(t,x)&\text{ in } &I\times M,\label{eq3a}
 \\
  v(t,x)&=0&\text{ in } &I\times \partial\Omega\setminus\eta(P),\label{eq3b}
 \\
  \varrho(0)=\varrho_0,\quad (\varrho v)(0)&=q_0&\text{ in } &\Omega_{\eta(0)}.\label{eq3}%&\qquad\text{ in }&\R^n.\label{eq3}
 \end{align}
 Here, $p(\varrho)$ is the pressure which is assumed to follow the $\gamma$-law, that is $p(\varrho)\sim\varrho^\gamma$ for large $\varrho$, where $\gamma>1$ (see Section \ref{sec:ass} for the precise assumptions).
Further, we suppose Newton's rheological law
\begin{align*}
\mathbb S(\nabla v)=2\mu\Big(\frac{\nabla v+\nabla v^\top}{2}-\frac{1}{3}\Div v\,\mathbb{I}\Big)+\lambda \Div v\,\mathbb{I}
\end{align*} 
with strictly positive viscosity coefficients $\mu,\,\lambda$.
The balance of linear momentum for the solid is given by
 \begin{align}\label{eq:4}
  \varrho_s\partial_t^2\eta+\Div\sigma&=f_s\quad\text{in}\quad I\times Q,
\end{align}
where $\varrho_s>0$ is the density of the shell and $f_s:I\times Q\rightarrow \R^3$ a given external force.
We suppose that the first Piola--Kirchhoff stress tensor $\sigma$ can be derived from underlying
energy and dissipation potentials; that is
\begin{align*}
\Div\sigma=DE(\eta)+D_2 R(\eta,\partial_t\eta)
\end{align*}
for some energy- and dissipation functionals $E$ and $R$.
Prototypical example examples are given by
\begin{align*}
R(\eta,\partial_t\eta)=\int_Q|\partial_t\nabla\eta^\top\nabla\eta+\nabla\eta^\top\partial_t\nabla\eta|^2\dy,
\end{align*}
and for some $q>n$ and $a>\frac{nq}{q-n}$
\begin{align*}
E(\eta)=\frac{1}{8}\int_Q\Big(\mathbb C(\nabla\eta^\top\nabla\eta- \mathbb{I}):(\nabla\eta^\top\nabla\eta-\mathbb{I})+\frac{1}{(\det\nabla\eta)^a}+\frac{1}{q}|\nabla^2\eta|^q\Big)\dx.
\end{align*}
The latter is defined provided $\det \nabla\eta>0$ a.e.\ in $Q$ and we set $E(\eta)=\infty$ otherwise. Here $\mathbb C$ is the
positive definite tensor of elastic constants. The general assumptions for $E$ and $R$ are collected in Section \ref{sec:ass}.
 Equation \eqref{eq:4} is supplemented with the boundary conditions
\begin{align}\label{eq5}
\begin{aligned}
\sigma(t,x) \nu(x)&=\big(\mathbb S(\nabla v)-p(\varrho)\mathcal I\big)\hat{\nu}(t,\eta(t,x))\quad\text{in}\quad I\times M,\\
\eta(t,x)&=\gamma\quad\text{in} \quad I\times P.
\end{aligned}
\end{align}
Here $\nu(x)$ is the unit normal to $M$ while $\hat{\nu}(t,\eta(t,x))=\cof(\nabla\eta(t,x))\nu(x)$ is the normal transformed
to the actual configuration and $\gamma:P\rightarrow\Omega$ is a given function. Finally, we assume the initial conditions
\begin{align}
\label{initial}
\eta(0,\cdot)=\eta_0,\quad \partial_t\eta(0,\cdot)=\eta_1\quad\text{in}\quad Q,
% \label{initial}\eta=0,\quad\nabla \eta=0\quad\text{in}\quad\partial M,
\end{align}
where $\eta_0,\eta_1:Q\rightarrow\R$ are given functions. We aim to prove the existence of a weak solution to \eqref{eq1}--\eqref{initial}, where the precise formulation can be found in Section \ref{sec:weak}.
A simplified versions of our main result reads as follows and we refer to Theorem \ref{thm:main} for the complete statement.
\begin{theorem} \label{thm:MAINa}
Under natural assumptions on the data there exists a weak solution $(\eta,v,\varrho)$ to \eqref{eq1}--\eqref{initial} which satisfies the energy inequality
\begin{align}\label{eq:apriori0}
\begin{aligned}
\mathscr E(s)&+\int_0^s\int_{\Omega_\eta}\mathbb S(\nabla v):\nabla v\dxt+2\int_0^s R(\eta,\partial_t\eta)\dt\\ &\leq
\mathscr E(0)+\int_{\Omega_{\eta}}\varrho f_f\cdot v\dx+\int_Q f_s\cdot\partial_t\eta\dy,\\
\mathscr E(t)&= \int_{\Omega_\eta(t)}\left( \varrho(t) \tfrac{\abs{ v(t)}^2}{2} + H(\varrho(t))\right)\dx+\varrho_s\int_Q \tfrac{|\partial_t\eta(t)|^2}{2}\,\dy+ E(\eta(t)),\end{aligned}
\end{align} 
for all $s\in I$, where $H$ is the pressure potential related to $p$ by $p(\varrho) = H'(\varrho)\varrho-H(\varrho)$.
The interval of existence is of the form $I=(0,T)$, where $T$ is the first time of collision or $\infty$ if there is none.
\end{theorem}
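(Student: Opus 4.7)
The plan is to extend the minimising movements scheme of \cite{benesovaVariationalApproachHyperbolic2020} to the compressible setting and then pass to the limit using the Lions--Feireisl compactness machinery adapted to moving domains as in \cite{BrSc,BrSc2}. Fix a time step $\tau = T/N$ and iterate: from data $(\eta^k, \zeta^k, \varrho^k, v^k)$ at time $t_k = k\tau$ (with $\zeta^k$ playing the role of $\partial_t\eta(t_k)$), produce $(\eta^{k+1}, v^{k+1})$ as a minimiser of the incremental functional
\begin{align*}
\mathcal F_\tau(\eta, v) &= \tfrac{\varrho_s}{2\tau}\int_Q \big|\tfrac{\eta - \eta^k}{\tau} - \zeta^k\big|^2 \dy + \tfrac{1}{2\tau}\int_{\Omega_{\eta^k}} \varrho^k |v - v^k|^2 \dx \\
&\qquad + E(\eta) + \tau R\big(\eta, \tfrac{\eta - \eta^k}{\tau}\big) + \int_{\Omega_{\eta}} H(\varrho[v])\dx,
\end{align*}
over admissible pairs satisfying the kinematic coupling $v\circ\eta = \tfrac{\eta-\eta^k}{\tau}$ on $M$ together with the homogeneous boundary condition on $\partial\Omega\setminus\eta(P)$. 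The decisive design choice is that $\varrho[v]$ is not a free variable but is defined as the pushforward of $\varrho^k$ along the flow map $\Phi^v_\tau$ generated by $v$ over time $\tau$; hence the discrete continuity equation \eqref{eq1} is enforced by construction, while the Euler--Lagrange equation with respect to $v$ produces, via the identity $H'(\varrho)\varrho - H(\varrho) = p(\varrho)$ and a change of variables, exactly the pressure gradient $\nabla p(\varrho^{k+1})$ in the discrete momentum balance together with the correct discrete material time derivative.

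\textbf{A priori bounds and extraction of subsequences.} Using the minimiser as its own competitor yields a discrete analogue of \eqref{eq:apriori0}. This provides uniform bounds on $\eta_\tau$ in $L^\infty_t W^{2,q}_x\cap C^{0,\alpha}_{t,x}$ with a uniform positive lower bound on $\det\nabla\eta_\tau$ (via the $(\det\nabla\eta)^{-a}$ penalisation), control of $\partial_t\nabla\eta_\tau$ through $R$, and the fluid bounds $\varrho_\tau |v_\tau|^2\in L^\infty_t L^1_x$, $\varrho_\tau\in L^\infty_t L^\gamma_x$, $v_\tau\in L^2_t W^{1,2}_x(\Omega_{\eta_\tau})$. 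Define piecewise constant/affine interpolants and extract weakly/weakly-* convergent subsequences. Precisely as in \cite{benesovaVariationalApproachHyperbolic2020}, an Aubin--Lions argument gives strong convergence of $\eta_\tau$ and $\partial_t\eta_\tau$ in suitable spaces, so $\Omega_{\eta_\tau}\to\Omega_\eta$ in a uniform sense; fluid quantities are then extended across the moving interface as in \cite{BrSc} in order to test the momentum equation against $\tau$-independent functions.

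\textbf{Main obstacle: pressure compactness and conclusion.} The decisive difficulty is the limit passage in the nonlinear pressure $p(\varrho_\tau)$, since only weak convergence of $\varrho_\tau$ is available. The plan is to transfer the Lions--Feireisl effective viscous flux identity to the present moving geometry: test the approximate and limit momentum equations with $\nabla\Delta^{-1}\bigl[\varrho_\tau^\theta\,\mathbbm{1}_{\Omega_{\eta_\tau}}\bigr]$ for a suitable small $\theta>0$ (corrected by a Bogovski\v{\i}-type field to make it admissible on the moving domain), apply a Div--Curl argument to establish weak continuity of the effective viscous flux $\bigl(p(\varrho_\tau)-(2\mu+\lambda)\Div v_\tau\bigr)\varrho_\tau^\theta$, and then conclude strong $L^1$ convergence of $\varrho_\tau$ via the renormalised continuity equation together with the Feireisl oscillation defect measure. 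The subtle point is that the Bogovski\v{\i} operator on $\Omega_{\eta_\tau}$ must be controlled uniformly in $\tau$; this is where the uniform Lipschitz character of $\partial\Omega_{\eta_\tau}$ up to the first time of collision is crucially used, and it is also what pins down the interval $I=(0,T)$. Once strong convergence of $\varrho_\tau$ is obtained, the limit momentum and continuity equations as well as the energy inequality \eqref{eq:apriori0} are recovered by standard arguments, and a maximality/continuation argument extends the solution up to the first time of collision (or to $\infty$).
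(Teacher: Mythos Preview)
Your high-level picture (variational time stepping to generate the pressure from a potential, followed by Lions--Feireisl compactness on the moving domain) matches the paper, but the single-layer $\tau$-scheme you describe will not close, and the paper's proof is organised quite differently for exactly this reason.

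First, the paper does not use a single $\tau$-discretisation. Following \cite{benesovaVariationalApproachHyperbolic2020} it uses \emph{two} time scales: an acceleration scale $\tau$ inside a velocity scale $h$. On each interval of length $h$ one solves, via the $\tau$-scheme, a \emph{time-delayed} problem in which the inertial terms appear as difference quotients at scale $h$ (not $\tau$). Only after $\tau\to0$ and then $h\to0$ does the material derivative emerge. Your term $\frac{1}{2\tau}\int\varrho^k|v-v^k|^2$ collapses both scales; in particular $v^k$ and $v$ live on different domains, and the paper handles this by transporting $\sqrt{\varrho}\,v$ along the flow map with a Jacobian correction---a specific construction that is essential for the energy balance and that your functional does not encode.

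Second, and this is the decisive gap, the paper introduces three further regularisation parameters $\kappa,\varepsilon,\delta$ that you omit. The density is \emph{not} updated by pure pushforward: one sets $\varrho\circ\Psi_v=(\mathrm{id}-\tau\varepsilon\Delta)^{-1}\varrho_k/\det\nabla\Psi_v$, so that in the $\tau\to0$ limit the continuity equation picks up an $\varepsilon\Delta\varrho$ term. This gives $\nabla\varrho\in L^2$ and (via the $\kappa$-level higher-order dissipation on $v$ and a maximum principle) strict positivity of $\varrho$, both of which are indispensable for the $h\to0$ passage in the material derivative---the paper is explicit that without excluding vacuum this step fails. The artificial pressure $\delta\varrho^\beta$ is then needed to survive the $\varepsilon\to0$ limit with physically admissible $\gamma$. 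The effective viscous flux argument is invoked only at the very last $\varepsilon\to0$ and $\delta\to0$ stages, \emph{after} the discrete-to-continuous limits are already done; you propose to use it essentially at the $\tau$-level, where the required weak momentum equation with controlled inertial term is not yet available.

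Finally, the equi-integrability of the pressure near the moving boundary is not obtained via a Bogovski\u{\i} correction as you suggest; the paper instead uses the method of \cite{Kuk} (Lemmas \ref{prop:higherb} and \ref{prop:higherb'}), for which only $\partial_t\eta\in L^2(I;L^2(M))$ is needed, and this is exactly what the trace of the $R$-estimate provides.
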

To prove Theorem \ref{thm:MAINa} we aim to apply a variational approach in the spirit of 
 \cite{benesovaVariationalApproachHyperbolic2020}, where the same problem was solved in the easier case of an incompressible fluid. This approach is based on a time-delayed approximation: One seeks a continuous solution for which the inertial terms (i.e.\ $\partial_{tt} \eta$ and the material derivative $\tfrac{\mathrm{D}v}{\mathrm{D}t}$) occur in form of a difference quotient involving a time step $h>0$. The resulting equation is of gradient-flow type, which allows us to construct solutions using the minimizing movements scheme. On a formal level, this method is well suited to the compressible case: The flow map, that is constructed to obtain the material derivative, can directly be used to transport the density. It turns out that the variational nature of the scheme allows us to generate the pressure term directly from the pressure potential which we include in the functional we are minimising in each step. The material derivative of the density also occurs as a difference quotient which leads to the desired equation of continuity in the limit.
 
Let us now explain how to make these ideas rigorous.
When solving the compressible Navier--Stokes equations it is common to work with an artificial viscosity $\varepsilon$ in \eqref{eq1} and add $\varepsilon\Delta\varrho$ on the right-hand side to make it a proper parabolic equation. As it turns out the limit $\varepsilon\rightarrow 0$ can only be performed if the integrability of the density (which results from the parameter $\gamma$ in \eqref{eq2}) is large and thus outside the realm of physical interest. Consequently, a second regularisation ($\delta$-level with artificial pressure) is used and one adds an additional term $\delta\varrho^\beta$ (with $\beta$ sufficiently large) to the pressure in \eqref{eq2}. This approach has been introduced in \cite{F}. In order to solve the regularised
problem with $\varepsilon,\delta>0$ fixed, we aim at a variational approach as described above. Due to the additional term $\varepsilon\Delta\varrho$, we need to modify the mass transport from a simple update of the densities via the flow map. This change also necessitates a rather specific choice of the discretised inertial term in order to obtain the correct energy inequality (see Section \ref{subsec:h-construction} for details).
Additionally, when combined with the pressure potential, this perturbation creates in the limit of the time-step $h\rightarrow0$ the term
\begin{align*}
\frac{\varepsilon}{2}\int_I\int_{\Omega_\eta}\nabla\varrho\cdot(\nabla v \phi+\nabla\phi v)\dxt
\end{align*}
in the momentum equation (as well as a similar term in the energy inequality). This corresponds to a regularisation used similarly in \cite{F} and many subsequent papers. In order to recover this term, it turns out that we have to exclude the vacuum for the limit passage
$h\rightarrow0$, see equation \eqref{limregh}. Excluding the hypothetical vacuum is a big open problem for the compressible Navier--Stokes system even in presence of positive $\varepsilon$.
To overcome this difficulty, we split the regularisation of $v$, that was already present in \cite{benesovaVariationalApproachHyperbolic2020}, on the $h$-level into its own $\kappa$-level: For fixed $\kappa>0$ we add a higher order dissipation to the momentum equation. This gives sufficient regularity of the velocity which ultimately yields a minumum principle for the equation of continuity.\\
For fixed $\kappa,\varepsilon,\delta>0$ we are able to apply the ideas just explained and to obtain a solution to the approximate problem by the minimising movement method (with the acceleration scale limit $\tau\rightarrow 0$ in Section \ref{subsec:tau} and the velocity scale limit $h\rightarrow0$ in Section \ref{sec:h}).
Eventually, we pass to the limit with respect to the regularisation parameters $\kappa,\varepsilon$ and $\delta$ in Section \ref{sec:5}. For technical reasons this has to be done in three independent steps. The limit $\kappa\rightarrow0$ is rather straightforward as the density remains compact for $\varepsilon>0$. The compactness of the density becomes critical in the subsequence limit procedures where we pass to the limit in $\varepsilon$ and $\delta$ respectively.
This is done via the method of effective viscous flux which is due to Lions \cite{Li2} (with important extensions by Feireisl et al.\ \cite{F}). This method has been extended to the setting of variable domains in \cite{BrSc}.\\
It is important to note that in each of these approximations, our approximate solutions are already confined to the set of admissible states. In particular, on each level the solid deformation $\eta$ is injective, the solid and the fluid move with the same velocities at the interface and the total mass of the fluid is conserved at all times. Additionally, an energy inequality holds on each level. It mirrors the physical energy inequality and will be our main tool to obtain a priori estimates. 

 \section{Preliminaries}
 
 \subsection{Assumptions}
 \label{sec:ass}
 The assumptions on the solid and its dissipation are identical to those in \cite{benesovaVariationalApproachHyperbolic2020}:
 \begin{assumption}[Elastic energy] 
\label{ass:energy}
We assume that $q>n$ and $E:W^{2,q}(Q;\Omega) \to \overline{\R}$ satisfies:
\begin{enumerate}
 \item[S1] Lower bound: There exists a number $E_{min} > -\infty$ such that 
 \[E(\eta) \geq E_{min} \text{ for all } \eta \in W^{2,q}(Q; \R^n).\]
 \item[S2] Lower bound in the determinant: For any $E_0>0$ there exists $\varepsilon_0 >0$ such that $\det \nabla \eta \geq \varepsilon_0$ for all $\eta \in \{\eta \in W^{2,q}(Q; \R^n):E(\eta) <E_0\}$.
 \item[S3] Weak lower semi-continuity: If $\eta_l \rightharpoonup \eta$ in $W^{2,q}(Q; \R^n)$ then $E(\eta) \leq \liminf_{l\to\infty} E(\eta_l)$.
 \item[S4] Coercivity: All sublevel-sets $\{\eta \in \mathcal{E} :E(\eta) <E_0\}$ are bounded in $W^{2,q}(Q; \R^n)$.
 \item[S5] Existence of derivatives: For finite values $E$ has a well defined derivative which we will formally denote by 
 \[DE:  \{ \eta \in \mathcal{E}: E(\eta) < \infty\} \to (W^{2,q}(Q; \R^n))'.\]
 Furthermore, on any sublevel-set of $E$, $DE$ is bounded and continuous with respect to strong $W^{2,q}(Q;\R^n)$-convergence.
 \item[S6] Monotonicity and Minty-type property: If $\eta_l \rightharpoonup \eta$ in $W^{2,q}(Q; \R^n)$ with $\sup_lE(\eta_l) < \infty$, then \[\liminf_{l\to \infty} \inner{DE(\eta_l)-DE(\eta)}{(\eta_l-\eta)\psi} \geq 0 \text{ for all }\psi\in C^\infty_0(Q;[0,1]).\] If additionally $\limsup_{l\to \infty} \inner{DE(\eta_l)-DE(\eta)}{(\eta_l-\eta)\psi} \leq 0$ then $\eta_l\to \eta$ in $W^{2,q}(Q; \R^n)$.
\end{enumerate}
\end{assumption}
 
\begin{definition}[Domain of definition] \label{rem:ciarletNecas}
The set of admissible deformations in $W^{2,q}(Q; \Omega)$ (injective a.e.\ and satisfying the Dirichlet boundary condition) can be expressed as
\begin{align}
\label{eq:etaspace}
  \mathcal{E} := \left\{\eta \in W^{2,q}(Q;\Omega): E(\eta) < \infty,\, \abs{\eta(Q)} = \int_Q \det \nabla \eta \dx, \eta|_{P} = \gamma\right\}
\end{align}
where $\gamma: P \to \partial \Omega$ is a fixed injective function of sufficient regularity so that $\mathcal{E}$ is non-empty. By a slight abuse of notation, we use $L^\infty(I;\mathcal{E})$ to denote the set of functions $\eta: I \times Q \to \Omega$, for which $\eta(t) \in \mathcal{E}$ for all $t \in I$ and $t\mapsto E(\eta(t))$ is bounded.
\end{definition}

\begin{assumption}[Solid dissipation] The dissipation functional $R: \mathcal{E} \times W^{1,2}(Q;\R^n) \to \R$ satisfies: \label{ass:dissipation}
\begin{enumerate}
  \item[R1] Weak lower semi-continuity: If $b_l \rightharpoonup b$ in $W^{1,2}(Q;\R^n)$ then 
  \[\liminf_{l\to\infty} R(\eta,b_l) \geq R(\eta,b).\]
  \item[R2] Homogeneity of degree two: The dissipation is homogeneous of degree two in its second argument , i.e.,
  \[R(\eta, \lambda b) = \lambda^2 R(\eta, b) \quad \forall \lambda \in \R.\]
  In particular, this implies $R(\eta,b) \geq 0$ and $R(\eta,0) = 0$.
  \item[R3] Energy-dependent Korn-type inequality: Fix $E_0 >0$. Then there exists a constant $c_K = c_K(E_0) >0$ such that for all $\eta \in W^{2,q}(Q;\R^n)$ with $E(\eta) \leq E_0$ and all $b \in W^{1,2}(Q;\R^n)$ with $b|_{P} = 0$ we have
  \[c_K \norm[W^{1,2}(Q)]{b}^2 \leq R(\eta,b).\]
  \item[R4] Existence of a continuous derivative: The derivative $D_2R(\eta,b) \in (W^{1,2}(Q;\R^n))'$ given by
  \[ \frac{\dd}{\dd\varepsilon} |_{\varepsilon = 0} R(\eta,b+\varepsilon \phi) =: \inner{D_2R(\eta,b)}{ \phi} \quad \forall \phi \in W^{1,2}(Q;\R^n)\]
  exists and is weakly continuous in its two arguments. Due to the homogeneity of degree two this in particular implies
  \[\inner{D_2R(\eta,b)}{ b } = 2R(\eta,b).\]
 \end{enumerate}
\end{assumption}
 
 Additionally, we need to state some assumptions on the potential energy of the density, which will also result in the pressure response.

  \begin{assumption}[Pressure]
  The function $p:[0,\infty)\rightarrow[0,\infty)$ satisfies the following.
  \begin{itemize}
   \item[P1:] $p\in C^2((0,\infty))\cap C^1([0,\infty))$;
   \item[P2:] $p'(\varrho)>0$ for all $\varrho>0$; 
   \item[P3:] $p$ has $\gamma$-growth with $\gamma > \frac{2n(n-1)}{3n-2}$, i.e.\ there exists $a > 0$ and $\gamma > \frac{2n(n-1)}{3n-2}$ such that
   \begin{align*}
\lim_{\varrho\rightarrow\infty}\frac{p'(\varrho)}{\varrho^{\gamma-1}}=a.
   \end{align*}
  \end{itemize}
  \end{assumption}
 We will also consider the regularised pressure $p_\delta$
 given by $$p_\delta(\varrho):=p(\varrho)+\delta\varrho^\beta+\delta \varrho^2$$
 for $\delta>0$, where $\beta>\max\{4,\gamma\}$. The function $p_\delta$ clearly inherits P1 and P2 but has $\beta$-growth instead of $\gamma$-growth. Additionally, one also has that
 \begin{align*}
 p''_\delta(\varrho)\geq\,2\delta\quad \text{for all }\varrho>0.
 \end{align*}
 Given the pressure through the function $p$ the potential energy of the density can be described by the fluid potential
   \begin{align*}
   U_\eta(\varrho) := \int_{\Omega \setminus \eta(Q)} H(\varrho) \dx,\quad H(\varrho):=\varrho\int_0^\varrho \frac{p(z)}{z^2}\,\dd z,
  \end{align*}
  which satisfies the relation $p(\varrho) = H'(\varrho) \varrho - H(\varrho)$. For the pressure potential $H$ we also obtain a regularised version given by $H_\delta(\varrho):=\varrho\int_c^\varrho \frac{p_\delta(z)}{z^2}\,\dd z = H(\varrho) + \frac{\delta}{\beta-1} \varrho^\beta + \delta \varrho^2$ for a fixed $c\geq 0$. During the minimising movement approach in Sections \ref{sec:tau} and \ref{sec:h}
  only $H_\delta$ appears. Some of its properties, which follow directly from P1--P3 above, are listed in the following lemma.
 \begin{lemma}[Fluid potential]\label{lem:fluidpotential}
 For fixed $\delta>0$ the function $H_\delta$ satisfies the following for some $c_\delta>0$.
%  We define $U_\eta: [0,\infty) \to \R$ locally using
%  \begin{align*}
%   U_\eta(\varrho) := \int_{\Omega \setminus \eta(Q)} H(\varrho) dx.
%  \end{align*}
%  We assume that $H \in C^2( (0,\infty) )$ satisfies the following conditions:
  \begin{itemize}
   \item[H1:] $H_\delta$ is bounded from below;
   \item[H2:] $H_\delta$ is strictly convex, i.e.\ $H_\delta''(\varrho) \geq c_{\delta} > 0$ for all $\varrho>0$;
   \item[H3:] $H_\delta$ has $\beta$-growth, i.e., we have
   \begin{align*}
    H_\delta(\varrho) \geq c_\delta (\varrho^{\beta}-1 ) &\text{ for all } \varrho \in [0,\infty).
   \end{align*}
  \end{itemize}
%  We also define the resulting pressure as $p(\varrho) := H'(\varrho)\varrho -H(\varrho)$. Finally for the proof, we also introduced a regularized version in the form of $U_\delta(\varrho) := U(\varrho) + \int_{\Omega} \delta \varrho^{\beta}$, i.e.\ $H_\delta(\varrho) := H(\varrho) + \delta \varrho^\beta$ and the resulting $p_\delta(\varrho) := p(\varrho) + \delta (\beta -1) \varrho^\beta$, where $\beta \gg 1$ will be fixed later.
 \end{lemma}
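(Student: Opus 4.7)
The plan is to exploit the explicit additive decomposition
\[
H_\delta(\varrho) = H(\varrho) + \tfrac{\delta}{\beta-1}\varrho^\beta + \delta\varrho^2
\]
recorded just before the lemma, together with the defining identity $p_\delta(\varrho) = H_\delta'(\varrho)\varrho - H_\delta(\varrho)$. All three properties then reduce to elementary manipulations based on (P1)--(P3) and the non-negativity of $p$.

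For (H2) I would differentiate the defining identity once in $\varrho$, which yields $p_\delta'(\varrho) = H_\delta''(\varrho)\varrho$ and hence $H_\delta''(\varrho) = p_\delta'(\varrho)/\varrho$. Writing $p_\delta'(\varrho) = p'(\varrho) + \beta\delta\varrho^{\beta-1} + 2\delta\varrho$ and using $p' \geq 0$ from (P2), I obtain $H_\delta''(\varrho) \geq 2\delta$ uniformly on $(0,\infty)$, so (H2) holds with $c_\delta = 2\delta$. For (H1) I note that the integrand $p(z)/z^2$ defining $H$ is non-negative by (P2); an elementary estimate (splitting the integral at $c$ and bounding $\varrho\int_\varrho^c p(z)/z^2\,\dd z$ by a constant, using $p \in C^1([0,\infty))$) shows $H$ is bounded below, and since the perturbation $\tfrac{\delta}{\beta-1}\varrho^\beta + \delta\varrho^2$ is manifestly non-negative, so is $H_\delta$ bounded below. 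For (H3) the same reasoning yields $H_\delta(\varrho) \geq \tfrac{\delta}{\beta-1}\varrho^\beta - C$ for some $C \geq 0$, from which $H_\delta(\varrho) \geq c_\delta(\varrho^\beta - 1)$ follows by choosing $c_\delta$ sufficiently small; on $[0,1]$ the right-hand side is non-positive, and the inequality is then trivial once we know $H_\delta$ is bounded from below.

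I do not anticipate any substantial obstacle: the lemma is essentially an unpacking of the decomposition displayed above. The only mildly delicate point is the behaviour of $p(z)/z^2$ near $z = 0$, which may fail to be integrable if $p(0) > 0$; however the statement permits any $c \geq 0$ in the definition of $H$, and choosing $c > 0$ when needed removes the issue. A shift in $c$ only alters $H$ by an additive constant and a linear term in $\varrho$, neither of which changes the qualitative validity of (H1)--(H3).
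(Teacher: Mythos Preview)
Your argument is correct and supplies exactly the details the paper omits: the paper gives no proof of this lemma, remarking only that the properties ``follow directly from P1--P3 above,'' and your computations (differentiating $p_\delta = H_\delta'\varrho - H_\delta$ for H2, using non-negativity of $p$ and the explicit $\delta$-terms for H1 and H3) are the natural way to verify this. One tiny correction: the non-negativity of $p$ you invoke comes from the stated codomain $p:[0,\infty)\to[0,\infty)$ rather than from (P2), which concerns $p'$; this does not affect the argument.
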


% \subsection{Some additional notation}
% 
% Most of the notation of \cite{benesovaVariationalApproachHyperbolic2020} still persists, only the $v$/$v$ disambiguation is dropped and $v$ is used for Eulerian velocity everywhere.
%  
% As a particular note, for any set $A\subset \R^n$, $id: A\to A$ will denote the identity map, while $I \in \R^{n\times n}$ will denote the identity matrix, as well as the identity operator on function spaces. Effectively $I$ is employed, when there is linear structure to be used, while $id$ is in use when it is not.
 
 \subsection{Function spaces on variable domains}
\label{ssec:geom}

For a given deformation $\eta : Q \rightarrow \Omega$ we parametrise the deformed fluid domain by
\begin{align*}
\Omega_\eta := \Omega \setminus \eta(Q).
\end{align*}
If $\eta : I \times Q \rightarrow \Omega$ is additionally taken to be time-dependent, this defines a deformed space-time cylinder $I\times\Omega_\eta :=\bigcup_{t\in I}\set{t}\times\Omega_{\eta(t)} \subset I \times \Omega$. To save on notation, we will sometimes define shorter notation in the presence of indices and parameters, e.g. 
\[\Omega_l^{(h)}(t) := \Omega_{\eta^{(h)}_l(t)}.\]

Recall that the moving part of the boundary of $\Omega_{\eta(t)}$ is given by $\eta(t)|_{M}:M\rightarrow \Omega$.
The corresponding function spaces for variable domains are defined as follows.
\begin{definition}{(Function spaces)}
For $I=(0,T)$, $T>0$, and $\eta\in C(\overline{I}\times Q; \Omega)$ defining a changing domain $\Omega(t) := \Omega \setminus \eta(t,Q)$ we define for $1\leq p,r\leq\infty$
\begin{align*}
L^p(I;L^r(\Omega(\cdot))&:=\big\{v\in L^1(I\times\Omega(\cdot)):\,\,v(t,\cdot)\in L^r(\Omega(t))\,\,\text{for a.e. }t,\,\,\|v(t,\cdot)\|_{L^r(\Omega(t))}\in L^p(I)\big\},\\
L^p(I;W^{1,r}(\Omega(\cdot)))&:=\big\{v\in L^p(I;L^r(\Omega(\cdot))):\,\,\nabla v\in L^p(I;L^r(\Omega(\cdot)))\big\}.
\end{align*}
\end{definition}
Function spaces of vector- or matrix valued functions are defined accordingly. 
We now give a definition of convergence in variable domains. Convergence in Lebesgue spaces follows from an extension by zero.
\begin{definition}
\label{def:conv}
Let $(\eta_i)\subset C(\overline{I}\times Q;\Omega)$ with $\eta_i\rightarrow \eta$ uniformly in $\overline I\times Q$.
 Let $p\in [1,\infty]$ and $k\in\N_0$.
\begin{enumerate}
%\begin{align}
%g_i\to g \in L^p(I,L^q(\Omega_{\eta_i}))\text{ if }
%%\sup_{\set{\phi\in C^\inty([0,T]\times \setR^3),\norm{\phi}_p\leq 1}}
%\lim_{i\to \infty}\bigg(\int_I\bigg(\int_{\setR^3}\abs{g_i\chi_{\Omega_{\eta_i}}-g\chi_{\Omega_\eta}}^q\dx\bigg)^\frac{p}{q}\dt\bigg)^\frac{1}{p}=0.
%\end{align}
\item We say that  a sequence $(g_i) \subset L^p(I,L^q(\Omega_{\eta_i}))$ converges to $g$ in $L^p(I,L^q(\Omega_{\eta}))$ strongly with respect to $(\eta_i)$, in symbols
$
g_i\toetai g \,\text{in}\, L^p(I,L^q(\Omega_{\eta})),
$
%\sup_{\set{\phi\in C^\inty([0,T]\times \setR^3),\norm{\phi}_p\leq 1}}
if
\begin{align*}
\chi_{\Omega_{\eta_i}}g_i\to \chi_{\Omega_\eta}g \quad\text{in}\quad L^p(I,L^q(\R^n)).
\end{align*}
\item Let $p,q<\infty$. We say that  a sequence $(g_i) \subset L^p(I,L^q(\Omega_{\eta_i}))$ converges to $g$ in $L^p(I,L^q(\Omega_{\eta}))$ weakly with respect to $(\eta_i)$, in symbols
$
g_i\weaktoetai g \,\text{in}\, L^p(I,L^q(\Omega_{\eta})),
$
%\sup_{\set{\phi\in C^\inty([0,T]\times \setR^3),\norm{\phi}_p\leq 1}}
if
\begin{align*}
\chi_{\Omega_{\eta_i}}g_i\weakto \chi_{\Omega_\eta}g \quad\text{in}\quad L^p(I,L^q(\R^n)).
\end{align*}
\item Let $p=\infty$ and $q<\infty$. We say that  a sequence $(g_i) \subset L^\infty(I,L^q(\Omega_{\eta_i}))$ converges to $g$ in $L^\infty(I,L^q(\Omega_{\eta}))$ weakly$^*$ with respect to $(\eta_i)$, in symbols
$
g_i\weakto^{\ast,\eta} g \,\text{in}\, L^\infty(I,L^q(\Omega_{\eta})),
$
%\sup_{\set{\phi\in C^\inty([0,T]\times \setR^3),\norm{\phi}_p\leq 1}}
if
\begin{align*}
\chi_{\Omega_{\eta_i}}g_i\weakto^* \chi_{\Omega_\eta}g \quad\text{in}\quad L^\infty(I,L^q(\R^n)).
\end{align*}
%and we say for $l,k\in \setN_0$ that
%\begin{align*}
%g_i\to g\text{ in }&W^{k,p}(I,W^{l,q}(\Omega_{\eta_i}))\text{ if }\nabla^\alpha\partial_t^ig_i\to g\text{ in }L^p(I,L^q(\Omega_{\eta_i}))\text{ for all }i\in \set{0,...,k}
%\\
%&\text{ and } \alpha\in \setN_0^n\text{ such that }\abs{\alpha_1}+...+\abs{\alpha_n}\leq k.
%\end{align*}
\end{enumerate}
\end{definition}
Next we state a compactness lemma from \cite[Lemma 2.8]{BrSc} which gives a variant of the classical result by Aubin-Lions for PDEs in variables domains. It allows to pass to the limit in the product
of two weakly converging sequences provided one is more regular in space and the other one in time.
 Let us list the required assumptions.
\begin{enumerate}[label={\rm (A\arabic{*})}, leftmargin=*]
\item\label{A1} The sequence $(\eta_i)\subset C(\overline I\times Q;\Omega)$  satisfies $\eta_i\rightarrow \eta$ uniformly in $\overline I\times Q$ and for some $\alpha>0$
\begin{align*}
\eta_i&\rightarrow\eta\quad\text{in}\quad C^\alpha(\overline I\times Q).
\end{align*}
\item\label{A2} Let $(v_i)$ be a sequence such that for some $p,s\in[1,\infty)$ we have
$$v_i\weaktoetai v\quad\text{in}\quad L^p(I;W^{1,s}(\Omega_{\eta})).$$
\item\label{A3} Let $(r_i)$ be a sequence such that for some $m,b\in[1,\infty)$ we have
$$r_i\weaktoetai r\quad\text{in}\quad L^m(I;L^{b}(\Omega_{\eta})).$$
Assume further that $(\partial_t r_i)$ is bounded in the sense of distributions, i.e., there is $c>0$ and $m\in\N$ such that
\begin{align*}
\int_I\int_{\Omega_{\eta_i}}r_i\,\partial_t\phi\dxt\leq\,c\int_I\|\phi\|_{W^{m,2}(\Omega_{\eta_i})}\dt
\end{align*}
uniformly in $i$ for all $\phi\in C^\infty_c(I\times\Omega_{\eta_i})$.
\end{enumerate}
In \cite[Lemma 2.8]{BrSc} the corresponding version of \ref{A3} assumes $m=2$. But the very same argument is also valid in the general case as in the classical Aubin-Lions argument.
\begin{lemma}
\label{thm:weakstrong}
Let $(\eta_i)$, $(v_i)$ and $(r_i)$ be sequences satisfying \ref{A1}--\ref{A3}
 where $\frac{1}{s^*}+\frac{1}{b}=\frac{1}{a}<1$ (with $s^*=\frac{ns}{n-s\alpha}$ if $s\in (1,3/\alpha)$ and $s^*\in(1,\infty)$ arbitrarily otherwise) and $\frac{1}{m}+\frac{1}{p}=\frac{1}{q}<1$. Then there is a subsequence with
%\begin{align*}
%\tilde{v}_i&\weakto \tilde{v}\text{ weakly in }L^{p}([0,T],W^{1,s}(\Omega))
%\\
%\tilde{r}_i&\weakto \tilde{r}\text{ weakly in } L^{m}([0,T],L^b(\Omega))
%\\
%{v}_i&\weakto {v}\text{ weakly in } L^{p}([0,T],W^{1,s}(\Omega_{\eta_i}))
%\\
%{r}_i&\weakto {r}\text{ weakly in } L^{m}([0,T],L^b(\Omega_{\eta_i}))
%\\
%{r}_i&\to {r}\text{ strongly in } L^{p'}([0,T],W^{-1,s}(\Omega_{\eta_i}))
%\end{align*}
%Moreover,
\begin{align}
\label{al}
v_i r_i\weakto^\eta v r\text{ weakly in }L^{q}(I,L^a(\Omega_{\eta})).
\end{align}
\end{lemma}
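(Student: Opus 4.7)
The plan is to follow the classical strategy for passing to the limit in weakly-times-weakly convergent products, carried out on compact subdomains of $\Omega_\eta$ so as to accommodate the moving geometry. The argument is essentially identical to that of \cite[Lemma~2.8]{BrSc}; the only change is the replacement of the exponent $m=2$ used there by a general $m\in[1,\infty)$, which affects only the Aubin-Lions-Simon step and leaves its conclusion unchanged.

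First I upgrade the hypothesis \ref{A2} via the Sobolev embedding: since \ref{A1} gives a uniform $C^\alpha$-bound on the boundary parametrizations of the $\Omega_{\eta_i}$, the embedding $W^{1,s}(\Omega_{\eta_i})\hookrightarrow L^{s^*}(\Omega_{\eta_i})$ holds with an $i$-independent constant, so
\[v_i \weaktoetai v \quad\text{in}\quad L^p(I;L^{s^*}(\Omega_\eta)).\]
Next I localize. For any open $U$ with $\overline U\subset\Omega_\eta$, the uniform convergence $\eta_i\to\eta$ implies $\overline U\subset\Omega_{\eta_i}$ once $i$ is large. Since $v_i r_i$ is uniformly bounded in $L^q(I;L^a(\R^n))$ (after extension by zero, by H\"older), exhaustion of $\Omega_\eta$ by such $U$ and the density of $C^\infty_c$-functions reduce the claim to showing $\int v_i r_i\phi\,\dxt\to\int vr\phi\,\dxt$ for every such $U$ and every $\phi\in C^\infty_c(I\times U)$. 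On the fixed bounded Lipschitz domain $U$ I then apply Aubin-Lions-Simon to $r_i$: the weak convergence in $L^m(L^b(U))$ combined with the distributional time-derivative bound from \ref{A3}, together with the compact embedding $L^b(U)\hookrightarrow\hookrightarrow W^{-1,b}(U)$, yields along a subsequence
\[r_i \to r \quad\text{strongly in}\quad L^m(I;W^{-1,b}(U)).\]
Interpolation with the $L^m(L^b(U))$-boundedness upgrades this to strong convergence in $L^m(W^{-\theta,b}(U))$ for every $\theta\in(0,1)$.

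The product limit is then performed via the decomposition
\[\int_0^T\!\!\int_U(v_ir_i-vr)\phi\,\dxt = \int_0^T\!\!\int_U(v_i-v)\,r\phi\,\dxt + \int_0^T\!\!\int_U v_i(r_i-r)\phi\,\dxt.\]
The first integral vanishes by the weak convergence of $v_i$ in $L^p(L^{s^*})$ paired against $r\phi\in L^{p'}(L^{(s^*)'}(U))$; the exponent conditions $\tfrac{1}{q}=\tfrac{1}{p}+\tfrac{1}{m}<1$ and $\tfrac{1}{a}=\tfrac{1}{s^*}+\tfrac{1}{b}<1$ precisely ensure $m\geq p'$ and $b\geq(s^*)'$, which suffice since $U$ is bounded. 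For the second integral, I pair the strong convergence of $r_i-r$ in $L^m(W^{-\theta,b}(U))$ with $v_i\phi$, uniformly bounded in $L^{m'}(W^{\theta,b'}_0(U))$ for suitably small $\theta>0$ thanks to $p\geq m'$ and the fractional Sobolev embedding $W^{1,s}(U)\hookrightarrow W^{\theta,b'}(U)$, the latter being available because $s^*\geq b'$ (again from $\tfrac{1}{a}<1$). The main obstacle is the bookkeeping of these exponent relations, but once the strict inequalities $\tfrac{1}{q}<1$ and $\tfrac{1}{a}<1$ provide enough slack in the duality pairings the argument reduces to the classical one.
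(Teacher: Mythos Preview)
Your proposal is correct and matches the paper's approach exactly: the paper does not give an independent proof of this lemma but simply refers to \cite[Lemma~2.8]{BrSc} and remarks that the restriction $m=2$ there can be dropped without changing the argument, which is precisely what you state and then flesh out. Your sketch of the localization plus Aubin--Lions--Simon step (with the compact embedding into a negative Sobolev space and the subsequent duality pairing) is the standard route, and your exponent bookkeeping verifying $m\ge p'$, $b\ge (s^*)'$, $p\ge m'$ and $s^*\ge b'$ from the strict inequalities $\tfrac{1}{q}<1$ and $\tfrac{1}{a}<1$ is accurate.
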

%\begin{remark}
%Assumption \ref{A3} in Lemma \ref{thm:weakstrong} can be extended in an obvious way to the case of higher order distributional derivatives. We have chosen the version above as it is most suitable for our applications.
%\end{remark}

\begin{corollary}
\label{rem:strong}
In the case $r_i=v_i$ we find that
%\[
%\int_0^T\int_{\Omega_{\eta_i}}\abs{v_i}^2\, dx\, dt\to \int_0^T\int_{\Omega_{\eta}}\abs{v}^2\, dx\, dt.
%\]
%Since weak convergence and norm convergence implies strong convergence, we find (by interpolation) that
\[
v_i\to^\eta v\text{ strongly in }L^{2}(I, L^{2}(\Omega_{\eta})).
\]
\end{corollary}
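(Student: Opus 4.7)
The plan is to invoke \autoref{thm:weakstrong} with the specific choice $r_i=v_i$ and then run a standard ``norm convergence plus weak convergence implies strong convergence'' argument, carefully respecting the zero-extension convention of \autoref{def:conv}. First I would verify that, with $r_i=v_i$, hypothesis \ref{A3} is indeed satisfied by the sequence itself: \ref{A2} forces $(v_i)$ to be bounded in $L^p(I,L^{s^*}(\Omega_{\eta_i}))$, so in particular bounded in some $L^m(I,L^{b}(\Omega_{\eta_i}))$, while the distributional bound on $\partial_t v_i$ is the hypothesis being used. Applying \autoref{thm:weakstrong} therefore produces exponents $q,a\in(1,\infty)$ with
\[
v_i\otimes v_i\weakto^\eta v\otimes v\quad\text{in}\quad L^{q}(I,L^{a}(\Omega_{\eta})),
\]
and taking the trace yields $\abs{v_i}^2\weakto^\eta \abs{v}^2$ in $L^{q}(I,L^{a}(\Omega_{\eta}))$.

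Next I would upgrade this to convergence of the $L^2$-norms. Fixing a ball $B_R\supset\Omega$, the function $\chi_{I\times B_R}$ belongs to $L^{q'}(I,L^{a'}(\R^n))$, so testing the weak convergence in $L^q(L^a)$ against it gives
\[
\int_I\!\int_{\Omega_{\eta_i}}\abs{v_i}^2\dxt\;\longrightarrow\;\int_I\!\int_{\Omega_{\eta}}\abs{v}^2\dxt.
\]
At the same time, the hypotheses \ref{A2} and \ref{A3} (together with Sobolev embedding, since $s^*,b>1$ and one can interpolate between $L^p(L^{s^*})$ and $L^m(L^b)$) provide a uniform bound on $v_i$ in $L^2(I,L^2(\Omega_{\eta_i}))$, so after a subsequence $v_i\weaktoetai v$ in $L^2(I,L^2(\Omega_{\eta}))$, i.e.\ $\chi_{\Omega_{\eta_i}}v_i\weakto\chi_{\Omega_{\eta}}v$ in $L^2(I\times\R^n)$.

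The final step is then the standard Hilbert-space identity
\[
\bigl\|\chi_{\Omega_{\eta_i}}v_i-\chi_{\Omega_{\eta}}v\bigr\|_{L^2(I\times\R^n)}^2 = \int_I\!\int_{\Omega_{\eta_i}}\abs{v_i}^2\dxt-2\int_I\!\int_{\R^n}\chi_{\Omega_{\eta_i}}v_i\cdot\chi_{\Omega_{\eta}}v\dxt+\int_I\!\int_{\Omega_{\eta}}\abs{v}^2\dxt.
\]
The first summand converges to $\int_I\int_{\Omega_{\eta}}\abs{v}^2$ by the norm-convergence above, and the cross term converges to the same limit via the $L^2$-weak convergence tested against the fixed function $\chi_{\Omega_{\eta}}v\in L^2(I\times\R^n)$. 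Hence the right-hand side tends to $0$, which by \autoref{def:conv}(1) is exactly the claimed strong convergence $v_i\to^\eta v$ in $L^2(I,L^2(\Omega_{\eta}))$.

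The only real obstacle is bookkeeping of the variable-domain convention: one must consistently pass between functions on $\Omega_{\eta_i}$ and their zero extensions to $\R^n$, and check that the test functions $\chi_{I\times B_R}$ and $\chi_{\Omega_{\eta}}v$ lie in the correct dual spaces. Everything else is an immediate consequence of \autoref{thm:weakstrong}.
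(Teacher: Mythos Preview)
Your argument is correct and is precisely the standard route the paper has in mind: apply \autoref{thm:weakstrong} with $r_i=v_i$ to obtain $|v_i|^2\weakto^\eta|v|^2$, integrate against the characteristic function of the bounded container to get convergence of the $L^2$-norms, and combine this with the already known weak convergence in $L^2$ to deduce strong convergence. The paper states the corollary without proof, so there is nothing further to compare; your bookkeeping of the zero-extension convention and the dual-space memberships is the only thing one actually has to check, and you have done so.
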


 \subsection{Weak solutions and the main theorem}
 \label{sec:weak}
 In this session we make the concept of a weak solution to
 \eqref{eq1}--\eqref{initial} rigorous. We begin with the function spaces to which the triple $(\eta,v,\varrho)$ belongs.
\begin{itemize}
\item For the solid deformation $\eta:I\times Q\to \Omega$ we consider the space
$$Y^I:=\{\zeta\in W^{1,2}(I;W^{1,2}(Q;\R^n))\cap L^\infty(I;\mathcal{E})\,\}.$$
\item Given $\eta\in Y^I$, for the fluid velocity $v:I\times \Omega_\eta\to\mathbb{R}^n$ we define the space $$X_\eta^I:=L^2(I;W^{1,2}(\Omega_{\eta};\R^n)).$$
\item Given $\eta\in Y^I$, for the fluid density $\varrho: I\times \Omega_\eta\to [0,\infty)$ we define the space $$
 Z_\eta^I:= C_w(\overline{I};L^\gamma(\Omega_\eta)),$$ where the subscript $w$ refers to continuity with respect to the weak topology. 
\end{itemize}

A weak solution to \eqref{eq1}--\eqref{initial} is a triple $(\eta,v,\varrho)\in Y^I\times X_{\eta}^I \times Z_\eta^I $ that satisfies the following.
\begin{itemize}
\item\label{*1} The momentum equation holds in the sense that\begin{align}\label{eq:mom}
\begin{aligned}
&\int_I\frac{\dd}{\dt}\int_{\Omega(t)}\varrho v \cdot b\dx-\int_{\Omega(t)} \Big(\varrho v\cdot \partial_t b +\varrho v\otimes v:\nabla b \Big)\dxt
\\
&+\int_I\int_{\Omega(t)}\mathbb S(\nabla v):\nabla b \dxt-\int_I\int_{\Omega(t)}
p_\delta(\varrho)\,\Div b\dxt\\
&+\int_I\bigg(-\int_Q \varrho_s \partial_t\eta\,\partial_t \phi\dy + \langle DE(\eta),\phi\rangle+\langle D_2R(\eta,\partial_t\eta),\phi\rangle\bigg)\dt
\\&=\int_I\int_{\Omega(t)}\varrho f_f\cdot b\dxt+\int_I\int_Q f_s\cdot \phi\,\dd x\dt
\end{aligned}
\end{align} 
for all $(\phi,b)\in L^2(I; W^{2,q}(Q;\R^n))\cap W^{1,2}(I;L^2(Q;\R^n))\times C_0^\infty(\overline{I}\times \Omega; \R^n)$ with  $\phi(t)=b(t)\circ\eta(t)$ in $Q$ and $b(t)=0$ on $P$ for a.a. $t\in I$. Moreover, we have $(\varrho v)(0)=q_0$, $\eta(0)=\eta_0$ and $\partial_t\eta(0)=\eta_1$ we well as $\partial_t\eta(t)=v(t)\circ\eta(t)$ in $Q$, $\eta(t)\in\mathcal E$ and $v(t)=0$ on $\partial\Omega$ for a.a. $t\in I$.
\item\label{*2}  The continuity equation holds in the sense that
\begin{align}\label{eq:con}
\begin{aligned}
&\int_I\frac{\dd}{\dt}\int_{\Omega(t)}\varrho \psi\dxt-\int_I\int_{\Omega(t)}\Big(\varrho\partial_t\psi
+\varrho v\cdot\nabla\psi\Big)\dxt=0
\end{aligned}
\end{align}
for all $\psi\in C^\infty(\overline{I}\times\R^3)$ and we have $\varrho(0)=\varrho_0$. 
\item \label{*3} The energy inequality is satisfied in the sense that
\begin{align} \label{eq:ene}
\begin{aligned}
- \int_I &\partial_t \psi \,
\mathscr E \dt+\int_I\psi\int_{\Omega(t)}\mathbb S(\nabla v):\nabla v\dxs+2\int_I\psi R(\eta,\partial_t\eta)\ds \\&\leq
\psi(0) \mathscr E(0)+\int_I\int_{\Omega(t)}\varrho f_f\cdot v\dxt+\int_I\psi\int_Q f_s\,\partial_t\eta\,\dd y\dt
\end{aligned}
\end{align}
holds for any $\psi \in C^\infty_c([0, T))$.
Here, we abbreviated
$$\mathscr E(t)= \int_{\Omega(t)}\Big(\frac{1}{2} \varrho(t) | {v}(t) |^2 + H(\varrho(t))\Big)\dx+\int_Q\varrho_s\frac{|\partial_t\eta|^2}{2}\dy+ E(\eta(t)).$$
\end{itemize}

\begin{remark}\label{rem:malte}
 Due to the bulk setting, it is possible to extend the fluid variables onto the fixed domain $\Omega$ using their solid counterparts. We will do so quite often for the velocity, where it is convenient to set $v(t,\eta(t,y)) := \partial_t \eta(t,y)$ for all $y \in Q$, as this results in the complete Eulerian velocity $v \in L^2(I;W^{1,2}_0(\Omega;\R^n))$. For the density, one can similarly consider pushing the solid density forward onto $\Omega$ using $\eta$. However in practice this is less useful, as the resulting function will still have a jump at the interface between solid and fluid. Instead, it is often more convenient to keep the inertial effects of the solid in their Lagrangian description and to think of $\varrho$ as extended by $0$ onto $\Omega$, as this allows for the removal of most of the time-dependent domains in the weak solution.
\end{remark}
We are finally in the position to state our main result in a complete form.
\begin{theorem}\label{thm:main}
Assume that we have
\begin{align*}
\frac{|q_0|^2}{\varrho_0}&\in L^1(\Omega_{\eta_0}),\ \varrho_0\in L^{\gamma}(\Omega_{\eta_0}), \ \eta_0\in \mathcal E,\ \eta_1\in L^2(Q;\R^n),\\
f_f&\in L^2(I;L^\infty(\R^n;\R^n)),\ f_s\in L^2(I\times Q;\R^n).
\end{align*}
There is a weak solution $(\eta,v,\varrho)\in Y^I\times X_\eta^I\times Z_\eta^I$ to \eqref{eq1}--\eqref{initial} in the sense of \eqref{eq:mom}--\eqref{eq:ene}. 
Here, we have $I=(0,T_*)$, where $T_*<T$ only if the time $T_*$ is the time of the first contact
of the free boundary of the solid body either with itself or $\partial\Omega$ (i.e. $\eta(T_*)\in\partial\mathcal E$). 
\end{theorem}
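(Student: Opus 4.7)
The plan is to build the solution by a multi-level approximation combining the variational approach of \cite{benesovaVariationalApproachHyperbolic2020} with the Lions--Feireisl techniques \cite{Li2,F} for compressible Navier--Stokes equations, in the variable-domain version from \cite{BrSc}. I introduce three regularization parameters $\kappa,\varepsilon,\delta>0$: an artificial viscosity $\varepsilon\Delta\varrho$ added to the continuity equation to make it parabolic, an artificial pressure $\delta\varrho^\beta$ with $\beta>\max\{4,\gamma\}$ in the momentum equation to gain density integrability for the limit $\varepsilon\to 0$, and a higher-order $\kappa$-dissipation of the velocity. The role of $\kappa$ is to raise the spatial regularity of $v$ enough so that the parabolic continuity equation satisfies a minimum principle and $\varrho$ stays strictly positive at the $h$-level; this is essential for recovering the Feireisl-type correction
\[
\tfrac{\varepsilon}{2}\int_I\int_{\Omega_\eta}\nabla\varrho\cdot(\nabla v\,\phi+\nabla\phi\,v)\dxt
\]
in the limit $h\to 0$, and ultimately for the effective viscous flux argument.

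For fixed $\kappa,\varepsilon,\delta>0$, the idea is to construct an approximate solution by a two-scale minimising-movements scheme in the spirit of De Giorgi: an acceleration scale $\tau$ nested inside a velocity scale $h$ (with $\tau\ll h$), carried out in Sections \ref{subsec:tau} and \ref{sec:h}. At each $\tau$-step one minimizes an incremental functional assembling the discrete kinetic energies of solid and fluid (written as difference quotients of order $1/\tau$ and $1/h$), the elastic energy $E(\eta)$, the fluid potential $U_\eta(\varrho)$, the dissipations $R(\eta,\partial_t\eta)$ and $\tau\int\mathbb S(\nabla v){:}\nabla v$, together with the $\kappa$- and $\varepsilon$-corrections. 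The density at the new step is updated through the flow map of the unknown velocity, modified to incorporate the $\varepsilon$-diffusion; this variational choice couples pressure gradient, material derivative and the $\varepsilon$-term in the Euler--Lagrange system, and automatically yields a discrete counterpart of \eqref{eq:apriori0}. Passage $\tau\to 0$ and then $h\to 0$ then uses the solid tools from Assumptions \ref{ass:energy}--\ref{ass:dissipation} (weak lower semicontinuity S3/R1, Minty-type property S6, Korn-type inequality R3), together with the vacuum exclusion provided by $\kappa$, to produce a weak solution at the $(\kappa,\varepsilon,\delta)$-level.

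The final step is to pass successively to $\kappa\to 0$, $\varepsilon\to 0$ and $\delta\to 0$ in Section \ref{sec:5}, using only the uniform a priori bounds obtained from the energy inequality. The limit $\kappa\to 0$ is essentially immediate since the parabolic continuity equation with $\varepsilon>0$ already delivers compactness of $\varrho$. The heart of the matter is the subsequent limits $\varepsilon\to 0$ and $\delta\to 0$, where only weak convergence of the density is available while the pressure is genuinely nonlinear. I resolve this by the effective viscous flux technique of Lions \cite{Li2} and Feireisl et al.\ \cite{F}, in the variable-domain formulation of \cite{BrSc}: together with the renormalized continuity equation and the weak--strong product Lemma \ref{thm:weakstrong} for passing to the limit in $\varrho v$ and $\varrho v\otimes v$, this yields strong $L^1$-convergence of $\varrho^\gamma$, sufficient to identify the nonlinear pressure and close the momentum equation \eqref{eq:mom}.

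Admissibility of $\eta$ is preserved throughout the approximation, since each minimizer lies in $\mathcal E$ by construction and weak $W^{2,q}$-limits of such deformations remain in $\mathcal E$ by coercivity S4 and weak lower semicontinuity S3; combined with the Ciarlet--Ne\v{c}as condition encoded in Definition \ref{rem:ciarletNecas} this guarantees that the flow is well-defined and the interface stays embedded. The solution then exists until the first time $T_*$ at which $\eta(T_*)\in\partial\mathcal E$, i.e.\ the first self-contact of the free boundary or contact with $\partial\Omega$. The two main obstacles I anticipate are the exclusion of vacuum at the $h$-level, which is the whole reason for introducing the $\kappa$-regularization, and the density compactness during the pressure-regularization limits, which relies entirely on adapting the effective viscous flux method to the time-dependent fluid domain $\Omega_\eta$.
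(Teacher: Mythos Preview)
Your outline is correct and follows essentially the same route as the paper: a two-scale minimising-movements construction at the $(\tau,h)$-level with the $\kappa,\varepsilon,\delta$ regularisations playing exactly the roles you describe, followed by the successive limits $\kappa\to0$, $\varepsilon\to0$, $\delta\to0$ using the effective viscous flux identity and renormalised continuity equation in variable domains. In particular, you correctly identify the reason for the separate $\kappa$-layer (vacuum exclusion needed to recover the $\tfrac{\varepsilon}{2}\nabla\varrho$ correction term in the $h\to0$ limit) and the use of the Lions--Feireisl machinery adapted from \cite{BrSc} for density compactness.
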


As will be apparent by the analysis we will show that the renormalised continuity equation in the sense of DiPerna and Lions is satisfied, cf. \cite{DL,Li2}.
\begin{definition}[Renormalized continuity equation]
\label{def:ren}
Let $\eta\in Y^I$ and $v\in X_\eta^I$. We say that the function $\varrho\in W_\eta^I$ solves the continuity equation~\eqref{eq:con} in the renormalized sense if we have
\begin{align}\label{eq:final3b}
\begin{aligned}
\int_I\frac{\dd}{\dt}\int_{\Omega(t)}\theta(\varrho)\psi\dxt&-\int_I\int_{\Omega(t)}\Big(\theta(\varrho)\partial_t\psi +\theta(\varrho) v\cdot \nabla \psi\Big)\dxt
\\
& =- \int_I\int_{\Omega(t)}(\varrho\theta'(\varrho)-\theta(\varrho)) \Div v \,\psi\dxt
\end{aligned}
\end{align}
for all $\psi\in C^\infty(\overline{I}\times\R^3)$ and all $\theta\in C^1(\R)$ with 
$\theta(0)=0$ and 
$\theta'(z)=0$ for  $z\geq M_\theta$.
\end{definition}

\subsection{The damped continuity equation in variable domains}

Here we present some results concerning the continuity equation
in moving domains. Some of the results are direct consequences of our previous papers \cite{BrSc,BrSc2}, but Lemma \ref{lem:warme2} below is new and contains some improvements.\\
  We assume that the moving boundary is prescribed by a function $\eta:\overline{I}\times Q\rightarrow \Omega$. 
 For a given function $w\in L^2(I;W^{1,2}(\Omega_\eta;\R^n))$ with $\partial_t\eta(t)=w(t)\circ\eta(t)$ in $Q$ for a.a. $t\in I$ and some $\varepsilon>0$ we consider the equation
\begin{align}\label{eq:warme}\begin{aligned}
\partial_t \varrho&+\Div(\varrho w)=\varepsilon\Delta\varrho\quad\text{in}\quad I\times\Omega_\eta,\\
\varrho(0)&=\varrho_0\text{ in }\Omega_{\eta(0)},\quad\partial_{\nu_\eta}\varrho\big|_{\partial\Omega_\eta}=0\quad\text{on}\quad I\times\partial\Omega_\eta.
\end{aligned}
\end{align}
 A weak solution to \eqref{eq:warme} satisfies
\begin{align}
\int_I\frac{\dd}{\dt}\int_{\Omega_\eta}\varrho\psi\dx\dt
-\int_I\int_{\Omega_\eta}\big(\varrho \partial_t\psi +\varrho w\cdot\nabla \psi\big)\dxt=-\int_I\int_{\Omega_\eta}\varepsilon\nabla\varrho\cdot\nabla\psi\dxt\label{eq:weak-masse}
\end{align}
for all $\psi\in C^\infty(\overline{I}\times \R^3)$. We have the following version of \cite[Thm. 3.1]{BrSc}.

\begin{lemma}\label{lem:warme} Let $\eta\in L^2(I;W^{1,\infty}(Q;\R^n))$ be the function describing the boundary. Assume that $w\in L^2(I;W^{1,2}(\Omega_\eta;\R^n))$ with $\partial_t\eta(t)=w(t)\circ\eta(t)$ in $Q$ and $w(t)=0$ in $P$ for a.a. $t\in I$ and
%such that $\bfw(t,x)\cdot\nu(x)\circ \Psi_{\eta}^{-1}(t,x) =\eta(t,x)$ for all $(t,x)\in I\times M$ and 
$\varrho_0\in L^{2}(\Omega_{\eta(0)})$.
\begin{enumerate}
\item[a)] There is at most one weak solution $\varrho$ to \eqref{eq:warme} such that
$$\varrho\in L^\infty(I;L^2(\Omega_{\eta}))\cap L^2(I;W^{1,2}(\Omega_{\eta})).$$
\item[b)] Let $\theta\in C^2(\setR_+;\setR_+)$ be such that $\theta'(s)= 0$
%\sebcom{we should replace this condition by $\theta''(s)\geq 0$}
 for large values of $s$ and $\theta(0)=0$.%\footnote{The restriction $\theta(0)=0$ is only needed if we extend the equation to the whole space.} 
Then the following holds:
\begin{align}
\label{eq:renormz}
\begin{aligned}
\int_I\frac{\dd}{\dt}&\int_{\Omega_{\eta}} \theta(\varrho)\,\psi\dxt-\int_{I\times \Omega_{\eta}}\theta(\varrho)\,\partial_t\psi\dxt
\\
=&-\int_{I\times \Omega_\eta}\big(\varrho\theta'(\varrho)-\theta(\varrho)\big)\Div w\,\psi\dx+\int_{I\times \Omega_{\eta}}\theta(\varrho) w\cdot\nabla\psi\dxt\\ &-\int_{I\times\Omega_{\eta}}\varepsilon\nabla \theta(\varrho)\cdot\nabla\psi\dxt-\int_{I\times \Omega_{\eta}}\varepsilon\theta''(\varrho)|\nabla\varrho|^2\psi\dxt
\end{aligned}
\end{align}
for all $\psi\in C^\infty(\overline{I}\times\R^3)$.
\item[c)] Assume that $\varrho_0\geq0$ a.e. in $\Omega_{\eta(0)}$. Then we have $\varrho\geq0$ a.e. in $I\times\Omega_\eta$. 
\end{enumerate}
\end{lemma}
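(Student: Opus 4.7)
The three parts can all be proven by adapting the standard parabolic toolkit to the moving-domain setting. The two workhorses are the Reynolds transport formula
\begin{align*}
\frac{d}{dt}\int_{\Omega_\eta(t)} f\dx = \int_{\Omega_\eta(t)}\partial_t f\dx + \int_{\partial\Omega_\eta(t)} f(w\cdot\hat\nu)\dd S,
\end{align*}
which is compatible with $\partial_t\eta=w\circ\eta$ on $M$ and $w=0$ on $\partial\Omega\setminus\eta(P)$, and a space-time mollification on the moving cylinder $I\times\Omega_\eta$ that makes nonlinear functions of $\varrho$ admissible as test functions in \eqref{eq:weak-masse}, as already developed in \cite{BrSc}.

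For (a), testing (after mollification) the equation satisfied by $\bar\varrho:=\varrho_1-\varrho_2$ with $\bar\varrho$ itself, integrating by parts in the transport term yields
\begin{align*}
\int_{\Omega_\eta}\bar\varrho\,\Div(\bar\varrho w)\dx = \tfrac12\int_{\Omega_\eta}|\bar\varrho|^2\Div w\dx + \tfrac12\int_{\partial\Omega_\eta}|\bar\varrho|^2(w\cdot\hat\nu)\dd S,
\end{align*}
whose boundary contribution cancels exactly the one produced by Reynolds transport applied to $\tfrac12\int|\bar\varrho|^2\dx$; the Neumann condition kills the boundary term from the Laplacian. What remains is
\begin{align*}
\frac{d}{dt}\int_{\Omega_\eta(t)}\tfrac{|\bar\varrho|^2}{2}\dx + \varepsilon\int_{\Omega_\eta(t)}|\nabla\bar\varrho|^2\dx = -\tfrac12\int_{\Omega_\eta(t)}|\bar\varrho|^2\Div w\dx.
\end{align*}
The right-hand side is controlled by interpolating $\bar\varrho\in L^\infty_t L^2_x\cap L^2_t W^{1,2}_x$ against $\Div w\in L^2_t L^2_x$ and absorbing a fraction into the diffusion; Gronwall together with $\bar\varrho(0)=0$ then gives $\bar\varrho\equiv 0$.

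For (b), the parabolic regularity $\varrho\in L^2(I;W^{1,2}(\Omega_\eta))\cap L^\infty(I;L^2(\Omega_\eta))$ together with $\theta\in C^2$, $\theta'$ compactly supported, let us apply the chain rule. I would mollify the equation, test with $\theta'(\varrho)\psi$, employ the identities
\begin{align*}
\theta'(\varrho)\Div(\varrho w)=\Div(\theta(\varrho)w)+(\varrho\theta'(\varrho)-\theta(\varrho))\Div w,\quad \theta'(\varrho)\Delta\varrho=\Delta\theta(\varrho)-\theta''(\varrho)|\nabla\varrho|^2,
\end{align*}
and pull the time derivative outside via Reynolds transport; passing to the limit in the mollification yields \eqref{eq:renormz}. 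Part (c) is then immediate from the same renormalization extended to $\theta\in C^2(\R)$ (the proof of (b) does not use positivity of $\varrho$): apply it with $\theta_N$ a smooth nonnegative convex approximation of $s\mapsto(\min(s,0))^2$ satisfying $\theta_N'=0$ outside $[-N,0]$. Convexity makes $-\varepsilon\theta_N''(\varrho)|\nabla\varrho|^2\psi\leq 0$, the transport term is estimated exactly as in (a), and since $\theta_N(\varrho_0)=0$ by hypothesis, Gronwall forces $\theta_N(\varrho)\equiv 0$; sending $N\to\infty$ yields $\varrho\geq 0$ a.e.

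The main obstacle is making these formal calculations rigorous in the moving-domain setting with only $W^{1,2}$-regular velocity: the Reynolds transport identity has to be applied to non-smooth integrands, the test functions must be mollified compatibly with the geometry of $I\times\Omega_\eta$ so that no spurious boundary contributions appear, and the cancellation between the boundary term coming from $\Div(\bar\varrho w)$ and the one from Reynolds transport is the point where the coupling condition $\partial_t\eta=w\circ\eta$ genuinely enters. Once the mollification machinery of \cite{BrSc} is in place, the remaining arguments are essentially the classical Lions--DiPerna calculus.
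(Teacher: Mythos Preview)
Your outline is correct and coincides with the paper's approach: the paper's proof consists entirely of a reference to \cite[Thm.~3.1]{BrSc}, together with the observation that the stronger regularity imposed there on $\eta$ and $w$ is needed only for the existence part (which is not claimed here) and not for a)--c). Your sketch of the energy identity for uniqueness, the renormalization via mollification and the chain rule, and the positivity argument via a convex cut-off of $(s^-)^2$ is precisely what that reference carries out; the one geometric point the paper singles out explicitly is that the hypothesis $\eta(t)\in W^{1,\infty}(Q;\R^n)$ ensures the existence of a Sobolev extension $W^{1,2}(\Omega_\eta)\to W^{1,2}(\R^n)$, which is the ingredient underlying the ``mollification machinery of \cite{BrSc}'' you invoke for b).
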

\begin{proof}
The proof follows by the lines of \cite[Thm. 3.1]{BrSc}, where much stronger conditions on the regularity of $\eta$ and also sightly more on $w$ is assumed. One easily checks that these assumptions are only used there to prove the existence of a solution, which we do not claim here. The statements from a)--c) do not require it. Note that the assumption $\eta(t)\in W^{1,\infty}(Q;\R^n)$ for a.a. $t\in I$ is needed to guarantee the existence of an extension operator $ W^{1,2}(\Omega_\eta;\R^n)\rightarrow W^{1,2}(\R^n;\R^n)$.  The latter is required for the proof of b).
\end{proof}
\begin{lemma}\label{lem:warme2}
Let the assumptions of Lemma \ref{lem:warme} be satisfied and suppose additionally that $w\in L^2(I;W^{k_0,2}(\Omega_\eta))$ and $\eta,\partial_t\eta\in L^2(I;W^{k_0,2}(Q))$ for some $k_0>\frac{n}{2}+2$ and $\eta_0\in W^{2,q}(Q;\R^n)$.
\begin{enumerate}
\item[a)] We have
\begin{align*}
\inf_{\Omega_{\eta(0)}}\varrho_0\exp\bigg(-\int_0^T\|\Div w\|_{L^\infty(\Omega_\eta)}\dt\bigg)  \leq \varrho(t,x)\leq \sup_{\Omega_{\eta(0)}}\varrho_0\exp\bigg(\int_0^T\|\Div w\|_{L^\infty(\Omega_\eta)}\dt\bigg) 
\end{align*}
for a.a. $(t,x)\in I\times\Omega_\eta$.
\item[b)] We have 
$$\varrho\in L^\infty(I;W^{1,2}(\Omega_{\eta}))\cap L^2(I;W^{2,2}(\Omega_{\eta}))$$
and it holds
   \begin{align*}
\sup_{t\in I}\int_{\Omega_\eta}|\nabla\varrho|^2\dx&+\int_I\int_{\Omega_\eta}|\nabla^2\varrho|^2\dx\dt\\&\leq\,c(\varepsilon)\exp\bigg(\int_0^T\big(\|w\|_{L^\infty(\Omega_\eta)}^2+\|\partial_t\eta\|_{L^\infty(Q)}^2\big)\dt\bigg)\\&\qquad\qquad\qquad\qquad\times\bigg(\int_{\Omega_\eta}\big(|\nabla\varrho_0|^2\dx+c\int_0^T\|\Div w\|_{L^\infty(\Omega_\eta)}^2\|\varrho\|_{L^2(\Omega_\eta)}^2\dt\bigg).
\end{align*}
\end{enumerate}
\end{lemma}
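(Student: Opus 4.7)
The plan is to prove the two assertions essentially independently, with (a) handled by a barrier/maximum principle argument based on the renormalized equation of Lemma \ref{lem:warme}(b), and (b) handled by a standard Gronwall-type energy estimate obtained by testing the equation with $-\Delta\varrho$.

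For part (a), I would introduce the upper barrier $\Psi(t):=\sup_{\Omega_{\eta(0)}}\varrho_0\,\exp(\int_0^t\|\Div w\|_{L^\infty(\Omega_\eta)}\ds)$, which is a spatially-constant function satisfying $\Psi'(t)=\|\Div w(t)\|_\infty\Psi(t)$, and consider the difference $u:=\varrho-\Psi$. Since $\Psi$ is spatially constant, $u$ satisfies the pointwise identity $\partial_t u+\Div(uw)+\Psi(t)\Div w+\Psi'(t)=\varepsilon\Delta u$ with Neumann boundary data $\partial_\nu u=0$. I would apply the renormalized formulation \eqref{eq:renormz} with $\theta_k$ a smooth non-negative convex approximation of $s\mapsto s_+^2$ (cut off at large $s$, which is permissible once one knows an a priori $L^\infty$ bound from e.g.\ the underlying parabolic theory used to build $\varrho$), test against $\psi\equiv 1$, and use the Reynolds transport theorem. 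On $\eta(M)$ the boundary velocity equals $w\cdot n$, while $w\equiv 0$ on $\partial\Omega\setminus\eta(P)$; these boundary contributions cancel with those produced by the transport term (as in the standard $L^2$-estimate for $\varrho$ itself). The forcing $-\Psi\Div w-\Psi'=-\Psi\Div w-\Psi\|\Div w\|_\infty\le 0$ drops out with the correct sign, leaving $\tfrac{d}{dt}\int u_+^2\dx\le \|\Div w\|_\infty\int u_+^2\dx$. Since $u_+(0)=0$, Gronwall yields $u_+\equiv 0$, i.e.\ the upper bound. The lower bound follows symmetrically by repeating the argument for $\tilde\Psi(t)-\varrho$ with $\tilde\Psi(t):=\inf\varrho_0\exp(-\int_0^t\|\Div w\|_\infty\ds)$.

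For part (b), I would formally test the equation by $-\Delta\varrho$ and integrate over $\Omega_\eta(t)$. Using $\partial_\nu\varrho=0$, integration by parts in the time derivative combined with the moving-domain transport theorem yields
\begin{align*}
\int_{\Omega_\eta}\partial_t\varrho(-\Delta\varrho)\dx = \tfrac{d}{dt}\tfrac12\int_{\Omega_\eta}|\nabla\varrho|^2\dx - \tfrac12\int_{\eta(M)}|\nabla\varrho|^2\,w\cdot n\,dS.
\end{align*}
The convective and divergence terms are estimated directly (without further integration by parts) via
$|\!\int(w\cdot\nabla\varrho)\Delta\varrho|\le\|w\|_\infty\|\nabla\varrho\|_2\|\Delta\varrho\|_2$ and $|\!\int\varrho\Div w\,\Delta\varrho|\le\|\Div w\|_\infty\|\varrho\|_2\|\Delta\varrho\|_2$, both absorbed by the dissipative $\varepsilon\|\Delta\varrho\|_2^2$ term via Young's inequality; this is the reason only $\|w\|_\infty^2$ (and not $\|\nabla w\|_\infty$) enters the exponential. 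The boundary integral on $\eta(M)$ is bounded by $\|\partial_t\eta\|_\infty\|\nabla\varrho\|_{L^2(\partial\Omega_\eta)}^2$, and the trace inequality $\|\nabla\varrho\|_{L^2(\partial\Omega_\eta)}^2\le C(\|\nabla\varrho\|_2\|\nabla^2\varrho\|_2+\|\nabla\varrho\|_2^2)$ together with Young's inequality hides the top-order term in $\tfrac\varepsilon2\|\nabla^2\varrho\|_2^2$, producing the $\|\partial_t\eta\|_\infty^2$ contribution. Sobolev embedding $W^{k_0,2}\hookrightarrow W^{2,\infty}$ (valid for $k_0>\tfrac n2+2$) ensures all these $L^\infty$ norms are finite, and the equivalence $\|\nabla^2\varrho\|_2^2\le C(\|\Delta\varrho\|_2^2+\|\nabla\varrho\|_2^2)$ for Neumann data on the $C^2$-boundary $\partial\Omega_\eta$ (which is $C^2$ since $\eta\in C^2$ by the same embedding) converts the $\|\Delta\varrho\|_2$ control into the claimed $\|\nabla^2\varrho\|_2$ control. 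A final application of Gronwall's lemma with rate $\|w\|_\infty^2+\|\partial_t\eta\|_\infty^2$ and forcing $\|\Div w\|_\infty^2\|\varrho\|_2^2$ delivers the estimate.

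The main obstacle is the rigorous justification of the formal computations on the time-dependent domain $\Omega_\eta(t)$: the testing by $-\Delta\varrho$ assumes an a priori $W^{2,2}$-regularity which is precisely what we are trying to prove, and the trace and $W^{2,2}$-regularity-from-Neumann estimates must be applied with constants uniform in $t$. Both issues are standard to resolve by either (i) performing the computation on a Galerkin/elliptic-regularization approximation in which $\varrho$ is smooth by construction and passing to the limit, or (ii) pulling back via a Hanzawa-type diffeomorphism built from $\eta$ to a fixed reference domain where the classical Neumann $H^2$-theory applies; the smoothness $k_0>\tfrac n2+2$ of $\eta$ and $\partial_t\eta$ is exactly what makes the transformation coefficients bounded in $W^{2,\infty}$ and the constants in the trace and regularity estimates uniform.
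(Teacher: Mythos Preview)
Your argument for part~(b) is essentially the paper's: test with $-\Delta\varrho$, pick up the moving-boundary trace term via Reynolds' transport theorem, estimate the convective terms by $\|w\|_\infty$ and $\|\Div w\|_\infty$, absorb with Young, and close by Gronwall. The only cosmetic difference is that you invoke Neumann $H^2$-regularity to pass from $\|\Delta\varrho\|_2$ to $\|\nabla^2\varrho\|_2$, whereas the paper writes the identity $\int|\Delta\varrho|^2=\int|\nabla^2\varrho|^2+\int_{\partial\Omega_\eta}(\nabla\nu_\eta)\nabla\varrho\cdot\nabla\varrho$ explicitly and treats the curvature boundary term together with the Reynolds term; these are two views of the same thing.

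For part~(a) you take a genuinely different route. The paper does not use a barrier: it applies the renormalized identity with $\theta(z)=z^m$, obtains $\|\varrho(t)\|_{L^m}\le\|\varrho_0\|_{L^m}\exp(\tfrac{m+1}{m}\int\|\Div w\|_\infty)$ by Gronwall, and sends $m\to\infty$; the lower bound is obtained analogously with $\theta(z)=z^{-m}$. Your comparison argument with $\Psi(t)=\sup\varrho_0\exp(\int_0^t\|\Div w\|_\infty)$ and $\theta\approx(\cdot)_+^2$ is correct and arguably cleaner, since it avoids the $m\to\infty$ limit and yields the sharp constant directly; the paper's approach, on the other hand, stays entirely within the framework of Lemma~\ref{lem:warme}(b) applied to $\varrho$ itself rather than to a shifted quantity, so no auxiliary renormalized identity needs to be derived.

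On justification: the paper does not use Galerkin or a Hanzawa pullback. It instead regularizes $w$ to $(w)_\xi$ by mollification, defines $(\eta)_\xi$ as the flow generated by $(w)_\xi$ starting from a mollified $\eta_0$, invokes an existence result from \cite{BrSc2} to obtain a classical solution $\varrho_\xi$ (smooth and strictly positive) on the smooth moving domain, verifies that the estimates are $\xi$-uniform, and passes to the limit using the uniqueness part of Lemma~\ref{lem:warme}(a). Your alternatives would work, but the paper's choice has the advantage that the regularized problem is still posed on a moving domain of the same type, so no change-of-variables error terms enter the estimates.
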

\begin{proof}
Let us initially suppose that $\eta,w$ and $\varrho$ are sufficiently smooth and $\varrho$ is stricly positive such that all the following manipulations are justified.\\
Ad~a). Multiplying \eqref{eq:warme} by $\theta'(\varrho)$ shows
\begin{align*}
\partial_t \theta(\varrho)+w\cdot\nabla\theta(\varrho)+\Div w\varrho\theta'(\varrho)=\varepsilon\Div\big(\theta'(\varrho)\nabla\varrho \big)-\varepsilon \theta''(\varrho)|\nabla\varrho|^2
\end{align*}
as well as
\begin{align*}
\frac{\dd}{\dt}\int_{\Omega_\eta} \theta(\varrho)\dx+\int_{\Omega_\eta}(\varrho\theta'(\varrho)-\theta(\varrho))\Div w\dx=-\varepsilon\int_{\Omega_\eta} \theta''(\varrho)|\nabla\varrho|^2\dx
\end{align*}
using Reynold's transport theorem, $\partial_{\nu_\eta}\varrho=0$ and $w(t)\circ\eta(t)=\partial_t\eta(t)$ in $Q$ for a.a. $t\in I$. If $\theta''\geq 0$
and $|\theta'(z)z|\leq c_\theta |\theta(z)|$ for $z\geq0$ we clearly get
\begin{align*}
\frac{\dd}{\dt}\int_{\Omega_\eta} \theta(\varrho)\dx\leq\int_{\Omega_\eta}(\theta(\varrho)-\varrho\theta'(\varrho))\Div w\dx\leq\,(c_\theta+1) \|\Div w\|_{L^\infty(\Omega_\eta)}\int_{\Omega_\eta}\theta(\varrho)\dx.
\end{align*}
Gronwall's lemma yields
\begin{align*}
\int_{\Omega_\eta} \theta(\varrho(t))\dx\leq\exp\Big(C_\theta\int_I \|\Div w\|_{L^\infty(\Omega_\eta)}\dt\Big)\int_{\Omega_\eta} \theta(\varrho_0)\dx,
\end{align*}
where $C_\theta=c_\theta+1$.
Choosing $\theta(z)=z^m$ for $m\gg1$ we get $c_\theta=m$ and hence
\begin{align*}
\bigg(\int_{\Omega_\eta}|\varrho(t)|^m\dx\bigg)^{\frac{1}{m}}\leq\exp\Big(\frac{m+1}{m}\int_I \|\Div w\|_{L^\infty(\Omega_\eta)}\dt\Big)\bigg(\int_{\Omega_\eta} |\varrho_0|^m\dx\bigg)^{\frac{1}{m}}
\end{align*}
for all $t\in I$.
Passing with $m\rightarrow\infty$ we obtain
\begin{align*}
\sup_{t\in I}\|\varrho(t)\|_{L^\infty(\Omega_\eta(t))}\leq\|\varrho_0\|_{L^\infty(\Omega_{\eta_0})}\exp\Big(\int_I \|\Div w\|_{L^\infty(\Omega_\eta)}\dt\Big).
\end{align*}
Similarly, choosing $\theta(z)=z^{-m}$, we have
\begin{align*}
\int_{\Omega_{\eta(t)}} (\varrho(t))^{-m}\dx\leq \exp\Big((m+1)\int_I \|\Div w\|_{L^\infty(\Omega_\eta)}\dt\Big)\int_{\Omega_{\eta_0}} (\varrho_0)^{-m}\dx.
\end{align*}
Taking the $m$-th root shows
\begin{align*}
\bigg(\int_{\Omega_{\eta(t)}} (\varrho(t))\Big)^{-m}\dx\bigg)^{\frac{1}{m}}\leq \exp\Big(\frac{m+1}{m}\int_I \|\Div w\|_{L^\infty(\Omega_\eta)}\dt\Big)\bigg(\int_{\Omega_{\eta_0}} (\varrho_0)^{-m}\dx\bigg)^{\frac{1}{m}}.
\end{align*}
Passing with $m\rightarrow \infty$ implies
\begin{align*}
\sup_{\Omega_{\eta(t)}} \frac{1}{\varrho(t)}\leq \exp\Big(\int_I \|\Div w\|_{L^\infty(\Omega_\eta)}\dt\Big)\sup_{\Omega_{\eta_0}} \frac{1}{\varrho_0}
\end{align*}
or, equivalently,
\begin{align*}
\exp\Big(-\int_I \|\Div w\|_{L^\infty(\Omega_\eta)}\dt\Big)\inf_{\Omega_{\eta_0}} \varrho_0\leq \inf_{\Omega_{\eta(t)}} \varrho(t).
\end{align*}
Ad~(b). Multiplying \eqref{eq:warme} by $\Delta\varrho$ shows
 \begin{align*}
- \partial_t\varrho\,\Delta\varrho+\varepsilon|\Delta\varrho|^2=\nabla\varrho\cdot w\,\Delta\varrho+\Div w\,\varrho\Delta\varrho
 \end{align*}
 as well as
  \begin{align*}
& \quad \quad \frac{1}{2}\frac{\dd}{\dt}\int_{\Omega_\eta}|\nabla\varrho|^2\dx+\varepsilon\int_{\Omega_\eta}|\nabla^2\varrho|^2\dx\\&=\int_{\partial\Omega_\eta}|\nabla\varrho|^2  \partial_t\eta \cdot\nu_\eta \dd x+\int_{\Omega_\eta}\nabla\varrho\cdot w\,\Delta\varrho\dx+\int_{\Omega_\eta}\Div w\,\varrho\Delta\varrho\dx - \varepsilon\int_{\partial \Omega_{\eta}} (\nabla \nu_\eta)\nabla \varrho \cdot  \nabla \varrho \dx\\
&=:\mathrm{I}+\mathrm{II}+\mathrm{III} +\mathrm{IV}
 \end{align*}
 using Reynold's transport theorem and $\partial_{\nu_\eta}\varrho=0$ (note that we used $\int_{\Omega_\eta}|\Delta\varrho|^2\dx=\int_{\Omega_\eta}|\nabla^2\varrho|^2\dx + \int_{\partial \Omega_{\eta}}  (\nabla \nu_\eta)\nabla \varrho \cdot \nabla \varrho \dx $).
 The first and last terms are estimated by
 \begin{align*}
 \mathrm{I}+\mathrm{IV}&\leq \,(c+\|\partial_t\eta\|_{L^\infty(Q)})\|\nabla\varrho\|^2_{L^2(\partial\Omega_\eta)}\leq \,c(1+\|\partial_t\eta\|_{L^\infty(Q)})\|\nabla\varrho\|_{L^2(\Omega_\eta)}\|\nabla^2\varrho\|_{L^2(\Omega_\eta)}\\
 &\leq \,c(\xi)(1+\|\partial_t\eta\|_{L^\infty(Q)})^2\|\nabla\varrho\|_{L^2(\Omega_\eta)}^2+\xi\|\nabla^2\varrho\|_{L^2(\Omega_\eta)}^2,
 \end{align*}
 where $\xi>0$ is arbitrary and where we used the trace theorem which requires\footnote{We even have $\eta\in L^\infty(I;W^{1,\infty}(Q;\R^n))$ by our assumptions and Sobolev's embedding.} $\eta\in L^\infty(I;W^{1,\infty}(Q;\R^n))$ together with an interpolation argument. We also have
  \begin{align*}
 \mathrm{II}
 &\leq \,c(\xi)\|w\|^2_{L^\infty(\Omega_\eta)}\|\nabla\varrho\|_{L^2(\Omega_\eta)}^2+\xi\|\nabla^2\varrho\|_{L^2(\Omega_\eta)}^2,
 \end{align*}
 as well as
   \begin{align*}
 \mathrm{III}
 &\leq \,c(\xi)\|\Div w\|_{L^\infty(\Omega_\eta)}^2\|\varrho\|_{L^2(\Omega_\eta)}^2+\xi\|\nabla^2\varrho\|_{L^2(\Omega_\eta)}^2.
% \\
% &\leq \,c(\xi)\|\Div w\|_{L^2(\Omega_\eta)}^2+\xi\|\nabla^2\varrho\|_{L^2(\Omega_\eta)}^2
 \end{align*}
% using the boundedness of $\varrho$ in $L^\infty(I;L^2(\Omega_\eta))$. 
 Absorbing the $\xi$-terms and applying Gronwall's lemma proves
   \begin{align*}
\sup_{t\in I}\int_{\Omega_\eta}|\nabla\varrho|^2\dx+\int_I\int_{\Omega_\eta}|\nabla^2\varrho|^2\dx\dt\leq\,c\int_{\Omega_\eta}|\nabla\varrho_0|^2\dx+c\int_0^T\|\Div w\|_{L^\infty(\Omega_\eta)}^2\|\varrho\|_{L^2(\Omega_\eta)}^2\dt,
\end{align*}
where the constant depends on $\int_I\|\Div w\|_{L^2(\Omega_\eta)}^2\dt$, $\int_I\|\partial_t\eta\|_{L^\infty(Q)}^2\dt$ and $\int_I\|w\|_{L^\infty(\Omega_\eta)}^2\dt$.\\
Let us now remove the regularity assumptions on $\eta,w$ and $\varrho$. 
We can regularise $\eta$ and $w$ by smooth approximation as follows. First, we extend $w$ by $\partial_t\eta\circ\eta^{-1}$ to $\Omega$ and regularise $w$ by a standard smooth approximation in space-time. This yields a smooth sequence
$(w)_\xi$ which converges to $w$ in the $L^2(I;W^{2,\infty}(\Omega;\R^n))$-norm. We define 
$(\eta)_\xi$ as the solution to the ODE $w_\xi(t,\eta_\xi(\cdot,x))=\partial_t\eta_\xi(\cdot,x)$ for each given $x\in Q$ with $\eta(0,x)=(\eta_0)_\xi(x)$, where $(\eta_0)_\xi$ is a regularisation of $\eta_0$ in space. The function $(\eta)_\xi(\cdot ,x)$ does indeed exist for all given $x\in Q$ on the interval $[0,T]$ by the Picard-Lindel\"off theorem as $\norm[\infty]{\nabla w_\xi}$ is uniformly bounded (in dependence of $\xi$).
 By its equation one directly deduces that $(\eta)_\xi$ is smooth.
Now \cite[Theorem 3.3]{BrSc2} applies to the regularised problem
and we obtain a solution $\varrho_\xi$ with $\varrho_\xi\in C^1(\overline I\times\overline \Omega_\eta)$ and $\nabla^2\varrho_\xi\in C(\overline I\times\overline \Omega_\eta)$. Also $\varrho_\xi$ is stricly positive. 
One easily checks that the estimates derived above do not depend on $\xi$. It remains to pass to the limit $\xi\rightarrow0$. Since $\eta_0\in W^{2,q}(Q;\R^n)$ by assumption and $q>n$ we have $(\eta_0)_\xi\rightarrow \eta_0$ in $W^{1,\infty}(Q;\R^n)$.
Hence, (using the ODE, the properties of $(w)_\xi$ and Gronwall's lemma) one deduces that $(\eta)_\xi\rightarrow \eta$ as $\xi\rightarrow0$ uniformly in $Q$. Actually, for this purpose convergence
of $w$ in the $L^2(I;W^{1,\infty}(\Omega;\R^n))$ is sufficient. Since we have convergence
in $L^2(I;W^{2,\infty}(\Omega;\R^n))$ we have similarly
$\nabla(\eta)_\xi\rightarrow \nabla\eta$ as $\xi\rightarrow0$ uniformly in $Q$. Also, passing to the limit with the ODE implies that $\partial_t(\eta)_\xi\rightarrow\partial_t\eta $ in $L^2(I;L^\infty(Q;\R^n))$. These convergences are sufficient to pass to the limit in the estimate.
Since \eqref{eq:warme} is linear the limit procedure
$\xi\rightarrow0$ in \eqref{eq:weak-masse} is straightforward and the limit is indeed the unique solution (see Lemma \ref{lem:warme} a)).
\end{proof}

%\begin{remark}\todo{delete?}
%We expect that the statement of Lemma \ref{lem:warme2} remains true under much weaker assumptions on the space regularity of $\eta,\partial_t\eta$ and $w$. The present version is, however, sufficient for our purposes. In order to obtain a relaxed version, one needs to regularise $\eta$ and $w$ in space to prove the claim, which typically destroys the crucial relation  $w(t)\circ\eta(t)=\partial_t\eta(t)$ in $Q$ for a.a. $t\in I$.
%\end{remark}

 \section{The time delayed problem}\label{sec:tau}
Following \cite{benesovaVariationalApproachHyperbolic2020}
we begin constructing a solution to a time-delayed problem, where the material derivative in the momentum equation and its solid counterpart are discretised at level $h>0$, but all other variables are already continuous. We initially solve only on the interval $[0,h]$ and then iterate this in the next section to a time-delayed solution on $I$ by decomposing it into intervals $[0,h],[h,2h],\dots$. As it is common in the literature on the compressible Navier--Stokes system we use an artificial diffusion in the continuity equation and an approximate pressure. As it turns out this alone does not yield sufficient regularity to pass to the limit in the time-step $h$, which we are going to do in the next section.
In order to overcome this problem we add additional terms and set
for $\kappa>0$ and $k_0\in\mathbb N$ large enough
\begin{align*}
E_\kappa(\eta)&=E(\eta)+\kappa\|\nabla^{k_0}\eta\|_{L^2(Q)}^2,\quad R_\kappa(\eta,b)=R(\eta,b)+\kappa\|\nabla^{k_0}b\|_{L^2(Q)}^2.
\end{align*}
In the model for the bulk we replace $E$ by $E_\kappa$ and
$R$ by $R_\kappa$ respectively.
Additionally, we add $\kappa\int_{\Omega(t)}|\nabla^{k_0}v|^2\dx$ to the dissipation of the fluid.
Similar terms are also used in \cite{benesovaVariationalApproachHyperbolic2020} with $h$ instead of $\kappa$. Hence there they disappear already the limit $h\rightarrow 0$, simultaneously with turning the differential quotient into the material derivative. In our case this is split into two limit procedures.

  Given suitable initial state $\varrho_0$, $\eta_0$, $\Omega_0 :=\Omega \setminus \eta(Q)$, a previous solid velocity $\zeta: [0,h] \times Q \to \R^n$ and a corresponding quantity $w:[0,h]\times \Omega_0 \to \R^n$ for the fluid\footnote{Due to the dissipation of density in the equation of continuity, we cannot simply work with the previous fluid velocity. Additionally, we need to deal with the transport effects inherent in the Eulerian description of the fluid. As a result, the natural quantity $w(t)$ turns out to be $\sqrt{\varrho(t-h)} v(t-h)$ transported forward along the flow to the domain $\Omega_0$ at time $t=0$. This will become more apparent in the subsequent section, when we extend the problem to $[0,T]$ and perform the limit $h\to 0$. For the purpose of the current section, all terms involving $w$ can be thought of as a given force term.}, we call a triple $(\eta,v,\varrho)$ solution to the time delayed problem 
in $[0,h]$ provided
\begin{itemize}
\item The time delayed momentum equation holds, that is we have
  \begin{align}\label{eq:h1}
  \begin{aligned}
&\inner[Q]{DE_\kappa(\eta(t))}{\phi} -\int_{\Omega(t)} p_\delta(\varrho)\Div b \dx  \\
 &\quad+ \inner[Q]{D_2R_\kappa\left(\eta,\partial_t \eta\right)}{\phi} + \int_{\Omega(t)}\mathbb S(\nabla v):\nabla b \dx + \kappa \int_{\Omega(t)}\nabla^{k_0} v:\nabla^{k_0}b \dx  \\
 &\quad+ \frac{1}{h} \int_Q \varrho_s(\partial_t \eta - \zeta)\, \phi\dy + \frac{1}{h} \int_{\Omega(t)}(\varrho v-\sqrt{\varrho}\sqrt{\det \nabla \Phi^{-1}} w  \circ \Phi^{-1})\cdot b\dx \\
 &\quad= \int_Qf_s\,\phi\dy + \int_{\Omega(t)}\varrho f_f\cdot b\dx
 \end{aligned}
  \end{align}
  for almost any $t\in[0,h]$ and all $\phi\in C_c([0,h];W^{k_0,2}(Q;\R^n))$, $b\in C_c([0,h];W^{k_0,2}_{0}(\Omega;\R^n))$ satisfying
$\phi|_P=0$ and
\begin{align*}
b\circ\eta= \phi\quad\text{and}\quad v\circ\eta=\partial_t\eta\quad\text{in}\quad Q,
\end{align*}
here we have set $\Omega(t)=\Omega\setminus \eta(t,Q)$ and $\Phi:[0,h]\times \Omega_0\rightarrow\Omega(t)$ solves $\partial_t\Phi=v\circ\Phi$ and $\Phi(0,\cdot)=\mathrm{id}$ where $\circ$ is meant with respect to space (i.e. $(v \circ \Phi) (t,x) := v(t,\Phi(t,x))$).
  \item The approximate equation of continuity holds, that is we have
  \begin{align}
   \partial_t \varrho = -\Div(v \varrho) + \varepsilon \Delta \varrho\label{eq:h2}
  \end{align}
  in $(0,h)\times \Omega(t)$ and $\partial_{\nu(t)}\varrho(t)=0$ in $\partial \Omega(t)$ for all $t \in (0,h)$ as well as $\varrho(0)=\varrho_0$.
  \item The energy balance holds in the sense that
  \begin{align}\label{eq:h3}
  \begin{aligned}
E_\kappa&(\eta(t_1)) + U_{\eta}^\delta(\varrho(t_1))  + h\int_0^{t_1}\int_{\Omega(t)}|\nabla^{k_0} v|^2\dxt \\
&+ \int_0^{t_1}  \Big(2R_\kappa\left(\eta,\partial_t \eta\right) + \int_{\Omega(t)}\mathbb S(\nabla v):\nabla v\dx + \varepsilon \int_{\Omega(t)}H_\delta''(\varrho)|\nabla\varrho|^2\dx\Big) \dt \\ 
  & + \int_0^{t_1} \frac{1}{2h}\left[\varrho_s\int_Q|\partial_t \eta|^2\dy + \int_{\Omega(t)} \varrho\abs{v}^2 \dx  \right] \dt\\
 &\leq E_\kappa(\eta_{0}) + U_{\eta_0}^\delta(\varrho_0) + \int_0^{t_1} \frac{1}{2h}\left[\varrho_s\int_Q|\zeta|^2\dy + \int_{\Omega_0} \abs{ w}^2 \dx  \right] \dt \\
 &+  \int_0^{t_1} \left[\int_Q \partial_t \eta \, f_s \dy + \int_{\Omega(t)} \varrho v \cdot f_f \dx\right] \dt
 \end{aligned}
\end{align}
for a.a. $t_1\in(0,h)$, where $U^\delta_\eta(\varrho)=\int_{\Omega_\eta}H_\delta(\varrho)\dx$.
 \end{itemize}
 We now have the following result.
\begin{theorem}\label{thm:h}
Suppose that there are
\begin{align*}
&\zeta\in L^2(0,h;L^2(Q;\R^n)) ,\quad w\in L^2(0,h;L^2(\Omega_0;\R^n)) ,\\
&\eta_0\in\mathcal E \cap W^{k_0,2}(Q;\Omega),\quad \varrho_0\in L^\infty(\Omega_0),\quad \essinf_{\Omega_0}\varrho_0>0,\\
&f_s\in C([0,h];L^2(Q;\R^n)),\quad f_f\in C([0,h];L^2(\Omega;\R^n)).
\end{align*}
Then there is a triple $(\eta,v,\varrho)$ with
\begin{align*}
&\eta \in L^\infty(0,h;\mathcal{E}) \cap W^{1,2}(0,h;W^{k_0,2}(Q;\R^n)),\\
&v\in L^2(0,h;W^{k_0,2}(\Omega;\R^n)), \\
&\varrho \in L^\infty(0,h;L^\beta(\Omega_\eta)) \cap L^2(0,h;W^{2,2}(\Omega_\eta))
\end{align*}
 which solves the time delayed problem in the sense of \eqref{eq:h1}--\eqref{eq:h3}.
\end{theorem}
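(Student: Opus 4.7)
The plan is to construct the solution via a minimizing movements scheme at an auxiliary time scale $\tau = h/N$ followed by the limit $N\to\infty$. Since $\kappa,\varepsilon,\delta>0$ are fixed, the $\kappa$-terms provide the high regularity of $v$ needed to apply Lemmas~\ref{lem:warme} and \ref{lem:warme2}---in particular the pointwise lower bound on $\varrho$ and the $L^\infty W^{1,2}\cap L^2 W^{2,2}$-estimate---which are crucial both to exclude vacuum on this level and to obtain strong compactness of the density when passing to the limit.

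\textbf{The incremental minimization.} Set $t_k=k\tau$, and suppose $(\eta_{k-1},\varrho_{k-1})$ have already been constructed, with $\varrho_{k-1}$ defined on $\Omega_{\eta_{k-1}}$ and pointwise strictly positive. For admissible pairs $(\eta,v)$ with $v\circ\eta=(\eta-\eta_{k-1})/\tau$ on $Q$ and $v=0$ on $\partial\Omega\setminus\eta(P)$, let $\hat\varrho(v)$ denote the unique solution on $(t_{k-1},t_k)$ of the damped continuity equation \eqref{eq:h2} in the moving domains determined by $v$, starting from $\varrho_{k-1}$; well-posedness and regularity are provided by Lemmas~\ref{lem:warme}--\ref{lem:warme2}. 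I would then minimise
\begin{align*}
J_k(\eta,v) &= E_\kappa(\eta) + U_\eta^\delta(\hat\varrho(v)(t_k)) + 2\tau R_\kappa\!\Big(\eta_{k-1},\tfrac{\eta-\eta_{k-1}}{\tau}\Big) \\
&\quad + \tau\!\int_{\Omega_\eta}\!\mathbb S(\nabla v)\!:\!\nabla v\dx + \tau\kappa\!\int_{\Omega_\eta}\!|\nabla^{k_0}v|^2\dx \\
&\quad + \tfrac{\tau\varrho_s}{2h}\!\int_Q\!\Big|\tfrac{\eta-\eta_{k-1}}{\tau}-\zeta(t_k)\Big|^2\dy + \tfrac{\tau}{2h}\!\int_{\Omega_\eta}\!\big|\sqrt{\hat\varrho(v)(t_k)}\,v - \Psi_k(v)\big|^2\dx
\end{align*}
minus the linear work terms involving $f_s,f_f$, where $\Psi_k(v)=\sqrt{\det\nabla\Phi_k^{-1}}\,w\circ\Phi_k^{-1}$ is the Lagrangian transport of the inertial datum $w$ along the flow $\Phi_k$ of $v$ starting from $\Omega_0$. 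Existence of a minimiser follows from the direct method: Assumption~\ref{ass:energy}~S3--S4, Lemma~\ref{lem:fluidpotential}, and the $\kappa$-contributions yield coercivity and weak lower semi-continuity in $W^{k_0,2}$, while continuous dependence of $\hat\varrho(v)$ on $v$ follows from the estimates of Lemma~\ref{lem:warme2}. Varying $(\eta,v)$ under the linearised kinematic constraint $b\circ\eta=\phi$, and using the relation $p_\delta(\varrho)=H_\delta'(\varrho)\varrho-H_\delta(\varrho)$ to convert the variation of $U_\eta^\delta$ into the $-\int p_\delta\Div b$ term, produces a time-discrete version of \eqref{eq:h1}. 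Testing $J_k$ against the competitor $(\eta_{k-1},\tilde v_{k-1})$ and summing gives the discrete form of \eqref{eq:h3}, where the $\varepsilon\int H_\delta''(\varrho)|\nabla\varrho|^2$-term is recovered using the renormalised identity in Lemma~\ref{lem:warme}~b).

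\textbf{Passage $\tau\to 0$.} The summed discrete energy inequalities yield uniform bounds on $\eta^\tau$ in $L^\infty(0,h;\mathcal E)\cap W^{1,2}(0,h;W^{k_0,2}(Q))$, on $v^\tau$ in $L^2(0,h;W^{k_0,2}(\Omega))$, and on $\varrho^\tau$ in $L^\infty(0,h;L^\beta)\cap L^2(0,h;W^{2,2})$, together with a uniform strictly positive lower bound via Lemma~\ref{lem:warme2}~a). Aubin--Lions makes $\eta^\tau\to\eta$ uniformly on $\overline I\times Q$, so that the moving domains converge in the sense required by Lemma~\ref{thm:weakstrong}; Corollary~\ref{rem:strong} provides strong $L^2L^2$-convergence of $v^\tau$ on the variable domains, and the parabolic $W^{2,2}$-regularity gives strong compactness of $\varrho^\tau$ as well. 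The \emph{main obstacles} will be: (i) identifying the limits of the Lagrangian quantities $\Phi^\tau$ and $\Psi^\tau$, for which the uniform lower bound on $\varrho^\tau$ from Lemma~\ref{lem:warme2}~a) is essential, both to keep the flow map non-degenerate and to make the $\sqrt{\varrho}$-structure meaningful; (ii) passing to the limit in $DE_\kappa(\eta^\tau)$, which requires the Minty-type Assumption~\ref{ass:energy}~S6 together with strong $W^{k_0,2}$-convergence in space obtained by interpolating the $W^{1,2}(0,h;W^{k_0,2})$-bound against weak spatial compactness; and (iii) recovering the energy inequality \eqref{eq:h3} as a lower-semicontinuous limit rather than an equality, for which the convex structure of $\mathbb S(\nabla\cdot):\nabla\cdot$, $R_\kappa$, and $H_\delta$ is used crucially.
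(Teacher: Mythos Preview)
Your overall architecture---a minimising-movements discretisation at scale $\tau$ followed by the limit $\tau\to 0$---matches the paper, but the way you handle the density at each step is genuinely different and leads to a gap.

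The paper does \emph{not} solve the damped continuity equation \eqref{eq:h2} on each sub-interval. Instead it uses a purely algebraic two-step update: first apply the resolvent $(\mathrm{id}-\tau\varepsilon\Delta)^{-1}$ (Neumann problem on the \emph{old} domain $\Omega_k$) to $\varrho_k$, then push the result forward by the discrete flow $\Psi_v=\mathrm{id}+\tau v$, see \eqref{eq:vrho}. This makes $\varrho$ an explicit, local function of $\nabla v$, so that $U^\delta_{\eta}(\varrho)$ can be rewritten as a functional of $v$ alone, cf.\ \eqref{eq:rho-v}. The Euler--Lagrange computation then reduces to differentiating $H_\delta(\tilde\varrho_k/\det(\mathbb{I}+\tau\nabla v))\det(\mathbb{I}+\tau\nabla v)$ in $v$, which produces the pressure term plus a controllable $\tau$-error (Corollary~\ref{cor:EL}). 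Your proposal instead defines $\hat\varrho(v)$ as the solution of the parabolic problem on $(t_{k-1},t_k)$; but Lemma~\ref{lem:warme} explicitly does \emph{not} assert existence (see its proof), and Lemma~\ref{lem:warme2} presupposes a solution. More seriously, even granting existence, your Euler--Lagrange step would require differentiating the parabolic solution operator $v\mapsto\hat\varrho(v)(t_k)$, i.e.\ analysing a linearised parabolic equation on a moving domain, and then showing that this variation collapses to $-\int p_\delta\,\Div b$ at the final time---a nontrivial computation you have not sketched.

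There are two smaller inconsistencies. First, the kinematic coupling should be $v\circ\eta_{k-1}=(\eta-\eta_{k-1})/\tau$, evaluated at the \emph{old} deformation, so that $\Psi_v$ maps $\Omega_{k-1}$ to $\Omega_\eta$; with your condition $v\circ\eta$ the geometric transport breaks. Second, the paper places all fluid integrals (dissipation, $\kappa$-regularisation, inertial term) on the fixed previous domain $\Omega_k$, not on $\Omega_\eta$; this is what makes the direct method and the variation tractable. Finally, the paper's discrete energy inequality (Lemma~\ref{lem:discreteenergy}) is obtained by comparison with $(\eta_k,0,\tilde\varrho_k)$ and is deliberately sub-optimal (missing the factor $2$ on dissipation); the sharp inequality \eqref{eq:h3} is recovered only \emph{after} the limit $\tau\to 0$ by testing \eqref{eq:h1} with $(\partial_t\eta,v)$ and using Reynolds' transport theorem on the pressure potential---a step your outline does not include.
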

 
 The rest of this section is devoted to the proof of Theorem \ref{thm:h}. Some aspects are reminiscent to Theorem 4.2 and its predecessors Theorems 2.2 (for the FSI) and 3.5 (for the time delayed terms) in \cite{benesovaVariationalApproachHyperbolic2020} to which we refer when possible. The main effort is to understand to contribution of $\varrho$ and its behaviour.\\
In order to construct a solution we now splitting $[0,h]$ again into small time-steps of length $\tau\ll h$ and discretise in time. For each of these steps we solve a stationary minimisation problem (the iterative problem), prove a discrete version of the energy inequality and finally pass to the limit $\tau\rightarrow0$ with the resulting piecewise constant and affine approximations.
 
 \subsection{The iterative approximation}
 
 Assume that $\tau,h,\kappa,\varepsilon,\delta >0$ are fixed and that the forces $f_f$ and $f_s$ as well as $\zeta_k:= \fint_{\tau k}^{\tau(k+1)} \zeta \dt : Q \to \R^n $ and $w_k:= \fint_{\tau k}^{\tau (k+1)} w\dt: \Omega_0 \to \R^n$ are given. Suppose that
  also $\eta_k:Q\to \Omega$, $\Omega_k := \Omega \setminus \eta_k(Q), \varrho_k: \Omega_k \to \R$ and a diffeomorphism $\Phi_k: \Omega_0 \to \Omega_k$ are known. Then we define $\eta_{k+1}:Q\to \Omega$, $v_{k+1}:\Omega_k \to \R^n$ and $\varrho_{k+1}:\Omega_{k+1} \to \R$ to be a minimizing triple (not necessarily unique) of
 \begin{align} \label{eq:DiscreteMinProblem}
 \begin{aligned}
  (\eta,v,\varrho)&\mapsto E_\kappa(\eta) + U^\delta_\eta(\varrho) + \tau \left[ R_\kappa\left(\eta_k,\tfrac{\eta-\eta_k}{\tau}\right) + \int_{\Omega_k} \frac{1}{2}\mathbb S(\nabla v):\nabla v\dx + \frac{\kappa}{2} \int_{\Omega_k}|\nabla^{k_0} v|^2\dx \right] \\ 
  & + \frac{\tau}{2h}\left[\varrho_s\int_Q \abs{\tfrac{\eta-\eta_k}{\tau} - \zeta_k}^2 + \int_{\Omega_k} \abs{  \sqrt{\varrho_k} v - \sqrt{\det \nabla \Phi_k^{-1}}w_k \circ \Phi_k^{-1} }^2 \dx  \right] \\
  &- \tau \int_Q \frac{\eta-\eta_k}{\tau} \cdot f_s(\tau k) \dy - \tau\int_{\Omega_k} \varrho_k v  \cdot f_f(\tau k) \dx,
  \end{aligned}
 \end{align}
 where we require $\eta \in \mathcal{E}\cap W^{k_0,2}(Q)$, $v \in W^{k_0,2}(\Omega_k;\R^n)$ with $v|_{\partial \Omega} = 0$ and
 subject to the coupling of velocities
 \begin{align}\label{eq:boundaryhtau}
  \frac{\eta-\eta_k}{\tau} = v \circ \eta_k \quad\text{in}\quad M.
 \end{align}
 For the fluid, we require a regularised condition for mass transport, that is $\varrho$ and $v$ are related through
 \begin{align}\label{eq:vrho}
  \varrho \circ \Psi_v = \frac{(\mathrm{id}-\tau\varepsilon \Delta)^{-1}\varrho_k}{\det \nabla\Psi_v}.
 \end{align}
 Here and in the future $\Psi_v := \mathrm{id}+\tau v$ is a helpful shorthand and $(\mathrm{id}-\tau\varepsilon \Delta)^{-1}$ is to be understood as the solution operator to the respective Neumann problem on $\Omega_k$, i.e. $(\mathrm{id}-\tau\varepsilon \Delta)^{-1}\varrho_k$ is the unique function $\tilde{\varrho}_k:\Omega_k \to \R$ solving
 \begin{align}
 \label{eq:rhok}
  \begin{cases} \tilde{\varrho}_k - \tau \varepsilon \Delta \tilde{\varrho}_k &= \varrho_{k} \quad\text{in}\quad \Omega_k, \\
   \partial_\nu \tilde{\varrho}_k &= 0  \quad\text{on}\quad \partial \Omega_k.
  \end{cases}
 \end{align}

Clearly, there is a unique solution $\tilde{\varrho}_k \in W^{2,\beta}(\Omega_k)$ since $\varrho_k$ can be assumed to be in $L^\beta(\Omega_k)$ and $\Omega_k$ is a $C^{1,\alpha}$-domain.\footnote{In fact, we can infer from Lemma \ref{lem:dmcv} that $\tilde{\varrho}_k \in W^{3,\beta}(\Omega_k)$, but note that this cannot be iterated for increasing $k$, as the $\Psi_{v_{k+1}}$ appearing in the definition of $\varrho_{k+1}$ presents an upper limit to regularity.} Furthermore, due to the fact that $v$ uniquely determines $\varrho$ through \eqref{eq:vrho} we can rewrite \eqref{eq:DiscreteMinProblem} as a minimisation problem in $(\eta,v)$ only. Specifically, we can write
  \begin{align}\label{eq:rho-v}
 U^\delta_{\eta_{k+1}}(\varrho) &= \int_{\Omega_{k+1}} H_\delta(\varrho)\dx = \int_{\Omega_k} H_\delta(\varrho(\Psi_{v}(y))) \det \nabla\Psi_{v}(y) \dx  \nonumber \\
 &= \int_{\Omega_k} H_\delta\left(\frac{(\mathrm{id}-\tau\varepsilon \Delta)^{-1}\varrho_k}{ \det (\mathbb{I}+\tau \nabla v)}  \right) \det (\mathbb{I}+\tau \nabla v) \dx =: \tilde{U}_{\eta_k,\varrho_k}^{\delta,\varepsilon}(v). %Hier müssen es v, \varrho statt v_{k+1},\varrho_{k+1} sein, wir brauchen die Formel auch für nicht-Minimierer
 \end{align}
 
Finally, we update $\Phi_k$ to $\Phi_{k+1}$ by setting
\begin{align*}
\Phi_{k+1}=\Psi_{v_{k+1}}\circ\Phi_k.
\end{align*} 

\begin{lemma}\label{lem:dmcv}
Suppose that $\eta_k$ and $\varrho_k$ are given, where
\begin{align*}
&\eta_k\in \mathcal E \cap W^{k_0,2}(Q;\Omega) ,\quad \varrho_k\in L^\beta(\Omega_k), \quad \essinf_{\Omega_k}\varrho_k  >0,\\
&w_k\in L^2(\Omega_0;\R^n), \quad \zeta_k \in L^2(Q;\R^n),\quad f_s\in L^2(Q;\R^n),\quad f_f\in L^2(\Omega_k;\R^n).
\end{align*}
Then the minimisation problem \eqref{eq:DiscreteMinProblem}--\eqref{eq:vrho} has a solution
$(\eta_{k+1},v_{k+1},\varrho_{k+1})$, where
\begin{align*}
\eta_{k+1}\in \mathcal E \cap W^{k_0,2}(Q;\Omega),&\quad v_{k+1}\in W^{k_0,2}(\Omega_k;\R^n),\quad \Omega_{k+1}=\Omega\setminus \eta_{k+1}(Q),
\quad \varrho_{k+1}\in L^{\beta}(\Omega_{k+1}),\\
  \essinf_{\Omega_{k+1}}\varrho_{k+1}  >0,& \quad \inf_{\Omega_k} \det(\mathbb{I}+\tau \nabla v_{k+1})  > 0.
\end{align*}

\end{lemma}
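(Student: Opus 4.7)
The plan is to apply the direct method to the reformulation of \eqref{eq:DiscreteMinProblem}--\eqref{eq:vrho} as a variational problem in $(\eta,v)$ alone, via the substitution \eqref{eq:rho-v}. Compactness will come from the $\kappa$-regularisation (which, together with $k_0>n/2+1$, delivers strong $C^1$-convergence along subsequences), and the strict positivity of $\det(\mathbb{I}+\tau\nabla v_{k+1})$ and $\varrho_{k+1}$ will emerge as a delicate by-product of the $\beta$-growth of $H_\delta$ combined with the maximum principle for \eqref{eq:rhok}.

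The admissible set is non-empty: the pair $(\eta_k,0)$ satisfies \eqref{eq:boundaryhtau} trivially, has $\det(\mathbb{I}+\tau\cdot 0)=1>0$, and gives a finite value of the functional (elliptic regularity for \eqref{eq:rhok} with $\beta>n$ places $\tilde\varrho_k$ in $L^\infty$ with $\essinf>0$, so $H_\delta(\tilde\varrho_k)$ is integrable). For a minimising sequence $(\eta^j,v^j)$, the bounds $E_\kappa\geq E_{\min}$ and $H_\delta\geq-c$ (Lemma \ref{lem:fluidpotential} H1), the nonnegativity of the dissipative and kinetic-type terms, and absorption of the linear forcing via Young's inequality yield uniform control of $v^j$ in $W^{k_0,2}(\Omega_k;\R^n)$ (via the $\kappa$-dissipation and the zero Dirichlet data through Poincaré) and of $(\eta^j-\eta_k)/\tau$ in $W^{k_0,2}(Q;\R^n)$ (via R3 and the $\kappa$-term of $R_\kappa$, combined with $\eta|_P=\gamma$). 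Coercivity S4 applied to $E_\kappa(\eta^j)$ bounds $\eta^j$ itself. Extracting a subsequence, $\eta^j\rightharpoonup\eta$ in $W^{k_0,2}(Q;\R^n)$ and $v^j\rightharpoonup v$ in $W^{k_0,2}(\Omega_k;\R^n)$, and the compact embedding into $C^1$ gives $\eta^j\to\eta$ in $C^1(\overline Q)$ and $v^j\to v$ in $C^1(\overline{\Omega_k})$.

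The main obstacle is establishing the uniform lower bound $\inf_{\Omega_k}\det(\mathbb{I}+\tau\nabla v_{k+1})>0$ for the limit. Since $\essinf_{\Omega_k}\varrho_k>0$, the strong maximum principle for \eqref{eq:rhok} forces $\essinf_{\Omega_k}\tilde\varrho_k>0$; combining this with Lemma \ref{lem:fluidpotential} H3 and $\varrho\circ\Psi_{v^j}=\tilde\varrho_k/\det(\mathbb{I}+\tau\nabla v^j)$ gives
\[\tilde U^{\delta,\varepsilon}_{\eta_k,\varrho_k}(v^j)\geq c_\delta\int_{\Omega_k}\frac{\tilde\varrho_k^{\beta}}{\det(\mathbb{I}+\tau\nabla v^j)^{\beta-1}}\dx-C.\]
As the left side is uniformly bounded along the minimising sequence and $\det(\mathbb{I}+\tau\nabla v^j)\to \det(\mathbb{I}+\tau\nabla v)$ uniformly on $\overline{\Omega_k}$, Fatou's lemma yields $\int\tilde\varrho_k^\beta/\det(\mathbb{I}+\tau\nabla v)^{\beta-1}\dx<\infty$, and in particular $\det(\mathbb{I}+\tau\nabla v)>0$ almost everywhere. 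Continuity of $\det(\mathbb{I}+\tau\nabla v)$ on $\overline{\Omega_k}$ then rules out any zero, as a vanishing point would produce a non-integrable singularity of $1/\det^{\beta-1}$ on a neighbourhood; hence $\inf\det(\mathbb{I}+\tau\nabla v)>0$.

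It remains to verify weak lower semicontinuity of the full functional and admissibility of the limit. S3 plus the weak $L^2$-l.s.c.\ of the $\nabla^{k_0}$-seminorm give $E_\kappa(\eta)\leq\liminf_j E_\kappa(\eta^j)$; R1 together with weak convergence of $(\eta^j-\eta_k)/\tau$ handles $R_\kappa$; convexity of the viscous and $\kappa$-dissipation integrands gives their l.s.c.; the kinetic-type and fluid-potential terms pass to the limit by the strong $C^1$-convergence together with the uniform lower bound on $\det(\mathbb{I}+\tau\nabla v^j)$ established above. The limit $\eta$ lies in $\mathcal E$ by S2, the $C^1$-convergence, and preservation of the Ciarlet--Ne\v{c}as injectivity condition $|\eta(Q)|=\int_Q\det\nabla\eta$ under uniform convergence, while the coupling \eqref{eq:boundaryhtau} passes to the limit uniformly on $M$. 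Finally, $\varrho_{k+1}:=(\tilde\varrho_k/\det\nabla\Psi_{v_{k+1}})\circ\Psi_{v_{k+1}}^{-1}$ lies in $L^\beta(\Omega_{k+1})$ with $\essinf\varrho_{k+1}>0$, by the strict positivity and $L^\infty$-bound on $\tilde\varrho_k$ together with the two-sided bound on $\det(\mathbb{I}+\tau\nabla v_{k+1})$.
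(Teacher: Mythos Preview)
Your proof is correct and follows essentially the same route as the paper: reduce to a minimisation in $(\eta,v)$ via \eqref{eq:rho-v}, obtain coercivity from the $\kappa$-regularisation, use compact embedding $W^{k_0,2}\hookrightarrow C^{1,\alpha}$, apply the maximum principle to \eqref{eq:rhok} for strict positivity of $\tilde\varrho_k$, and deduce the lower bound on $\det(\mathbb{I}+\tau\nabla v)$ from the $\beta$-growth of $H_\delta$. The only point where you are slightly less explicit than the paper is the ``non-integrable singularity'' step: for a $C^\alpha$-function $f$ vanishing at $x_0$ one has $f(x)\leq C|x-x_0|^\alpha$, so $\int 1/f^{\beta-1}=\infty$ requires $\alpha(\beta-1)\geq n$; the paper covers this by citing the Healey--Kr\"omer argument (via \cite[Prop.~2.24]{benesovaVariationalApproachHyperbolic2020}) and noting that $\beta$ can be chosen sufficiently large, which you should mention as well.
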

\begin{proof}
We argue by the direct method in the calculus of variation. The functional is clearly well-defined
by the choice of the function spaces. Using Young's inequality to estimate the force terms we can also show that it is bounded from below in each term and inserting $(\eta_k,0,\tilde{\varrho}_k)$ as a candidate shows that the minimiser must have a finite value.
Using \eqref{eq:rho-v} we can rewrite the problem as a minimisation in $(\eta,v)$ only. 
Coercivity in the relevant functional spaces is now obvious, recalling assumption \ref{ass:energy} S4. However, we still need to verify that any limit obeys the lower bounds.

A standard application of the minimum principle\footnote{Here we use the assumption that $\rho_k$ is strictly bounded from above and below. Due to the Neumann boundary conditions, we find that $\tilde{\rho}_k$ satisfies the same upper and lower bounds as $\rho_k$.}  to \eqref{eq:rhok} implies that $\inf_{\Omega_k} \tilde{\varrho}_k > 0$. Since $\tau \norm[\Omega_k]{\nabla^{k_0} v}^2$ is part of the functional, we know that $\norm[C^{\alpha}(\Omega_k)]{\nabla v}$ is uniformly bounded along any minimising sequence. Thus $\det(\mathbb{I}+\tau \nabla v)$ is trivially bounded from above, which gives us $\inf_{\Omega_k} \varrho >0$ for any $\varrho$ of finite energy in the functional.

Additionally, by the growth condition H3 of Lemma \ref{lem:fluidpotential} and \eqref{eq:rho-v} we find the following bound on the determinant
\begin{align*}
 U^\delta_{\eta_{k+1}}(\varrho) = \tilde{U}_{\eta_k,\varrho_k}^{\delta,\varepsilon}(v) \geq c\left( \int_{\Omega_k} \det(\mathbb{I}+\tau\nabla v)^{-\beta}\dx -1 \right)
\end{align*}
 in dependence of the lower bound of $\tilde{\varrho}_k$. Combining this with the $C^{\alpha}$ bound on $\nabla v$ and choosing $\beta$ sufficiently large, we get a nonzero lower bound on $\det(\mathbb{I}+\tau \nabla v)$ by \cite[Prop. 2.24 (S2)]{benesovaVariationalApproachHyperbolic2020}, which ultimately goes back to \cite{HealeyKroemer}. Note that together with the uniform $C^{1,\alpha}$-bounds on $\eta$, this also implies that $\operatorname{id} +\tau v$ is a diffeomorphism up to the boundary of the fluid domain, which in turn implies that there cannot be a collision. 

%Bevor mir das wieder jemand rauskürzt, wir brauchen das hier, auch wenn wir später ein besseres Estimate zeigen, denn für das bessere brauchen wir a) ein nicht-Null estimate und b) die Energieungleichung

The final point which needs clarification is lower semi-continuity, which for most terms does not differ from the analysis in the incompressible case and for these we refer to \cite[Propositions 2.13 and 4.3]{benesovaVariationalApproachHyperbolic2020}. For the new term $\tilde{U}_{\varrho_k}^{\delta,\varepsilon}$ however, weak convergence in $W^{k_0,2}(\Omega_k)$ implies strong convergence in $C^1(\Omega_k)$. Since the functional $\tilde{U}_{\varrho_k}^{\delta,\varepsilon}$ is continuous on $C^1(\Omega)$ (as $\det (\mathbb{I}+\tau \nabla v$) is strictly positive for all velocities for which the functional is of finite value) the proof is complete.
\end{proof}

Next we calculate the corresponding Euler-Lagrange equation, which will give us the discrete, time-delayed momentum-balance. Assuming that $(\eta_{k+1},v_{k+1})$ is in the interior of the admissible set (i.e. the solid has no collisions and $\det\nabla \Psi_{v_k} > 0$), we can vary with $(\phi,\frac{b}{\tau})$ such that $b \circ \eta_k = \phi$ in $Q$. 
% \seb{Note that the corresponding
% \begin{align}\label{eq:vrho}
%   \varrho_{k+1} := \frac{(\mathrm{id}-\tau\varepsilon \Delta)^{-1}\varrho_k\circ\Psi_{v_{k+1}}^{-1} }{\det \nabla\Psi_{v_{k+1}}}.
%  \end{align}
%  }
For the fluid potential we now calculate
\begin{align*}
 & \phantom{{}={}} \delta \tilde{U}_{\eta_k,\varrho_k}^{\delta,\varepsilon}(v_{k+1}) \left( \frac{b}{\tau} \right) = \frac{\dd}{\dd s} \int_{\Omega_k} H_\delta\left(\tfrac{(\mathrm{id}-\tau\varepsilon \Delta)^{-1}\varrho_k}{\det (\mathbb{I}+\tau \nabla v_{k+1} +s \nabla b)}\right) \det (\mathbb{I}+\tau \nabla v_{k+1} +s \nabla b) \dx\bigg|_{s = 0} \\
 &= \int_{\Omega_k} \tr( \nabla b  \cdot \cof (\nabla \Psi_{v_{k+1}})^\top) \left[-\frac{H_\delta'\left(\tfrac{(\mathrm{id}-\tau\varepsilon \Delta)^{-1}\varrho_k}{\det(\nabla \Psi_{v_{k+1}})}\right)}{\det (\nabla \Psi_{v_{k+1}})} (\mathrm{id}-\tau\varepsilon \Delta)^{-1}\varrho_k + H_\delta\left(\tfrac{(\mathrm{id}-\tau\varepsilon \Delta)^{-1}\varrho_k}{\det(\nabla \Psi_{v_{k+1}})}\right)\right] \dx\\
 &=-\int_{\Omega_k}  \tr (\nabla b  \cdot \nabla \Psi_{v_{k+1}}^{-1}) \left[H_\delta'\left(\tfrac{(\mathrm{id}-\tau\varepsilon \Delta)^{-1}\varrho_k}{\det(\nabla \Psi_{v_{k+1}})}\right) \frac{(\mathrm{id}-\tau\varepsilon \Delta)^{-1}\varrho_k}{\det \nabla \Psi_{v_{k+1}}}-  H_\delta\left(\tfrac{(\mathrm{id}-\tau\varepsilon \Delta)^{-1}\varrho_k}{\det(\nabla \Psi_{v_{k+1}})}\right) \right]  \det \nabla \Psi_{v_{k+1}} \dx \\
 &= -\int_{\Omega_{k+1}} \Div  (b \circ (\Psi_{v_{k+1}})^{-1}) \left( H_\delta'(\varrho_{k+1}) \varrho_{k+1}  - H_\delta(\varrho_{k+1}) \right) \dx \\
%  &+\int_{\Omega_k} \tr (\nabla b \nabla \Psi_{v_{k+1}}^{-1})  H_\delta'\left(\tfrac{(\mathrm{id}-\tau\varepsilon \Delta)^{-1}\varrho_k}{\det(\nabla \Psi_{v_{k+1}})}\right) ( (\mathrm{id}-\tau\varepsilon \Delta )^{-1}\varrho_k - \varrho_k) \dx\\
  &= -\int_{\Omega_{k+1}} \Div  (b \circ (\Psi_{v_{k+1}})^{-1})\,p_\delta(\varrho_{k+1})\dx 
%  &+\int_{\Omega_k} \tr (\nabla b  \nabla \Psi_{v_{k+1}}^{-1})  H_\delta'\left(\tfrac{(\mathrm{id}-\tau\varepsilon \Delta)^{-1}\varrho_k}{\det(\nabla \Psi_{v_{k+1}})}\right) ( (\mathrm{id}-\tau\varepsilon \Delta )^{-1}\varrho_k - \varrho_k) \dx.
\end{align*} 
The first integral is the discrete version of the pressure term and the second is a dissipation related error that will be shown to vanish in the limit $\tau \to 0$. Furthermore we note that a short calculation reveals 
\begin{align*}
 \delta \left(\frac{\tau}{2}\mathbb S(\nabla v_{k+1}):\nabla v_{k+1} \right) \left(\frac{b}{\tau}\right) = \mathbb S(\nabla v_{k+1}):\nabla b
\end{align*}

Refering
to \cite[Propositions 4.3]{benesovaVariationalApproachHyperbolic2020} for the remaining terms in the 
Euler-Lagrange equation we then conclude:

\begin{corollary}\label{cor:EL}
Suppose that the assumptions of Lemma \ref{lem:dmcv} hold.
Any solution of the minimisation problem \eqref{eq:DiscreteMinProblem}--\eqref{eq:vrho} satisfies
\begin{align} \label{eq:discreteEL}
\begin{aligned}
 &\phantom{{}={}} \inner[Q]{DE_\kappa(\eta_{k+1})}{\phi} -\int_{\Omega_{k+1}} \nabla \cdot  (b \circ (\Psi_{v_{k+1}})^{-1}) p_\delta(\varrho_{k+1}) dy \\
 &+\int_{\Omega_k} \tr (\nabla b  \nabla \Psi_{v_{k+1}}^{-1})  H_\delta'\left(\frac{(\mathrm{id}-\tau\varepsilon \Delta)^{-1}\varrho_k}{\det(\nabla \Psi_{v_{k+1}})}\right) ( (\mathrm{id}-\tau\varepsilon \Delta )^{-1}\varrho_k - \varrho_k) \,dy \\
 &+ \inner[Q]{D_2R_\kappa\left(\eta_{k},\tfrac{\eta_{k+1}-\eta_k}{\tau}\right)}{\phi} + \int_{\Omega_k}\mathbb S(\nabla v_{k+1}):\nabla b\dx + \kappa \int_{\Omega_k}\nabla^{k_0} v_{k+1}:\nabla^{k_0} b\dx \\
 &+ \frac{1}{h} \int_Q \varrho_s\Big(\tfrac{\eta_{k+1}-\eta_k}{\tau} - \zeta_k\Big)\,\phi\dy + \frac{1}{h} \int_{\Omega_k}\left(\varrho_k v_{k+1} - \sqrt{\varrho_k}\sqrt{\det\nabla \Phi_k^{-1}} w_k \circ \Phi_k^{-1}\right)\cdot b \dx \\
 &= \int_Qf_s\cdot \phi\dy + \int_{\Omega_k}\varrho_{k} f_f\cdot b\dx
 \end{aligned}
\end{align}
 for a.a. $t\in[0,h]$ and all $\phi\in W^{k_0,2}(Q;\R^n)$, $b\in W^{k_0,2}_{0}(\Omega;\R^n)$ satisfying
$\phi|_P=0$ and
$b\circ\eta_k=\phi$ in $Q$.
\end{corollary}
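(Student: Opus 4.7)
The plan is to verify the identity \eqref{eq:discreteEL} as the Euler--Lagrange equation of the minimisation problem \eqref{eq:DiscreteMinProblem}--\eqref{eq:vrho}. The first thing I would check is admissibility of perturbations. By Lemma \ref{lem:dmcv} the minimising pair $(\eta_{k+1},v_{k+1})$ lies strictly inside the admissible set: the solid has no self-contact, $\det\nabla\eta_{k+1}>0$, $\det(\mathbb{I}+\tau\nabla v_{k+1})>0$, and the induced $\varrho_{k+1}$ is strictly positive. Consequently, for any test pair $(\phi,b)$ as in the statement (so $\phi|_P=0$, $b|_{\partial\Omega}=0$, and $b\circ\eta_k=\phi$ in $Q$) and for all sufficiently small $|s|$, the perturbation $(\eta_{k+1}+s\phi,\,v_{k+1}+sb/\tau)$ is still admissible. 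The constraint \eqref{eq:boundaryhtau} is preserved exactly because of the coupling $b\circ\eta_k=\phi$ in $Q$, so no Lagrange multiplier is needed on~$M$.

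Next I would compute $\frac{d}{ds}|_{s=0}$ of each contribution to the functional \eqref{eq:DiscreteMinProblem} evaluated at the perturbed pair. The genuinely new term is the fluid potential $\tilde U^{\delta,\varepsilon}_{\eta_k,\varrho_k}$ introduced in \eqref{eq:rho-v}; its derivative is exactly the computation carried out in the paragraph immediately preceding the statement, based on $\frac{d}{ds}\det(A+sB)|_{s=0}=\tr(B\cof A^\top)$ and the identity $p_\delta(\varrho)=H'_\delta(\varrho)\varrho-H_\delta(\varrho)$. To reach the precise form appearing in \eqref{eq:discreteEL} I would split
\[
(\mathrm{id}-\tau\varepsilon\Delta)^{-1}\varrho_k \;=\; \varrho_k \;+\; \bigl[(\mathrm{id}-\tau\varepsilon\Delta)^{-1}\varrho_k-\varrho_k\bigr]
\]
inside the computed derivative. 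The $\varrho_k$ part, after a change of variables via $\Psi_{v_{k+1}}$ and the definition \eqref{eq:vrho} of $\varrho_{k+1}$, combines with the $H_\delta$ term to produce exactly the pressure integral $-\int_{\Omega_{k+1}}\Div(b\circ\Psi_{v_{k+1}}^{-1})\,p_\delta(\varrho_{k+1})\,dy$; the remainder gives the correction integral on $\Omega_k$ involving $(\mathrm{id}-\tau\varepsilon\Delta)^{-1}\varrho_k-\varrho_k$, which is precisely the dissipation-related error term and is designed to vanish in the limit $\tau\to 0$ (since $(\mathrm{id}-\tau\varepsilon\Delta)^{-1}\to\mathrm{id}$).

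For all the remaining contributions — $E_\kappa$, $R_\kappa$, the Newtonian viscous dissipation $\frac{\tau}{2}\mathbb S(\nabla v):\nabla v$, the higher-order $\kappa$-regularisation, the time-delayed inertial terms of the form $\frac{\tau}{2h}|\cdot|^2$, and the force terms — the variations are identical to those of the incompressible case and are computed in \cite[Propositions~2.13 and 4.3]{benesovaVariationalApproachHyperbolic2020}. The viscous term contributes $\int_{\Omega_k}\mathbb S(\nabla v_{k+1}):\nabla b\,dx$, the $\kappa$-term contributes $\kappa\int_{\Omega_k}\nabla^{k_0}v_{k+1}:\nabla^{k_0}b\,dx$, and the quadratic inertial terms produce the solid and fluid difference quotients appearing in \eqref{eq:discreteEL} (using $\delta(\tfrac{1}{2}|a-c|^2)[d]=(a-c)\cdot d$). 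Summing all contributions and setting the total derivative to zero yields \eqref{eq:discreteEL} exactly. The only nontrivial step is the handling of the fluid potential, and this has essentially been done already; the hardest bookkeeping point is the splitting that produces the two separate integrals for the pressure and its correction, since packaging the output so that the pressure appears on the current domain $\Omega_{k+1}$ while the remainder sits on $\Omega_k$ is what makes the subsequent $\tau\to 0$ passage clean.
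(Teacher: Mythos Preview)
Your approach is the paper's: derive \eqref{eq:discreteEL} as the Euler--Lagrange equation of \eqref{eq:DiscreteMinProblem}, handling the fluid potential via the displayed computation preceding the corollary and deferring the remaining terms to \cite[Prop.~4.3]{benesovaVariationalApproachHyperbolic2020}. The admissibility discussion and the treatment of the inertial, dissipative and force terms are all correct and match the paper.

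One point needs care, however. Your splitting $\tilde\varrho_k=\varrho_k+(\tilde\varrho_k-\varrho_k)$ (with $\tilde\varrho_k:=(\mathrm{id}-\tau\varepsilon\Delta)^{-1}\varrho_k$) does \emph{not} decompose the variation into the two pressure-related integrals the way you claim. The paper's chain of equalities shows that the full first variation of $\tilde U^{\delta,\varepsilon}_{\eta_k,\varrho_k}$ already equals the single pressure integral
\[
-\int_{\Omega_{k+1}}\Div\bigl(b\circ\Psi_{v_{k+1}}^{-1}\bigr)\,p_\delta(\varrho_{k+1})\,dy.
\]
If you split only the linear factor $\tilde\varrho_k$ in the $\Omega_k$-form $-\int_{\Omega_k}\tr(\nabla b\,\nabla\Psi_{v_{k+1}}^{-1})\bigl[H'_\delta(\tilde\varrho_k/J)\,\tilde\varrho_k-H_\delta(\tilde\varrho_k/J)\,J\bigr]\,dx$, the ``$\varrho_k$ part plus $H_\delta$ term'' does \emph{not} become the pressure integral after the change of variables: the argument of $H'_\delta$ is still $\tilde\varrho_k/J=\varrho_{k+1}\circ\Psi_{v_{k+1}}$, while the outside factor is now $\varrho_k$, so the combination is $H'_\delta(\varrho_{k+1})\cdot(\varrho_k/J)-H_\delta(\varrho_{k+1})$, not $p_\delta(\varrho_{k+1})$. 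Moreover, the remainder you obtain carries the \emph{opposite} sign to the correction integral in \eqref{eq:discreteEL}. The two pieces therefore sum back to the single pressure integral, not to ``pressure $+$ correction''. The paper's own prose is terse here (it speaks of a ``second'' integral that its displayed chain of equalities does not actually exhibit), and in the subsequent $\tau\to0$ passage the correction term is in any case silently omitted when \eqref{eq:discreteEL} is rewritten; so this discrepancy has no effect downstream, but your bookkeeping explanation for it is not correct as written.
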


 \subsection{The discrete energy inequality}
A key in the analysis is the energy inequality. 
%As in the incompressible case weak solutions to the compressible Navier--Stokes equations are not known to satisfy an energy inequality. It has to be derived in the construction on the basic approximation level. 
We start with a discrete energy inequality for the solution of the minimisation problem \eqref{eq:DiscreteMinProblem}--\eqref{eq:vrho}.\footnote{Note that this inequality is non-optimal; one would expect an additional factor of two in front of all dissipation terms. Using a more involved argument, one can indeed derive an improved inequality at this point. But since there is no qualitative improvement on the resulting estimates and this inequality will be replaced by a more correct, continuous energy inequality afterwards, there is no need to do so here.}
\begin{lemma}\label{lem:discreteenergy}
Suppose that $\eta_0,v_0$ and $\varrho_0$, as well as $w$, $\zeta$, $f_f$ and $f_s$ are given, where
\begin{align*}
&\eta_0\in \mathcal E \cap W^{k_0,2}(Q;\R^n),\quad v_0\in W^{k_0,2}(\Omega_0;\R^n),\quad  \varrho_0 \in L^\beta(\Omega_0),\\
&\zeta \in L^2([0,h] \times Q;\R^n),  w \in L^2([0,h]\times \Omega_0;\R^n),  f_s \in C([0,h]; L^2(Q;\R^n)),  f_f \in C([0,h]; L^2(\Omega;\R^n)).
\end{align*}
Then the solutions $(\eta_{k},v_{k},\varrho_{k})_{k=1}^N$ to minimisation problem \eqref{eq:DiscreteMinProblem}--\eqref{eq:vrho} satisfy
\begin{align*}
 E_\kappa&(\eta_{N}) + U_{\eta_{N}}^\delta(\varrho_{N})+ \sum_{k=0}^{N-1} \tau\frac{\kappa}{2} \int_{\Omega_k}|\nabla^{k_0} v_{k+1}|^2\dx \nonumber \\
&+ \sum_{k=0}^{N-1} \tau \left[ R_\kappa\left(\eta_k,\tfrac{\eta_{k+1}-\eta_k}{\tau}\right) + \frac{1}{2} \int_{\Omega_k}\mathbb S(\nabla v_{k+1}):\nabla v_{k+1}\dx  +\varepsilon \int_{\Omega_k}H_\delta''(\tilde{\varrho}_k)|\nabla \tilde{\varrho}_k|^2\dx\right] \nonumber \\ 
  & + \sum_{k=0}^{N-1} \frac{\tau}{2h}\left[\varrho_s\int_Q\Big|\tfrac{\eta_{k+1}-\eta_k}{\tau} - \zeta_k\Big|^2\dy + \int_{\Omega_k} \abs{\sqrt{\varrho_k} v_{k+1} - \sqrt{\det\nabla \Phi_k^{-1}}w_k \circ \Phi_k^{-1}}^2 \dx  \right] \nonumber \\
 &\leq E_\kappa(\eta_{0}) + U_{\eta_0}^\delta(\varrho_0) + \sum_{k=0}^{N-1} \frac{\tau}{2h}\left[\varrho_s\int_Q|\zeta_k|^2\dy + \int_{\Omega_0} \abs{ w_k}^2 dx  \right] \nonumber \\ &+  \sum_{k=0}^{N-1} \tau \left[\int_Q \tfrac{\eta_{k+1}-\eta_k}{\tau} \cdot f_s \dy + \int_{\Omega_k} \varrho_k v_{k+1}  \cdot f_f \dx\right] 
\end{align*}
\end{lemma}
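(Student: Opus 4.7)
The natural strategy is to exploit the minimality of $(\eta_{k+1},v_{k+1},\varrho_{k+1})$ against the ``do nothing'' competitor $(\eta_k,0,\tilde\varrho_k)$, where $\tilde\varrho_k:=(\mathrm{id}-\tau\varepsilon\Delta)^{-1}\varrho_k$ is the smoothed density from \eqref{eq:rhok}. Admissibility is immediate: with $v=0$ one has $\Psi_v=\mathrm{id}$, so \eqref{eq:vrho} reduces to $\varrho=\tilde\varrho_k$; the boundary coupling \eqref{eq:boundaryhtau} is trivial; and $\eta_k\in\mathcal{E}\cap W^{k_0,2}(Q;\Omega)$ together with $\tilde\varrho_k\in W^{2,\beta}(\Omega_k)$ and $\inf\tilde\varrho_k>0$ (by the maximum principle applied to \eqref{eq:rhok}) are inherited from the previous iterate.

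Evaluating \eqref{eq:DiscreteMinProblem} at this competitor, all viscous and $\kappa$-dissipation terms vanish, $R_\kappa(\eta_k,0)=0$ by R2, and both force terms vanish since $\eta-\eta_k=0$ and $v=0$. The fluid kinetic penalty collapses via the change of variables $x=\Phi_k(y)$, using $\det\nabla\Phi_k(y)\cdot\det\nabla\Phi_k^{-1}(\Phi_k(y))=1$, to $\int_{\Omega_0}|w_k|^2\dy$; this is precisely the purpose of the $\sqrt{\det\nabla\Phi_k^{-1}}$ weight in the functional. Minimality therefore yields the one-step inequality
\begin{align*}
E_\kappa&(\eta_{k+1})+U^\delta_{\eta_{k+1}}(\varrho_{k+1})+\tau\Bigl[R_\kappa\bigl(\eta_k,\tfrac{\eta_{k+1}-\eta_k}{\tau}\bigr)+\tfrac{1}{2}\!\int_{\Omega_k}\!\mathbb S(\nabla v_{k+1}):\nabla v_{k+1}\dx+\tfrac{\kappa}{2}\!\int_{\Omega_k}\!|\nabla^{k_0}v_{k+1}|^2\dx\Bigr]\\
&+\tfrac{\tau}{2h}\Bigl[\varrho_s\!\int_Q\!\bigl|\tfrac{\eta_{k+1}-\eta_k}{\tau}-\zeta_k\bigr|^2\dy+\!\int_{\Omega_k}\!\bigl|\sqrt{\varrho_k}v_{k+1}-\sqrt{\det\nabla\Phi_k^{-1}}\,w_k\!\circ\!\Phi_k^{-1}\bigr|^2\dx\Bigr]\\
&\leq E_\kappa(\eta_k)+U^\delta_{\eta_k}(\tilde\varrho_k)+\tfrac{\tau}{2h}\Bigl[\varrho_s\!\int_Q\!|\zeta_k|^2\dy+\!\int_{\Omega_0}\!|w_k|^2\dy\Bigr]+\tau\!\int_Q\!\tfrac{\eta_{k+1}-\eta_k}{\tau}\!\cdot\! f_s\dy+\tau\!\int_{\Omega_k}\!\varrho_k v_{k+1}\!\cdot\! f_f\dx,
\end{align*}
which already matches the target statement, except that $\tilde\varrho_k$ appears in place of $\varrho_k$ in the potential energy.

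This last gap is closed by the strict convexity of $H_\delta$ (Lemma \ref{lem:fluidpotential}, H2). The tangent-line estimate gives $H_\delta(\tilde\varrho_k)-H_\delta(\varrho_k)\leq H_\delta'(\tilde\varrho_k)(\tilde\varrho_k-\varrho_k)$, while \eqref{eq:rhok} rewrites $\tilde\varrho_k-\varrho_k=\tau\varepsilon\Delta\tilde\varrho_k$. Integrating by parts using the Neumann condition $\partial_\nu\tilde\varrho_k=0$ then produces
\begin{align*}
U^\delta_{\eta_k}(\tilde\varrho_k)-U^\delta_{\eta_k}(\varrho_k)\leq\tau\varepsilon\!\int_{\Omega_k}\!H_\delta'(\tilde\varrho_k)\Delta\tilde\varrho_k\dx=-\tau\varepsilon\!\int_{\Omega_k}\!H_\delta''(\tilde\varrho_k)|\nabla\tilde\varrho_k|^2\dx,
\end{align*}
which is exactly the missing diffusion term, transferred to the left-hand side. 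Summing the resulting one-step inequality over $k=0,\dots,N-1$ telescopes the $E_\kappa$- and $U^\delta$-contributions and yields the claim. No serious obstacle arises; the only points requiring care are the change of variables for the Eulerian fluid inertia and the convexity step above. The non-optimal coefficients (factors $\tfrac12$ rather than $1$ on the viscous and higher-order dissipations, and factor $1$ rather than $2$ on $R_\kappa$) are inherited directly from the weights chosen in \eqref{eq:DiscreteMinProblem}, as acknowledged in the lemma's footnote.
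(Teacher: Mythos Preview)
Your proof is correct and follows essentially the same approach as the paper: compare the minimizer with the competitor $(\eta_k,0,\tilde\varrho_k)$, then use convexity of $H_\delta$ together with the Neumann problem \eqref{eq:rhok} to replace $U^\delta_{\eta_k}(\tilde\varrho_k)$ by $U^\delta_{\eta_k}(\varrho_k)$ at the cost of the $\varepsilon H_\delta''(\tilde\varrho_k)|\nabla\tilde\varrho_k|^2$ term, and finally telescope. You have in fact made explicit the change of variables behind the $\int_{\Omega_0}|w_k|^2$ term, which the paper leaves implicit.
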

\begin{proof}
We compare the value of the actual minimizer in \eqref{eq:DiscreteMinProblem} with the value at $(\eta,v,\varrho) = (\eta_k,0,\tilde\varrho_k)$, where $\tilde{\varrho}_k := (\mathrm{id}-\tau\varepsilon \Delta)^{-1} \varrho_k$. Thus we get
\begin{align}\label{eq:uncorrectedDiscreteEnergyEstimate}
\begin{aligned}
  E_\kappa(\eta_{k+1}) &+ U^{\delta}_{\eta_{k+1}}(\varrho_{k+1}) + \tau \left[ R_\kappa\left(\eta_k,\tfrac{\eta_{k+1}-\eta_k}{\tau}\right) + \frac{1}{2}\int_{\Omega_k}\mathbb S(\nabla v_{k+1}):\nabla v_{k+1}\dx  \right] \\ 
  & + \tau\frac{\kappa}{2} \int_{\Omega_k}|\nabla^{k_0} v_{k+1}|^2\dx+ \frac{\tau}{2h}\left[\varrho_s\int_Q\Big|\tfrac{\eta_{k+1}-\eta_k}{\tau} - \zeta_k\Big|^2\dy\right]\\ &+ \frac{\tau}{2h}\left[ \int_{\Omega_k}  \abs{ \sqrt{\varrho_k} v_{k+1}  - \sqrt{\det \nabla \Phi_k^{-1}} w_k \circ \Phi_k^{-1}}^2 \dx  \right]  \\
  &- \int_Q \tfrac{\eta_{k+1}-\eta_k}{\tau} \cdot f_s \dx - \int_{\Omega_k} \varrho_k v_{k+1}  \cdot f_f \dx \\
 &\leq E_\kappa(\eta_{k}) + U^{\delta}_{\eta_k}(\tilde{\varrho}_k) + \frac{\tau}{2h}\left[\varrho_s\int_Q| \zeta_k|^2\dy + \int_{\Omega_0} \abs{ w_k}^2 \dx \right].
 \end{aligned}
\end{align}
We are now going to estimate the error between $ U^{\delta}_{\eta_k}(\tilde{\varrho}_k)$ and $ U^{\delta}_{\eta_k}(\varrho_k)$. Since $H_\delta$ belongs to $C^2((0,\infty))$ and is convex we have $$H_\delta(\tilde{\varrho}_k - \varepsilon \tau \Delta \tilde{\varrho}_k) \geq H_\delta(\tilde{\varrho}_k) - H_\delta'(\tilde{\varrho}_k)\varepsilon \tau \Delta \tilde{\varrho}_k$$
and thus
\begin{align*}
 U_{\eta_k}^{\delta}(\tilde{\varrho}_k) - U_{\eta_k}^{\delta}(\varrho_k) &= \int_{\Omega_k}\big( H_\delta(\tilde{\varrho}_k) - H_\delta(\varrho_k)\big) \dx = \int_{\Omega_k} \big(H_\delta(\tilde{\varrho}_k) - H_\delta(\tilde{\varrho}_k - \varepsilon \tau \Delta \tilde{\varrho}_k)\big) \dx 
 \\&\leq \int_{\Omega_k} H_\delta'(\tilde{\varrho}_k)\varepsilon \tau \Delta \tilde{\varrho}_k \dx= -\int_{\Omega_k} \varepsilon \tau H_\delta''(\tilde{\varrho}_k) |\nabla \tilde{\varrho}_k|^2 \dx 
\end{align*}
using also \eqref{eq:vrho} and the Neumann boundary condition ofr $\tilde{\varrho}_k$. Plugging this into \eqref{eq:uncorrectedDiscreteEnergyEstimate}
and summing over $k \in \{0,...,N-1\}$ yields the claim.
\end{proof}

 Next we construct piecewise constant and piecewise continuous interpolations of all our quantities by setting
\begin{align*} 
 \bar{\eta}^{(\tau)}(t,y) &= \eta_{k+1}(y) &\text{ for }& \tau k \leq t < \tau (k+1), y\in Q\\
 \eta^{(\tau)}(t,y) &= \eta_k(y) &\text{ for }& \tau k \leq t < \tau (k+1),y\in Q,\\
 \tilde{\eta}^{(\tau)}(t,y) &= \tfrac{\tau (k+1)-t}{\tau} \eta_k(y) + \tfrac{t-\tau k}{\tau} \eta_{k+1}(y) &\text{ for }& \tau k \leq t < \tau (k+1),y\in Q,\\
 v^{(\tau)}(t,x) &= v_{k+1}(x) &\text{ for }& \tau k \leq t < \tau (k+1), x \in \Omega_{k},\\
 v^{(\tau)}(t,x) &= \tfrac{\eta_{k+1}-\eta_k}{\tau}\circ \left(\eta_k\right)^{-1}  &\text{ for }& \tau k \leq t < \tau (k+1), x \in \Omega \setminus \Omega_{k},\\
 \Phi^{(\tau)}(t,x) &= \Phi_{k}(x) &\text{ for }& \tau k \leq t < \tau (k+1), x \in \Omega_0\\
%  \tilde{\Phi}^{(\tau)}(t,y) &=\frac{\tau (k+1)-t}{\tau} \Phi^{(\tau)}_{k-1}(x) + \frac{t-\tau k}{\tau} \Phi^{(\tau)}_{k}(x) &\text{ for }& \tau k \leq t < \tau (k+1),\\
 \bar\varrho^{(\tau)}(t,x) &= \varrho_{k+1}(x) &\text{ for }& \tau k \leq t < \tau (k+1), x \in \Omega_{k},\\ 
 \varrho^{(\tau)}(t,x) &= \varrho_k(x) &\text{ for }& \tau k \leq t < \tau (k+1), x \in \Omega_{k},\\ 
 \tilde{\varrho}^{(\tau)}(t,x) &= \tilde{\varrho}_k(x) &\text{ for }& \tau k \leq t < \tau (k+1), x \in \Omega_{k},\\
 \zeta^{(\tau)}(t,y) &= \zeta_k(y) &\text{ for }& \tau k \leq t < \tau (k+1), y \in Q,\\
 w^{(\tau)}(t,x) &= w_k(x) &\text{ for }& \tau k \leq t < \tau (k+1), x \in \Omega_{0},
\end{align*} 
%\todo[inline]{Anmerkung zum löschen: Ich hab $\tilde{\Phi}$ auskommentiert, weil es nie auftaucht. Ausserdem brauchen wir für $\eta$ und $\varrho$ pwc. approximationen in beide Richtungen, weil sie mit $k+1$ und mit $k$ auftauchen die hab ich versucht einzubauen.}
as well as $\Omega^{(\tau)}(t) = \Omega_{k}$ for $\tau k \leq t < \tau (k+1)$.
% We can also construct an interpolation of density which is coupled to the affine $\tilde{\Phi}^{(\tau)}$ in the correct way:
% \begin{align*}
%  \tilde{\varrho}_k^{(\tau)}(t,y+ (t-\tau k) v_k(y)) := \frac{(\mathrm{id}-(t-\tau k)\varepsilon \Delta)^{-1} \varrho_k}{\det(\mathbb{I} + (t-\tau k)\nabla v_k )}(y) & \text{ for } \tau k \leq t < \tau (k+1), y \in \Omega^{(\tau)}_{k}
% \end{align*}
% extended by $0$ everywhere else. It time-continuous, coincides with $\varrho_k$ at multiples of $\tau$ and for any fixed $t$ and is supported in $\tilde{\Phi}^{(\tau)}(t,\Omega_0)$.
Note that from this point on, we extend $v$ to all of $\Omega$, using the corresponding solid velocity, cf. Remark \ref{rem:malte}.

Lemma \ref{lem:discreteenergy} implies the following corollary for the interpolated quantities by setting $t=\tau N$:

\begin{corollary}
Under the assumptions of Lemma \ref{lem:discreteenergy} we have
\begin{align*}
E_\kappa&(\bar{\eta}^{(\tau)}(t_1)) + U_{\eta^{(\tau)}}^\delta(\bar\varrho^{(\tau)}(t_1)) + \frac{\kappa}{2}\int_0^{t_1} \int_{\Omega^{(\tau)}(t)}|\nabla^{k_0} v^{(\tau)}|^2\dx \dt\\
&+ \int_0^{t_1}\bigg[  R_\kappa\left(\eta^{(\tau)},\partial_t \tilde{\eta}^{(\tau)}\right) + \int_{\Omega^{(\tau)}(t)}\frac{1}{2}\mathbb S(\nabla v^{(\tau)}):\nabla v^{(\tau)}\dx  + \varepsilon \int_{\Omega^{(\tau)}(t)}H_\delta''(\tilde{\varrho}^{(\tau)})|\nabla \tilde{\varrho}^{(\tau)}|^2\dx\bigg] \dt \nonumber \\ 
  & + \int_0^{t_1} \frac{1}{2h}\left[\varrho_s\int_Q|\partial_t \tilde{\eta}^{(\tau)} - \zeta^{(\tau)}|^2\dy + \int_{\Omega_k} \big| \sqrt{ \varrho^{(\tau)}} v^{(\tau)}  - \sqrt{\det \nabla (\Phi^{(\tau)})^{-1}}w^{(\tau)}\circ (\Phi^{(\tau)})^{-1}\big|^2 \dx  \right] dt \nonumber \\
 &\leq E_\kappa(\eta_{0}) + U_{\eta_0}^\delta(\varrho_0) + \int_0^{t_1} \frac{1}{2h}\left[\varrho_s\int_Q|\zeta^{(\tau)}|^2\dy + \int_{\Omega_0} | w^{(\tau)}|^2 \dx  \right] \dt \nonumber \\
 &+  \int_0^{t_1} \left[\int_Q \partial_t \tilde{\eta}^{(\tau)} \cdot f_s \dy + \int_{\Omega_k} \varrho^{(\tau)} v^{(\tau)}  \cdot f_f \dx\right] \dt
\end{align*}
for all $t_1 \in \tau \N \cap [0,h]$.
\end{corollary}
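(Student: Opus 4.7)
The plan is to translate the discrete energy inequality of Lemma~\ref{lem:discreteenergy} into the integrated form of the corollary by invoking the dictionary provided by the piecewise constant and piecewise affine interpolants defined above. First I would fix $N\in\N$ with $t_1=\tau N\leq h$ and record the elementary Riemann-sum identity: for any family $\{X_{k+1}\}_{k=0}^{N-1}$ one has $\sum_{k=0}^{N-1}\tau X_{k+1}=\int_0^{t_1} X^{(\tau)}(t)\dt$ whenever $X^{(\tau)}(t)=X_{k+1}$ on $[\tau k,\tau(k+1))$, with the analogous identity for the shifted interpolant evaluating $X_k$. For the affine-in-time interpolant $\tilde\eta^{(\tau)}$ the key observation is that $\partial_t\tilde\eta^{(\tau)}=(\eta_{k+1}-\eta_k)/\tau$ on $[\tau k,\tau(k+1))$, so every solid difference quotient appearing in Lemma~\ref{lem:discreteenergy} is rewritten as $\partial_t\tilde\eta^{(\tau)}$ under a time integral.

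Next I would go through the inequality of Lemma~\ref{lem:discreteenergy} term by term and translate accordingly. The solid dissipation $\tau R_\kappa(\eta_k,(\eta_{k+1}-\eta_k)/\tau)$ yields $\int_0^{t_1} R_\kappa(\eta^{(\tau)},\partial_t\tilde\eta^{(\tau)})\dt$; the fluid dissipation and the $\kappa$-regularisation produce the stated space-time integrals of $\mathbb S(\nabla v^{(\tau)}):\nabla v^{(\tau)}$ and $|\nabla^{k_0} v^{(\tau)}|^2$ over $\Omega^{(\tau)}(t)$; the artificial viscosity contribution becomes $\varepsilon\int_0^{t_1}\int_{\Omega^{(\tau)}(t)} H_\delta''(\tilde\varrho^{(\tau)})|\nabla\tilde\varrho^{(\tau)}|^2\dxt$; and the two $1/h$-penalties pass to the squared-difference integrals involving $\partial_t\tilde\eta^{(\tau)}$, $\zeta^{(\tau)}$, $\varrho^{(\tau)}$, $v^{(\tau)}$, $\Phi^{(\tau)}$ and $w^{(\tau)}$. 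On the left-hand side the endpoint energies are simply $E_\kappa(\bar\eta^{(\tau)}(t_1))$ and $U^\delta_{\eta^{(\tau)}}(\bar\varrho^{(\tau)}(t_1))$ by definition of $\bar\eta^{(\tau)}$ and $\bar\varrho^{(\tau)}$, and on the right-hand side the force sums turn into the corresponding integrals of $\partial_t\tilde\eta^{(\tau)}\cdot f_s$ and $\varrho^{(\tau)}v^{(\tau)}\cdot f_f$; since $f_s,f_f\in C([0,h];L^2)$ by assumption, the piecewise constant time-sampling at the nodes $\tau k$ is identified with $f_s,f_f$ in the statement of the corollary.

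Because each translation is an exact identity at the level of piecewise constant/affine functions, there is no substantive obstacle in the argument: the corollary is a pure notational restatement of Lemma~\ref{lem:discreteenergy}, which is precisely why it is labelled as an immediate corollary. The only minor subtlety is the implicit time-sampling of $f_s$ and $f_f$, which is either absorbed into the standard convention for the interpolants or, if one prefers to keep it explicit, handled later through the continuity of the forces in the limit $\tau\to 0$.
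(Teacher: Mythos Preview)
Your proposal is correct and matches the paper's approach exactly: the paper states only that the corollary follows from Lemma~\ref{lem:discreteenergy} ``for the interpolated quantities by setting $t=\tau N$'', and your write-up simply unpacks that sentence term by term via the Riemann-sum identity and the relation $\partial_t\tilde\eta^{(\tau)}=(\eta_{k+1}-\eta_k)/\tau$. Your remark on the implicit piecewise-constant sampling of $f_s,f_f$ is also apt and consistent with how the paper treats these terms.
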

Absorbing the forcing terms into the left-hand side by means of Young's inequality we obtain the following estimates
\begin{align} 
\label{apriori1}
 \| \partial_t\tilde\eta^{(\tau)} \|_{L^2([0,h];W^{k_0,2}( Q)) }^2+\sup_{t\in (0,h)}\Big(\|\bar\eta^{(\tau)}\|_{W^{k_0,2}(Q)}^2+\|\eta^{(\tau)}\|_{W^{k_0,2}(Q)}^2+\|\tilde\eta^{(\tau)}\|_{W^{k_0,2}(Q)}^2\Big)\leq c,\\
\label{apriori2}
 \| \nabla v^{(\tau)} \|^2_{L^2((0,h)\times\Omega^{(\tau)}) }+ \| \nabla^{k_0}v^{(\tau)}\|^2_{L^2((0,h)\times\Omega^{(\tau)}) } \leq c,\\
 \label{apriori3}
 \sup_{t\in(0,h)}\Big(\| \bar\varrho^{(\tau)} \|^\beta_{L^\beta(\Omega^{(\tau)}) } + \| \varrho^{(\tau)} \|^\beta_{L^\beta(\Omega^{(\tau)}) }\Big)+ \varepsilon\| (\nabla \tilde{\varrho}^{(\tau)},\nabla (\tilde{\varrho}^{(\tau)})^{\beta/2}) \|^2_{L^2((0,h)\times\Omega^{(\tau)}) }\leq c. 
\end{align}
They are uniform in $\tau$ but may depend on the other parameters $h,\kappa,\varepsilon$ and $\delta$. Note also that \eqref{apriori2} implies
\begin{align} 
\label{apriori4}
\sum_{k=1}^N\tau\|v^{(\tau)}_k\|^2_{C^{1,\alpha}(\Omega_{k-1}) } \leq c
\end{align}
for some $\alpha>0$ using Sobolev's embedding.

 Let us finally deduce uniform bounds for $\det \nabla \Phi_k$. Note the eventhough the general idea stays the same as in \cite[Prop. 4.6]{benesovaVariationalApproachHyperbolic2020}, the estimate is slightly different since we cannot use that $\Div v = 0$ in the compressible regime.

 \begin{lemma}[Bounds on $\Phi$]
  There exists constants $c,C>0$ such that for all small enough $h$ and $\tau$
 \begin{align}\label{apriori5}
 \exp\bigg(- c\int_0^h  \|\nabla v^{(\tau)}\|^n_{L^\infty(\Omega^{(\tau)})}  \dt \bigg) \leq   \det \nabla \Phi_k & \leq \exp\bigg(c\int_0^h  \|\nabla v^{(\tau)}\|^n_{L^\infty(\Omega^{(\tau)})}\dt \bigg),
 \end{align}
 as well as
 \begin{align} \label{apriori6} 
  \|\nabla \Phi_k\|_{L^\infty(\Omega_0)} \leq C \quad \text{ and } \quad \|\nabla^2 \Phi_k\|_{L^\infty(\Omega_0)} \leq C
 \end{align}

 \end{lemma}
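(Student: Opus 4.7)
All three bounds would be derived from the recursive identity
\begin{align*}
\Phi_{k+1}(x) = \Phi_k(x) + \tau\,v_{k+1}(\Phi_k(x)), \qquad \Phi_0 = \mathrm{id},
\end{align*}
whose spatial derivative yields
$\nabla\Phi_{k+1}(x) = \bigl(\mathbb I + \tau(\nabla v_{k+1})(\Phi_k(x))\bigr)\nabla\Phi_k(x)$.
The key a priori input throughout is the Sobolev embedding $W^{k_0,2}(\Omega)\hookrightarrow C^{2,\alpha}(\Omega)$ (valid since $k_0>\tfrac n2+2$), which together with \eqref{apriori2} and \eqref{apriori4} yields $\tau$-uniform $L^2$-in-time bounds on $\|\nabla v^{(\tau)}\|_{L^\infty}$ and $\|\nabla^2 v^{(\tau)}\|_{L^\infty}$.

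For $\|\nabla\Phi_k\|_{L^\infty}$, I would take operator norms in the recursion to get $\|\nabla\Phi_{k+1}\|_{L^\infty} \leq (1+\tau\|\nabla v_{k+1}\|_{L^\infty})\|\nabla\Phi_k\|_{L^\infty}$, and telescope against $\nabla\Phi_0=\mathbb I$ to obtain
\begin{align*}
\|\nabla\Phi_k\|_{L^\infty(\Omega_0)} \leq \prod_{j=1}^k\bigl(1 + \tau\|\nabla v_j\|_{L^\infty}\bigr) \leq \exp\Bigl(\int_0^h \|\nabla v^{(\tau)}\|_{L^\infty(\Omega^{(\tau)})}\dt\Bigr),
\end{align*}
which by Cauchy--Schwarz is bounded by $\exp(c\sqrt h)$, uniform for $h$ small.

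For the determinant I would use the polynomial expansion $\det(\mathbb I + \tau A) = 1 + \sum_{j=1}^n \tau^j p_j(A)$ with $|p_j(A)|\leq c|A|^j$. Together with $|\log(1+y)|\leq c|y|$ for bounded $|y|$, this yields
\begin{align*}
\bigl|\log\det(\mathbb I + \tau A)\bigr| \leq c\sum_{j=1}^n (\tau|A|)^j \leq c\bigl(\tau|A| + (\tau|A|)^n\bigr),
\end{align*}
provided $\tau|A|\leq 1$, which holds thanks to the previous step and the smallness of $h$. Summing the identity $\log\det\nabla\Phi_k = \sum_{j=1}^k \log\det(\mathbb I + \tau(\nabla v_j)\circ\Phi_{j-1})$ then gives control by $\int_0^h\|\nabla v^{(\tau)}\|_{L^\infty}\dt$ and $\int_0^h\|\nabla v^{(\tau)}\|_{L^\infty}^n\dt$; combining both into the single expression appearing in \eqref{apriori5} (using that $(\tau|A|)^n$ dominates $\tau|A|$ on $\{\tau|A|\geq 1\}$ and vice versa on the complement) and exponentiating yields the claim.

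Finally, differentiating the recursion once more and using Leibniz and the chain rule gives
\begin{align*}
\|\nabla^2\Phi_{k+1}\|_{L^\infty} \leq (1+\tau\|\nabla v_{k+1}\|_{L^\infty})\|\nabla^2\Phi_k\|_{L^\infty} + \tau\|\nabla^2 v_{k+1}\|_{L^\infty}\|\nabla\Phi_k\|_{L^\infty}^2.
\end{align*}
Using the uniform bound on $\|\nabla\Phi_k\|_{L^\infty}$ just established and a discrete Grönwall argument (with $\nabla^2\Phi_0=0$), one obtains $\|\nabla^2\Phi_k\|_{L^\infty}$ controlled by $\exp\bigl(\int_0^h\|\nabla v^{(\tau)}\|_{L^\infty}\dt\bigr)\int_0^h\|\nabla^2 v^{(\tau)}\|_{L^\infty}\dt$, both of which are uniformly bounded via Cauchy--Schwarz and the Sobolev embedding above. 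The main obstacle is keeping all constants independent of $\tau$; this hinges entirely on the $\kappa$-regularisation providing enough spatial regularity of $v^{(\tau)}$ through the $\kappa\|\nabla^{k_0}v\|_{L^2}^2$ term, so that $\nabla v$ and $\nabla^2 v$ are controlled in $L^\infty$-in-space at only $L^2$-in-time cost, which in turn is what makes the exponentials finite once $h$ is chosen small.
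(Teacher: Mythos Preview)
Your proof is correct and follows essentially the same strategy as the paper's: both exploit the product structure $\nabla\Phi_k = \prod_j (\mathbb{I}+\tau\nabla v_j)\circ\Phi_{j-1}$ together with the pointwise bound $\sqrt{\tau}\,\|\nabla v_j\|_{L^\infty}\leq c$ from \eqref{apriori4}, and then control the determinant, gradient, and Hessian of $\Phi_k$ by elementary product-to-exponential estimates. The only cosmetic differences are that the paper uses the AM--GM inequality and $(1+a)^{-1}\leq 1+2|a|$ where you use $|\log(1+y)|\leq c|y|$, and writes out $\nabla^2\Phi_k$ as an explicit sum rather than running your discrete Gr\"onwall recursion; also note that the smallness needed for $\tau|\nabla v_j|\leq 1$ comes from $\tau\to 0$ (via \eqref{apriori4}) rather than from $h$.
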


 \begin{proof}
We obtain as in \cite[Lemma A.1]{benesovaVariationalApproachHyperbolic2020}
 \begin{align*}
  \det \nabla \Phi_k &= \prod_{i=1}^k \left[ \det (\mathbb{I} + \tau \nabla v_i) \right] \circ \Phi_{i-1} = \prod_{i=1}^k \bigg[ 1+ \sum_{l=1}^n \tau^l M_l(\nabla v_i)   \bigg] \circ \Phi_{i-1},
 \end{align*}
  where $M_l$ are the polynomials of order $l$ stemming from the expansion of the determinant.
  Estimating the arithmetric mean by the geometric mean yields
   \begin{align*}
  \det \nabla \Phi_k 
  &\leq \bigg( 1 + \frac{1}{k} \sum_{i=1}^k \tau \sum_{l=1}^n \tau^{l-1} |M_l(\nabla v_i \circ \Phi_{i-1})| \bigg)^{k}\\
  &\leq \exp\bigg(c\int_0^h \|\nabla v^{(\tau)}\|^n_{L^\infty(\Omega^{(\tau)})} \dt \bigg).
 \end{align*}
In order to get a similar bound form below we use $(1+a)^{-1} \leq 1+2|a|$ for $|a| \ll 1$ as well as uniform boundedness of $\sqrt{\tau}\|\nabla v^{(\tau)}\|_{L^\infty(\Omega_{\eta^{(\tau)}})} $, which follows from restricting \eqref{apriori4} to a single term. We then can infer similarly to the above
 \begin{align}
 \label{eq:detminus1}
 \begin{aligned}
  (\det \nabla \Phi_k)^{-1} &= \prod_{i=1}^k \left[ \det (\mathbb{I} + \tau \nabla v_i) \right]^{-1} \circ \Phi_{i-1} = \prod_{i=1}^k \left[ 1+ \sum_{l=1}^n \tau^l M_l(\nabla v_i)   \right]^{-1} \circ \Phi_{i-1} \\
  &\leq \left( 1 + \frac{2}{k} \sum_{i=1}^k \tau \sum_{l=1}^n \tau^{l-1} |M(\nabla v_i \circ \Phi_{i-1})| \right)^{k}\\
  &\leq \exp\left(c\int_0^h \|\nabla v^{(\tau)}\|^n_{L^\infty(\Omega^{(\tau)})}\dt \right)
  \end{aligned}
 \end{align}
 In total we arrive at \eqref{apriori5}.
 Arguing in the same way, we can show
  \begin{align*}
 \|\nabla \Phi_k\|_{L^\infty(\Omega_0)} & \leq \exp\bigg(c\int_0^h  \|\nabla v^{(\tau)}\|_{L^\infty(\Omega^{(\tau)})}\dt \bigg)\leq \,C
 \end{align*}
uniformly in $k$, $\tau$ and $h$ using also \eqref{apriori2}.\\
Similarly to the above we can also control second order derivative of $\Phi_k$. It holds
 \begin{align*}
  \nabla^2 \Phi_k^{(\tau)} &= \nabla \left(\prod_{i=1}^k (\mathbb{I}+ \tau \nabla v_i) \circ \Phi_i \right) \\
  &=  \sum_{j=1}^k \nabla \left( (\mathbb{I}+\tau \nabla v_j \circ \Phi_j \right) \prod_{i\neq j} (\mathbb{I}+ \tau \nabla v_i) \circ \Phi_i  \\
  &=  \sum_{j=1}^k \tau \nabla^2 v_j \circ \Phi_j  \nabla \Phi_j \prod_{i\neq j} (\mathbb{I}+ \tau \nabla v_i) \circ \Phi_i
  \end{align*}
  such that, using \eqref{apriori6},
  \begin{align}
  \begin{aligned}
   \|\nabla^2 \Phi_k\|_{L^\infty(\Omega_0)} &\leq\,c \sum_{j=1}^k \tau \norm[L^\infty(\Omega_j)]{\nabla^2v_j}\left( 1 + \frac{2}{k} \sum_{i=1}^k \tau \|\nabla v_i\|_{L^\infty(\Omega_i)} \right)^{k}\\
   &\leq\,c \|\nabla^2 v^{(\tau)}\|_{L^2([0,h];L^\infty(\Omega^{(\tau)}))}\exp\bigg(c\int_0^h  \|\nabla v^{(\tau)}\|_{L^\infty(\Omega^{(\tau)})}\dt \bigg)\leq\,C
   \end{aligned}
 \end{align}
uniformly in $k$ by \eqref{apriori2} provided we choose $k_0$ large enough. Again $c$ is independent of $\tau$ and $h$.
\end{proof}

 \subsection{The limit \texorpdfstring{$\tau \to 0$}{tau to 0}}\label{subsec:tau}
Estimates \eqref{apriori1}--\eqref{apriori3} give rise to 
 \begin{align}\label{conv:tau:eat1}
 \bar\eta^{(\tau)}, \,\eta^{(\tau)}, \,\tilde\eta^{(\tau)}&\rightharpoonup^*\eta\quad\text{in}\quad L^\infty(0,h;W^{k_0,2}(Q;\Omega)),\\
\partial_t\tilde\eta^{(\tau)}&\rightharpoonup\partial_t\eta\quad\text{in}\quad L^2(0,h;W^{k_0,2}(Q;\R^n)),\label{conv:tau:eat2}\\
v^{(\tau)}&\rightharpoonup v\quad\text{in}\quad L^2(0,h;W_0^{k_0,2}(\Omega;\R^n)),\label{conv:tau:v}\\
\varrho^{(\tau)}, \bar\varrho^{(\tau)}&\rightharpoonup^{*,\eta} \varrho\quad\text{in}\quad L^\infty(0,h;L^{\beta}(\Omega_\eta)),\label{conv:tau:rho1}\\
\bar\varrho^{(\tau)}&\rightharpoonup^\eta \varrho\quad\text{in}\quad L^2(0,h;W^{1,2}(\Omega_\eta)),
\label{conv:tau:rho2}
 \end{align}
 at least for a (non-relabelled) subsequence. As in \cite[Prop. 2.20]{benesovaVariationalApproachHyperbolic2020} we also have
 \begin{align}
 \tilde\eta^{(\tau)}&\rightarrow \eta\quad\text{in}\quad C^0([0,h];C^{1,\alpha}(Q))\label{conv:tau:eta3}
 \end{align}
 for some $\alpha>0$.
  Furthermore, the definition of $\Phi_k$ and \eqref{apriori5} imply
 \begin{align}
 \label{eq:phidiscrete}
\|\Phi_k-\Phi_{k-1}\|^2_{L^2(\Omega_0)}&=\tau^2\|v_{k}\circ\Phi_{k-1}\|^2_{L^2(\Omega_{0})}\leq c\tau^2\|v_{k}\|^2_{L^2(\Omega_{k-1})}.
 \end{align}
Combining this with \eqref{apriori6} we conclude that
\begin{align}
 \Phi^{(\tau)}&\rightarrow \Phi\quad\text{in}\quad C^0([0,h];C^{1,\alpha}(\Omega_0)),\label{conv:tau:Phi}
 \end{align}
 where the limit $\Phi(t)$ now maps $\Omega_0$ to the limit fluid domain $\Omega(t)$.
 We want to obtain a similar statement for the function $\Psi_{v_k}=\mathrm{id}+\tau v_k$ and write
 $\Psi_{v_k}=\Phi_k\circ\Phi_{k-1}^{-1}$. This motivates the definition
 \begin{align*}
 \Psi^{(\tau)}(t)=\Phi^{(\tau)}(t)\circ \Phi^{\tau}(t-\tau)^{-1}=\mathrm{id}+\tau v^{(\tau)}.
 \end{align*}
  Due to \eqref{apriori5} and \eqref{apriori6}
  we also have
  \begin{align}
 (\Phi^{(\tau)})^{-1}&\rightarrow \Phi^{-1}\quad\text{in}\quad C^0([0,h];C^{\alpha}(\Omega^{(\tau)}))\label{conv:tau:Phi-1}
 \end{align}
  in addition to \eqref{conv:tau:Phi}. Combining \eqref{conv:tau:Phi} and \eqref{conv:tau:Phi-1}
  shows
  \begin{align}
 \Psi^{(\tau)}&\rightarrow \mathrm{id}\quad\text{in}\quad C^0([0,h];C^{\alpha}(\Omega^{(\tau)})).\label{conv:tau:Psi-1}
 \end{align}
  By definition we have $\nabla\Psi^{(\tau)}-I=\tau\nabla v^{(\tau)}$ such that we also have
 \begin{align}\label{conv:tau:Psi} 
 \Psi^{(\tau)}\rightarrow \mathrm{id}\quad\text{in}\quad L^2(0,h;W^{1,2}(\Omega^{(\tau)}))
 \end{align}
 by \eqref{conv:tau:v}.
 The aim is now to pass to the limit in order to obtain 
 \eqref{eq:h1}--\eqref{eq:h3}. As far as the momentum equation is concerned, we rewrite equation \eqref{eq:discreteEL} as
 \begin{align*} 
 &\phantom{{}={}} \int_0^h\inner{DE_\kappa(\bar\eta^{(\tau)})}{\phi^{(\tau)}}\dt -\int_0^{h}\int_{\Omega^{(\tau)}(t+\tau)}  \Div (b \circ (\Psi^{(\tau)})^{-1}) p_\delta(\bar\varrho^{(\tau)}) \dxt \\
%  &+\int_0^{h}\int_{\Omega^{(\tau)}} \tr (\nabla b \nabla (\Psi^{(\tau)})^{-1})  H_\delta'\left(\frac{\tilde{\varrho}^{(\tau)}}{\det(\nabla \Psi^{(\tau)})}\right) ( \tilde{\varrho}^{(\tau)} - \varrho^{(\tau)}) \dx\dt \\
 &+ \int_0^h\bigg[\inner{D_2R_\kappa\left(\eta^{(\tau)},\partial_t\tilde\eta^{(\tau)}\right)}{\phi^{(\tau)}} + \int_{\Omega^{(\tau)}}\mathbb S(\nabla v^{(\tau)}):\nabla b\dx + \kappa \int_{\Omega^{(\tau)}}\nabla^{k_0} v^{(\tau)}:\nabla^{k_0} b\dx\bigg]\dt \\
 &+ \frac{1}{h}\int_0^h\int_Q \varrho_s\big(\partial_t\tilde\eta^{(\tau)} - \zeta^{(\tau)}\big)\,\phi^{(\tau)}\dy\dt \\&+ \frac{1}{h}\int_0^h\int_{\Omega^{(\tau)}}\left(\varrho^{(\tau)}v^{(\tau)} - \sqrt{\varrho^{(\tau)}} \sqrt{\det \nabla (\Phi^{(\tau)})^{-1}} w^{(\tau)} \circ (\Phi^{(\tau)})^{-1}\right)\cdot b\dx\dt \\
 &= \int_0^h\int_Qf_s \cdot\phi^{(\tau)}\dy\dt + \int_0^h\int_{\Omega^{(\tau)}}\varrho^{(\tau)} f_f\cdot b\dx\dt.
\end{align*}
Note that due to the coupling condition, which involves $\eta^{(\tau)}$, we cannot pick the same pair of test-functions for all $\tau$. Instead, we fix $\xi \in C^0([0,h];C_0^\infty(\Omega;\R^3))$ and then derive $\phi^{(\tau)} :=  \xi \circ (\eta^{(\tau)})^{-1}$ from there.

We only have to prove that the terms involving the density converge to their correct counterparts, that is
\begin{align}\label{conv:tau:mom1}
&\int_0^{h}\int_{\Omega^{(\tau)}(t+\tau)}  \Div (b \circ (\Psi^{(\tau)})^{-1}) p_\delta(\bar\varrho^{(\tau)}) \dxt\rightarrow \int_0^{h}\int_{\Omega(t)}  \Div b\, p_\delta(\varrho) \dxt,\\
% &\int_0^{h}\int_{\Omega^{(\tau)}} \tr (\nabla b \nabla (\Psi^{(\tau)})^{-1})  H_\delta'\left(\frac{\tilde\varrho^{(\tau)}}{\det(\nabla \Psi^{(\tau)})}\right) ( \tilde\varrho^{(\tau)} - \varrho^{(\tau)}) \dx\dt\rightarrow 0,\label{conv:tau:mom2}\\
\nonumber
&\frac{1}{h}\int_0^h\int_{\Omega^{(\tau)}(t)}\left(\varrho^{(\tau)}v^{(\tau)} - \sqrt{\varrho^{(\tau)}} \sqrt{\det \nabla (\Phi^{(\tau)})^{-1}} b \circ (\Phi^{(\tau)})^{-1}\right)\cdot b\dx\dt \\
&\qquad\qquad \rightarrow \frac{1}{h}\int_0^h\int_{\Omega(t)}\left(\varrho v - \sqrt{\varrho} \sqrt{\det \nabla (\Phi)^{-1}} b \circ (\Phi)^{-1}\right)\cdot b\dx\dt \label{conv:tau:mom3}\\
% &\int_0^h\int_{\Omega_{\eta^{(\tau)}}}\varrho^{(\tau)}(v^{(\tau)}-w^{(\tau)})\cdot b\dx\dt\rightarrow\int_0^h\int_{\Omega_{\eta}}\varrho(v-w)\cdot b\dx\dt,\label{conv:tau:mom3}\\  %Anmerkung: taucht hier nicht in der form auf sondern wie alt, konvergenz also trivial
&\int_0^h\int_{\Omega^{(\tau)}(t)}\varrho^{(\tau)} f_f\cdot b\dx\dt\rightarrow \int_0^h\int_{\Omega(t)}\varrho f_f\cdot b\dx\dt,\label{conv:tau:mom4}
\end{align}
as $\tau\rightarrow0$.
The limit in the remaining terms can be performed as in \cite[Section 4.1]{benesovaVariationalApproachHyperbolic2020}. The convergence in \eqref{conv:tau:mom4} follows directly from \eqref{conv:tau:rho1} and \eqref{conv:tau:eta3}, whereas \eqref{conv:tau:mom1} %\eqref{conv:tau:mom2}
and \eqref{conv:tau:mom3} require strong convergence of the density. Using \eqref{eq:vrho} and \eqref{eq:rhok} we can write
\begin{align}\label{eq:2203}
\begin{aligned}
\int_{\Omega_k}&\frac{\varrho_{k+1}(x+\tau v_k(x))-\varrho_k(x)}{\tau}\psi(x)\dx\\
&=-\int_{\Omega_k}\varepsilon(\nabla \tilde{\varrho}_{k})(x)\cdot\nabla\psi(x)\dx-\int_{\Omega_k}\frac{1}{\tau}\Big(1-\frac{1}{\det(\nabla \Psi_{v_k})}\Big)\tilde{\varrho}_k(x)\psi(x)\dx
\end{aligned}
\end{align}
 for all $\psi\in W^{1,2}(\Omega_k)$. Now we choose a parabolic cylinder $J\times B$ such that $2B\Subset \Omega_{\eta^{(\tau)}}$ for all $\tau$ small enough. This is possible due to \eqref{conv:tau:eta3}.
 We obtain for $\psi\in W^{1,2}_0(B)$
 \begin{align*}
\int_{B}&\frac{\varrho^{(\tau)}(t+\tau,x)-\varrho^{(\tau)}(t,x)}{\tau}\psi(x)\dx\\
&=-\int_{B}\frac{\varrho^{(\tau)}(t+\tau,x+\tau v^{(\tau)}(x))-\varrho^{(\tau)}(t+\tau,x)}{\tau}\psi(x)\dx\\
&-\int_{B}\varepsilon\nabla \tilde{\varrho}^{(\tau)}(t,x)\cdot\nabla\psi(x)\dx\\&-\int_{B}\frac{1}{\tau}\Big(1-\frac{1}{\det(\nabla \Psi^{(\tau)})}\Big)\tilde{\varrho}^{(\tau)}(t,x)\psi(x)\dx\\
&=-\int_{B}\int_0^1\nabla\varrho^{(\tau)}(t+\tau,x+s\tau v^{(\tau)}(x))\,\dd s\cdot v^{(\tau)}\psi(x)\dx\\
&-\int_{B}\varepsilon\nabla \tilde{\varrho}^{(\tau)}(t,x)\cdot\nabla\psi(x)\dx\\&-\int_{B}\frac{\Div v^{(\tau)}+ o(\tau)}{1+\tau\Div v^{(\tau)}+ o(\tau)}\tilde{\varrho}^{(\tau)}(t,x)\psi(x)\dx.
\end{align*}
Here the quantity $ o(\tau)$ is such that $o(\tau)/\tau$ vanishes in the $L^2(J;L^\infty(B))$-norm as a consequence of \eqref{conv:tau:v}. Using \eqref{eq:vrho} we may deduce from the regularity of $v^{(\tau)}$ in \eqref{apriori1}--\eqref{apriori3} as well as the uniform lower bound for $\det\nabla\Phi_k$ from \eqref{apriori5}) that
\begin{align*}
\varrho^{(\tau)}\in W^{1,2}(I;W^{-1,2}(B))\cap L^2(I;W^{1,2}(B))
\end{align*}
uniformly in $\tau$.
 Combining this with \eqref{conv:tau:rho2}
 we conclude that
 \begin{align}
\begin{aligned}
 \varrho^{(\tau)}&\rightharpoonup\varrho\quad\text{in}\quad W^{1,2}(I;W^{-1,2}(B))\cap L^2(I;W^{1,2}(B)),
 \\
 \text{and} \quad \varrho^{(\tau)}&\rightarrow \varrho\quad\text{in}\quad L^2(J \times B).
 \end{aligned}\label{conv:tau:tilderhostrong} 
 \end{align}
 
 This together with \eqref{conv:tau:Psi} yields
  \begin{align}
 \tilde{\varrho}^{(\tau)},\bar\varrho^{(\tau)}&\rightarrow \varrho\quad\text{in}\quad L^2(J \times B).\label{conv:tau:rhostrong} 
 \end{align}
 Using \eqref{apriori3} and arbitrariness of $J \times B$, the convergences in \eqref{conv:tau:tilderhostrong} and  \eqref{conv:tau:rhostrong} even hold in
 $L^q((0,h)\times\Omega^{(\tau)})$ with $q=\frac{4+3\beta}{3}>\beta$.
 This in combination with \eqref{conv:tau:eta3} and \eqref{conv:tau:Psi} is enough 
 to prove the convergenc \eqref{conv:tau:mom1} %and \eqref{conv:tau:mom2}.
 Taking into account also \eqref{conv:tau:v}, \eqref{conv:tau:Phi} and \eqref{conv:tau:Phi-1} proves \eqref{conv:tau:mom3} (note also the uniform lower bound for $\det\nabla\Phi_k$ from \eqref{apriori5}). We thus conclude that \eqref{eq:h1} holds.\\
 Now we take a look at the continuity equation. We use a test-function
 $\psi\in C^\infty_c((\tau,h-\tau)\times \Omega)$ and obtain similarly to \eqref{eq:2203}
 \begin{align*}
\int_0^h\int_{\Omega^{(\tau)}}&\frac{\tilde{\varrho}^{(\tau)}(t+\tau)\circ\Psi^{(\tau)}-\tilde{\varrho}^{(\tau)}(t)}{\tau}\psi(t+\tau)\circ\Psi^{(\tau)}\dx\dt\\
&=-\int_0^h\int_{\Omega^{(\tau)}}\varepsilon\nabla \tilde{\varrho}^{(\tau)}\circ\Psi^{(\tau)}\cdot\nabla(\psi(t+\tau)\circ\Psi^{(\tau)})\dxt\\&-\int_0^h\int_{\Omega^{(\tau)}}\frac{1}{\tau}\Big(1-\frac{1}{\det(\nabla \Psi^{(\tau)})}\Big)\tilde{\varrho}^{(\tau)}\psi(t+\tau)\circ\Psi^{(\tau)}\dxt,
\end{align*}
which we denote by $\mathrm{I}=\mathrm{II}+\mathrm{III}$.
The term $\mathrm{I}$ on the left-hand side can be rewritten as
 \begin{align*}
\mathrm{I}&=\int_0^h\int_{\Omega^{(\tau)}}\tilde{\varrho}^{(\tau)}(t)\frac{\psi(t)-\psi(t+\tau)\circ\Psi^{(\tau)}}{\tau}\dx\dt\\
&-\int_0^h\int_{\Omega^{(\tau)}}\frac{\Div v^{(\tau)}(t+\tau)+o(\tau)}{1+\tau\Div v^{(\tau}(t+\tau))+ o(\tau)}\tilde{\varrho}^{(\tau)}(t)\psi(t)\dx\dt\\
&=:\mathrm{I}_1+\mathrm{I}_2
\end{align*}
with $o(\tau)$ as above.
Due to \eqref{conv:tau:Phi}, \eqref{conv:tau:tilderhostrong}, the smoothness of $\psi$ and \eqref{eq:phidiscrete} we find
 \begin{align*}
\mathrm{I}_1\rightarrow\int_0^h\int_{\Omega(t)}\varrho\big(\partial_t\psi+v\cdot\nabla\psi\big)\dx\dt.
\end{align*}
On the other hand,
 \begin{align*}
\mathrm{I}_2&\rightarrow -\int_0^h\int_{\Omega(t)}\Div v(t)\varrho(t)\psi(t)\dx\dt
\end{align*}
using \eqref{conv:tau:v}, \eqref{conv:tau:eta3} and \eqref{conv:tau:tilderhostrong}. Similarly,
 \begin{align*}
\mathrm{III}&\rightarrow -\int_0^h\int_{\Omega(t)}\Div v(t)\varrho(t)\psi(t)\dx\dt
\end{align*}
which cancels with $I_2$. Finally, we have
 \begin{align*}
 \mathrm{II}&\rightarrow-\int_0^h\int_{\Omega(t)}\varepsilon\nabla {\varrho}\cdot\nabla\psi\dxt,
 \end{align*}
 which yields
 \begin{align}\label{lim:continuity}
-\int_0^h\int_{\Omega(t)}\varrho\big(\partial_t\psi+v\cdot\nabla\psi\big)\dx\dt =-\int_0^h\int_{\Omega(t)}\varepsilon\nabla {\varrho}\cdot\nabla\psi\dxt
 \end{align}
 for all $\psi\in C^\infty_c((0,h)\times\Omega)$. We have shown \eqref{eq:h2}.
 
 Finally, for the energy inequality \eqref{eq:h3}, we note that we cannot simply pass to the limit in Lemma \ref{lem:discreteenergy}, as this is missing a factor 2 in front of the dissipation terms. Instead, we obtain it as in \cite[Lemma 4.8]{benesovaVariationalApproachHyperbolic2020} by testing \eqref{eq:h1} with $(\partial_t \eta,v)$. The only substantial addition here is the pressure term which reads as
\begin{align*} 
-\int_{\Omega(t)} \Div  v (H_\delta'(\varrho) \varrho - H_\delta(\varrho)) \dy &= \int_{\Omega(t)}\big( -\Div (v \varrho) H_\delta'(\varrho) + v\cdot \nabla \varrho H_\delta'(\varrho) + \Div v H_\delta(\varrho)\big) \dy \\
&= \int_{\Omega(t)}\big( \partial_t \varrho H_\delta'(\varrho)  + \Div (v H_\delta(\varrho)) \big) \dy  - \int_{\Omega(t)} \varepsilon \Delta \varrho H_\delta'(\varrho) \dy \\
&=\frac{\dd}{\dt} \int_{\Omega(t)} H_\delta(\varrho) \dy+ \varepsilon \int_{\Omega(t)} H_\delta''(\varrho) \abs{\nabla \varrho}^2 \dy
\end{align*}
using \eqref{eq:h2} and Reynold's transport theorem (due to Lemma \ref{lem:warme2} (b) the density is smooth enough to rigorously perform these computations).
For the inertial term we now have
\begin{align*}
 &\phantom{{}={}}\int_{\Omega(t)} \left( \varrho(t) v(t) - \sqrt{\varrho(t)} \sqrt{\det \nabla \Phi^{-1} } w(t) \circ \Phi^{-1} \right) \cdot v(t) \dx \\
 &= \int_{\Omega(t)}\Big(  \varrho(t) \abs{v(t)}^2 - \sqrt{\det \nabla \Phi^{-1} } w(t) \circ \Phi^{-1} \cdot \sqrt{\varrho(t)} v(t) \Big)\dx\\
 & \geq \int_{\Omega(t)}  \frac{\varrho(t)}{2} \abs{v(t)}^2 - \frac{\det \nabla \Phi^{-1}}{2} \abs{w(t) \circ \Phi^{-1}}^2\ \dx = \int_{\Omega(t)}  \varrho(t) \frac{\abs{v(t)}^2}{2} \dx - \int_{\Omega_0}   \frac{\abs{w(t)}^2}{2} \dx
\end{align*}
by Young's inequality and a change of variables. This also shows the need for the factor $\sqrt{\det\nabla \Phi^{-1}}$ in the equations.

 \section{Inertial problem (\texorpdfstring{$h \to 0$}{h to 0})}\label{sec:h}
The aim of the present section is to pass to the limit $h\rightarrow 0$ in \eqref{eq:h1}-\eqref{eq:h3} and to obtain a solution to the regularised system (where $\kappa,\varepsilon$ and $\delta$ are fixed). We beginn with a definition of the latter.

We introduce the function spaces
\begin{align*}
Y^I_{k_0}&:=\{\zeta\in W^{1,2}(I;W^{k_0,2}(Q;\R^n))\cap L^\infty(I;\mathcal{E})\,\},\\
X_{\eta,k_0}^I&:=L^2(I;W^{k_0,2}(\Omega_{\eta};\R^n)),\\
\hat{Z}_{\eta}^I&:=C_w(I;W^{1,2}(\Omega_{\eta};\R^n) \cap L^\beta(\Omega_\eta)),
\end{align*}
which replace the spaces $Y^I$, $X_\eta^I$ and $Z_\eta^I$ (defined in Section \ref{sec:weak}) on the $\kappa$ and $\varepsilon$-level respectively.

A weak solution to the regularised system is a triple $(\eta,v,\varrho)\in Y^I_{k_0}\times X_{\eta,k_0}^I\times \hat{Z}_{\eta,\varepsilon}^I$ that satisfies the following.
\begin{itemize}
\item\label{E1} The momentum equation holds in the sense that\begin{align}\label{eq:mom:kappa}
\begin{aligned}
&\int_I\frac{\dd}{\dt}\int_{\Omega_{ \eta}}\varrho v \cdot b\dx-\int_{\Omega_{\eta}} \Big(\varrho v\cdot \partial_t b +\varrho v\otimes v:\nabla b\Big)\dxt+\frac{\varepsilon}{2}\int_I\int_{\Omega(t)}\nabla\varrho\cdot(\nabla v b+\nabla b v)\dxt
\\
&+\int_I\int_{\Omega_\eta}\mathbb S(\nabla v):\nabla b \dxt+\kappa\int_I\int_{\Omega_\eta}\nabla^{k_0} v:\nabla^{k_0}b \dxt-\int_I\int_{\Omega_{ \eta }}
p_\delta(\varrho)\,\Div b\dxt\\
&+\int_I\bigg(-\int_Q\varrho_s \partial_t\eta\,\partial_t \phi\dy + \langle DE_\kappa(\eta),\phi\rangle+\langle D_2R_\kappa(\eta,\partial_t\eta),\phi\rangle\bigg)\dt
\\&=\int_I\int_{\Omega_{\eta}}\varrho f_f\cdot b\dxt+\int_I\int_Q f_s\cdot \phi\,\dd x\dt
\end{aligned}
\end{align} 
for all $(\phi,b)\in W^{1,2}(I; W^{k_0,2}(Q;\R^n) )\times C_c^\infty(\overline{I}\times \Omega; \R^n)$ with $b(t)\circ \eta(t) = \phi(t)$ in $Q$ and $\phi(t)=0$ on $P$. Moreover, we have $(\varrho v)(0)=q_0$, $\eta(0)=\eta_0$ and $\partial_t\eta(0)=\eta_1$ we well as $\partial_t\eta(t)=v(t)\circ\eta(t)$ in $Q$, $\eta(t)\in\mathcal E$ and $v(t)=0$ on $\partial\Omega$ for a.a. $t\in I$.
\item\label{E2}  The continuity equation holds in the sense that
\begin{align}\label{eq:con:kappa} 
\begin{aligned}
\partial_t\varrho+\Div(\varrho v)=\varepsilon\Delta \varrho
\end{aligned}
\end{align}
holds in $I\times\Omega_\eta$ and we have $\varrho(0)=\varrho_0$ as well as $\partial_{\nu_\eta}\varrho=0$. 
\item \label{E3} The energy inequality is satisfied in the sense that
\begin{align} \label{eq:ene:kappa}
\begin{aligned}
- \int_I &\partial_t \psi \,
\mathscr E_{\delta,\kappa} \dt+\int_I\psi\int_{\Omega_\eta}\Big(\mathbb S(\nabla v):\nabla v+\kappa|\nabla^{k_0} v|^2\Big)\dxt\\
&+2\int_I\psi R_\kappa(\eta,\partial_t\eta)\ds +\varepsilon\int_I\psi\int_{\Omega_\eta}H_\delta''(\varrho)|\nabla\varrho|^2\dxt\\
&\leq
\psi(0) \mathscr E_\delta(0)+\int_I\int_{\Omega_{\eta}}\varrho f_f\cdot v\dxt+\int_I\psi\int_Q f_s\,\partial_t\eta\,\dd y\dt
\end{aligned}
\end{align}
holds for any $\psi \in C^\infty_c([0, T))$.
Here, we abbreviated
$$\mathscr E_{\delta,\kappa}(t)= \int_{\Omega_\eta(t)}\Big(\frac{1}{2} \varrho(t) | {v}(t) |^2 + H_\delta(\varrho(t))\Big)\dx+\int_Q\varrho_s\frac{|\partial_t\eta|^2}{2}\dy+ E_\kappa(\eta(t)).$$
\end{itemize}

\begin{theorem}\label{thm:kappa}
Assume that we have for some $\alpha\in(0,1)$
\begin{align*}
\frac{|q_0|^2}{\varrho_0}&\in L^1(\Omega_{\eta_0}),\ \varrho_0\in C^{2,\alpha}(\overline\Omega_{\eta_0}), \ \eta_0\in \mathcal E,\ \eta_1\in L^2(Q;\R^n),\\
f_f&\in C([0,T];L^2(\R^n;\R^n))\cap L^2(I;L^\infty(\R^3;\R^n)),\ f_s\in L^2(I\times Q).
\end{align*}
Furthermore suppose that $\varrho_0$ is strictly positive. Then there is a solution $(\eta,v,\varrho)\in Y^I_{k_0}\times X^I_{\eta,k_0}\times \hat{Z}_{\eta}^I$ to \eqref{eq:mom:kappa}--\eqref{eq:ene:kappa}. 
Here, we have $I=(0,T)$, where $T \in (0, \infty)$ is arbitrary.
\end{theorem}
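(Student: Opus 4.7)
\textbf{Step 1: Iteration of Theorem \ref{thm:h}.} The plan is first to iterate Theorem \ref{thm:h} over intervals $[kh,(k+1)h]$, $k=0,1,\ldots,\lfloor T/h\rfloor$, to obtain an approximate solution $(\eta^{(h)},v^{(h)},\varrho^{(h)})$ defined on all of $I$. The hand-off between intervals is dictated by the structure of the inertial terms in \eqref{eq:h1}: on the new interval we take as initial data the final state of the previous one, set $\zeta^{(h)}(t):=\partial_t\eta^{(h)}(t-h)$, and take $w^{(h)}(t):=\sqrt{\varrho^{(h)}(t-h)}\,v^{(h)}(t-h)\circ\Phi^{(h)}(t-h)$ transported back to $\Omega_0$ via concatenation of the flow map. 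The flow $\Phi^{(h)}$ extends by $\partial_t\Phi^{(h)}=v^{(h)}\circ\Phi^{(h)}$.

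\textbf{Step 2: Uniform estimates and strict positivity of the density.} Summing \eqref{eq:h3} over the intervals, the telescoping nature of the inertial terms eliminates all but the final and initial values, and we obtain a uniform (in $h$) bound
\begin{align*}
\sup_{t\in I}\mathscr E_{\delta,\kappa}(t) + \int_I\bigl(2R_\kappa(\eta^{(h)},\partial_t\eta^{(h)})+\|\nabla v^{(h)}\|_{L^2}^2 + \kappa\|\nabla^{k_0}v^{(h)}\|_{L^2}^2+\varepsilon\int_{\Omega^{(h)}}\!H_\delta''(\varrho^{(h)})|\nabla\varrho^{(h)}|^2\bigr)\dt \leq C,
\end{align*}
which yields the required weak/weak-$*$ bounds on $\eta^{(h)}$ in $Y^I_{k_0}$, $v^{(h)}$ in $X^I_{\eta^{(h)},k_0}$, $\varrho^{(h)}$ in $L^\infty(I;L^\beta)\cap L^2(I;W^{1,2})$. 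Because $k_0>n/2+2$, Sobolev embedding controls $\|\Div v^{(h)}\|_{L^2(I;L^\infty)}$; Lemma \ref{lem:warme2}(a) then gives the crucial two-sided bound
\begin{align*}
0<c_1\leq \varrho^{(h)}(t,x)\leq c_2<\infty\quad\text{for a.e.\ }(t,x),
\end{align*}
uniform in $h$, together with the $L^\infty(I;W^{1,2})\cap L^2(I;W^{2,2})$ estimate from Lemma \ref{lem:warme2}(b). This absence of vacuum is the key point that will allow us to recover the $\tfrac{\varepsilon}{2}\nabla\varrho\cdot(\nabla v\,b+\nabla b\,v)$ term in the limit.

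\textbf{Step 3: Compactness and passage to the limit in the continuity equation.} Using Arzela-Ascoli for $\eta^{(h)}$ (from $W^{1,2}(I;W^{k_0,2}(Q))$ together with $k_0>n/2+2$) we get $\eta^{(h)}\to\eta$ in $C^0(\overline I;C^{1,\alpha}(Q))$, hence uniform convergence of the domains. Similarly $\Phi^{(h)}\to\Phi$ with $\partial_t\Phi=v\circ\Phi$. The $W^{2,2}$ spatial regularity of $\varrho^{(h)}$ together with the distributional bound on $\partial_t\varrho^{(h)}$ provided by \eqref{eq:h2} and the Aubin-Lions type Lemma \ref{thm:weakstrong} yields strong convergence $\varrho^{(h)}\to^\eta\varrho$ in $L^q(I;L^q(\Omega_\eta))$ for all $q<\infty$. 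With this in hand, the linear continuity equation \eqref{eq:h2} passes directly to the limit, giving \eqref{eq:con:kappa} with the Neumann boundary condition preserved, and the strict positivity of $\varrho$ is inherited.

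\textbf{Step 4: The momentum equation and the emergent $\varepsilon$-term (main obstacle).} The hardest step is transforming the time-delayed inertial couplings in \eqref{eq:h1} into the genuine material derivatives in \eqref{eq:mom:kappa}. For the solid, the difference quotient $\tfrac{1}{h}\varrho_s(\partial_t\eta^{(h)}-\zeta^{(h)})$ tested against smooth $\phi$ can be shifted onto $\phi$ (after a discrete integration by parts in time), producing $-\varrho_s\partial_t\eta\cdot\partial_t\phi$ in the limit. For the fluid the corresponding term is
\begin{align*}
\frac{1}{h}\int_I\int_{\Omega^{(h)}(t)}\Bigl(\varrho^{(h)}v^{(h)}-\sqrt{\varrho^{(h)}}\sqrt{\det\nabla(\Phi^{(h)})^{-1}}\,w^{(h)}\circ(\Phi^{(h)})^{-1}\Bigr)\cdot b\dxt.
\end{align*}
Substituting the definition of $w^{(h)}$ and performing a change of variables via $\Phi^{(h)}$ rewrites the integrand in Lagrangian form as a discrete material derivative of $\varrho^{(h)}v^{(h)}$ along the flow. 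Because $\sqrt{\varrho^{(h)}}\sqrt{\det\nabla(\Phi^{(h)})^{-1}}$ is obtained by propagating $\sqrt{\varrho^{(h)}(t-h)}$ using the full continuity equation \eqref{eq:h2} (which contains the $\varepsilon\Delta\varrho$ term), expanding $\sqrt{\varrho^{(h)}(t)\varrho^{(h)}(t-h)}$ via strict positivity of $\varrho^{(h)}$ and using Lemma \ref{lem:warme}(b) with $\theta(z)=\sqrt{z}$ identifies the first-order correction as exactly $\tfrac{\varepsilon}{2}\nabla\varrho\cdot(\nabla v\,b+\nabla b\,v)$; this is precisely the step where the uniform lower bound $\varrho^{(h)}\geq c_1>0$ is indispensable, since $\sqrt{\cdot}$ degenerates at zero. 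The convective limit $\varrho^{(h)}v^{(h)}\otimes v^{(h)}\rightharpoonup^\eta\varrho v\otimes v$ follows from strong convergence of $\varrho^{(h)}$ and a further application of Lemma \ref{thm:weakstrong} (weak convergence of $v^{(h)}$ in $W^{k_0,2}$ gives strong convergence in a sufficient Lebesgue space). The pressure term passes by strong convergence of $\varrho^{(h)}$.

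\textbf{Step 5: Energy inequality.} Finally, \eqref{eq:ene:kappa} is obtained by testing the already established weak momentum equation with $(v,\partial_t\eta)$ (as in \cite[Lemma 4.8]{benesovaVariationalApproachHyperbolic2020}), using the continuity equation identity
\begin{align*}
-\int_{\Omega_\eta}\!\Div v\,(H_\delta'(\varrho)\varrho-H_\delta(\varrho))\dx=\frac{\dd}{\dt}\!\int_{\Omega_\eta}\!H_\delta(\varrho)\dx+\varepsilon\!\int_{\Omega_\eta}\!H_\delta''(\varrho)|\nabla\varrho|^2\dx,
\end{align*}
which is rigorous because Step 2 gives enough regularity on $\varrho$ for Reynold's transport theorem. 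Initial data convergence $(\varrho v)(0)=q_0$, $\eta(0)=\eta_0$, $\partial_t\eta(0)=\eta_1$ is inherited from the iteration scheme together with the weak continuity coming from the equation.
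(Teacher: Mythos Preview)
Your outline follows the paper's strategy closely (iterate Theorem~\ref{thm:h}, telescope the energy, use Lemma~\ref{lem:warme2} for two-sided density bounds, then pass to the limit). The continuity equation, the pressure term, the solid inertia, and the emergence of the $\tfrac{\varepsilon}{2}\nabla\varrho\cdot(\nabla v\,b+\nabla b\,v)$ correction are all handled essentially as in the paper (the paper computes directly along the flow via Corollary~\ref{cor:Phi_hProperties} rather than invoking the renormalised equation with $\theta=\sqrt{\cdot}$, but both routes work once $\varrho^{(h)}\geq c_1>0$).

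There is, however, a genuine gap in Step~4 for the convective term. Your parenthetical ``weak convergence of $v^{(h)}$ in $W^{k_0,2}$ gives strong convergence in a sufficient Lebesgue space'' is false as written: weak convergence in $L^2(I;W^{k_0,2})$ alone never yields strong $L^p$ convergence without time compactness. To apply Lemma~\ref{thm:weakstrong} you must supply assumption~\ref{A3}, i.e.\ a distributional bound on the time derivative of the momentum. In the paper this is Lemma~\ref{lem:hmEst}: one reads the $h$-level momentum equation \eqref{eq:hlong1} as an identity for the discrete time derivative of $\sqrt{\varrho^{(h)}}v^{(h)}$ (modulo flow-map shifts), obtaining a uniform $L^2(I;W^{-m,2})$ bound. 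Only then does Lemma~\ref{thm:weakstrong} apply to the pair $(v^{(h)},\ \text{averaged momentum})$ and produce the limit $\varrho v\otimes v$. You should state and prove this estimate explicitly; strong convergence of $\varrho^{(h)}$ by itself is not enough to pass to the limit in a product of two weakly converging copies of $v^{(h)}$.

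A second, smaller point you are glossing over: after expanding the fluid inertial term along the flow, the convective contribution $(\mathrm{III})_h$ in the paper's notation carries $v^{(h)}(t+s)\circ\Phi_s^{(h)}$ at a \emph{shifted} argument, not $v^{(h)}(t)$. Removing this shift costs an extra argument (the paper uses the Lipschitz estimate \eqref{Phi-Shift} together with a spatial mollification in $\xi$, letting $h\to 0$ first and then $\xi\to 0$). Your write-up should acknowledge this; it is not automatic.
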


\subsection{A priori analysis} \label{subsec:h-construction}
We proceed as follows. First we need to produce an $h$-approximation on the whole interval $I$. For $h\ll 1$ we decompose the interval $I$ into subintervals $(0,h),(h,2h),\dots$. For any given $h$ we obtain
from Theorem \ref{thm:h} the existence of a solution to \eqref{eq:h1}--\eqref{eq:h3} in $(0,h)$. We will then use the resulting $\eta(h)$, $\varrho(h)$, $\partial_t \eta$ and a suitably modified version of $v \circ \Phi(t) \circ \Phi(h)^{-1}$ as $\eta_0$, $\varrho_0$ $\zeta$ and $w$ on $(h,2h)$. We can prove that these are valid initial data, because of the energy inequality \eqref{eq:h3}. We repeat this procedure on the following time-intervals to get a global solution
$(\eta_h,v_h,\varrho_h)$ which solves a variant of \eqref{eq:h1} on $I$.

Specifically, given a solution $(\eta_l^{(h)},v_l^{(h)},\varrho_l^{(h)},\Phi_l^{(h)})$ to \eqref{eq:h1}--\eqref{eq:h3} on the interval $[0,h]$ (note that later this will correspond to $[(l-1)h,lh]$ in $I$, but for now, we will consider each of these intervals as $[0,h]$ and distinguish them by the index $l$), we apply Theorem \ref{thm:h} with 
\begin{align*}
 \eta_l^{(h)}(h) &\text{ as  }\eta_0&\\
 \varrho_l^{(h)}(h) &\text{ as }\varrho_0 &&\text{ (defined on $\Omega\setminus \eta_l^{(h)}(h,Q) = \Omega \setminus \eta_0(Q) =: \Omega_0$)}\\
 \partial_t \eta_l^{(h)}(t) & \text{ as } \zeta(t) && \text{ for all } t \in [0,h]
\end{align*}
and \begin{align*}
 \left( \sqrt{\varrho_l^{(h)}(t) } v_l^{(h)}(t)\right) \circ \Phi_l^{(h)}(t) \circ \Phi_l^{(h)}(h)^{-1} \sqrt{\det\nabla (\Phi_l^{(h)}(t) \circ \Phi_l^{(h)}(h)^{-1}) }
\end{align*}
 as $w(t)$  for all $t \in [0,h]$ (defined on $\Omega\setminus \eta_l^{(h)}(h)(Q) = \Omega_0$). We also write as usual $\Omega_l^{(h)}(t) := \Omega \setminus \eta_l^{(h)}(t,Q)$.
 
These assignements are obvious, except for $w$. Here one should keep in mind that $w(t)$ needs to be defined on the new initial fluid domain $\Omega_0$, but needs to correspond to the quantity $\sqrt{\varrho} v$ for the matching fluid particle at an earlier time. We thus employ the flow map to move it there. As this flow map was only defined with respect to the previous reference configuration, we take a slight detour there, resulting in $\Phi(t) \circ \Phi(h)^{-1}$ and finally we need to correct the distortion due to the extra term $\varepsilon \Delta \varrho$ in the continuity equation such that, in particular,
\begin{align*}
\int_{\Omega_0} \abs{w(t)}^2 \dx = \int_{\Omega_l^{(h)}(t)} \varrho_l^{(h)}(t) \abs{v_l^{(h)}}^2 \dx
\end{align*}

From the energy inequality \eqref{eq:h3} applied to the previous solution, we can conclude that our new initial data fulfills the conditions for Theorem \ref{thm:h}. The solutions constructed by this theorem will then be denoted by $(\eta_{l+1}^{(h)},v_{l+1}^{(h)},\varrho_{l+1}^{(h)},\Phi_{l+1}^{(h)})$.

With these solutions in hand, we can now construct an $h$-approximation on the whole interval $I$. Specifically, we set
\begin{align*}
\eta^{(h)}(t) &:= \eta_l^{(h)}(t-(l-1)h) & \text{ for } & t \in [(l-1)h,lh] \\
v^{(h)}(t) &:= v_l^{(h)}(t-(l-1)h) & \text{ for } & t \in [(l-1)h,lh] \\
\varrho^{(h)}(t) &:= \varrho_l^{(h)}(t-(l-1)h) & \text{ for } & t \in [(l-1)h,lh] \\
\Omega^{(h)}(t) &:= \Omega^{(h)}_l(t-(l-1)h) = \Omega \setminus \eta^{(h)}(t,Q) & \text{ for } & t \in [(l-1)h,lh] 
\intertext{ as well as a redefined flow map for $t \in [(l-1)h,lh]$}
\Phi_s^{(h)}(t) &:= \Phi_l^{(h)}(s+t-(l-1)h) \circ \Phi_l^{(h)}(t-(l-1)h)^{-1} & \text{ for } &t+s \in [(l-1)h,lh] \\
\Phi_s^{(h)}(t) &:= \Phi_{l+1}^{(h)}(s+t-lh) \circ \Phi_l^{(h)}(h) \circ \Phi_l^{(h)}(t-(l-1)h)^{-1} & \text{ for } &t+s \in [lh,(l+1)h] \\
\Phi_s^{(h)}(t) &:= \Phi_{l-1}^{(h)}(s+t-(l-2)h) \circ \Phi_{l-1}^{(h)}(h) \circ \Phi_{l-1}^{(h)}(t-(l-1)h)^{-1} & \text{ for } &t+s \in [(l-2)h,(l-1)h] 
\end{align*}
and so on. Here the new flow map needs to be read in the style of semi-groups, i.e. $\Phi_s^{(h)}(t)$ maps the fluid domain at time $t$ to the fluid domain at time $t+s$, in such a way that we follow the fluid flow. Note that in concert with the uniform (only $\kappa$-dependent) bounds on $\det \nabla \Phi(t)$ derived in the last section in \eqref{apriori5} and the $C^1$-regularity of $\eta$, this also implies that $\Phi(t)$ is a diffeomorphism up to the boundary and thus there cannot be a collision.

If we now translate \eqref{eq:h1}--\eqref{eq:h3} into this new notation, then we have that the $h$-approximation fulfills the following:
\begin{itemize}
 \item The momentum equation \eqref{eq:h1} translates to
  \begin{align}\label{eq:hlong1}
  \begin{aligned}
&\qquad\inner{DE_\kappa(\eta^{(h)}(t))}{\phi} -\int_{\Omega^{(h)}(t)} p_\delta(\varrho^{(h)})\Div b \dx +\inner{D_2R_\kappa\left(\eta^{(h)},\partial_t \eta^{(h)}\right)}{\phi}  \\
 &+  \int_{\Omega^{(h)}(t)}\mathbb S(\nabla v^{(h)}):\nabla b\dx + \kappa \int_{\Omega^{(h)}(t)}\nabla^{k_0} v^{(h)}:\nabla^{k_0}b \dx  +  \int_Q \varrho_s\tfrac{\partial_t \eta^{(h)}(t) - \partial_t \eta^{(h)}(t-h)}{h}\cdot\phi\dy \\
 &+ \frac{1}{h} \int_{\Omega^{(h)}(t)}\left(\varrho^{(h)}(t)v^{(h)}(t)-\sqrt{\varrho^{(h)}(t) \varrho^{(h)}(t-h)\circ\Phi^{(h)}_{-h}\det \nabla \Phi^{(h)}_{-h}}v^{(h)}(t-h)\circ\Phi^{(h)}_{-h}\right)\cdot b\dx \\
 &= \int_Qf_s\cdot\phi\dy + \int_{\Omega^{(h)}(t)}\varrho f_f\cdot b\dx
 \end{aligned}
  \end{align}
\item The continuity equation \eqref{eq:h2} holds unchanged, i.e.
  \begin{align}
   \partial_t \varrho^{(h)} = -\Div(v^{(h)} \varrho^{(h)}) + \varepsilon \Delta \varrho^{(h)}\label{eq:hlong2}
  \end{align}
  in $I\times \Omega_\eta$ and $\partial_{\nu^{(h)}(t)}\varrho^{(h)}(t)=0$ in $\partial \Omega^{(h)}(t)$ for all $t \in (0,h)$ as well as $\varrho^{(h)}(0)=\varrho_0$.
  \item the energy balance holds in the sense that
  \begin{align}\label{eq:hlong3}
  \begin{aligned}
&\qquad E_\kappa(\eta^{(h)}(t)) + U_{\eta}^\delta(\varrho^{(h)})  + \kappa\int_0^{t_1}\int_{\Omega^{(h)}(t)}|\nabla^{k_0} v^{(h)}|^2\dxt \\
&+ \int_0^{t_1}  \Big(2R_\kappa\left(\eta^{(h)},\partial_t \eta^{(h)}\right) + \int_{\Omega^{(h)}(t)}\mathbb S(\nabla v^{(h)}):\nabla v^{(h)}\dx + \varepsilon \int_{\Omega^{(h)}(t)}H_\delta''(\varrho^{(h)})|\nabla\varrho^{(h)}|^2\dx\Big) \dt \\ 
  & + \int_{t_1-h}^{t_1} \frac{1}{2h}\left[\varrho_s\int_Q|\partial_t \eta^{(h)}|^2\dy + \int_{\Omega^{(h)}(t)} \varrho^{(h)} \abs{v^{(h)}}^2 \dx  \right] \dt\\
 &\leq E_\kappa(\eta_{0}) + U_{\eta_0}^\delta(\varrho_0) + \frac{1}{2}\left[\varrho_s\int_Q| \eta_1|^2\dy + \int_{\Omega_0} \varrho_0 \abs{v_0 }^2 \dx  \right] \dt \\
 &+  \int_0^{t_1} \left[\int_Q \partial_t \eta^{(h)} \, f_s \dy + \int_{\Omega(t)} \varrho^{(h)} v^{(h)} \cdot f_f \dx\right] \dt
 \end{aligned}
\end{align}
for a.a. $t_1\in I$.
\end{itemize}

Most of this is a straigthforward replacement of the new definitions together with a telescope argument for the energy balances. Note that in the intertial term of the momentum-equation, the different flow maps from the equation and the definition of $w$ combine to $\Phi_h$, as do their Jacobian determinants.

Additionally, we recover the expected properties of the flow map:
\begin{corollary} \label{cor:Phi_hProperties}
Let the assumptions of Theorem \ref{thm:kappa} be valid.
 For any $t,t+s \in [0,T]$, $\Phi^{(h)}_s(t)$ is a diffeomorphism between $\Omega^{(h)}(t)$ and $\Omega^{(h)}(t+s)$ such that $\Phi^{(h)}_0 = \mathrm{id}$ and
\begin{align*}
 \partial_s \Phi^{(h)}_s(t) &= v^{(h)}(t+s) \circ \Phi^{(h)}_s(t).
\end{align*}
As a consequence, we have 
\begin{align*}
 \partial_s \det \nabla \Phi_s^{(h)} = \Div v^{(h)}(t+s) \circ \Phi^{(h)}_s \det \nabla \Phi_s^{(h)},
\end{align*}
which, in particular, implies
\begin{align*}
 \partial_s \left(\varrho^{(h)}(t+s) \circ \Phi^{(h)}_s(t)\right) &= \left(\varepsilon \Delta \varrho^{(h)}(t+s) -\varrho^{(h)}(t+s) \Div v^{(h)}(t+s)\right)\circ \Phi^{(h)}_s(t)\\
 \partial_s \left(\varrho^{(h)}(t+s) \circ \Phi^{(h)}_s(t) \det \nabla \Phi_s^{(h)}(t) \right) &= \varepsilon \Delta \varrho^{(h)}(t+s)\circ \Phi^{(h)}_s(t) \det \nabla \Phi_s^{(h)}(t).
\end{align*}
\end{corollary}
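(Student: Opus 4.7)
The idea is to reduce every assertion to the properties of the underlying flow maps $\Phi_l^{(h)}$ that were constructed on each sub-interval $[(l-1)h, lh]$ in Section \ref{sec:tau} as limits of the discrete maps $\Phi_k$. Recall that each $\Phi_l^{(h)}$ satisfies $\partial_r \Phi_l^{(h)}(r) = v_l^{(h)}(r) \circ \Phi_l^{(h)}(r)$ with $\Phi_l^{(h)}(0) = \mathrm{id}$ and, thanks to the uniform two-sided bounds \eqref{apriori5} on $\det \nabla \Phi_l^{(h)}$ together with the $W^{1,\infty}$ control in \eqref{apriori6}, is a diffeomorphism between $\Omega_l^{(h)}(0) = \Omega\setminus \eta^{(h)}((l-1)h,Q)$ and $\Omega_l^{(h)}(r)$ for every $r \in [0,h]$.

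My first step is to verify the diffeomorphism property and $\Phi^{(h)}_0 = \mathrm{id}$ by unwinding the piecewise definition: for $t \in [(l-1)h,lh]$ and $s$ sufficiently small so that $t+s$ remains in the same sub-interval, the expression $\Phi_s^{(h)}(t) = \Phi_l^{(h)}(s+t-(l-1)h) \circ \Phi_l^{(h)}(t-(l-1)h)^{-1}$ is manifestly a composition of diffeomorphisms between matching domains, hence itself a diffeomorphism $\Omega^{(h)}(t) \to \Omega^{(h)}(t+s)$. For larger $s$, the definition strings together such compositions across the sub-interval boundaries $lh$; compatibility holds because the one-sided limits at $t+s=lh$ agree (using $\Phi_{l+1}^{(h)}(0) = \mathrm{id}$), so no positional jump occurs and $\Phi_s^{(h)}(t)$ is continuous in $s$ and a diffeomorphism throughout $[0,T]$.

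Next I would differentiate in $s$ at fixed $t$. On the interior of each sub-interval, the chain rule combined with the ODE for $\Phi_l^{(h)}$ directly yields
\[
\partial_s \Phi_s^{(h)}(t) = (\partial_r \Phi_l^{(h)})(s+t-(l-1)h) \circ \Phi_l^{(h)}(t-(l-1)h)^{-1} = v^{(h)}(t+s) \circ \Phi_s^{(h)}(t).
\]
At boundary points $t+s \in h\Z$ the velocity $v^{(h)}$ may jump, but the ODE is understood almost everywhere, so this is harmless. The Jacobi identity then follows from the standard formula $\tfrac{d}{ds}\det A = \det A\, \tr(A^{-1} \dot A)$ applied to $A = \nabla \Phi_s^{(h)}$, together with $\partial_s \nabla \Phi_s^{(h)} = (\nabla v^{(h)}(t+s) \circ \Phi_s^{(h)}) \nabla \Phi_s^{(h)}$, which gives $\tr(\nabla \Phi_s^{(h),-1} \partial_s \nabla \Phi_s^{(h)}) = \Div v^{(h)}(t+s) \circ \Phi_s^{(h)}$.

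Finally, both density identities follow from the chain rule combined with the continuity equation \eqref{eq:hlong2}. Differentiating $\varrho^{(h)}(t+s) \circ \Phi_s^{(h)}(t)$ in $s$ yields
\[
(\partial_t \varrho^{(h)} + v^{(h)} \cdot \nabla \varrho^{(h)})(t+s) \circ \Phi_s^{(h)}(t) = (\varepsilon \Delta \varrho^{(h)} - \varrho^{(h)} \Div v^{(h)})(t+s) \circ \Phi_s^{(h)}(t),
\]
where the equality uses \eqref{eq:hlong2}. This is the first formula. Applying the product rule to $\varrho^{(h)}(t+s) \circ \Phi_s^{(h)} \det \nabla \Phi_s^{(h)}$ and using the Jacobi identity cancels the $\varrho^{(h)} \Div v^{(h)}$ contribution and leaves precisely the second formula. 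The only genuine obstacle is the bookkeeping needed to keep the piecewise definitions of $\Phi_s^{(h)}$ compatible across the sub-interval boundaries $h\Z$; once that is carefully checked, the rest is routine chain-rule computation enabled by the regularity and the positivity of the determinant already recorded in \eqref{apriori5}--\eqref{apriori6}.
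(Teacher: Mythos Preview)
Your proposal is correct and follows essentially the same approach as the paper: the first assertions are unwound from the piecewise definition of $\Phi_s^{(h)}$ and the ODE satisfied by each $\Phi_l^{(h)}$, the determinant identity is the standard Jacobi/cofactor computation, and the density identities come from the chain rule together with the continuity equation \eqref{eq:hlong2}. The only cosmetic difference is that the paper writes the determinant derivative via the cofactor form $\tr(\nabla \partial_s \Phi \, \cof \nabla \Phi)$ whereas you use the equivalent $\det A\,\tr(A^{-1}\dot A)$, and you spell out the compatibility across the sub-interval boundaries $h\Z$ more carefully than the paper, which simply says this ``follows directly from the definition.''
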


\begin{proof}
The first set of assertions follows directly from the definition and the properties of the short-time solutions. Additionally, we can calculate
\begin{align*}
 \partial_s \det \nabla \Phi_s^{(h)}(t) &= \tr( \nabla \partial_s \Phi_s^{(h)}(t) \cof \nabla \Phi_s^{(h)}(t)) = \tr(\nabla (v^{(h)}(t+s) \circ \Phi_s^{(h)}(t)) (\nabla \Phi_s^{(h)}(t))^{-1}) \det \nabla \Phi_s^{(h)}(t) \\
 &= \tr(\nabla v^{(h)})(t+s) \circ \Phi_s^{(h)}(t) \det \nabla \Phi_s^{(h)}(t) = \Div v^{(h)}(t+s) \circ \Phi^{(h)}_s(t) \det \nabla \Phi_s^{(h)}(t)
\end{align*}
as well as
\begin{align*}
 \partial_s &\left(\varrho^{(h)}(t+s) \circ \Phi^{(h)}_s(t)\right)\\ &= \partial_t \varrho^{(h)}(t+s) \circ \Phi^{(h)}_s(t) + (\nabla \varrho^{(h)}(t+s)) \circ \Phi^{(h)}_s(t) \cdot (\partial_s  \Phi^{(h)}_s(t)) \\
 &=\left(\varepsilon \Delta \varrho^{(h)}(t+s) - \Div(\varrho^{(h)} v^{(h)}) (t+s)+ v^{(h)}(t+s) \cdot \nabla \varrho^{(h)}(t+s) \right) \circ \Phi^{(h)}_s(t) \\
 &=\left(\varepsilon \Delta \varrho^{(h)}(t+s) - \varrho^{(h)} (t+s)\Div v^{(h)}(t+s) \right) \circ \Phi^{(h)}_s(t)\end{align*}
using the contunuity equation. Combining the two relations above results in the final assertion.
\end{proof}
% 
% \todo[inline]{Wir wissen aus equation \eqref{eq:detminus1} et al., bzw. deren limit das $\det\nabla \Phi$ von oben und unten beschränkt ist, in Abhängigkeit von $\norm[L^\infty]{\nabla v}$. Zusammen mit den Gleichungen oben sollte das eine belastbare obere/untere Schranke für $\varrho$ geben}

From \eqref{eq:hlong3} we obtain the following uniform bounds:
\begin{align} 
\label{est:h1}
 \| \partial_t\nabla\eta^{(h)} \|_{L^2(I\times Q) }^2+\sup_{t\in I}\|\eta^{(h)}\|_{W^{2,q}(Q)}^q\leq c,\\
\label{est:h2}
\sup_{t \in I} \| \varrho^{(h)} \|^\beta_{L^\beta(\Omega^{(h)})}    \leq c,\\
 \label{est:h3}
 \| \nabla v^{(h)} \|^2_{L^2(I\times \Omega^{(h)}) }+\| \nabla \varrho^{(h)} \|^2_{L^2(I\times\Omega^{(h)}) } + \| \nabla (\varrho^{(h)})^{\beta/2} \|^2_{L^2(I\times\Omega^{(h)}) } \leq c,\\
  \label{est:h4}
\| v^{(h)} \|^2_{L^2(I;W^{k_0,2}(\Omega^{(h)})) }+\sup_{t\in I}\| \eta^{(h)} \|^2_{W^{k_0,2}(Q) }+\| \partial_t\eta^{(h)} \|^2_{L^2(I;W^{k_0,2}(Q)) } \leq c,
\end{align}
which are uniform in $h$. As a consequence we have the following convergences
 for some $\alpha\in (0,1)$ 
\begin{align}
\eta^{(h)}&\rightharpoonup^\ast\eta\quad\text{in}\quad L^\infty(I;W^{k_0,2}(Q;\Omega))\label{eqh:conveta1},\\
%\eta^{(h)}&\rightharpoonup^\ast\eta\quad\text{in}\quad W^{1,\infty}(I;L^2(Q;\R^n)),
%\label{eqh:conetat1}\\
\eta^{(h)}&\rightharpoonup\eta\quad\text{in}\quad W^{1,2}(I;W^{k_0,2}(Q;\R^n)),
\label{eqh:conetat2}\\
%h\eta_h&\rightharpoonup^* 0\quad\text{in}\quad L^\infty(I;W^{k_0,2}(Q)),\label{eqh:conetat3}\\
%h\partial_t\eta_h&\rightharpoonup 0\quad\text{in}\quad L^2(I;W^{k_0,2}(Q)),\label{eqh:conetat4}\\
\eta^{(h)}&\to\eta\quad\text{in}\quad C^\alpha(\overline I\times Q;\Omega)),
\label{eqh:conetatal}\\
v^{(h)}&\rightharpoonup^\eta v\quad\text{in}\quad L^2(I;W^{k_0,2}(\Omega;\R^n)),\label{eqh:convu1}\\
%hv_h&\rightharpoonup^\eta 0\quad\text{in}\quad L^2(I;W^{k_0,2}(\Omega_{\eta_h})),\label{eqh:convu2}\\
\varrho^{(h)}&\rightharpoonup^{\ast,\eta}\varrho\quad\text{in}\quad L^\infty(I;L^\beta(\Omega_{\eta})),\label{eqh:convrho1}\\
\varrho^{(h)}&\rightarrow^\eta \varrho\quad\text{in}\quad L^2(I;W^{1,2}(\Omega_\eta)).\label{eqh:van1}
\end{align}
after taking a (non-relabelled) subsequence. Taking further Lemma \ref{lem:warme2} into account we have
\begin{align}
\varrho^{(h)}&\rightharpoonup^{\ast,\eta}\varrho\quad\text{in}\quad L^\infty(I;W^{1,2}(\Omega_\eta)),\label{eqh:convrho1'}\\
\varrho^{(h)}&\rightharpoonup^\eta \varrho\quad\text{in}\quad L^2(I;W^{2,2}(\Omega_\eta)).\label{eqh:van1'}
\end{align}
Using this together with \eqref{eqh:convu1} in the equation of continuity we also have
\begin{align}
\partial_t\varrho^{(h)}&\rightharpoonup^\eta \partial_t\varrho\quad\text{in}\quad L^2(I;L^{2}(\Omega_\eta)).\label{eqh:dtrho}
\end{align}
Moreover, $\varrho^{(h)}$ stays bounded and strictly positive, that is
\begin{align}\label{eq:rhopositive}
\underline\varrho\leq\varrho^{(h)}(t,x)\leq\overline\varrho\quad\text{for a.a.}\quad (t,x)\in I\times\Omega^{(h)}
\end{align}
for some $\underline\varrho, \overline\varrho>0$ which do not depend on $h$.\\
Finally, passing to the limit in \eqref{apriori5}--\eqref{apriori6} and using \eqref{eqh:convu1} yields uniform bounds for $\Phi^{(h)}$. In particular, we have
 \begin{align}\label{apriori52} 
\underline c\leq   \det \nabla \Phi^{(h)}_s & \leq \overline c
 \end{align}
for some $\underline c,\overline c>0$ independent of $h$ and $s$ (however diverging with $T \to \infty$ or $\kappa \to 0$)
as well as
\begin{align}
 \Phi^{(h)}_s&\rightarrow \Phi\quad\text{in}\quad C^0([0,T];C^{1,\alpha}(\Omega_0))\label{conv:h:Phi}
 \end{align}
for a.a.\ $s$ as $h\rightarrow 0$.

\subsection{Strong convergence}
To pass to the limit in \eqref{eq:hlong1}--\eqref{eq:hlong3}, we need to improve some of the convergence from the last subsection to strong convergences. First we will argue similarly to \cite[Lem. 4.14, Prop. 4.15]{benesovaVariationalApproachHyperbolic2020} and use \eqref{eq:hlong1} to estimate
\begin{lemma}[$W^{-m,2}$-estimates] \label{lem:hmEst} There exists a constant $C > 0$ independent of $h$ such that for an $m\in \N$ large enough
\begin{align}
 \norm[L^2(I;W^{-m,2}(Q))]{\frac{\partial_t \eta^{(h)}- \partial_t \eta^{(h)} (\cdot-h)}{h} } &\leq C, \label{hmEst1}\\
 \int_0^T \int_{\Omega^{(h)}(t)}  \tfrac{\sqrt{\varrho^{(h)}} v^{(h)}-\sqrt{\varrho^{(h)}(t-h) \det \nabla \Phi_{-h}}v^{(h)}(t-h)}{h} \cdot \sqrt{\varrho^{(h)}}b \dxt &\leq C \norm[L^2(I;W^{m,2}(\Omega))]{b}.\label{hmEst2}
\end{align}
for all $b \in C_c(I;W_0^{m,2}(\Omega))$.
\end{lemma}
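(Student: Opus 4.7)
The plan is to test the discrete momentum equation \eqref{eq:hlong1} with admissible pairs $(\phi,b)$ satisfying the coupling $b\circ\eta^{(h)}=\phi$ in $Q$, and to bound all non-inertial contributions uniformly in $h$ using the a priori estimates \eqref{est:h1}--\eqref{est:h4}, the pointwise bounds \eqref{eq:rhopositive} and the flow-map bounds \eqref{apriori52}. Throughout, I fix $m$ so large that $W^{m,2}(\Omega)\hookrightarrow W^{k_0,\infty}(\Omega)$. By the uniform H\"older convergence \eqref{eqh:conetatal} combined with \eqref{est:h4} one obtains a continuous extension operator $E^{(h)}:W^{m,2}(Q;\R^n)\to W^{m,2}_0(\Omega;\R^n)$ with $(E^{(h)}\phi)\circ\eta^{(h)}=\phi$ and $\norm[W^{m,2}(\Omega)]{E^{(h)}\phi}\leq C\norm[W^{m,2}(Q)]{\phi}$, uniformly in $h$; conversely, composition $b\mapsto b\circ\eta^{(h)}$ is uniformly bounded from $W^{m,2}_0(\Omega)$ to $W^{m,2}(Q)$.

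For any admissible pair $(\phi,b)$ all terms in \eqref{eq:hlong1} apart from the two inertial discrete differences can be bounded termwise: the solid potential and dissipation terms through \eqref{est:h4} together with Assumptions S5, R4; the pressure $\int p_\delta(\varrho^{(h)})\Div b\dx$ through H\"older's inequality and the Sobolev embedding using \eqref{est:h2}; the viscous and $k_0$-order terms through \eqref{est:h3} and \eqref{est:h4}; and the forcing terms through the assumptions on $f_s$ and $f_f$. Denoting by $T_1(\phi)$ the solid inertial term and by $T_2(b)$ the fluid inertial term in \eqref{eq:hlong1}, this first step yields
\[
\Bigl|\int_0^T\bigl[T_1(\phi)+T_2(b)\bigr]\dt\Bigr|\leq C\bigl(\norm[L^2(I;W^{k_0,2}(Q))]{\phi}+\norm[L^2(I;W^{k_0,2}(\Omega))]{b}\bigr).
\]

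To prove \eqref{hmEst1} I would test with the pair $(\phi,E^{(h)}\phi)$, so that the problem reduces to bounding $\int T_2(E^{(h)}\phi)\dt$ by $\norm{\phi}$. The key observation is that $T_2$ has the structure of a discrete material derivative along the flow $\Phi^{(h)}$: performing the change of variables $x=\Phi^{(h)}_{h}(t-h)(y)$ in the second half of $T_2$ and summing by parts in $t$ (the boundary-in-time contributions drop since $b\in C_c(I;\cdot)$) rewrites
\[
\int_0^T T_2(b)\dt=\int_0^T\!\!\int_{\Omega^{(h)}(t)}\sqrt{\varrho^{(h)}}v^{(h)}\cdot\frac{\sqrt{\varrho^{(h)}(t+h)\circ\Phi^{(h)}_h\,\det\nabla\Phi^{(h)}_h}\,b(t+h)\circ\Phi^{(h)}_h-\sqrt{\varrho^{(h)}}\,b(t)}{h}\dxt .
\]
Using the continuity equation \eqref{eq:hlong2} together with Corollary \ref{cor:Phi_hProperties} one expresses the time-difference of $\varrho^{(h)}\det\nabla\Phi^{(h)}_h$ purely through $\varepsilon\Delta\varrho^{(h)}$ along the flow, thereby shifting the time-difference entirely onto the much smoother quantity $b\circ\Phi^{(h)}_h$. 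Combining this with the $L^\infty(L^2)$-bound on $\sqrt{\varrho^{(h)}}v^{(h)}$ from \eqref{eq:hlong3}, the spatial regularity of $\varrho^{(h)}$ in \eqref{eqh:convrho1'}--\eqref{eqh:van1'}, and the uniform Lipschitz bounds on $\Phi^{(h)}_s$ from \eqref{apriori52} and \eqref{conv:h:Phi}, the right-hand side is controlled by $C\norm[L^2(W^{m,2}(\Omega))]{E^{(h)}\phi}\leq C\norm[L^2(W^{m,2}(Q))]{\phi}$, which yields \eqref{hmEst1}.

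Finally, \eqref{hmEst2} follows by a dual argument. For arbitrary $b\in C_c(I;W^{m,2}_0(\Omega))$ one sets $\phi:=b\circ\eta^{(h)}$, obtaining an admissible pair with $\norm{\phi}\leq C\norm{b}$. Inserting this pair and applying \eqref{hmEst1} gives
\[
\Bigl|\int_0^T T_2(b)\dt\Bigr|\leq\Bigl|\int_0^T T_1(\phi)\dt\Bigr|+C\norm[L^2(I;W^{m,2}(\Omega))]{b}\leq C\norm[L^2(I;W^{m,2}(\Omega))]{b},
\]
and factoring $\sqrt{\varrho^{(h)}}$ out of the bracket identifies $\int_0^T T_2(b)\dt$ with the expression on the left-hand side of \eqref{hmEst2}. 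The main technical obstacle lies in the rewriting used in the proof of \eqref{hmEst1}: since $b$ only belongs to $C_c(I;W^{m,2}_0(\Omega))$ and carries no time-regularity, the discrete time-difference has to be transferred onto $\varrho^{(h)}$ and $\Phi^{(h)}$, which is precisely why the improved $L^\infty(W^{1,2})\cap L^2(W^{2,2})$-regularity of the density from Lemma \ref{lem:warme2} is indispensable here.
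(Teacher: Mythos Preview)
There are two genuine gaps.

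First, your identification at the end is wrong: the fluid inertial term $T_2(b)$ from \eqref{eq:hlong1} is \emph{not} the left-hand side of \eqref{hmEst2}. In the equation the previous velocity and density carry a composition with the flow, i.e.\ $\sqrt{\varrho^{(h)}(t-h)\circ\Phi_{-h}^{(h)}}\,v^{(h)}(t-h)\circ\Phi_{-h}^{(h)}$, whereas in \eqref{hmEst2} both appear \emph{without} $\circ\,\Phi_{-h}^{(h)}$. These expressions differ exactly by the ``remainder'' the paper has to estimate, and this remainder is the heart of the proof.

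Second, your direct bound on $T_2(E^{(h)}\phi)$ does not work. After your change of variables and summation by parts you obtain a difference quotient of the form
\[
\frac{\sqrt{\varrho^{(h)}(t+h)\circ\Phi_h^{(h)}\det\nabla\Phi_h^{(h)}}\,b(t+h)\circ\Phi_h^{(h)}-\sqrt{\varrho^{(h)}}\,b(t)}{h}.
\]
You correctly strip off the $\sqrt{\varrho\det\nabla\Phi}$--part via Corollary~\ref{cor:Phi_hProperties}, but what is left is $b(t+h)\circ\Phi_h^{(h)}-b(t)$, which contains a genuine \emph{time} shift in $b$. Controlling this requires $\partial_t b$, and since $b=E^{(h)}\phi$ with $\phi$ merely in $L^2(I;W^{m,2})$, no such bound is available; Lemma~\ref{lem:warme2} does not help here because the missing regularity is in $b$, not in $\varrho^{(h)}$.

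The paper avoids this by exploiting the precise form of \eqref{hmEst2}. Writing $(\text{LHS of }\eqref{hmEst2})=T_2(b)+\text{remainder}$ and changing variables by $\Phi_h^{(h)}(t-h)$ in the remainder, one is left with $b(t)\circ\Phi_h^{(h)}(t-h)-b(t)$: the \emph{same} $b(t)$ evaluated at two nearby spatial points. This only needs the spatial Lipschitz bound $\|b(t)\|_{W^{m,2}}$ together with the key estimate \eqref{Phi-Shift}, and the missing $T_2(b)+T_1(b\circ\eta^{(h)})$ is then handled jointly through the equation \eqref{eq:hlong1}. The choice of the quantity in \eqref{hmEst2} (dropping $\circ\,\Phi_{-h}^{(h)}$) is therefore not cosmetic but essential.
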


\begin{proof}
We need the following key estimate to correct the flow map. For any Lipschitz continuous function $a:I \times \Omega \to \R$ and any $s \in [0,h]$ we have
\begin{align} \label{Phi-Shift}
\int_{\Omega(t)} &\varrho^{(h)}(t) \abs{ a(t) - a(t-s) \circ \Phi_{-s}^{(h)}}^2 \dx\\ \nonumber & = \int_{\Omega(t)} \varrho^{(h)}(t) \abs{s \fint_{-s}^0 \frac{\dd}{\dd r}\left( a(t+r) \circ \Phi_{r}^{(h)}\right) \,\dd r }^2 \dx \\ \nonumber
 &\leq s^2 \int_{\Omega(t)} \varrho^{(h)}(t) \fint_{-s}^0 \abs{ \partial_t a(t+r) \circ \Phi_r^{(h)} + \nabla a(t+r) \cdot v^{(h)} \circ \Phi_r^{(h)}}^2 \,\dd r \dx \\ \nonumber
 &\leq s h \fint_{-h}^0 \int_{\Omega(t+r)} \varrho^{(h)}(t+r) c\left( (\Lip_t a)^2 + \abs{\smash{v^{(h)}}}^2 (\Lip_x a)^2 \right) \dx \,\dd r \leq sh C (\Lip_{t,x} a)^2,
\end{align}
where we used that $\int_{\Omega(t+r)} \varrho^{(h)}(t) \dx$ is constant (this follows immediately from \eqref{eq:h2}) and the fact that $\fint_{t-h}^t \int_{\Omega(t)} \varrho^{(h)}(t+r) \abs{v^{(h)}}^2 \dx \,\dd r$ is uniformly bounded by the energy estimate. By $\Lip_t$, $\Lip_x$ and $\Lip_{t,x}$ we denote the Lipschitz constants with respect to time, space and space-time respectively. This estimate varies from the corresponding estimate in \cite{benesovaVariationalApproachHyperbolic2020} by the necessary inclusion of the density, as the flow is no longer volume preserving.

Now we consider (with $\varrho^{(h)}$ per convention extended by $0$ to all of $\Omega$)
\begin{align*}
  \int_0^T &\int_{\Omega}\tfrac{\sqrt{\varrho^{(h)}}v^{(h)}-\sqrt{\varrho^{(h)}(t-h)\det \nabla \Phi^{(h)}_{-h}}v^{(h)}(t-h)}{h} \cdot \sqrt{\varrho^{(h)}}b \dxt  \\ 
  &= \int_0^T \int_{\Omega} \tfrac{\sqrt{\varrho^{(h)}}v^{(h)}-\sqrt{\varrho^{(h)}(t-h)\circ \Phi_{-h}^{(h)}\det \nabla \Phi^{(h)}_{-h}}v^{(h)}(t-h)\circ \Phi_{-h}^{(h)}}{h} \cdot \sqrt{\varrho^{(h)}}b \dxt \\
  &+ \int_0^T \int_{\Omega}{\tfrac{\sqrt{\det \nabla \Phi^{(h)}_{-h}}\big(\sqrt{\varrho^{(h)}(t-h) \circ \Phi_{-h}^{(h)}}v^{(h)}(t-h)\circ \Phi_{-h}^{(h)}-\sqrt{\varrho^{(h)}(t-h)}v^{(h)}(t-h)\big)}{h} }{\sqrt{\varrho^{(h)}}b} \dxt,
\end{align*}
where the first term can be estimated using equation \eqref{eq:hlong1}. For the second one we perform a change of variables with $\Phi_h(t-h)$ in its first term to obtain
\begin{align*}
 \int_0^T& \int_{\Omega}{\tfrac{\sqrt{\det \nabla \Phi^{(h)}_{-h}}\big(\sqrt{\varrho^{(h)}(t-h) \circ \Phi_{-h}^{(h)}}v^{(h)}(t-h)\circ \Phi_{-h}^{(h)}-\sqrt{\varrho^{(h)}(t-h)}v^{(h)}(t-h)\big)}{h} }{\sqrt{\varrho^{(h)}}b} \dxt \\
 &= \int_0^T  \int_\Omega \tfrac{\det \nabla \Phi^{(h)}_h(t-h) \sqrt{\det \nabla \Phi^{(h)}_{-h} \circ\Phi^{(h)}_h(t-h)  \varrho^{(h)}(t-h)}v^{(h)}(t-h) \sqrt{\varrho^{(h)}\circ\Phi^{(h)}_h(t-h)}b \circ\Phi^{(h)}_h(t-h)}{h} \dx \\
 &-  \int_{\Omega}{\tfrac{\sqrt{\det \nabla \Phi^{(h)}_{-h}}\sqrt{\varrho^{(h)}(t-h)}v^{(h)}(t-h)}{h} }{\sqrt{\varrho^{(h)}}b} \dxt \\
 &= \int_0^T  \int_\Omega \tfrac{ \sqrt{\det \nabla \Phi^{(h)}_h(t-h)} \sqrt{\varrho^{(h)}\circ\Phi^{(h)}_h(t-h)}b \circ\Phi_h(t-h) -\sqrt{\det \nabla \Phi^{(h)}_{-h}}\sqrt{\varrho^{(h)}}b}{h} \sqrt{\varrho^{(h)}(t-h)} v^{(h)}(t-h)  \dxt \\
 &= \int_0^T  \int_\Omega \sqrt{\det \nabla \Phi^{(h)}_{-h}}\sqrt{\varrho^{(h)}} \tfrac{ b \circ\Phi^{(h)}_h(t-h) -b}{h} \sqrt{\varrho^{(h)}(t-h)} v^{(h)}(t-h)  \dxt \\
 &+ \int_0^T \int_\Omega \tfrac{ \sqrt{\det \nabla \Phi^{(h)}_h(t-h)} \sqrt{\varrho^{(h)}\circ\Phi^{(h)}_h(t-h)} -\sqrt{\det \nabla \Phi^{(h)}_{-h}}\sqrt{\varrho^{(h)}}}{h} {\scriptstyle b \circ\Phi^{(h)}_h(t-h)\sqrt{\varrho^{(h)}(t-h)} v^{(h)}(t-h) } \dxt \\
 &=: \mathrm{I}+ \mathrm{II}
 \end{align*}
 Now the absolute value of the first integral can be estimated by
 \begin{align*}
 &\abs{\mathrm{I}} \leq  \int_0^T \sqrt{ \int_{\Omega} \varrho^{(h)} \abs{v^{(h)}}^2 \dx  } \sqrt{ \int_{\Omega} \varrho^{(h)} \abs{ \tfrac{b(t+h) \circ \Phi_{h}^{(h)} - b}{h}}^2 \dx  } \dt 
 \leq C \Lip_{t,x} b 
%  &= \int_h^{T+h}  \int_\Omega \tfrac{\sqrt{\det \nabla \Phi^{(h)}_h(t) \varrho^{(h)}(t)}v^{(h)}(t) \sqrt{\varrho^{(h)}(t+h)\circ\Phi_h(t)}b(t+h) \circ\Phi_h(t)}{h} \dxt \\
%  &- \int_0^T \int_{\Omega}{\tfrac{\sqrt{\det \nabla \Phi^{(h)}_{-h}}\sqrt{\varrho^{(h)}(t-h)}v^{(h)}(t-h)}{h} }{\sqrt{\varrho^{(h)}}b} \dxt \\
\end{align*}
by using that 
\begin{align*} \int_0^T \int_{\Omega} \varrho^{(h)} |v^{(h)}|^2 \dxt = \sum_{l=0}^{T/h-1} h \fint_{lh}^{(l+1)h} \int_{\Omega} \varrho^{(h)} |v^{(h)}|^2 \dxt
 \end{align*}
 consists of $T/h$ terms uniformly bounded by a multiple of $h$, due to the energy-inequality and by applying \eqref{Phi-Shift} with $s = h$ and $a=b$

 For the second integral we have by Corollary \ref{cor:Phi_hProperties} and the various estimates
\begin{align*}
 \abs{\mathrm{II}} &= \bigg| \int_0^T \int_{\Omega} \fint_{t-h} \partial_s\left( \sqrt{\det\nabla \Phi_s^{(h)} \varrho^{(h)} \circ \Phi_s}_{}  \right) \dd s\, b \circ\Phi_h(t-h)\sqrt{\varrho^{(h)}(t-h)} v^{(h)}(t-h)\dxt \bigg| \\
 &\leq \norm[L^2(I;W^{2,2}(\Omega))]{\varrho^{(h)}} \norm[\infty]{\varrho^{(h)}}\norm[\infty]{b} \norm[L^2(I\times \Omega))]{v^{(h)}} \leq C \norm[\infty]{b}. \qedhere
\end{align*}
\end{proof}

Finally we need to prove strong convergence of the density. For this purpose
we choose a parabolic cylinder $J\times B$ such that $2B\Subset \Omega^{(h)}$ for all $t\in J$ and all $h$ small enough. This is possible due to \eqref{eqh:conetatal}. From the continuity equation \eqref{eq:hlong2} we obtain
\begin{align*}
\partial_t \varrho^{(h)}\in L^2(J;W^{-1,2}(B))
\end{align*}
uniformly in $h$. This, in combination with the gradient estimate from \eqref{est:h2}, yields
\begin{align*}
 \varrho^{(h)}&\rightarrow \varrho\quad\text{in}\quad L^2(J\times B).
 \end{align*}
 Using \eqref{eq:rhopositive} and the arbitrariness of $J\times B$ we obtain
 \begin{align}\label{eq:0106rhostrong}
 \varrho^{(h)}&\rightarrow^\eta \varrho\quad\text{in}\quad L^p(I\times\Omega_\eta)
 \end{align}
  for some $p>\beta$.  Using
Theorem \ref{lem:warme} (b) with $\theta(s)=s^2$ (which is admissible by approximation)
we obtain
\begin{align}
\label{eq:renormzh}
\begin{aligned}
\int_{\Omega^{(h)}} |\varrho^{(h)}(t_1)|^2\dx+\int_0^{t_1}&\int_{\Omega^{(h)}}2\varepsilon |\nabla\varrho^{(h)}|^2\dx\dt\\
&=\int_{\Omega_0}\varrho_0^2\dx-\int_0^{t_1}\int_{\Omega^{(h)}}2\varrho^{(h)}\Div v^{(h)}\dx\dt.
\end{aligned}
\end{align}
%Due to \eqref{conv4orig} and \eqref{conv:vrN1} we conclude
% (for a non-relabelled subsequence)
%\begin{align}
%%\label{conv:vrN1}\varrho_N&\rightharpoonup^{\eta,*}\varrho\quad\text{in}\quad L^\infty(I_*;L^2(\Omega_{\eta_N})),\\
%\label{conv:vrN2}\nabla\varrho^{(\kappa)}&\rightharpoonup^{\eta}\nabla\varrho\quad\text{in}\quad L^2(I;L^2(\Omega_{\eta^{(\kappa)}}))).
%\end{align}
Now We apply Theorem \ref{lem:warme} (b) to the limit version of the continuity equation (that is, equation \eqref{eq:con:kappa}) which results in a counterpart of \eqref{eq:renormzh}. On account of \eqref{eq:0106rhostrong} all terms converge except for $\int_0^t\int_{\Omega_{\eta^{(h)}}}2\varepsilon |\nabla\varrho^{(h)}|^2\dx\ds$, which yields convergence of the latter. Hence we obtain
 \begin{align}\label{eq:0106rhostrongh}
 \varrho^{(h)}&\rightarrow^\eta \varrho\quad\text{in}\quad L^2(I;W^{1,2}(\Omega_\eta)).
 \end{align}
  %Now we apply Lemma \ref{thm:weakstrong} to $\varrho^{(h)}v^{(h)}$ (using \eqref{eqh:convu1}) to conclude 
%   \begin{align*}
% \varrho^{(h)}v^{(h)}&\rightarrow^\eta \varrho v\quad\text{in}\quad L^p(I\times\Omega_\eta)
% \end{align*}
% for some $p>1$
% which yields 
%    \begin{align}\label{eq:0106rhovstrong}
%v^{(h)}&\rightarrow^\eta v\quad\text{in}\quad L^p(I\times\Omega_\eta)
% \end{align}
% for any $p<2$
% by \eqref{eq:0106rhostrong}, \eqref{eq:rhopositive} and \eqref{eqh:convu1}.
  
   \subsection{Derivation of the material derivative}
  We now take the inertial term from~\eqref{eq:hlong1} and shift its second half in time by a change of variables. This gives us (note the symmetry of the $\sqrt{\varrho}$-terms)
  \begin{align*}
    \int_0^T &\frac{1}{h}\int_{\Omega^{(h)}} \left(\varrho^{(h)} v^{(h)}-\sqrt{\varrho^{(h)}} \sqrt{\varrho^{(h)}(t-h) \circ \Phi^{(h)}_{-h} \det \nabla \Phi^{(h)}_{-h}} v^{(h)}(t-h) \circ \Phi^{(h)}_{-h} \right) \cdot b\dx \dt \\ 
   &=  \int_0^T \frac{1}{h}\int_{\Omega^{(h)}} \left(\varrho^{(h)} b-\sqrt{\varrho^{(h)}} \sqrt{\varrho^{(h)}(t+h) \circ \Phi^{(h)}_{h} \det \nabla \Phi^{(h)}_{h}} b(t+h) \circ \Phi^{(h)}_h\right) \cdot  v^{(h)}\dx \dt
   \end{align*}
   recalling that $(\Phi^{(h)}_{h})^{-1}(t,x)=\Phi^{(h)}_{-h}(t+h))$.
Using the fundamental theorem of calculus and \autoref{cor:Phi_hProperties} the integrand can be rewritten as\footnote{Note that $\det \nabla \Phi^{(h)}_{s}(t,x)$ signifies that this term is always evaluated at $x$, while most other terms (those at time $t+s$ are evaluated at $\Phi^{(h)}_{s}(t,x)$. In general, all terms are evaluated at their ``natural'' point.}
\begin{align*}
  -\int_{0}^h& \sqrt{\varrho^{(h)}(t)} \partial_s \left(\sqrt{\varrho^{(h)}(t+s) \circ \Phi^{(h)}_s(t) \det \nabla \Phi^{(h)}_{s}(t)} b(t+s) \circ \Phi^{(h)}_s(t) \right) \cdot v^{(h)}(t) \, \dd s  \\
   &= -\int_0^h \sqrt{\varrho^{(h)}(t)} \tfrac{\partial_s \left( [\varrho^{(h)}(t+s)\det \nabla \Phi^{(h)}_{s}(t,x)] \circ \Phi^{(h)}_s(t)\right) }{2\sqrt{\varrho^{(h)}(t+s) \circ \Phi^{(h)}_s(t) \det \nabla \Phi^{(h)}_{s}(t,x)}}b(t+s)  \circ \Phi^{(h)}_s(t) \cdot v^{(h)}(t) \,\dd s  \\
   &  - \int_0^h \sqrt{\varrho^{(h)}(t)}\left[\sqrt{\varrho^{(h)}(t+s) \det \nabla \Phi^{(h)}_{s}(t,x)} \partial_t b(t+s)\right] \circ \Phi^{(h)}_s(t) \cdot v^{(h)}(t)\,\dd s \\
      & - \int_0^h \sqrt{\varrho^{(h)}(t)}\left[\sqrt{\varrho^{(h)}(t+s) \det \nabla \Phi^{(h)}_{s}(t,x)}  v^{(h)}(t+s) \cdot \nabla b(t+s)  \right]\circ \Phi^{(h)}_s(t) \cdot v^{(h)}(t) \,\dd s \\
   &=-\int_0^h \sqrt{\varrho^{(h)}(t)}\left[ \sqrt{\det \nabla \Phi^{(h)}_{s}(t,x)} \tfrac{\varepsilon \Delta \varrho^{(h)}(t+s)   }{2\sqrt{\varrho^{(h)}(t+s)} }  b(t+s)\right]  \circ \Phi^{(h)}_s(t) \cdot v^{(h)}(t)\, \dd s \\
   &  - \int_0^h \sqrt{\varrho^{(h)}(t)} \left[\sqrt{\det \nabla \Phi^{(h)}_{s}(t,x)} \sqrt{\varrho^{(h)}(t+s)} \partial_t b(t+s)\right]\circ\Phi^{(h)}_s(t) \cdot v^{(h)}(t)\,\dd s \\
   &- \int_0^h \sqrt{\varrho^{(h)}(t)}\left[ \sqrt{\det \nabla \Phi^{(h)}_{s}(t,x)}\sqrt{\varrho^{(h)}(t+s)} v^{(h)}(t+s) \cdot \nabla b(t+s)\right]  \circ \Phi^{(h)}_s(t) \cdot v^{(h)}(t)\, \dd s\\
   &=:-(\mathrm{I})_h-(\mathrm{II})_h-(\mathrm{III})_h
  \end{align*} 
  using in particular Corollary \ref{cor:Phi_hProperties}.

  We can now deal with each of these terms one after the other. Using $(\Phi^{(h)}_{s})^{-1}(t,x)=\Phi^{(h)}_{-s}(t+s)$ and $\partial_{\nu^{(h)}(t)}\varrho^{(h)}(t)=0$ on $\partial\Omega^{(h)}(t)$ we find
  \begin{align*}
%  \nonumber
%  \int_0^T\int_{\Omega^{(h)}(t)}& \fint_0^h \sqrt{\varrho^{(h)}(t)} \sqrt{\det \nabla \Phi^{(h)}_{s}(t,x)}  \tfrac{\varepsilon \Delta \varrho^{(h)} \circ \Phi^{(h)}_s(t)}{2\sqrt{\varrho^{(h)}(t+s) \circ \Phi^{(h)}_s(t) } }  b(t+s) \circ \Phi^{(h)}_s(t) \cdot v^{(h)}(t) \,\dd s \dx \dt \\
    \int_0^T \frac{1}{h}\int_{\Omega^{(h)}}(\mathrm{I})_h\dx\dt&= 
   \int_0^T \fint_0^h \int_{\Omega^{(h)}(t+s)} \sqrt{\varrho^{(h)}(t)\circ \Phi_{-s}^{(h)}(t+s,x)} \sqrt{\det \nabla \Phi^{(h)}_{-s}(t,x)}  \tfrac{\varepsilon \Delta \varrho^{(h)} (t+s) }{2\sqrt{\varrho^{(h)}(t+s) } }  b(t+s)\\\nonumber&\qquad\qquad\qquad\qquad\qquad\qquad \qquad\qquad\qquad\cdot v^{(h)}(t) \circ \Phi_{-s}^{(h)}(t+s,x) \,\dd s \dx \dt \\
      &= 
   \int_0^T \fint_0^h \int_{\Omega^{(h)}(t+s)} \Big(\tfrac{\sqrt{\varrho^{(h)}(t)\circ \Phi_{-s}^{(h)}(t+s,x)} \sqrt{\det \nabla \Phi^{(h)}_{-s}(t,x)}   }{2\sqrt{\varrho^{(h)}(t+s) } }-1\Big) \\&\qquad\qquad\qquad \qquad\quad\quad\cdot\varepsilon \Delta \varrho^{(h)} (t+s) b(t+s)\cdot v^{(h)}(t) \circ \Phi_{-s}^{(h)}(t+s,x) \,\dd s \dx \dt\\
      &-
   \int_0^T \fint_0^h \int_{\Omega^{(h)}(t+s)}  \frac{\varepsilon}{2} \nabla \varrho^{(h)} (t+s)  \nabla \big(v^{(h)}(t) \circ \Phi_{-s}^{(h)}(t+s,x)\big)b(t+s) \,\dd s \dx \dt\\
      &-
   \int_0^T \fint_0^h \int_{\Omega^{(h)}(t+s)}  \frac{\varepsilon}{2} \nabla \varrho^{(h)} (t+s)\cdot  \nabla b(t+s) v^{(h)}(t) \circ \Phi_{-s}^{(h)}(t+s,x) \,\dd s \dx \dt\\
   & =:(\mathrm{IV})_h+(\mathrm{V})_h+(\mathrm{VI})_h.
%   \int_0^T \frac{1}{h}\int_{\Omega^{(h)}(t+s)}(\mathrm{I})_h^1\dx\dt+\int_0^T \frac{1}{h}\int_{\Omega^{(h)}(t+s)}(\mathrm{I})_h^2\dx\dt+\int_0^T \frac{1}{h}\int_{\Omega^{(h)}(t+s)}(\mathrm{I})_h^3\dx\dt.
  \end{align*}
  As $h\rightarrow0$ we have
       \begin{align*}
    (\mathrm{IV})_h \to0
  \end{align*}
    on account of \eqref{eqh:van1'}, \eqref{eq:rhopositive}
and
 \begin{align*}
&\int_{\Omega^{(h)}(t+s)}|v^{(h)}(t)\circ\Phi_{-s}^{(h)}(t+s)|^2\dx\leq\,c\int_{\Omega^{(h)}(t)}|v^{(h)}(t)|^2\dx,\\  &\sup_{l\in\{0,\dots, T/h-1\}}\dashint_{lh}^{(l+1)h} \int_{\Omega^{(h)}}|v^{(h)}|^2\dx\leq\,c,
\end{align*}
  the latter one being a consequence of \eqref{eq:hlong3} and \eqref{eq:rhopositive}.
  Moreover, it holds
      \begin{align*}
(\mathrm{V})_h\to- \int_0^T \int_{\Omega(t)} \frac{ \varepsilon}{2} \nabla \varrho(t)\cdot\nabla v(t) b(t) \dx \dt
  \end{align*}
  as well as
       \begin{align*}
(\mathrm{VI})_h \to- \int_0^T \int_{\Omega(t)} \frac{ \varepsilon}{2} \nabla \varrho(t)\cdot \nabla b(t) \cdot v(t)\dx \dt.
  \end{align*}
  by \eqref{conv:h:Phi}, \eqref{eq:0106rhostrong} and \eqref{eq:0106rhostrongh}.
  We conlcude that
    \begin{align}
    \int_0^T \frac{1}{h}\int_{\Omega^{(h)}}(\mathrm{I})_h\dx\dt \to- \int_0^T \int_{\Omega(t)} \frac{ \varepsilon}{2} \nabla \varrho(t)\cdot\Big(\nabla b(t) \cdot v(t)+\nabla v(t) b(t)\Big) \dx \dt.\label{limregh}
  \end{align}
  Similarly, we have
  \begin{align*}
%\int_0^T\int_{\Omega^{(h)}(t)} \fint_0^h &\sqrt{\varrho^{(h)}(t)} \sqrt{\det \nabla \Phi^{(h)}_{s}(t,x)} \left[ \sqrt{\varrho^{(h)}(t+s)} \partial_t b(t+s) \right] \circ \Phi^{(h)}_s(t) \cdot v^{(h)}(t) \,\dd s \dx  \dt \\
   \int_0^T \frac{1}{h}\int_{\Omega^{(h)}}(\mathrm{II})_h\dx\dt &\to \int_0^T \int_{\Omega(t)} \varrho(t) v(t) \partial_t b(t) \dx \dt
  \end{align*}
    as $h\rightarrow0$. The remaining term involving $(\mathrm{III})_h$ is more critical since $v^{(h)}$, which is based on our a-priori estimates only weakly converging, appears in a product with itself. However, we may use the discrete time derivative for the momentum. 
%\seb{\bf Irgendwie hab ich das Gefuehl das sich hier das Argument im Kreis dreht. Man kann hier ohne weiters das $\Phi_s^{(h)}$ hineinschreiben. Das sollte es allauben \eqref{hmEst2} zu aendern so dass dieser Ausdruck nahe an der diskreten Momentumsgleichung ist.  Fuer welchen wir dann auch Abschaetzungen bekommen. Letztendlich haben wir ja nur einen h-shift zu verarbeiten, also ein h-determinanten defizit, welches ja dann stark konvergieren sollte. }    
    It holds
    \begin{align*}
    \partial_t \fint^{t+h}_t \sqrt{\varrho^{(h)}(s)}&\sqrt{\det \nabla \Phi^{(h)}_{s}(t,x)}   v^{(h)}(s) \,\dd s \\&= \frac{\sqrt{\varrho^{(h)}(t+h)}\sqrt{\det \nabla \Phi^{(h)}_{s}(t,x)}   v^{(h)}(t+h) - \sqrt{\varrho^{(h)}(t)} v^{(h)}(t)}{h} 
    \end{align*}
    such that
    \begin{align*}
   \partial_t\bigg[\sqrt{\varrho^{(h)}(t)}& \fint^{t+h}_t \sqrt{\varrho^{(h)}(s)}\sqrt{\det \nabla \Phi^{(h)}_{s}(t,x)}  v^{(h)}(s)\,\dd s\bigg]\\ &=\sqrt{\varrho^{(h)}(t)} \frac{\sqrt{\varrho^{(h)}(t+h)}\sqrt{\det \nabla \Phi^{(h)}_{s}(t,x)}  v^{(h)}(t+h) - \sqrt{\varrho^{(h)}(t)} v^{(h)}(t)}{h}\\
    &+\frac{\partial_t\varrho^{(h)}(t)}{2\sqrt{\varrho^{(h)}(t)}} \fint^{t+h}_t \sqrt{\varrho^{(h)}(s)}\sqrt{\det \nabla \Phi^{(h)}_{s}(t,x)}  v^{(h)}(s)\,\dd s.
   \end{align*}
For the first term we have 
%$\sqrt{\varrho^{(h)}(t)} \partial_t \fint^{t+h}_t \sqrt{\varrho^{(h)}(s)} v^{(h)}(s) ds$ ein 
a $W^{-m,2}$-estimate given in \eqref{hmEst2}. The second term can be estimated in $L^1(I;L^2(\Omega^{(h)}))$ by \eqref{eqh:van1'}, \eqref{eqh:dtrho}, \eqref{eq:rhopositive} and \eqref{apriori52}. We conclude that
\begin{align*}
 \int_0^T \int_{\Omega(t)}    \partial_t\bigg[\sqrt{\varrho^{(h)}(t)}  \fint^{t+h}_t \sqrt{\varrho^{(h)}(s)} \sqrt{\det \nabla \Phi^{(h)}_{s}(t,x)} v^{(h)}(s)\,\dd s\bigg] \cdot b  \dxt \leq C \norm[L^1(I;W^{m,2}(\Omega))]{b}
\end{align*}
uniformly in $h$.
Now, Lemma \ref{thm:weakstrong} yields
   \begin{align}
\nonumber
\int_0^T\int_{\Omega^{(h)}(t)} \fint_0^h &\sqrt{\varrho^{(h)}(t)}\sqrt{\det \nabla \Phi^{(h)}_{s}(t,x)}  \left[  \sqrt{\varrho^{(h)}(t+s)} v^{(h)}(t+s) \cdot \nabla b(t+s) \right]  \cdot v^{(h)}(t) \,\dd s \dx  \dt\\
   &\to \int_0^T \int_{\Omega(t)} \varrho(t) v(t) \cdot \nabla b(t) v(t) \dx \dt\label{eq:0106b}
  \end{align}    
 as $h\rightarrow0$ taking also \eqref{eqh:van1'}, \eqref{conv:h:Phi} and \eqref{eq:0106rhostrong} into account. It remains to ``add'' the shift
 %$\sqrt{\det \nabla \Phi^{(h)}_{s}(t,x)}$ and 
 ${}\circ \Phi^{(h)}_s(t)$. For this purpose we decompose
% \begin{align*}
% \int_0^T\int_{\Omega^{(h)}(t)} \fint_0^h &\sqrt{\varrho^{(h)}(t)} \left[  \sqrt{\varrho^{(h)}(t+s)} v^{(h)}(t+s) \cdot \nabla \xi(t+s) \right]  \cdot v^{(h)}(t) \,\dd s \dx  \dt\\
% &= \int_0^T\int_{\Omega^{(h)}(t)} \fint_0^h &\sqrt{\varrho^{(h)}(t)}\Big(-1\Big) \left[  \sqrt{\varrho^{(h)}(t+s)} v^{(h)}(t+s) \cdot \nabla \xi(t+s) \right]  \cdot v^{(h)}(t) \,\dd s \dx  \dt
%% \end{align*}
%  \begin{align*}
% \int_0^T\int_{\Omega^{(h)}(t)} \fint_0^h& \sqrt{\varrho^{(h)}(t)}\sqrt{\det \nabla \Phi^{(h)}_{s}(t,x)} \Big[  \sqrt{\varrho^{(h)}(t+s)} v^{(h)}(t+s) \cdot \nabla \xi(t+s) \Big] \circ \Phi^{(h)}_s(t) \cdot v^{(h)}(t) \,\dd s \dx  \dt\\
%   &= \int_0^T\int_{\Omega^{(h)}(t)} \fint_0^h \sqrt{\varrho^{(h)}(t)} \Big(\sqrt{\det \nabla \Phi^{(h)}_{s}(t,x)}-1\Big) \Big[  \dots \Big] \circ \Phi^{(h)}_s(t) \cdot v^{(h)}(t) \,\dd s \dx  \dt\\
%   &+ \int_0^T\int_{\Omega^{(h)}(t)} \fint_0^h \sqrt{\varrho^{(h)}(t)} \Big[  \dots \Big] \circ \Phi^{(h)}_s(t) \cdot v^{(h)}(t) \,\dd s \dx  \dt,
%  \end{align*}
%  where the first term converges to zero by \eqref{conv:h:Phi} and the a priori estimates form the last subsection.
%  We further write
  \begin{align*}
  \int_0^T\int_{\Omega^{(h)}(t)}& \fint_0^h \sqrt{\varrho^{(h)}(t)}\sqrt{\det \nabla \Phi^{(h)}_{s}(t,x)}  \Big[  \dots \Big] \circ \Phi^{(h)}_s(t) \cdot v^{(h)}(t) \,\dd s \dx  \dt\\
    &=\int_0^T\int_{\Omega^{(h)}(t)} \fint_0^h \sqrt{\varrho^{(h)}(t)} \sqrt{\det \nabla \Phi^{(h)}_{s}(t,x)} \bigg(\Big[  \dots \Big]_\xi \circ \Phi^{(h)}_s(t)-\Big[  \dots \Big]_{\xi}\bigg) \cdot v^{(h)}(t) \,\dd s \dx  \dt\\
       &+\int_0^T\int_{\Omega^{(h)}(t)} \fint_0^h \sqrt{\varrho^{(h)}(t)}\sqrt{\det \nabla \Phi^{(h)}_{s}(t,x)}  \bigg(\Big[  \dots \Big]_{\xi}-\Big[  \dots \Big]\bigg) \cdot v^{(h)}(t) \,\dd s \dx  \dt\\
         &+\int_0^T\int_{\Omega^{(h)}(t)} \fint_0^h \sqrt{\varrho^{(h)}(t)}\sqrt{\det \nabla \Phi^{(h)}_{s}(t,x)}  \Big[  \dots \Big] \cdot v^{(h)}(t) \,\dd s \dx  \dt
  \end{align*}
  Here $[\cdot]_\xi$ denotes a regularisation in space defined by an extension to the whole space\footnote{Since $\partial\Omega^{(h)}$ is a Lipschitz boundary uniformly in time by \eqref{est:h1} standard results on the extension of Sobolev functions apply.} and a mollification with a smooth kernel.
 In the above we use $[  \dots]$ as a shorthand for
 \begin{align*}
 \Big[  \dots \Big]=\left[  \sqrt{\varrho^{(h)}(t+s)} v^{(h)}(t+s) \cdot \nabla b(t+s) \right].
 \end{align*}  For fixed $\xi$ we can use smoothness of $[\cdot]_\xi$ to conclude that the first term vanishes as $h\rightarrow0$. This is a consequence of \eqref{Phi-Shift} and the a priori estimates. Also, the second term converges to zero as $\xi\rightarrow 0$ (uniformly with respect to $h$, recall \eqref{eqh:convrho1'},  \eqref{eqh:van1'} and \eqref{eq:rhopositive}) by standard properties of the mollification. The last term converges to the expected limit as we have seen in \eqref{eq:0106b}.
  In conclusion, we have
    \begin{align*}
 \int_0^T\int_{\Omega^{(h)}} \fint_0^h \sqrt{\varrho^{(h)}(t)}&\sqrt{\det \nabla \Phi^{(h)}_{s}(t,x)} \Big[  \sqrt{\varrho^{(h)}(t+s)} v^{(h)}(t+s) \cdot \nabla \phi(t+s) \Big] \circ \Phi^{(h)}_s(t) \cdot v^{(h)}(t) \,\dd s \! \dx \!  \dt\\
   &\to \int_0^T \int_{\Omega(t)} \varrho(t) v(t) \cdot \nabla \phi(t) v(t) \dx \dt
  \end{align*}    
 as $h\rightarrow0$, which finishes the proof of \eqref{eq:mom:kappa}.

 \section{Removal of the remaining approximation parameters}
 
\label{sec:5}
In this section we pass to the limit in the approximate equations. For technical reasons
the limits $\kappa\rightarrow0$, $\varepsilon\rightarrow0$ and $\delta\rightarrow0$ have to be performed independently from each other.  The limit $\kappa\rightarrow0$ is rather straightforward as the density remains compact for $\varepsilon>0$.
For the greater part of this section we study the limit $\varepsilon\rightarrow0$ and only highlight the difference in the $\delta$-limit.

\subsection{The limit system for \texorpdfstring{$\kappa\to0$}{k to 0}}
\label{subsec:eps}

Recalling the definition of the function spaces from Section \ref{sec:weak} we seek a triple $(\eta,v,\varrho)\in Y^I\times X_{\eta}^I\times \hat{Z}_\eta^I$ that satisfies the following.
\begin{itemize}
\item The momentum equation holds in the sense that\begin{align}\label{eq:mom:eps}
\begin{aligned}
&\int_I\frac{\dd}{\dt}\int_{\Omega(t)}\varrho v \cdot b\dx-\int_{\Omega(t)} \Big(\varrho v\cdot \partial_t b +\varrho v\otimes v:\nabla b\Big)\dxt
\\
&+\int_I\int_{\Omega(t)}\mathbb S(\nabla v):\nabla b \dxt-\int_I\int_{\Omega(t)}
p_\delta(\varrho)\,\Div b\dxt+\frac{\varepsilon}{2}\int_I\int_{\Omega(t)}\nabla\varrho\cdot(\nabla v b+\nabla b v)\dxt\\
&+\int_I\bigg(-\int_Q \varrho_s\partial_t\eta\,\partial_t \phi \dy + \langle DE(\eta), \phi\rangle+\langle D_2R(\eta,\partial_t\eta),\phi\rangle\bigg)\dt
\\&=\int_I\int_{\Omega(t)}\varrho f_f\cdot b\dxt+\int_I\int_Q f_s \cdot \phi\,\dd x\dt
\end{aligned}
\end{align} 
for all $(\phi,b)\in L^2(I;W^{2,q}(Q;\Omega)) \cap W^{1,2}(I;L^2(Q;\R^n)) \times C^\infty_c(\overline{I}\times \Omega; \R^n)$ with $\phi(t) = b(t) \circ \eta(t)$ in $Q$ and $\phi(t)=0$ on $P$, where $\Omega(t) := \Omega \setminus \eta(t,Q)$. Moreover, we have $(\varrho v)(0)=q_0$, $\eta(0)=\eta_0$ and $\partial_t\eta(0)=\eta_1$ we well as $\partial_t\eta(t)=v(t)\circ\eta(t)$ in $Q$, $\eta(t)\in\mathcal E$ and $v(t)=0$ on $\partial\Omega$ for a.a. $t\in I$.
\item The continuity equation holds in the sense that
\begin{align}\label{eq:con:eps}
\begin{aligned}
&\int_I\frac{\dd}{\dt}\int_{\Omega(t)}\varrho \psi\dxt-\int_I\int_{\Omega(t)}\Big(\varrho\partial_t\psi
+\varrho v\cdot\nabla\psi\Big)\dxt=\varepsilon\int_I\int_{\Omega(t)}\nabla\varrho\cdot\nabla\psi\dxt
\end{aligned}
\end{align}
for all $\psi\in C^\infty(\overline{I}\times\R^3)$, $\partial_{\nu(t)}\varrho(t)=0$ in $\partial \Omega(t)$ for a.a. $t \in I$ and we have $\varrho(0)=\varrho_0$. 
\item  The energy inequality is satisfied in the sense that
\begin{align} \label{eq:ene:eps}
\begin{aligned}
- \int_I \partial_t &\psi \,
\mathscr E_\delta \dt+\int_I\psi\int_{\Omega(t)}\mathbb S(\nabla v):\nabla v\dxs\\&+2\int_I\psi R(\eta,\partial_t\eta)\ds+\varepsilon\int_I\psi\int_{\Omega(t)}H_\delta''(\varrho)|\nabla\varrho|^2\dxs \\&\leq
\psi(0) \mathscr E_\delta(0)+\int_I\psi\int_{\Omega(t)}\varrho f_f\cdot v\dxt+\int_I\psi\int_Q g_s\,\partial_t\eta\,\dd y\dt
\end{aligned}
\end{align}
holds for any $\psi \in C^\infty_c([0, T))$.
Here, we abbreviated
$$\mathscr E_\delta(t)= \int_{\Omega(t)}\Big(\frac{1}{2} \varrho(t) | {v}(t) |^2 + H_\delta(\varrho(t))\Big)\dx+\int_Q\varrho_s\frac{|\partial_t\eta|^2}{2}\dy+ E(\eta(t)).$$
\end{itemize}
\begin{theorem}\label{thm:eps}
Assume that we have for some $\alpha\in(0,1)$
\begin{align*}
\frac{|q_0|^2}{\varrho_0}&\in L^1(\Omega_{\eta_0}),\ \varrho_0\in C^{2,\alpha}(\overline\Omega_{\eta_0}), \ \eta_0\in \mathcal E,\ \eta_1\in L^2(Q;\R^n),\\
f_f&\in L^2(I;L^\infty(\Omega;\R^n)),\ f_s\in L^2(I\times Q;\R^n).
\end{align*}
Furthermore suppose that $\varrho_0$ is strictly positive. There is a solution $(\eta,v,\varrho)\in Y^I\times X_\eta^I\times \hat{Z}_\eta^I$ to \eqref{eq:mom:eps}--\eqref{eq:ene:eps}. 
Here, we have $I=(0,T_*)$, where $T_*<T$ only if the time $T_*$ is the time of the first contact
of the free boundary of the solid body either with itself or $\partial\Omega$ (i.e. $\eta(T_*)\in\partial\mathscr E$). 
\end{theorem}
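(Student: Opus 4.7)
The plan is to apply Theorem \ref{thm:kappa} for each $\kappa>0$ and then pass $\kappa\to 0$. Since the initial datum $\eta_0\in\mathcal E$ need not lie in $W^{k_0,2}$, I first regularize: by density choose $\eta_{0,\kappa}\in\mathcal E\cap W^{k_0,2}(Q;\Omega)$ with $\eta_{0,\kappa}\to\eta_0$ in $W^{2,q}(Q)$, $E(\eta_{0,\kappa})\to E(\eta_0)$ and $\kappa\|\nabla^{k_0}\eta_{0,\kappa}\|_{L^2}^2\to 0$, and correspondingly split $\eta_1$, $q_0$, $\varrho_0$ (which already lies in the required regularity class). Theorem \ref{thm:kappa} provides $(\eta_\kappa,v_\kappa,\varrho_\kappa)\in Y^I_{k_0}\times X^I_{\eta_\kappa,k_0}\times \hat Z^I_{\eta_\kappa}$ satisfying \eqref{eq:mom:kappa}--\eqref{eq:ene:kappa} for arbitrary $T$. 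The energy inequality \eqref{eq:ene:kappa}, together with S1--S4, R2--R3 and $\inf H_\delta''\geq c_\delta>0$, yields the following $\kappa$-independent bounds: $\eta_\kappa$ in $L^\infty(I;W^{2,q}(Q))\cap W^{1,2}(I;W^{1,2}(Q))$, $v_\kappa$ in $L^2(I;W^{1,2}(\Omega_{\eta_\kappa}))$, $\sqrt{\varrho_\kappa}v_\kappa$ in $L^\infty(I;L^2)$, $\varrho_\kappa$ in $L^\infty(I;L^\beta)$, and $\sqrt{\varepsilon}\,\nabla\varrho_\kappa$ in $L^2(I\times\Omega_{\eta_\kappa})$. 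The $\kappa$-contributions on the left-hand side merely give $\sqrt\kappa\,\nabla^{k_0}v_\kappa$, $\sqrt\kappa\,\nabla^{k_0}\partial_t\eta_\kappa$ bounded in $L^2$, which will vanish in the limit.

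From these estimates I extract, along a subsequence, $\eta_\kappa\to\eta$ in $C^0(\overline I;C^{1,\alpha}(Q))$ by Aubin--Lions, $\partial_t\eta_\kappa\rightharpoonup\partial_t\eta$ in $L^2(I;W^{1,2}(Q))$, and $v_\kappa\rightharpoonup^{\eta} v$ in $L^2(I;W^{1,2}(\Omega_\eta))$. The density requires more care: using the regularity of $v_\kappa$ provided by the standard (non-$\kappa$) dissipation together with Lemma \ref{lem:warme2}, I obtain uniform bounds $\varrho_\kappa\in L^\infty(I;W^{1,2}(\Omega_{\eta_\kappa}))\cap L^2(I;W^{2,2}(\Omega_{\eta_\kappa}))$ and a strictly positive lower bound (the prerequisites of that lemma reduce to the already-established regularity of $v_\kappa$ and $\eta_\kappa$, and $\varrho_0\in C^{2,\alpha}\hookrightarrow W^{1,2}$). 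Combined with $\partial_t\varrho_\kappa\in L^2(I;L^2)$ from \eqref{eq:con:kappa}, a local Aubin--Lions argument on parabolic cylinders compactly contained in $\Omega_{\eta_\kappa}$ (exactly as performed in the $\tau\to 0$ step around \eqref{conv:tau:rhostrong}) plus the uniform $L^\beta$-bound gives $\varrho_\kappa\to^\eta\varrho$ strongly in $L^p(I\times\Omega_\eta)$ for some $p>\beta$. Strong convergence of $\nabla\varrho_\kappa$ in $L^2(I\times\Omega_\eta)$ is then obtained by writing the renormalized identity \eqref{eq:renormz} with $\theta(s)=s^2$ at both levels and comparing, analogously to \eqref{eq:0106rhostrongh}.

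Passing to the limit in \eqref{eq:mom:kappa} is now largely standard. The pressure $p_\delta(\varrho_\kappa)\to p_\delta(\varrho)$ strongly in $L^1$ by the strong $L^p$-convergence of $\varrho_\kappa$ and $\beta$-growth. The convective term $\varrho_\kappa v_\kappa\otimes v_\kappa$ is handled by Lemma \ref{thm:weakstrong}: the momentum $\varrho_\kappa v_\kappa$ has a distributional time-derivative bounded in $W^{-m,2}$ (read off from \eqref{eq:mom:kappa} with test functions in $W^{m,2}_0$, using the $\kappa$-uniform side of the estimate, noting that $\sqrt\kappa\nabla^{k_0}v_\kappa$ bounded makes $\kappa\int\nabla^{k_0}v_\kappa:\nabla^{k_0}b\leq\sqrt\kappa\|b\|_{W^{k_0,2}}$), whereas $v_\kappa$ is more regular in space. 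The artificial-viscosity terms $\tfrac{\varepsilon}{2}\nabla\varrho_\kappa\cdot(\nabla v_\kappa\,b+\nabla b\,v_\kappa)$ pass by strong convergence of $\nabla\varrho_\kappa$ against weak convergence of $\nabla v_\kappa$, respectively strong convergence of $v_\kappa$ in $L^2$ (Corollary \ref{rem:strong}). Both $\kappa$-extra terms vanish as $\sqrt\kappa\to 0$. The continuity equation \eqref{eq:con:eps} and the energy inequality \eqref{eq:ene:eps} follow by standard lower semi-continuity arguments, using that the $\kappa$-contribution to $\mathscr E_\delta(0)$ tends to zero by our choice of $\eta_{0,\kappa}$.

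The main obstacle is to identify $\langle DE(\eta_\kappa),\phi\rangle\to\langle DE(\eta),\phi\rangle$ and analogously for $D_2R$, given that $E$ is non-convex and only $\eta_\kappa\to\eta$ in $C^{1,\alpha}$ is available a priori. Following the strategy of \cite{benesovaVariationalApproachHyperbolic2020}, I would test the $\kappa$-equation with $(\phi,b)$ built from a localization $(\eta_\kappa-\eta)\psi$ with $\psi\in C_0^\infty(Q;[0,1])$, compare with the corresponding testing of the candidate limit equation, and combine the resulting estimates with the Minty-type property S6 (and the weak continuity of $D_2R$ from R4) to upgrade to $\eta_\kappa\to\eta$ in $W^{2,q}(Q)$; the derivatives $DE$, $D_2R$ then pass by S5 and R4. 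Finally, for the time of existence: all bounds above are valid as long as $\eta(t)$ remains in a sublevel set of $E$ strictly inside $\mathcal E$; defining $T_*$ as the supremum of times for which this holds and iterating the construction on successive intervals gives the maximal solution, which either exists on all of $(0,T)$ or ceases to do so at $T_*$ with $\eta(T_*)\in\partial\mathcal E$, i.e.\ a collision.
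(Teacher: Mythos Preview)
Your strategy mirrors the paper's: apply Theorem~\ref{thm:kappa}, pass $\kappa\to0$ using the energy inequality for uniform bounds, establish strong density convergence, handle $DE$ via the Minty property S6, and extend the time interval. The regularisation of $\eta_0$ you add is a point the paper glosses over but which is indeed needed (Theorem~\ref{thm:h} requires $\eta_0\in W^{k_0,2}$).

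There is, however, one genuine gap. You invoke Lemma~\ref{lem:warme2} to obtain \emph{$\kappa$-uniform} bounds $\varrho_\kappa\in L^\infty(I;W^{1,2})\cap L^2(I;W^{2,2})$ and a uniform positive lower bound. This fails: the constants in Lemma~\ref{lem:warme2} depend on $\int_0^T\|\nabla v\|_{L^\infty}^2\,\dd t$ (via $\|\Div w\|_{L^\infty}$ and $\|w\|_{L^\infty}$), and the only control you have on $\|\nabla v_\kappa\|_{L^\infty}$ comes from the $\kappa$-term in the energy, giving $\|\nabla^{k_0}v_\kappa\|_{L^2}\lesssim 1/\sqrt{\kappa}$. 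Thus the constants in Lemma~\ref{lem:warme2} blow up as $\kappa\to0$, and neither the higher regularity nor the strict positivity survive the limit. Consequently your claim $\partial_t\varrho_\kappa\in L^2(I;L^2)$ uniformly is not justified either.

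The fix is simple and is what the paper does: read the $\kappa$-uniform bound $\nabla\varrho_\kappa\in L^2(I\times\Omega_{\eta_\kappa})$ directly from the energy inequality \eqref{eq:ene:kappa}, using $H_\delta''\geq c_\delta>0$ (Lemma~\ref{lem:fluidpotential}, H2). This gives $\varrho_\kappa\rightharpoonup^\eta\varrho$ in $L^2(I;W^{1,2})$. The distributional time derivative of $\varrho_\kappa$ is then bounded in a negative Sobolev space from \eqref{eq:con:kappa} (no need for $L^2(L^2)$), so Corollary~\ref{rem:strong} applies and yields $\varrho_\kappa\to^\eta\varrho$ in $L^2(I\times\Omega_\eta)$; interpolation with the $L^\infty(L^\beta)$ and $\nabla(\varrho_\kappa)^{\beta/2}\in L^2$ bounds upgrades this to $L^p$ for some $p>\beta$. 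Your subsequent argument for strong convergence of $\nabla\varrho_\kappa$ via the renormalised identity with $\theta(s)=s^2$ is then correct and is exactly the paper's route. The rest of your proof goes through as written.
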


\begin{proof}
For $T>0$ to be fixed later and any given $\kappa$ we obtain a solution $(\eta^{(\kappa)},v^{(\kappa)},\varrho^{(\kappa)})$ to \eqref{eq:mom:kappa}--\eqref{eq:ene:kappa} by Theorem \ref{thm:kappa}.
In particular, we have
\begin{align} \label{NTDBkappa}\begin{aligned}
&
 \int_{\Omega^{(\kappa)}} \Big[ \frac{1}{2} \varrho^{(\kappa)} | {v^{(\kappa)}}(t_1) |^2 + H_{\delta}(\varrho^{(\kappa)}(t_1))
  \Big] \dx+\int_Q\varrho_s\tfrac{|\partial_t\eta^{(\kappa)}(t_1)|^2}{2}\dy+ E(\eta^{(\kappa)}(t_1))\\&+\int_0^{t_1} \left[ \int_{\Omega^{(\kappa)}}\mathbb S(\nabla v^{(\kappa)}):\nabla v^{(\kappa)}\dx
+2 \int_{Q}R(\eta^{(\kappa)},\partial_t\eta^{(\kappa)})\dy+\varepsilon \int_{\Omega^{(\kappa)}}H''_\delta(\varrho^{(\kappa)})|\nabla\varrho^{(\kappa)}|^2\dx \right.\\
&\left.+\kappa\int_{\Omega^{(\kappa)}}|\nabla^{k_0} v^{(\kappa)}|^2\dx
+2\kappa \int_{Q}|\nabla^{k_0}\eta^{(\kappa)}|^2\dy\right] \dt +\int_Q \kappa|\nabla^{k_0}\eta^{(\kappa)}(t_1)|^2\dy\\
 &\leq \,\int_{\Omega_0}\Big[ \frac{1}{2} \varrho_0 |v_0|^2  + H_{\delta}(\varrho_0) \Big] \dx+\int_Q\varrho_s\frac{|\eta_1|^2}{2}\dy+E(\eta_0) \\  &+ \int_0^{t_1}\left[ \int_{\Omega^{(\kappa)}} \varrho^{(\kappa)} f_f \cdot v^{(\kappa)} \dx + \int_Q f_s \cdot \partial_t \eta^{(\kappa)} \dy  \right] \dt
 \end{aligned}
\end{align}
for almost all $0 \leq t_1 \leq T$.
We deduce the bounds
%\begin{align} \label{Nbv1}
%   &\sup_{t \in I} \int_{\Omega_{\eta^{(\varepsilon)}}} \Big[ \frac{1}{2} \varrho_\varepsilon | {\bf v_\varepsilon} |^2 + P_{\delta}(\varrho_\varepsilon) \Big] \dx   \leq c\\
% \label{Nbv1B}   &\sup_{I}\int_Q \frac{|\partial_t\eta^{(\varepsilon)}|^2}{2}\,\dd y+ \sup_{I}E(\eta^{(\varepsilon)})\leq\,c.
%\end{align}
%In particular, we have
\begin{align} 
\label{est:k1}
 \| \partial_t\nabla\eta^{(\kappa)} \|_{L^2(I\times Q) }^2+\sup_{t\in I}\|\partial_t\eta^{(\kappa)}\|_{L^{2}(Q)}^2+\sup_{t\in I}\|\eta^{(\kappa)}\|_{W^{2,q}(Q)}^q\leq c,\\
\label{est:k2}
\sup_{t \in I} \| \varrho^{(\kappa)} \|^\beta_{L^\beta(\Omega^{(\kappa)})}  +
 \sup_{t \in I} \| \varrho^{(\kappa)} v^{(\kappa)} \|_{L^{\frac{2 \beta}{\beta + 1}}(\Omega^{(\kappa)})}^{\frac{2 \beta}{\beta + 1}}  \leq c,\\
 \label{est:k3}
 \| \nabla v^{(\kappa)} \|^2_{L^2(I\times \Omega^{(\kappa)}) }+ \| \nabla \varrho_\varepsilon \|^2_{L^2(I\times\Omega^{(\kappa)}) } + \| \nabla (\varrho^{(\kappa)})^{\beta/2} \|^2_{L^2(I\times\Omega^{(\kappa)}) } \leq c,\\
   \label{est:k4}
\sqrt{\kappa}\Big(\| v^{(\kappa)} \|^2_{L^2(I;W^{k_0,2}(\Omega^{(\kappa)})) }+\sup_{t\in I}\| \eta^{(\kappa)} \|^2_{W^{k_0,2}(Q) }+\| \partial_t\eta^{(\kappa)} \|^2_{L^2(I;W^{k_0,2}(Q)) }\Big) \leq c,
\end{align}
where in particular the bound on $\| \partial_t\nabla\eta^{(\kappa)} \|_{L^2(I\times Q) }$ converges to $0$ if we send $T\to 0$. Using \cite[Prop. 2.7]{benesovaVariationalApproachHyperbolic2020}, we can then choose $T$ small enough, so that there will be no collision, even in the limit $\kappa \to 0$.

Passing to a subsequence we obtain
%by Lemma~\ref{thm:weakstrong}, we find for $s\in (1,2)$ and
 for some $\alpha\in (0,1)$
\begin{align}
\eta^{(\kappa)}&\rightharpoonup^\ast\eta\quad\text{in}\quad L^\infty(I;W^{2,q}(Q;\Omega))\label{kappaeq:conveta1},\\
\eta^{(\kappa)}&\rightharpoonup^\ast\eta\quad\text{in}\quad W^{1,\infty}(I;L^2(Q;\R^n)),
\label{kappaeq:conetat1}\\
\eta^{(\kappa)}&\rightharpoonup\eta\quad\text{in}\quad W^{1,2}(I;W^{1,2}(Q;\R^n)),
\label{kappaeq:conetat2}\\
%\varepsilon\eta_\varepsilon&\rightharpoonup^\ast\eta\quad\text{in}\quad W^{1,2}(I;W^{3,2}(\omega)),
%\label{eq:conetat1}\\
\eta^{(\kappa)}&\to\eta\quad\text{in}\quad C^\alpha(\overline I\times Q;\Omega)),
\label{kappaeq:conetatal}\\
\kappa\eta^{(\kappa)}&\rightharpoonup^* 0\quad\text{in}\quad L^\infty(I;W^{k_0,2}(Q;\Omega)),\label{kappaeq:conetat3}\\
\kappa\partial_t\eta^{(\kappa)}&\rightharpoonup 0\quad\text{in}\quad L^2(I;W^{k_0,2}(Q;\R^n)),\label{kappaeq:conetat4}\\
v^{(\kappa)}&\rightharpoonup^\eta v\quad\text{in}\quad L^2(I;W^{1,2}(\Omega;\R^n)),\label{kappaeq:convu1}\\
\kappa v^{(\kappa)}&\rightharpoonup^\eta 0\quad\text{in}\quad L^2(I;W^{k_0,2}(\Omega;\R^n)),\label{kappaeq:convu2}\\
\varrho^{(\kappa)}&\rightharpoonup^{\ast,\eta}\varrho\quad\text{in}\quad L^\infty(I;L^\beta(\Omega_{\eta^{(\kappa)}})),\label{kappaeq:convrho1}\\
\varrho^{(\kappa)}&\rightharpoonup^{\eta}\varrho\quad\text{in}\quad L^2(I;W^{1,2}(\Omega_{\eta^{(\kappa)}}))\label{kappaeq:convrho2}.
\end{align}
Clearly, the $\kappa$-terms in \eqref{eq:mom:kappa} vanishes as $\kappa\rightarrow0$ as a consequence of \eqref{kappaeq:conetat3}, \eqref{kappaeq:conetat4} and \eqref{kappaeq:convu2}.
%Now, using the a-priori estimates \eqref{Nbv2}--\eqref{Nbv4} and the bounds that one gains (using the renormalized continuity equation from Lemma~\ref{cor:ap} with $\theta(z)=z^2$ and testing with $\psi\equiv 1$) we find due to $\beta>4$ that
%\begin{align}\label{est:nablarho}
%\int_I\int_{\Omega_{\eta_\varepsilon}}\varepsilon\abs{\nabla\varrho_\varepsilon}^2\, dx\, dt\leq C,
%\end{align}
%with $C$ independent of $\varepsilon$. This
%implies
%\begin{align}
%\varepsilon\nabla\varrho_\varepsilon\rightarrow^\eta 0\quad\text{in}\quad L^2(I\times\Omega_{\eta_\varepsilon}).\label{eq:van1}
%%\varepsilon\nabla\bfu_\varepsilon\nabla\varrho_\varepsilon\rightarrow^\eta 0\quad\text{in}\quad L^1(I\times\Omega_{\eta_\varepsilon}).\label{eq:van2}
%\end{align}
%Note that due to \eqref{eq:convvt'1eps} also the term $\varepsilon\vt^{(\varepsilon)}^5$ vanishes as $\varepsilon\rightarrow0$. Finally, we have
%\begin{align}
%\varepsilon^{1/q'}\overline\bfh^{(\varepsilon)}\rightarrow^\eta 0\quad\text{in}\quad L^{q}(I\times\Omega_{\eta_\varepsilon})\label{eq:van2}
%\end{align}
%by Theorem \ref{thm:visu}.\\
%\seb{REMOVE: Please observe that this is the only essential point in the proof where we make use of the fact that $\varrho_\varepsilon$ is uniformly bounded in $L^\beta$. }
%Note, that \eqref{con:epseta} and \eqref{con:epsetat} imply
%\begin{align}\label{eq:convetah}
%\eta_\varepsilon\rightarrow\eta\quad\text{in}\quad C(\overline{I}\times \partial\Omega).
%\end{align}
Arguing as in \cite[Prop. 2.23]{benesovaVariationalApproachHyperbolic2020} one can use assumption S6 to deduce
\begin{align}\label{15:02kappa}
\eta^{(\kappa)}&\rightarrow\eta\quad\text{in}\quad L^q(I;W^{2,q}(Q;\Omega))
\end{align}
such that for a.e. $t\in I$
\begin{align}\label{15:02'kappa}
DE(\eta^{(\kappa)}(t))&\rightarrow DE(\eta(t))\quad\text{in}\quad W^{-2,q}(Q;\Omega).
\end{align}
In order to pass to the limit in various terms
in the equations
we are concerned with the compactness of $\varrho^{(\kappa)}$. 
%Using
%Theorem \ref{lem:warme} (b) with $\theta(s)=s^2$ (which is admissible by approximation)
%we obtain
%\begin{align}
%\label{eq:renormzN}
%\begin{aligned}
%\int_{\Omega^{(\kappa)}} |\varrho^{(\kappa)}(t_1)|^2\dx+\int_0^{t_1}&\int_{\Omega^{(\kappa)}}2\varepsilon |\nabla\varrho^{(\kappa)}|^2\dx\dt\\
%&=\int_{\Omega_0}\varrho_0^2\dx-\int_0^{t_1}\int_{\Omega^{(\kappa)}}2\varrho^{(\kappa)}\Div v^{(\kappa)}\dx\dt.
%\end{aligned}
%\end{align}
%Due to \eqref{conv4orig} and \eqref{conv:vrN1} we conclude
% (for a non-relabelled subsequence)
%\begin{align}
%%\label{conv:vrN1}\varrho_N&\rightharpoonup^{\eta,*}\varrho\quad\text{in}\quad L^\infty(I_*;L^2(\Omega_{\eta_N})),\\
%\label{conv:vrN2}\nabla\varrho^{(\kappa)}&\rightharpoonup^{\eta}\nabla\varrho\quad\text{in}\quad L^2(I;L^2(\Omega_{\eta^{(\kappa)}}))).
%\end{align}
Due to \eqref{kappaeq:convrho2} and \eqref{eq:con:kappa} we can apply
Corollary \ref{rem:strong} to conclude
\begin{align}\label{0103}
\varrho^{(\kappa)}\rightarrow^{\eta}\varrho\quad\text{in}\quad L^2(I;L^2(\Omega)).
\end{align}
 In combination with \eqref{est:k3} this can be improved to
\begin{align}\label{0103a}
\varrho^{(\kappa)}\rightarrow^{\eta}\varrho\quad\text{in}\quad L^p(I;L^p(\Omega_\eta)),
\end{align}
for some $p>\beta$. It is easy to see that \eqref{0103a} allows to pass to the limit in all nonlinear terms of \eqref{eq:mom:kappa} and \eqref{eq:con:kappa} except of
%$-\frac{\varepsilon}{2}\int_I\int_{\Omega^{(\kappa)}}\Delta\varrho^{(\kappa)} v^{(\kappa)}\cdot\phi\dxt$ which we rewrite as
\begin{align*}
\frac{\varepsilon}{2}\int_I\int_{\Omega^{(\kappa)}}\nabla\varrho^{(\kappa)}\cdot(\nabla v^{(\kappa)} \phi+\nabla\phi v^{(\kappa)})\dxt.
\end{align*}
Due to \eqref{kappaeq:convu1} 
we obtain the expected limit as $\kappa\rightarrow0$ provided we have
\begin{align}\label{0104}
\nabla\varrho^{(\kappa)}\rightarrow^{\eta}\nabla\varrho\quad\text{in}\quad L^2(I;L^2(\Omega;\R^n)).
\end{align}
This can be proved exactly as in \eqref{eq:0106rhostrongh}
 using Theorem \ref{lem:warme} (b).
%  to the limit version of the continuity equation (that is, equation \eqref{eq:con:eps}) which results in a counterpart of \eqref{eq:renormzN}. On account of \eqref{0103} all terms converge except for $\int_0^t\int_{\Omega_{\eta^{(\kappa)}}}2\varepsilon |\nabla\varrho^{(\kappa)}|^2\dx\ds$, which yields convergence of the latter. Hence \eqref{0104} follows.
Finally we complete proof of Theorem \eqref{thm:eps}, by extending the solution to long times: Assume that $I = [0,T)$ is a maximal interval of existence with $T< \infty$. Then using the energy-inequality, we conclude existence of limits $\eta(T),\partial_t \eta(T), \varrho(T)$ and $v(T)$. Now $\eta(T)$ has to have a collision, which proves the theorem. otherwise we could construct an extended solution by applying the theorem with these as initial data, which would be a contradiction.
 \end{proof}

\subsection{The limit system for \texorpdfstring{$\varepsilon\to0$}{ε to 0}}
\label{subsec:del}
We wish to establish the existence of a weak solution $(\eta,v,\varrho)$ to the system with artificial pressure in the following sense: We define
$$
\widetilde Z_\eta^I= C_w(\overline{I};L^\beta(\Omega_\eta))
$$
as the function space for the density, whereas the other function spaces are defined in Section \ref{sec:weak}.
A weak solution is a triple $(\eta,v,\varrho)\in Y^I\times X_{\eta}^I\times\widetilde Z_\eta^I$ that satisfies the following.
\begin{itemize}
\item\label{D1} The momentum equation holds in the sense that\begin{align}\label{eq:mom:delta}
\begin{aligned}
&\int_I\frac{\dd}{\dt}\int_{\Omega(t)}\varrho v \cdot b\dx-\int_{\Omega(t)} \Big(\varrho v\cdot \partial_t b +\varrho v\otimes v:\nabla b\Big)\dxt
\\
&+\int_I\int_{\Omega(t)}\mathbb S(\nabla v):\nabla b \dxt-\int_I\int_{\Omega(t)}
p_\delta(\varrho)\,\Div b \dxt\\
&+\int_I\bigg(-\int_Q \varrho_s\partial_t\eta\,\partial_t \phi\dy + \langle DE(\eta),\phi\rangle+\langle D_2R(\eta,\partial_t\eta),\phi\rangle\bigg)\dt
\\&=\int_I\int_{\Omega(t)}\varrho f_f\cdot b\dxt+\int_I\int_Q f_s\cdot \phi\,\dd x\dt
\end{aligned}
\end{align} 
for all $(\phi,b)\in L^2(I;W^{2,q}(Q;\R^n)) \cap W^{1,2}(I;L^2(Q;\R^n))\times C^\infty_c(\overline{I}\times \Omega; \R^n)$ with $\phi(t) = b(t) \circ \eta(t)$ in $Q$ and $\phi(t)=0$ on $P$, where $\Omega(t) := \Omega \setminus \eta(t,Q)$. Moreover, we have $(\varrho v)(0)=q_0$, $\eta(0)=\eta_0$ and $\partial_t\eta(0)=\eta_1$ we well as $\partial_t\eta(t)=v(t)\circ\eta(t)$ in $Q$, $\eta(t)\in\mathcal E$ and $v(t)=0$ on $\partial\Omega$ for a.a. $t\in I$.
\item\label{D2}  The continuity equation holds in the sense that
\begin{align}\label{eq:con:delta}
\begin{aligned}
&\int_I\frac{\dd}{\dt}\int_{\Omega(t)}\varrho \psi\dxt-\int_I\int_{\Omega(t)}\Big(\varrho\partial_t\psi
+\varrho v\cdot\nabla\psi\Big)\dxt=0
\end{aligned}
\end{align}
for all $\psi\in C^\infty(\overline{I}\times\Omega)$ and we have $\varrho(0)=\varrho_0$. 
\item \label{D3} The energy inequality is satisfied in the sense that
\begin{align} \label{eq:ene:delta}
\begin{aligned}
- \int_I &\partial_t \psi \,
\mathscr E_\delta \dt+\int_I\psi\int_{\Omega_\eta}\mathbb S(\nabla v):\nabla v\dxt+2\int_I R(\eta,\partial_t\eta)\dt \\&\leq
\psi(0) \mathscr E_\delta(0)+\int_I\psi\int_{\Omega_{\eta}}\varrho f_f\cdot v\dxt+\int_I\psi\int_Q f_s\,\partial_t\eta\,\dd y\dt
\end{aligned}
\end{align}
holds for any $\psi \in C^\infty_c([0, T))$.
Here, we abbreviated
$$\mathscr E_\delta(t)= \int_{\Omega(t)}\Big(\frac{1}{2} \varrho(t) | {v}(t) |^2 + H_\delta(\varrho(t))\Big)\dx+\int_Q\varrho_s\frac{|\partial_t\eta|^2}{2}\dy+ E(\eta(t)).$$
\end{itemize}
\begin{theorem}\label{thm:delta}
Assume that we have for some $\alpha\in(0,1)$ and $s>0$
\begin{align*}
\frac{|q_0|^2}{\varrho_0}&\in L^1(\Omega_{\eta_0}),\ \varrho_0\in C^{2,\alpha}(\overline\Omega_{\eta_0}), \ \eta_0\in \mathcal E,\ \eta_1\in L^2(Q;\R^n),\\
f_f&\in L^2(I;L^\infty(\R^n)),\ f_s\in L^2(I\times Q;\R^n).
\end{align*}
Furthermore suppose that $\varrho_0$ is strictly positive. There is a solution $(\eta,v,\varrho)\in Y^I\times X_\eta^I\times \widetilde Z_\eta^I$ to \eqref{eq:mom:delta}--\eqref{eq:ene:delta}. 
Here, we have $I=(0,T_*)$, where $T_*<T$ only if the time $T_*$ is the time of the first contact
of the free boundary of the solid body either with itself or $\partial\Omega$ (i.e. $\eta(T_*)\in\partial\mathscr E$). 
\end{theorem}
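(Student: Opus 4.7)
The plan is to start from a sequence of solutions $(\eta^{(\varepsilon)},v^{(\varepsilon)},\varrho^{(\varepsilon)})$ furnished by Theorem \ref{thm:eps} and to pass to the limit $\varepsilon \to 0$. The energy inequality \eqref{eq:ene:eps} directly provides, uniformly in $\varepsilon$, the a priori bounds
\begin{align*}
&\sup_{t\in I}\Big(\|\eta^{(\varepsilon)}(t)\|_{W^{2,q}(Q)}^q+\|\partial_t\eta^{(\varepsilon)}(t)\|_{L^2(Q)}^2+\|\varrho^{(\varepsilon)}(t)\|_{L^\beta(\Omega^{(\varepsilon)})}^\beta+\|\sqrt{\varrho^{(\varepsilon)}}\,v^{(\varepsilon)}(t)\|_{L^2(\Omega^{(\varepsilon)})}^2\Big)\\
&\qquad+\|\partial_t\eta^{(\varepsilon)}\|_{L^2(I;W^{1,2}(Q))}+\|\nabla v^{(\varepsilon)}\|_{L^2(I\times\Omega^{(\varepsilon)})}\le c,
\end{align*}
together with $\sqrt{\varepsilon}\,\|\nabla\varrho^{(\varepsilon)}\|_{L^2(I\times\Omega^{(\varepsilon)})}+\sqrt{\varepsilon}\,\|\nabla(\varrho^{(\varepsilon)})^{\beta/2}\|_{L^2(I\times\Omega^{(\varepsilon)})}\le c$. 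Arguing exactly as in Section~\ref{subsec:eps} (using Assumption S6 together with \cite[Prop.~2.7]{benesovaVariationalApproachHyperbolic2020}) I would extract $\eta^{(\varepsilon)}\to\eta$ in $C^\alpha(\overline I\times Q)\cap L^q(I;W^{2,q}(Q;\R^n))$, $v^{(\varepsilon)}\rightharpoonup^{\eta}v$ in $L^2(I;W^{1,2}(\Omega_\eta;\R^n))$ and $\varrho^{(\varepsilon)}\rightharpoonup^{\ast,\eta}\varrho$ in $L^\infty(I;L^\beta(\Omega_\eta))$. The $\varepsilon$-correction $\tfrac{\varepsilon}{2}\int_I\!\int_{\Omega^{(\varepsilon)}}\nabla\varrho^{(\varepsilon)}\cdot(\nabla v^{(\varepsilon)}\,b+\nabla b\,v^{(\varepsilon)})\dxt$ in \eqref{eq:mom:eps} and the Laplacian on the right of \eqref{eq:con:eps} both vanish in the limit on account of the $\sqrt{\varepsilon}$-bounds, as does the dissipative $\varepsilon$-term in the energy inequality, which can only be dropped by weak lower semicontinuity.

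The hard part will be to upgrade the weak convergence $\varrho^{(\varepsilon)}\rightharpoonup^{\ast,\eta}\varrho$ to a strong one, since without the parabolic regularisation of $\varepsilon\Delta\varrho$ the density now has no spatial gradient estimate. I would follow the effective-viscous-flux strategy of Lions \cite{Li2} and Feireisl et al.\ \cite{F}, adapted to the moving geometry as in \cite[Sec.~4]{BrSc}. First, testing \eqref{eq:mom:eps} with a Bogovskii-type function $b=\psi\,\mathcal{B}[\chi(\varrho^{(\varepsilon)})^\theta-(\chi(\varrho^{(\varepsilon)})^\theta)_\Omega]$ localised strictly inside the fluid domain (which is legitimate uniformly in $\varepsilon$ thanks to the $C^\alpha$-compactness of $\eta^{(\varepsilon)}$) yields the improved integrability estimate $\int_I\!\int_{\Omega^{(\varepsilon)}}p_\delta(\varrho^{(\varepsilon)})(\varrho^{(\varepsilon)})^\theta\dxt\le c$ for some small $\theta>0$. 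Second, testing the $\varepsilon$-momentum equation and its formal limit with $b=\psi\,\nabla\Delta^{-1}[\chi\varrho^{(\varepsilon)}]$ and $b=\psi\,\nabla\Delta^{-1}[\chi\varrho]$ respectively, and using Div-Curl style commutator identities for the Riesz operator together with Lemma~\ref{thm:weakstrong} to identify the numerous products of weakly converging factors, one obtains the effective-viscous-flux identity
\begin{align*}
\overline{\big(p_\delta(\varrho)-(\tfrac{4}{3}\mu+\lambda)\Div v\big)\varrho}\;\chi=\big(\overline{p_\delta(\varrho)}-(\tfrac{4}{3}\mu+\lambda)\Div v\big)\,\varrho\;\chi
\end{align*}
after passing to the limit. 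Compared to \cite{BrSc} the inertial contribution of the solid now lives in $W^{1,2}(I\times Q)$ rather than on a lower-dimensional interface, which is actually an advantage, as the solid terms can be tested directly without a trace step.

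From the effective-viscous-flux identity combined with the renormalised continuity equation for the limit (legitimate via DiPerna--Lions since $\varrho\in L^2_{t,x}$ and $v\in L^2_t W^{1,2}_x$) I would deduce that the defect $\overline{\varrho\log\varrho}-\varrho\log\varrho$ satisfies $\frac{\dd}{\dt}\int_{\Omega_\eta}(\overline{\varrho\log\varrho}-\varrho\log\varrho)\dx\le 0$ and vanishes at $t=0$, which by the convexity of $z\mapsto z\log z$ forces $\varrho^{(\varepsilon)}\to^{\eta}\varrho$ in $L^1(I\times\Omega_\eta)$; interpolation with the uniform $L^\infty L^\beta$-bound upgrades this to strong convergence in $L^q(I\times\Omega_\eta)$ for every $q<\beta$. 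The strict monotonicity $p_\delta''\ge 2\delta>0$, which is precisely why the artificial pressure is kept on this level, then allows one to identify $\overline{p_\delta(\varrho)}=p_\delta(\varrho)$; this Feireisl-type monotonicity step is what I expect to be the most delicate part of the whole passage. Once it is in place, every remaining nonlinearity in \eqref{eq:mom:delta}, \eqref{eq:con:delta} and \eqref{eq:ene:delta} converges by Lemma~\ref{thm:weakstrong} and Vitali's theorem, the energy inequality survives by weak lower semicontinuity, and a continuation argument identical in spirit to the end of the proof of Theorem~\ref{thm:eps} extends the solution to the maximal interval $(0,T_*)$, where $T_*$ is the first contact time or infinity.
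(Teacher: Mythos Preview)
Your outline is essentially the paper's own proof: start from Theorem~\ref{thm:eps}, extract weak limits from the energy bounds, kill the $\varepsilon$-terms, obtain a local effective-viscous-flux identity, and close via the renormalised continuity equation with $\theta(z)=z\ln z$ to force strong convergence of the density. The continuation argument at the end is also identical in spirit.

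There is, however, one genuine gap. Your Bogovski\u{\i} test function is localised strictly inside the fluid domain, so it can only deliver the \emph{local} bound of Lemma~\ref{prop:higher}, i.e.\ $\int_{J\times B}p_\delta(\varrho^{(\varepsilon)})\varrho^{(\varepsilon)}\dxt\le C(J\times B)$ for parabolic cubes $J\times B\Subset I\times\Omega(\cdot)$; it does not give the global estimate over $\Omega^{(\varepsilon)}$ that you write down. In a moving-domain problem the pressure could still concentrate at the fluid--solid interface, and without ruling this out you cannot even assert that $p_\delta(\varrho^{(\varepsilon)})$ has a weak $L^1$-limit, let alone pass to the limit in the momentum equation. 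The paper handles this separately in Lemma~\ref{prop:higherb} (following \cite{Kuk} and \cite[Lemma~6.4]{BrSc}) by constructing a test function that pushes mass away from the boundary; the key input is the uniform bound $\partial_t\eta^{(\varepsilon)}\in L^2(I;L^2(M))$ coming from the trace of \eqref{eq:conetat2}. You should add this step; the local effective-viscous-flux identity \eqref{eq:fluxpsi} is then combined with the equi-integrability near the boundary (Corollary~\ref{prop:higherb0}) to reach the global conclusion.

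A minor remark on your motivation: the reason the artificial pressure $\delta\varrho^\beta$ is retained at this level is not so much the strict convexity $p_\delta''\ge 2\delta$ as the uniform $L^\infty(I;L^\beta)$-bound on the density with $\beta\ge 2$, which is precisely what makes the DiPerna--Lions renormalisation of the limit equation (Lemma~\ref{cor:ap1}) immediate without the $T_k$-truncation machinery needed later in the $\delta\to 0$ step. Once you have strong convergence of $\varrho^{(\varepsilon)}$, the identification $\overline{p}=p_\delta(\varrho)$ is automatic; the monotonicity you need earlier is just $p_\delta'>0$, used to obtain $\overline{\Div v\,\varrho}\ge \Div v\,\varrho$ from \eqref{eq:fluxpsi}.
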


\begin{lemma}
\label{cor:ap1}
Under the assumptions of Theorem \ref{thm:delta} the continuity equation holds in the renormalized sense as specified in Definition~\ref{def:ren}.
%that is
%\begin{align}\label{eq:final3}
%\begin{aligned}
%\int_I\frac{\dd}{\dt}\int_{\Omega_\eta}\theta(\varrho)\psi\dxt&-\int_I\int_{\Omega_\eta}\Big(\theta(\varrho)\partial_t\psi +\theta(\varrho) \bfu\cdot \nabla \psi\Big)\dxt
%\\
%& =- \int_I\int_{\Omega_\eta}(\varrho\theta'(\varrho)-\theta(\varrho)) \Div\bfu \,\psi\dxt
%\end{aligned}
%\end{align}
%for all $\psi\in C^\infty(\overline I\times\R^3)$ and all $\theta\in C^1(\R)$ with 
%$\theta(0)=0$ and
%$\theta'(z)=0$ for $z\geq M_\theta$.
\end{lemma}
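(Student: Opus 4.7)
The plan is to adapt the classical DiPerna--Lions renormalization argument to the moving-domain setting, exploiting the fact that the limiting density $\varrho$ no longer enjoys Sobolev regularity in space but still satisfies $\varrho \in L^\infty(I;L^\beta(\Omega_\eta))$ with $\beta\geq 2$ and that $v \in L^2(I;W^{1,2}(\Omega_\eta))$. First, I extend $\varrho$ by zero and $v$ by its solid counterpart $\partial_t\eta\circ\eta^{-1}$ (as in Remark~\ref{rem:malte}), so that $v\in L^2(I;W^{1,2}_0(\Omega;\R^n))$. Because $v$ is continuous across the interface $\partial\Omega_\eta$ thanks to the kinematic coupling $\partial_t\eta=v\circ\eta$, and because $\varrho\equiv 0$ outside $\Omega_\eta$, no surface contribution appears and the weak equation \eqref{eq:con:delta} upgrades to
\begin{align*}
\partial_t \varrho + \Div(\varrho v) = 0 \quad \text{in}\quad \mathcal D'(I\times\R^n).
\end{align*}

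Next I regularize this extended equation by a spatial mollifier $\omega_\xi$, setting $\varrho_\xi:=\varrho*\omega_\xi$. The resulting identity
\begin{align*}
\partial_t \varrho_\xi + \Div(\varrho_\xi v) = r_\xi,\qquad r_\xi := \Div(\varrho_\xi v) - \Div((\varrho v)*\omega_\xi),
\end{align*}
holds pointwise. The DiPerna--Lions commutator lemma (using $\beta\geq 2$ and $\nabla v\in L^2$) yields $r_\xi\to 0$ in $L^1_{\mathrm{loc}}(I\times\R^n)$. Since $\varrho_\xi$ is smooth in space, one may multiply by $\theta'(\varrho_\xi)$ and use the chain rule to arrive at
\begin{align*}
\partial_t \theta(\varrho_\xi) + \Div(\theta(\varrho_\xi) v) + \bigl(\varrho_\xi\theta'(\varrho_\xi)-\theta(\varrho_\xi)\bigr)\Div v = \theta'(\varrho_\xi)\,r_\xi.
\end{align*}
Because $\theta$ and $z\mapsto z\theta'(z)$ are bounded on $[0,\infty)$ (thanks to $\theta(0)=0$ and $\theta'(z)=0$ for $z\geq M_\theta$), the limit $\xi\to 0$ is straightforward: $\theta(\varrho_\xi)\to \theta(\varrho)$ and $\varrho_\xi\theta'(\varrho_\xi)\to\varrho\theta'(\varrho)$ in any $L^p_{\mathrm{loc}}$, $p<\infty$, while the right-hand side tends to zero in $L^1_{\mathrm{loc}}$. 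Testing against $\psi\in C^\infty(\overline I\times\R^n)$ and noting that $\theta(\varrho)=0$ outside $\Omega_\eta$, integration by parts in time over $\Omega(t)$ (via Reynolds' transport theorem and $v=\partial_t\eta\circ\eta^{-1}$ at $\partial\Omega_\eta$) reproduces exactly~\eqref{eq:final3b}.

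The key technical obstacle is the justification of step~one, namely that the extension by zero of $\varrho$ yields a genuine distributional solution on $I\times\R^n$ with no interfacial mass source. This is guaranteed by the combination of the kinematic interface condition and the Lipschitz-in-space regularity of $\eta(t)$ (which makes the moving boundary a set of measure zero with compatible traces), and it has been established in precisely this framework in \cite{BrSc}. All remaining manipulations are standard once the global distributional equation is in hand, and the integrability $\beta>4$ available at the $\delta$-level provides ample room for the commutator estimate.
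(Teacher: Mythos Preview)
Your argument is correct and is precisely the route the paper takes: it simply cites \cite[Lemma 6.2]{BrSc}, whose proof is the DiPerna--Lions regularisation you have written out (extension of $\varrho$ by zero and of $v$ by $\partial_t\eta\circ\eta^{-1}$ to obtain the continuity equation in $\mathcal D'(I\times\R^n)$, spatial mollification, commutator estimate using $\varrho\in L^2_{\mathrm{loc}}$ and $\nabla v\in L^2$, and passage to the limit). Your identification of the crucial point---that the kinematic coupling and the Lipschitz regularity of $\eta(t)$ prevent any interfacial defect when extending---is exactly what \cite{BrSc} supplies.
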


For a given $\varepsilon$ we obtain a solution $(\eta_\varepsilon,v_\varepsilon,\varrho_\varepsilon)$ to \eqref{eq:mom:eps}--\eqref{eq:ene:eps} by Theorem \ref{thm:eps}.
In particular, we have
\begin{align} 
\nonumber
 \int_{\Omega^{(\varepsilon)}} &\Big[ \frac{1}{2} \varrho^{(\varepsilon)}(t_1) | {v^{(\varepsilon)}(t_1)} |^2 + H_{\delta}(\varrho^{(\varepsilon)}(t_1))
  \Big] \dx+\int_Q\varrho_s\frac{|\partial_t\eta(t_1)|^2}{2}\dy+ E(\eta^{(\varepsilon)}(t_1))\\\int_0^{t_1}\int_{\Omega^{(\varepsilon)}}&\mathbb S(\nabla v^{(\varepsilon)}):\nabla v^{(\varepsilon)}\dxt
+2\int_0^{t_1} \int_{Q}R(\eta^{(\varepsilon)},\partial_t\eta^{(\varepsilon)})\ds+\varepsilon\int_0^{t_1} \int_{\Omega^{(\varepsilon)}}P''_\delta(\varrho^{(\varepsilon)})|\nabla\varrho^{(\varepsilon)}|^2\dxt\nonumber\\
&\leq \,\int_{\Omega_0}\Big[ \frac{1}{2} \varrho_0 |v_0|^2  + H_{\delta}(\varrho_0) \Big] \dx+\int_Q\varrho_s\frac{|\eta_1|^2}{2}\dy+E(\eta_0)
\label{NTDB}
\end{align}
for any $0 \leq t_1 \leq T$, where $\Omega^{(\varepsilon)} := \Omega \setminus \eta^{(\varepsilon)}(t,Q)$.
We deduce the bounds
%\begin{align} \label{Nbv1}
%   &\sup_{t \in I} \int_{\Omega_{\eta^{(\varepsilon)}}} \Big[ \frac{1}{2} \varrho^{(\varepsilon)} | {\bf v^{(\varepsilon)}} |^2 + P_{\delta}(\varrho^{(\varepsilon)}) \Big] \dx   \leq c\\
% \label{Nbv1B}   &\sup_{I}\int_Q \frac{|\partial_t\eta^{(\varepsilon)}|^2}{2}\,\dd y+ \sup_{I}E(\eta^{(\varepsilon)})\leq\,c.
%\end{align}
%In particular, we have
\begin{align} 
\label{Nbv1}
 \| \partial_t\nabla\eta^{(\varepsilon)} \|_{L^2(I\times Q) }^2+\sup_{t\in I}\|\partial_t\eta^{(\varepsilon)}\|_{L^{2}(Q)}^2+\sup_{t\in I}\|\eta^{(\varepsilon)}\|_{W^{2,q}(Q)}^q\leq c,\\
\label{Nbv2}
\sup_{t \in I} \| \varrho^{(\varepsilon)} \|^\beta_{L^\beta(\Omega^{(\varepsilon)})}  +
 \sup_{t \in I} \| \varrho^{(\varepsilon)} v^{(\varepsilon)} \|_{L^{\frac{2 \beta}{\beta + 1}}(\Omega^{(\varepsilon)})}^{\frac{2 \beta}{\beta + 1}}  \leq c,\\
 \label{Nbv4}
 \| \nabla v^{(\varepsilon)} \|^2_{L^2(I\times \Omega^{(\varepsilon)}) }+ \varepsilon\| \nabla \varrho^{(\varepsilon)} \|^2_{L^2(I\times\Omega^{(\varepsilon)}) } + \varepsilon\| \nabla (\varrho^{(\varepsilon)})^{\beta/2} \|^2_{L^2(I\times\Omega^{(\varepsilon)}) } \leq c.
\end{align}
%whence, by Poincare's inequality and (\ref{Nbv2}),
%\begin{equation} \label{Nbv5}
%\| v^{(\varepsilon)} \|^2_{L^2(I; W^{1,2}(\Omega_{\eta^{(\varepsilon)}}) ) }  +  \| \vt^{(\varepsilon)} \|^2_{L^2(I; W^{1,2}(\Omega_{\eta^{(\varepsilon)}})) } \leq c.
%\end{equation}
Finally, we deduce from the equation of continuity (\ref{eq:con:eps}) (using the renormalized formulation from Theorem~\ref{lem:warme} (b) with $\theta(z)=z^2$ and testing with $\psi\equiv 1$)) that
\begin{equation} \label{Nbv6}
\int_{\Omega^{(\varepsilon)}(t)} \varrho^{(\varepsilon)}(t, \cdot) \dx = \int_{\Omega_0} \varrho_0 \dx,\
 \| \sqrt{\varepsilon} \nabla \varrho^{(\varepsilon)} \|_{L^2(I\times\Omega^{(\varepsilon)})}  \leq c.
\end{equation}
Note that all estimates are independent of $\varepsilon$.
%In particular, we have
%the following uniform bounds
%\begin{align}
% \bfu^{(\varepsilon)}&\in L^2(I;W^{1,2}(\Omega_{\eta^{(\varepsilon)}})),\label{apv}\\
%\sqrt{ \varrho^{(\varepsilon)}} \bfu^{(\varepsilon)}&\in L^\infty(I;L^2( \Omega_{\eta^{(\varepsilon)}})),\label{aprhov}\\
% \varrho^{(\varepsilon)}&\in L^\infty(I;L^\beta( \Omega_{\eta^{(\varepsilon)}})),\label{aprho}\\
%  \varrho^{(\varepsilon)}\bfu^{(\varepsilon)}&\in L^\infty(I;L^\frac{2\beta}{\beta+1}( \Omega_{\eta^{(\varepsilon)}})),\label{estrhou2}\\
%\varrho^{(\varepsilon)}\bfu^{(\varepsilon)}\otimes\bfu^{(\varepsilon)}&\in L^2(I;L^\frac{6\beta}{4\beta+3}(\Omega_{\eta^{(\varepsilon)}})),\label{est:rhobfu22}\\
%\eta^{(\varepsilon)}&\in  L^\infty(I,W^{2,2}_0(M)),\label{con:epseta}\\
%\partial_t\eta^{(\varepsilon)}&\in  L^\infty(I,L^{2}(M)).\label{con:epsetat}
%\end{align}
Hence, we may take a subsequence such that
%by Lemma~\ref{thm:weakstrong}, we find for $s\in (1,2)$ and
 for some $\alpha\in (0,1)$ we have
\begin{align}
\eta^{(\varepsilon)}&\rightharpoonup^\ast\eta\quad\text{in}\quad L^\infty(I;W^{2,q}(Q;\Omega))\label{eq:conveta1},\\
\eta^{(\varepsilon)}&\rightharpoonup^\ast\eta\quad\text{in}\quad W^{1,\infty}(I;L^2(Q;\R^n)),
\label{eq:conetat1}\\
\eta^{(\varepsilon)}&\rightharpoonup\eta\quad\text{in}\quad W^{1,2}(I;W^{1,2}(Q;\R^n)),
\label{eq:conetat2}\\
%\varepsilon\eta^{(\varepsilon)}&\rightharpoonup^\ast\eta\quad\text{in}\quad W^{1,2}(I;W^{3,2}(\omega)),
%\label{eq:conetat1}\\
\eta^{(\varepsilon)}&\to\eta\quad\text{in}\quad C^\alpha(\overline I\times Q;\Omega)),
\label{eq:conetatal}\\
v^{(\varepsilon)}&\rightharpoonup^\eta v\quad\text{in}\quad L^2(I;W^{1,2}(\Omega;\R^n)),\label{eq:convu1}\\
\varrho^{(\varepsilon)}&\rightharpoonup^{\ast,\eta}\varrho\quad\text{in}\quad L^\infty(I;L^\beta(\Omega)),\label{eq:convrho1}\\
\varepsilon\nabla\varrho^{(\varepsilon)}&\rightarrow^\eta 0\quad\text{in}\quad L^2(I\times\Omega;\R^n).\label{eq:van1}
\end{align}
%Now, using the a-priori estimates \eqref{Nbv2}--\eqref{Nbv4} and the bounds that one gains (using the renormalized continuity equation from Lemma~\ref{cor:ap} with $\theta(z)=z^2$ and testing with $\psi\equiv 1$) we find due to $\beta>4$ that
%\begin{align}\label{est:nablarho}
%\int_I\int_{\Omega_{\eta^{(\varepsilon)}}}\varepsilon\abs{\nabla\varrho^{(\varepsilon)}}^2\, dx\, dt\leq C,
%\end{align}
%with $C$ independent of $\varepsilon$. This
%implies
%\begin{align}
%\varepsilon\nabla\varrho^{(\varepsilon)}\rightarrow^\eta 0\quad\text{in}\quad L^2(I\times\Omega_{\eta^{(\varepsilon)}}).\label{eq:van1}
%%\varepsilon\nabla\bfu^{(\varepsilon)}\nabla\varrho^{(\varepsilon)}\rightarrow^\eta 0\quad\text{in}\quad L^1(I\times\Omega_{\eta^{(\varepsilon)}}).\label{eq:van2}
%\end{align}
%Note that due to \eqref{eq:convvt'1eps} also the term $\varepsilon\vt^{(\varepsilon)}^5$ vanishes as $\varepsilon\rightarrow0$. Finally, we have
%\begin{align}
%\varepsilon^{1/q'}\overline\bfh^{(\varepsilon)}\rightarrow^\eta 0\quad\text{in}\quad L^{q}(I\times\Omega_{\eta^{(\varepsilon)}})\label{eq:van2}
%\end{align}
%by Theorem \ref{thm:visu}.\\
%\seb{REMOVE: Please observe that this is the only essential point in the proof where we make use of the fact that $\varrho^{(\varepsilon)}$ is uniformly bounded in $L^\beta$. }
%Note, that \eqref{con:epseta} and \eqref{con:epsetat} imply
%\begin{align}\label{eq:convetah}
%\eta^{(\varepsilon)}\rightarrow\eta\quad\text{in}\quad C(\overline{I}\times \partial\Omega).
%\end{align}
Arguing as in \cite[Prop. 2.23]{benesovaVariationalApproachHyperbolic2020} one can again benefit from assumption S6 to deduce
\begin{align}\label{15:02}
\eta^{(\varepsilon)}&\rightarrow\eta\quad\text{in}\quad L^q(I;W^{2,q}(Q;\Omega))
\end{align}
such that for a.e.\ $t\in I$
\begin{align}\label{15:02'}
DE(\eta^{(\varepsilon)}(t))&\rightarrow DE(\eta(t))\quad\text{in}\quad W^{-2,q}(Q;\Omega)
\end{align}
We observe that the a priori estimates \eqref{Nbv2} imply uniform bounds of
$\varrho^{(\varepsilon)} v^{(\varepsilon)}$ in $ L^\infty(I,L^\frac{2\beta}{\beta+1}(\Omega^{(\varepsilon)}))$. Therefore, we may apply Lemma \ref{thm:weakstrong} with the choice $v_i\equiv v^{(\varepsilon)}$, $r_i=\varrho^{(\varepsilon)}$, $p=s=2$, $b=\beta$ and $m$ sufficiently large to obtain
\begin{align}
\varrho^{(\varepsilon)}v^{(\varepsilon)}&\rightharpoonup^\eta  {\varrho}  {v}\qquad\text{in}\qquad L^q(I, L^a(\Omega^{(\varepsilon)};\R^n)),\label{conv:rhov2}
\end{align}
where $a\in (1,\frac{2\beta}{\beta+1})$ and $q\in (1,2)$.
We apply Lemma \ref{thm:weakstrong} once more with the choice $v_i\equiv v^{(\varepsilon)}$, $r_i=\varrho^{(\varepsilon)} v^{(\varepsilon)}$,  $p=s=2$, $b=\frac{2\beta}{\beta+1}$ and $m$ sufficiently large to find that
\begin{align}
{\varrho}^{(\varepsilon)}  {v}^{(\varepsilon)}\otimes  {v}^{(\varepsilon)}&\rightharpoonup^\eta  {\varrho}  {v}\otimes  {v}\quad\text{in}\quad L^1(I\times\Omega;\R^{n\times n}).\label{conv:rhovv2}
\end{align}
We also obtain
\begin{align}
\varrho^{(\varepsilon)}v^{(\varepsilon)}&\rightarrow^\eta  {\varrho}  {v}\quad\text{in}\quad L^q(I, L^q(\Omega;\R^n))\label{conv:rhov2B},\\
\varrho^{(\varepsilon)}v^{(\varepsilon)}&\rightharpoonup^{\eta,*}  {\varrho}  {v}\quad\text{in}\quad L^\infty(I, L^{\frac{2\beta}{\beta+1}}(\Omega;\R^n))\label{conv:rhov2BB},
\end{align}
for all $q<\frac{6\beta}{\beta+6}$.
%Finally we combine \eqref{Nbv1B} and \eqref{Nbv4} (together with the Lemma \ref{lem:2.28}) to conclude
%\begin{align}\label{eq:dtd3teta}
%\varepsilon\partial_t\nabla^3\eta\rightarrow0\quad\text{in}\quad L^q(I\times\omega)
%\end{align}
%for some $q>1$.\\
%\subsection{Equi-integrability of the pressure}
At this stage of the proof the pressure is only bounded in $L^1$, so we have to exclude its concentrations. The standard approach only works locally, where the moving shell is not seen and we obtain the following Lemma (see \cite[Lemma 6.3]{BrSc} for details).
\begin{lemma}\label{prop:higher}
Let $J\times B\Subset I\times\Omega(\cdot)$ be a parabolic cube. The following holds for any $\varepsilon\leq\varepsilon_0(J\times B)$
\begin{equation}\label{eq:gamma+1}
\int_{J\times B}p_\delta(\varrho^{(\varepsilon)})\varrho^{(\varepsilon)}\,\dd x\,\dd t\leq C(J\times B)
\end{equation}
with a constant independent of $\varepsilon$.
\end{lemma}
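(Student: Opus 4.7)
My plan is to follow the classical Bogovski\u{\i}-operator strategy for higher pressure integrability, localised to a cube that is uniformly separated from the moving interface. Since $J\times B\Subset I\times\Omega(\cdot)$ and $\eta^{(\varepsilon)}\to\eta$ uniformly by \eqref{eq:conetatal}, there exists $\varepsilon_0=\varepsilon_0(J\times B)$ such that $J\times B\Subset I\times\Omega^{(\varepsilon)}(\cdot)$ whenever $\varepsilon\le\varepsilon_0$. Fix a slightly larger Lipschitz ball $B\Subset B'$ with $J\times\overline{B'}\Subset I\times\Omega^{(\varepsilon)}(\cdot)$ for all such $\varepsilon$, and choose cutoffs $\chi\in C_c^\infty(J;C_c^\infty(B'))$ with $\chi\equiv 1$ on $J\times B$. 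This separation means every test function supported in $B'$ satisfies the coupling conditions of \eqref{eq:mom:eps} trivially by pairing it with $\phi\equiv 0$ on $Q$, so the solid contributions drop out.

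The key step is to test \eqref{eq:mom:eps} with $b(t,x)=\chi(t,x)\,\bog_{B'}\!\big[\chi(t,\cdot)\varrho^{(\varepsilon)}(t,\cdot)-\langle\chi\varrho^{(\varepsilon)}\rangle_{B'}(t)\big](x)$ (extended by zero) and $\phi\equiv 0$, where $\bog_{B'}$ denotes the Bogovski\u{\i} operator on $B'$. The pressure term then produces
\[
-\int_J\!\int_{\Omega^{(\varepsilon)}}p_\delta(\varrho^{(\varepsilon)})\,\Div b\dxt=-\int_J\!\int_{B'}p_\delta(\varrho^{(\varepsilon)})\chi^2\varrho^{(\varepsilon)}\dxt+\mathrm{l.o.t.},
\]
which is exactly the quantity we want to control. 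All other terms will be absorbed on the other side.

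For each of the remaining terms we use the Bogovski\u{\i} bound $\|b\|_{W^{1,p}(B')}\lesssim\|\chi\varrho^{(\varepsilon)}\|_{L^p(B')}$ together with the a priori estimates \eqref{Nbv1}--\eqref{Nbv6}: the viscous term is controlled by $\nabla v^{(\varepsilon)}\in L^2$; the convective term $\varrho v\otimes v$ is handled via the $L^\infty L^{2\beta/(\beta+1)}$ bound on $\varrho^{(\varepsilon)}v^{(\varepsilon)}$ and Sobolev embedding of $b$; the $\varepsilon$-dissipation term $\tfrac{\varepsilon}{2}\int\nabla\varrho^{(\varepsilon)}\cdot(\nabla v^{(\varepsilon)} b+\nabla b\,v^{(\varepsilon)})$ is tamed by the $\sqrt{\varepsilon}\nabla\varrho^{(\varepsilon)}\in L^2$ bound from \eqref{Nbv6}; the forcing terms are trivial. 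The genuinely delicate piece is the time-derivative term $\int\varrho^{(\varepsilon)}v^{(\varepsilon)}\cdot\partial_t b$, which I would handle by integrating by parts in time and rewriting $\partial_t\bog_{B'}[\chi\varrho^{(\varepsilon)}-\langle\cdot\rangle]=\bog_{B'}[\partial_t(\chi\varrho^{(\varepsilon)})-\partial_t\langle\cdot\rangle]$, substituting the continuity equation \eqref{eq:con:eps} to get $\partial_t(\chi\varrho^{(\varepsilon)})=-\Div(\chi\varrho^{(\varepsilon)}v^{(\varepsilon)})+\nabla\chi\cdot\varrho^{(\varepsilon)}v^{(\varepsilon)}+\partial_t\chi\,\varrho^{(\varepsilon)}+\varepsilon\Delta(\chi\varrho^{(\varepsilon)})-\varepsilon\Div(\nabla\chi\,\varrho^{(\varepsilon)})-\varepsilon\nabla\chi\cdot\nabla\varrho^{(\varepsilon)}$, and using that $\bog_{B'}$ inverts $\Div$ modulo smooth (mean-value) corrections. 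The $\bog_{B'}\circ\Div$ terms collapse back to $L^p$ functions controlled by $\varrho^{(\varepsilon)}v^{(\varepsilon)}$ and $\varepsilon\nabla\varrho^{(\varepsilon)}$, while the remaining lower-order contributions are harmless.

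The main obstacle is precisely this time-derivative bookkeeping, since it is the point where the artificial viscosity $\varepsilon\Delta\varrho^{(\varepsilon)}$ interacts nontrivially with the Bogovski\u{\i} operator; however, everything that appears is quadratic in objects we already bound uniformly in $\varepsilon$, so choosing $p$ slightly larger than $2\beta/(\beta+1)$ in the Bogovski\u{\i} estimate and using Young's inequality to absorb a small multiple of $\int p_\delta(\varrho^{(\varepsilon)})\varrho^{(\varepsilon)}\chi^2$ on the right, we obtain \eqref{eq:gamma+1} with a constant depending only on $J\times B$ (and on the energy), as claimed. This is exactly the localised version of the argument in \cite[Lemma~6.3]{BrSc} to which the authors point.
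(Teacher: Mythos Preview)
Your proposal is correct and follows exactly the approach the paper has in mind: the authors do not give a self-contained proof here but simply refer to \cite[Lemma~6.3]{BrSc}, noting that the standard local Bogovski\u{\i}-operator argument applies verbatim since on $J\times B$ the moving interface is not seen and the solid test function $\phi$ can be taken identically zero. Your handling of the time-derivative term via the regularised continuity equation and the $\sqrt{\varepsilon}\nabla\varrho^{(\varepsilon)}\in L^2$ bound is precisely what makes the argument go through uniformly in $\varepsilon$.
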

We still have to exclude concentrations of the pressure at the boundary, which is the object of the following lemma.
\begin{lemma}\label{prop:higherb}
Let $\xi>0$ be arbitrary. There is a measurable set $A_\xi\Subset I\times\Omega(\cdot)$ such that we have for all $\varepsilon\leq \varepsilon_0(\xi)$
\begin{equation}\label{eq:gamma+1b}
\int_{I\times\Omega\setminus A_\xi}p_\delta(\varrho^{(\varepsilon)})\varrho^{(\varepsilon)}\chi_{\Omega^{(\varepsilon)}}\,\dd x\,\dd t\leq \xi.
\end{equation}
\end{lemma}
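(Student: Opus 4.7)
The strategy, following \cite[Lemma~6.4]{BrSc}, is to exhibit a boundary-layer test function for the momentum equation \eqref{eq:mom:eps} whose divergence localises the pressure to a thin strip along $\partial \Omega^{(\varepsilon)}(t)$. For $\rho > 0$ to be chosen at the end, let
\[
A_\xi := \{(t,x) \in I \times \Omega : x \in \Omega^{(\varepsilon)}(t),\ \mathrm{dist}(x, \partial\Omega^{(\varepsilon)}(t)) \geq \rho\},
\]
and write $B_\rho(t)$ for the complementary boundary strip in $\Omega^{(\varepsilon)}(t)$; note that $|B_\rho(t)| = O(\rho)$ uniformly in $\varepsilon$ thanks to \eqref{Nbv1}. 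The task reduces to proving $\int_I\int_{B_\rho(t)} p_\delta(\varrho^{(\varepsilon)})\varrho^{(\varepsilon)}\,\dxt \lesssim \rho^\alpha$ for some $\alpha > 0$ and all sufficiently small $\varepsilon$.

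Choose a smooth cutoff $\varphi_\rho(t,\cdot)$ equal to one on $B_\rho(t)$ and supported in the $2\rho$-strip, with $|\nabla \varphi_\rho|\leq C/\rho$. Near the moving component $\eta^{(\varepsilon)}(M)$ it is obtained by pulling back a reference cutoff from $Q$ via $\eta^{(\varepsilon)}$, using the uniform $C^{1,\alpha}$-regularity from \eqref{Nbv1}; near the fixed component it is constructed from a standard tubular neighbourhood of $\partial\Omega$. Denote by $\mathcal{B}_t$ the Bogovskii solution operator on $\Omega^{(\varepsilon)}(t)$ and set
\[
b(t,\cdot) := \mathcal{B}_t\bigl[\varphi_\rho(t)\, p_\delta(\varrho^{(\varepsilon)}(t)) - m_\rho(t)\bigr],
\]
where $m_\rho(t)$ fixes the zero-mean condition. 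Since $b(t) = 0$ on $\partial\Omega^{(\varepsilon)}(t)$, the coupling $b \circ \eta^{(\varepsilon)} = \phi$ on $M$ and the Dirichlet condition on $P$ are trivially satisfied with $\phi \equiv 0$. The classical Bogovskii estimate, applied uniformly in time thanks to the uniform Lipschitz bound on $\partial\Omega^{(\varepsilon)}(t)$, provides $\|b\|_{W^{1,q}(\Omega^{(\varepsilon)})} \leq C\|\varphi_\rho\, p_\delta(\varrho^{(\varepsilon)})\|_{L^q(\Omega^{(\varepsilon)})}$ for every $q \in (1,\beta)$.

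Inserting $(\phi,b)=(0,b)$ into \eqref{eq:mom:eps} and isolating the pressure term gives
\[
\int_I\int_{\Omega^{(\varepsilon)}}\varphi_\rho\, p_\delta(\varrho^{(\varepsilon)})^2\,\dxt = \int_I m_\rho(t)\int_{\Omega^{(\varepsilon)}(t)} p_\delta(\varrho^{(\varepsilon)})\,\dxt + \mathcal{R},
\]
where $\mathcal R$ gathers the remaining inertial, convective, viscous, $\varepsilon$-correction, and force contributions from the momentum equation. Each summand of $\mathcal R$ is controlled by a product of a quantity uniformly bounded via \eqref{Nbv1}--\eqref{Nbv4} and a norm of $b$ (respectively $\nabla b$). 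Because $\varphi_\rho\, p_\delta(\varrho^{(\varepsilon)})$ is supported in a set of measure $O(\rho)$, Hölder's inequality yields $\|\varphi_\rho\, p_\delta(\varrho^{(\varepsilon)})\|_{L^q} \lesssim \rho^{1/q - 1/\beta}$, which produces a strictly positive power of $\rho$ in every term of $\mathcal R$ (including the mean-correction, whose prefactor $|m_\rho|$ also carries a factor $\rho^{1-1/\beta}$). Combining this with the $\beta$-growth of $p_\delta$---which for large $\varrho$ yields $p_\delta(\varrho)^2 \gtrsim \delta\, \varrho\, p_\delta(\varrho)$, while the contribution of bounded $\varrho$ on $B_\rho$ is already $O(\rho)$---we obtain $\int_I\int_{B_\rho}\varrho^{(\varepsilon)}p_\delta(\varrho^{(\varepsilon)})\,\dxt \leq C(\delta)\,\rho^{\alpha}$ for some $\alpha > 0$. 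Choosing $\rho = \rho(\xi)$ so small that $C(\delta)\rho^\alpha \leq \xi$ and $\varepsilon_0(\xi)$ accordingly completes the argument.

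The main obstacle is the time-regularity required to make sense of the inertial and convective terms $\int(\varrho^{(\varepsilon)} v^{(\varepsilon)})\cdot\partial_t b\,\dxt$ and $\int \varrho^{(\varepsilon)} v^{(\varepsilon)}\otimes v^{(\varepsilon)}:\nabla b\,\dxt$. The former is handled by lifting the Bogovskii construction to the reference configuration $Q$ via $\eta^{(\varepsilon)}$---so that the pulled-back cutoff $\varphi_\rho\circ\eta^{(\varepsilon)}$ inherits its time-regularity from $\eta^{(\varepsilon)}$ through \eqref{Nbv1}---and, if needed, by a Steklov averaging of $\varphi_\rho$ in time. The latter is controlled by \eqref{conv:rhovv2} combined with Sobolev embedding applied to $\nabla b$.
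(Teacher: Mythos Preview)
Your approach has a circularity that makes it fail. You feed $\varphi_\rho\,p_\delta(\varrho^{(\varepsilon)})$ into the Bogovskii operator and then claim, via H\"older on the thin strip, that $\|\varphi_\rho\,p_\delta(\varrho^{(\varepsilon)})\|_{L^q}\lesssim\rho^{1/q-1/\beta}$. That estimate would require $p_\delta(\varrho^{(\varepsilon)})\in L^\beta$ uniformly, i.e.\ $\varrho^{(\varepsilon)}\in L^{\beta^2}$; the energy inequality only delivers $\varrho^{(\varepsilon)}\in L^\infty(I;L^\beta)$, so $p_\delta(\varrho^{(\varepsilon)})$ is merely $L^\infty(I;L^1)$ a priori. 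In other words, the bounds you need on $b$ and $\nabla b$ to control $\mathcal R$ presuppose precisely the higher integrability of the pressure near the boundary that the lemma is meant to establish. (For the same reason $\mathcal B_t$ applied to an $L^1$ right-hand side does not even yield a $W^{1,1}$ function, so the construction of $b$ itself is not justified.)

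The paper proceeds differently, following \cite[Lemma~6.4]{BrSc} and ultimately \cite{Kuk}: the test field is \emph{not} a Bogovskii inverse of the pressure but a purely geometric vector field built from the boundary --- essentially a truncated normal/distance field $b$ supported in the $\rho$-strip with $\Div b\geq c>0$ there and $\|b\|_{W^{1,\infty}}$ bounded independently of the pressure. Plugging this $b$ into the momentum equation, the pressure term directly dominates $\int_{\text{strip}}p_\delta(\varrho^{(\varepsilon)})\dxt$, while all remaining terms are controlled by the a priori bounds \eqref{Nbv1}--\eqref{Nbv4} together with the smallness $\|b\|_{L^\infty}=O(\rho)$. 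The only structural input beyond \cite{BrSc} that must be checked in the present bulk setting is the time regularity needed for $\partial_t b$ near the moving interface, and this is exactly why the paper singles out $\partial_t\eta^{(\varepsilon)}\in L^2(I;L^2(M;\R^n))$ (obtained from \eqref{eq:conetat2} via the trace theorem) as the one condition to verify.
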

\begin{proof}
The proof is exactly as in \cite[Lemma 6.4]{BrSc} (which is inspired by \cite{Kuk}) provided we know that 
\begin{align*}
\partial_t\eta^{(\varepsilon)}\in L^2(I;L^2(M;\R^n))
\end{align*}
uniformly in $\varepsilon$. This follows from \eqref{eq:conetat2} due to the trace theorem.
\end{proof}

We connect Lemma~\ref{prop:higher} and Lemma~\ref{prop:higherb} to obtain the following corollary.
\begin{corollary}\label{prop:higherb0}Under the assumptions of Theorem~\ref{thm:delta} there exists a function $\overline p$ such that
\[
p_\delta(\varrho^{(\varepsilon)})\weakto^\eta \overline p\text{ in }L^1(I;L^1(\Omega)),
\]
at least for a subsequence.
Additionally, for $\xi>0$  arbitrary, there is a measurable set $A_\xi\Subset I\times\Omega(\cdot)$ such that $\overline{p}\varrho\in L^1(A_\xi)$ and
\begin{equation}\label{eq:gamma+1b0}
\int_{(I\times\Omega(\cdot))\setminus A_\xi}\overline p\dxt\leq \xi.
\end{equation}
\end{corollary}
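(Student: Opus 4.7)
The plan is to combine the local interior pressure bound from Lemma \ref{prop:higher} with the no-boundary-concentration estimate from Lemma \ref{prop:higherb} to obtain a uniform $L^1$-bound on $\{p_\delta(\varrho^{(\varepsilon)})\chi_{\Omega^{(\varepsilon)}}\}$ together with equi-integrability, and then to extract a weakly convergent subsequence via the Dunford--Pettis theorem.

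First, for any relatively compact $A\Subset I\times\Omega(\cdot)$ I would cover $A$ by finitely many parabolic cubes $J_i\times B_i$ with $2(J_i\times B_i)\Subset I\times\Omega(\cdot)$ and apply Lemma \ref{prop:higher} on each to deduce
\[
\int_A p_\delta(\varrho^{(\varepsilon)})\varrho^{(\varepsilon)}\dxt\leq C(A)
\]
uniformly for all sufficiently small $\varepsilon$. Since $p_\delta(\varrho)\sim \varrho^\beta$ for large $\varrho$, this in turn yields a uniform bound on $\int_A |p_\delta(\varrho^{(\varepsilon)})|^{(\beta+1)/\beta}\dxt$, and by de la Vall\'ee--Poussin the family $\{p_\delta(\varrho^{(\varepsilon)})\chi_A\}$ is equi-integrable.

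Second, I would turn this into global $L^1$-control plus no concentrations. For $\xi>0$, apply Lemma \ref{prop:higherb} with parameter $\xi/2$ to obtain a set $A_{\xi/2}\Subset I\times\Omega(\cdot)$ with
\[
\int_{(I\times\Omega)\setminus A_{\xi/2}} p_\delta(\varrho^{(\varepsilon)})\varrho^{(\varepsilon)}\chi_{\Omega^{(\varepsilon)}}\dxt\leq \xi/2.
\]
By enlarging $A_{\xi/2}$ (still keeping it compactly contained in $I\times\Omega(\cdot)$) I may additionally ensure $p_\delta(1)\,\big|(I\times\Omega(\cdot))\setminus A_{\xi/2}\big|\leq \xi/2$. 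Splitting $p_\delta(\varrho^{(\varepsilon)})\leq p_\delta(1)$ on $\{\varrho^{(\varepsilon)}\leq 1\}$ and $p_\delta(\varrho^{(\varepsilon)})\leq p_\delta(\varrho^{(\varepsilon)})\varrho^{(\varepsilon)}$ on $\{\varrho^{(\varepsilon)}>1\}$ then yields
\[
\int_{(I\times\Omega)\setminus A_{\xi/2}} p_\delta(\varrho^{(\varepsilon)})\chi_{\Omega^{(\varepsilon)}}\dxt\leq \xi
\]
uniformly in $\varepsilon$ small. Combined with the interior bound of step one, this gives a uniform $L^1(I\times\R^n)$-bound on the zero-extensions of $p_\delta(\varrho^{(\varepsilon)})$ as well as equi-integrability.

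Third, the Dunford--Pettis theorem produces a (non-relabelled) subsequence and a function $\overline p\in L^1(I\times\Omega(\cdot))$ with $p_\delta(\varrho^{(\varepsilon)})\weakto^\eta \overline p$ in $L^1$. Testing the weak convergence of the zero-extensions against $\chi_{(I\times\Omega)\setminus A_{\xi/2}}\in L^\infty$ and using the estimate of step two, we pass to the limit $\varepsilon\to 0$ and obtain $\int_{(I\times\Omega(\cdot))\setminus A_{\xi/2}}\overline p\dxt\leq \xi$. Renaming $A_{\xi/2}$ as $A_\xi$ concludes the corollary. The main bookkeeping obstacle is that the boundary estimate of Lemma \ref{prop:higherb} controls $p_\delta(\varrho^{(\varepsilon)})\varrho^{(\varepsilon)}$ rather than $p_\delta(\varrho^{(\varepsilon)})$ itself; the above splitting together with a suitable enlargement of $A_\xi$ is what bridges this gap.
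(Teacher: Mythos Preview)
Your proposal is correct and follows exactly the strategy the paper indicates (the paper only writes ``We connect Lemma~\ref{prop:higher} and Lemma~\ref{prop:higherb}'' without spelling out the details); the equi-integrability argument via Dunford--Pettis, together with your splitting trick to pass from the control of $p_\delta(\varrho^{(\varepsilon)})\varrho^{(\varepsilon)}$ near the boundary to control of $p_\delta(\varrho^{(\varepsilon)})$ itself, is precisely what is needed. The only point you do not make explicit is the clause $\overline p\,\varrho\in L^1(A_\xi)$, but this follows immediately from your step one: the interior bound gives $p_\delta(\varrho^{(\varepsilon)})\in L^{(\beta+1)/\beta}(A_\xi)$ and $\varrho^{(\varepsilon)}\in L^{\beta+1}(A_\xi)$ uniformly, hence the same for the weak limits, and H\"older yields $\overline p\,\varrho\in L^1(A_\xi)$.
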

Combining Corollary \ref{prop:higherb0} with the convergences \eqref{eq:conveta1}--\eqref{15:02'} we can pass to the limit in \eqref{eq:mom:eps} and \eqref{eq:con:eps} and obtain the following. There is
$$(\eta,v,\varrho,\overline p)\in Y^I\times X_{\eta}^I\times\widetilde W_\eta^I  \times L^1(I\times \Omega_\eta)$$ that satisfies $v(t) = \partial_t \eta(t) \circ \eta(t)$ in $Q$, $v(t)=0$ on $\partial\Omega$ for a.a. $t\in I$, the continuity equation 
\begin{align}\label{eq:apvarrho}
\begin{aligned}
&\int_I\frac{\dd}{\dt}\int_{\Omega(t)}\varrho \psi\dxt-\int_I\int_{\Omega(t)}\Big(\varrho\partial_t\psi
+\varrho v\cdot\nabla\psi\Big)\dxt=0
\end{aligned}
\end{align}
for all $\psi\in C^\infty(\overline I \times \R^3)$ and the coupled weak momentum equation
\begin{align}\label{eq:apulim}
\begin{aligned}
&\int_I\frac{\dd}{\dt}\int_{\Omega(t)}\varrho v\cdot b\dxt-\int_I\int_{\Omega(t)} \Big(\varrho v\cdot \partial_t b +\varrho v\otimes v:\nabla b\Big)\dxt\\
&+\int_{\Omega(t)}\mathbb S(\nabla v):\nabla b\dxt-\int_I\int_{\Omega(t)}
\overline{p}\,\Div b\dxt
\\
&
+\int_I\bigg(-\int_Q \varrho_s\partial_t\eta\,\partial_t \phi\dy + \langle DE(\eta),\phi\rangle+\langle D_2R(\eta,\partial_t\eta), \phi\rangle\bigg)\dt
\\&=\int_I\int_{\Omega(t)}\varrho f_f\cdot b\dxt+\int_I\int_Q f_s\cdot \phi\,\dd x\dt
\end{aligned}
\end{align} 
 for all $(\phi,b)\in L^2(I;W^{2,q}(Q;\Omega) \cap W^{1,2}(L^2(Q;\R^n)) \times C_c^\infty(\overline{I}\times \R^3)$ with $b(t) \circ \eta(t) = \phi(t)$ in $Q$, $b(t)=0$ on $P$ for a.a. $t\in I$.
It remains to show strong convergence of $\varrho^{(\varepsilon)}$.
The proof of strong convergence of the density is based on the effective viscous flux identity introduced in \cite{Li2} and the concept of renormalized solutions from \cite{DL}. Arguing locally, there is no difference to the standard setting. We consider a parabolic cube $J\times B$ with
$J \times B \Subset A \Subset  I\times\Omega(\cdot)$. Due to \eqref{eq:conetatal} we can assume
that $A \Subset  I\times\Omega^{(\varepsilon)}$ (by taking $\varepsilon$ small enough). 
For non-negative $\psi\in C^\infty_c(A)$ we obtain
\begin{align}\label{eq:fluxpsi}
\begin{aligned}
\int_{I\times \Omega} &\psi\big(p_\delta(\varrho^{(\varepsilon)})-(\lambda+2\mu)\Div v^{(\varepsilon)}\big)\,\varrho^{(\varepsilon)}\dxt\\&\longrightarrow\int_{I\times\Omega}\psi \big( \overline{p}-(\lambda+2\mu)\Div v\big)\,\varrho\dxt
\end{aligned}
\end{align}
as $\varepsilon\rightarrow0$.
The proof of Lemma \ref{cor:ap1} follows exactly as in \cite[Lemma 6.2]{BrSc}. 
So, for $\psi\in C^\infty(\overline{I}\times \Omega(\cdot))$ we have
\begin{align}\label{8.4}
\begin{aligned}
\int_{I}\frac{\dd}{\dt}&\int_{\Omega(t)} \theta(\varrho)\,\psi\dxt-\int_I \int_{\Omega(t)}\theta(\varrho)\,\partial_t\psi\dxt
+\int_I\int_{\Omega(t)}\big(\varrho\theta'(\varrho)-\theta(\varrho)\big)\Div v\,\psi\dxt\\
&=\int_{I} \int_{\Omega(t)}\theta(\varrho) v\cdot\nabla\psi \dxt .
\end{aligned}
\end{align} 
% Here we extended $v$ to $\R^3$ by means of a standard extension operator.
% This is possible since $\partial\Omega_\eta$ is Lipschitz.
In order to deal with the local nature of \eqref{eq:fluxpsi} we use ideas from
\cite{Fe}. First of all, by the monotonicity of the mapping $\varrho\mapsto p(\varrho)$, we find for arbitrary non-negative $\psi\in C^\infty_c(A)$
\begin{align*}%\label{eq:fluxpsib}
(\lambda+2\mu)\liminf_{\varepsilon\rightarrow0}&\int_{I\times\R^n}\psi \big(\Div v^{(\varepsilon)}\,\varrho^{(\varepsilon)} -\Div v\,\varrho\big)\dxt\\
=&\liminf_{\varepsilon\rightarrow0}\int_{I\times\Omega_{\eta^{(\varepsilon)}}}\psi \big(p(\varrho^{(\varepsilon)})- \overline{p}\big)\big(\varrho^{(\varepsilon)}-\varrho\big)\dxt\geq 0
\end{align*}
using \eqref{eq:fluxpsi} (together with the uniform bounds \eqref{Nbv2} and \eqref{Nbv4}).
As $\psi$ is arbitrary we conclude
\begin{align}\label{8.12}
\overline{\Div v\,\varrho}\geq \Div v\,\varrho \quad\text{a.e. in }\quad I\times\Omega(\cdot),
\end{align}
where 
\begin{align*}
\Div v^{(\varepsilon)}\,\varrho^{(\varepsilon)}\weakto^\eta \overline{\Div v\,\varrho}\quad\text{in}\quad L^1(\Omega;L^1(\Omega_{\eta^{(\varepsilon)}})),
\end{align*}
recall \eqref{eq:convu1} and \eqref{eq:convrho1}. Now, we compute both sides of
\eqref{8.12} by means of the corresponding continuity equations. Due to Theorem
\ref{lem:warme} (b) on the interval $[0,t_1]$ with $\theta(z)=z\ln z$ and $\psi=1$ we have
\begin{align}\label{8.15}
\int_0^{t_1}\int_{\Omega^{(\varepsilon)}}\Div v^{(\varepsilon)}\,\varrho^{(\varepsilon)}\dxt \leq\int_{\Omega_0}\varrho_0\ln(\varrho_0)\dx
-\int_{\Omega^{(\varepsilon)}(t_1)}\varrho^{(\varepsilon)}(t_1)\ln(\varrho^{(\varepsilon)}(t_1))\dx
\end{align}
for almost all $0 \leq t_1 < T$.
Similarly, equation \eqref{8.4} yields
\begin{align}\label{8.14}
\int_0^{t_1}\int_{\Omega(t)}\Div v\,\varrho\dxt=\int_{\Omega_0}\varrho_0\ln(\varrho_0)\dx
-\int_{\Omega(t_1)}\varrho(t)\ln(\varrho(t))\dx.
\end{align}
Combining \eqref{8.12}--\eqref{8.14} shows
\begin{align*}
\limsup_{\varepsilon\rightarrow0}\int_{\Omega^{(\varepsilon)}(t)}\varrho^{(\varepsilon)}(t)\ln(\varrho^{(\varepsilon)}(t))\dx\leq \int_{\Omega(t)}\varrho(t)\ln(\varrho(t))\dx
\end{align*}
for any $t\in I$.
This gives the claimed convergence $\varrho^{(\varepsilon)}\rightarrow\varrho$ in $L^1(I\times\R^3)$ by convexity of $z\mapsto z\ln z$. Consequently, we have $\overline p=p(\varrho)$ and the proof of Theorem \ref{thm:delta} is complete.

\subsection{Proof of Theorem~\ref{thm:main}.}

\label{sec:6}
In this section we are ready to prove the main result of this paper by passing to the limit $\delta\rightarrow0$ in the system \eqref{eq:mom:delta}--\eqref{eq:ene:delta} from Section \ref{subsec:del}.
Given initial data $(q_0,\varrho_0)$ belonging to the function spaces stated in Theorem \ref{thm:main}
it is standard to find regularised versions $q_0^{(\delta)}$ and $\varrho_0^{(\delta)}$ such that for all $\delta>0$
\begin{align*}
 \varrho_0^{(\delta)}\in C^{2,\alpha}(\overline\Omega_0),\ \varrho_0^{(\delta)}\ \text{strictly positive}, \int_{\Omega_0} \varrho_0^{(\delta)} \dx = \int_{\Omega_0} \varrho_0 \dx
 \end{align*}
 as well as $q_0^{(\delta)} \to q_0$ in $L^{\frac{2\gamma}{\gamma+1}}(\Omega_0;\R^n)$, $\varrho_0^{(\delta)} \to \varrho_0$ in $L^\gamma(\Omega_0)$ and
 \begin{align*}
 \int_{\Omega_0}\Big(\frac{1}{2} \frac{| {q}_0^{(\delta)} |^2}{\varrho_0^{(\delta)}} &+ H_{\delta} (\varrho_0^{(\delta)})\Big)\dx\rightarrow  \int_{\Omega_0}\Big(\frac{1}{2} \frac{| {q}_{0} |^2}{\varrho_0} + H(\varrho_{0})\Big)\dx,
 \end{align*}
 as $\delta\rightarrow0$.
For a given $\delta$ we gain a weak solution $(\eta^{(\delta)},v^{(\delta)},\varrho^{(\delta)})$ to \eqref{eq:mom:delta}--\eqref{eq:ene:delta} with this data by Theorem \ref{thm:delta}. It is defined in the interval $(0,T_*)$, where $T_*$ is restricted by the data only. Exactly as in Section \ref{subsec:del}, we write $\Omega^{(\delta)}(t):= \Omega(t) \setminus \eta^{(\delta)}(t,Q)$ and deduce the following uniform bounds from the energy inequality:
\begin{equation} \label{wWS4eta}
 \| \partial_t\nabla\eta^{(\delta)} \|_{L^2(I\times Q) }^2+\sup_{t\in I}\|\partial_t\eta^{(\delta)}\|_{L^{2}(Q)}^2+\sup_{t\in I}\|\eta^{(\delta)}\|_{W^{2,q}(Q)}^q\leq c,\\
\end{equation}
\begin{equation} \label{wWS47}
 \sup_{t \in I} \| \varrho^{(\delta)} \|^{\gamma}_{L^{\gamma}(\Omega^{(\delta)})}  +
  \sup_{t \in I} \delta \|  \varrho^{(\delta)} \|^\beta_{L^\beta(\Omega^{(\delta)})}  
\leq c,
\end{equation}
\begin{equation} \label{wWS48}
\begin{split}
   \sup_{t \in I} \big\| \varrho^{(\delta)} |v^{(\delta)}|^2 \big\|_{L^1(\Omega^{(\delta)})} +
 \sup_{t \in I} \big\| \varrho^{(\delta)} v^{(\delta)} \big\|^\frac{2\gamma}{\gamma+1}_{L^{\frac{2\gamma}{\gamma+1}}(\Omega^{(\delta)})}   \leq c,
\end{split}
\end{equation}
\begin{equation} \label{wWS49}
 \big\| v^{(\delta)} \big\|^{2}_{L^2(I;W^{1,2}(\Omega^{(\delta)}))}  \leq c.
\end{equation}
Finally, we have the conservation of mass principle resulting from the continuity equation, i.e.,
\begin{equation} \label{wWS411}
\| \varrho^{(\delta)}(\tau, \cdot) \|_{L^1(\Omega^{(\delta)})} = \int_{\Omega_{\eta^{(\delta)}}} \varrho(\tau, \cdot) \dx = \int_{\Omega} \varrho_0 \dx  \quad \mbox{for all}\ \tau\in[0,T].
\end{equation}
%The estimate from Theorem \ref{thm:ap} holds uniformly with respect to $\delta$. 
Hence we may take a subsequence, such that
%by Lemma~\ref{thm:weakstrong}, we find 
%for $s\in (1,2)$ and 
for some $\alpha\in (0,1)$ we have
\begin{align}
\eta^{(\delta)}&\rightharpoonup^\ast\eta\quad\text{in}\quad L^\infty(I;W^{2,q}(Q;\Omega))\label{eq:conveta}\\
\eta^{(\delta)}&\rightharpoonup^\ast\eta\quad\text{in}\quad W^{1,\infty}(I;L^2(Q;\R^n)),
\label{eq:conetat}\\
\eta^{(\delta)}&\rightharpoonup^\ast\eta\quad\text{in}\quad W^{1,2}(I;W^{1,2}(Q;\R^n)),
\label{eq:conetatt}\\
\eta^{(\delta)}&\to\eta\quad\text{in}\quad C^\alpha(\overline{I}\times Q;\Omega),
\label{eq:conetata}\\
v^{(\delta)}&\rightharpoonup^\eta v\quad\text{in}\quad L^2(I;W^{1,2}(\Omega;\R^n)),\label{eq:convu}\\
\varrho^{(\delta)}&\rightharpoonup^{\ast,\eta}\varrho\quad\text{in}\quad L^\infty(I;L^\gamma(\Omega)).\label{eq:convrho}
\end{align}
Also, we obtain as before again that
\begin{align}\label{15:02''}
DE(\eta^{(\delta)}(t))&\rightarrow DE(\eta(t))\quad\text{in}\quad W^{-2,q}(Q;\Omega).
\end{align}
By Lemma \ref{thm:weakstrong}, arguing as in Section \ref{subsec:del}, we find for all $q\in (1,\frac{6\gamma}{\gamma+6})$ that
\begin{align}
\varrho^{(\delta)}v^{(\delta)}&\rightharpoonup^\eta  {\varrho}  {v}\quad\text{in}\quad L^2(I, L^q(\Omega;\R^n))\label{conv:rhov2delta}\\
{\varrho}^{(\delta)}  {v}^{(\delta)}\otimes  {v}^{(\delta)}&\rightarrow^\eta  {\varrho}  {v}\otimes  {v}\quad\text{in}\quad L^1(I;L^1(\Omega;\R^{n\times n})).\label{conv:rhovv2delta}\\
\sqrt{{\varrho}^{(\delta)}}  {v}^{(\delta)}&\rightarrow^\eta  \sqrt{\varrho}  {v}\quad\text{in}\quad L^1(I;L^1(\Omega)).\label{conv:rhovv2delta'}
\end{align}
As before in Proposition \ref{prop:higher} we have higher integrability of the density (see \cite[Lemma 7.3]{BrSc} for the proof).
% due to $\gamma>3$ and \eqref{eq:convetah'}.
\begin{lemma}\label{prop:higher'}
Let $\gamma> \frac{n}{2}$.
Let $J\times B\Subset I\times\Omega(\cdot)$ be a parabolic cube and $0<\Theta\leq\frac{2}{n}\gamma-1$. The following holds for any $\delta\leq \delta_0(J\times B)$
\begin{equation}\label{eq:gamma+1'}
\int_{J\times B}p_\delta(\varrho^{(\delta)})(\varrho^{(\delta)})^{\Theta}\,\dd x\,\dd t\leq C(J\times B)
\end{equation}
with constant independent of $\delta$.
\end{lemma}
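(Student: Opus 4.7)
The plan is to adapt the classical Bogovski\u{\i}/Lions pressure estimate to our variable-domain situation, exploiting that the cube $J\times B$ is compactly contained in $I\times \Omega(\cdot)$ and therefore, by the uniform convergence \eqref{eq:conetata}, in $I\times \Omega^{(\delta)}$ for all sufficiently small $\delta$. This localization is decisive: it removes the solid contributions $DE$, $D_2R$, and the boundary constraint from the momentum equation, reducing the problem to a purely fluid-interior estimate.

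First, I would choose cutoffs $\xi\in C^\infty_c(B')$ and $\psi\in C^\infty_c(J')$ for a slightly larger parabolic cube $J'\times B'$ with $J\times B\Subset J'\times B'\Subset I\times \Omega(\cdot)$, and construct the test function
\begin{align*}
 b(t,x) = \psi(t)\,\mathcal{B}_{B'}\Big[\xi(x)\,(\varrho^{(\delta)})^\Theta(t,x) - \tfrac{1}{|B'|}\int_{B'}\xi\,(\varrho^{(\delta)})^\Theta\,dy\Big],
\end{align*}
where $\mathcal{B}_{B'}$ is the Bogovski\u{\i} operator on $B'$. Since $b$ vanishes near $\partial B'$, it is an admissible test function with $\phi\equiv 0$ in \eqref{eq:mom:delta}. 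By construction $\Div b = \psi\big(\xi(\varrho^{(\delta)})^\Theta - \text{mean}\big)$, so that testing \eqref{eq:mom:delta} with $b$ produces, as its leading term, precisely
\begin{align*}
 \int_{J'}\!\!\int_{B'} \psi\,\xi\, p_\delta(\varrho^{(\delta)})\,(\varrho^{(\delta)})^\Theta\,\dxt,
\end{align*}
which dominates the quantity on the left-hand side of \eqref{eq:gamma+1'}. The mean-value correction term is trivially bounded using mass conservation \eqref{wWS411}.

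The substance of the proof is then to show that all remaining terms stay bounded uniformly in $\delta$. The viscous term $\int\mathbb S(\nabla v^{(\delta)}):\nabla b$ is handled by \eqref{wWS49} together with the Calder\'on--Zygmund bound $\|\nabla b\|_{L^p}\lesssim \|(\varrho^{(\delta)})^\Theta\|_{L^p}$ for all $p\in(1,\infty)$, which requires only $\Theta\leq \gamma$; the force terms are straightforward from \eqref{wWS47}--\eqref{wWS48}. For the time-derivative term $\frac{d}{dt}\int \varrho^{(\delta)} v^{(\delta)}\cdot b$, one uses the continuity equation \eqref{eq:con:delta} (or the renormalized version \eqref{8.4} with $\theta(z)=z^\Theta$, noting that $\Theta$ may be non-integer and $(\varrho^{(\delta)})^\Theta$ must be first approximated by truncated $\theta_k$ and then passed to the limit) to compute $\partial_t\mathcal B_{B'}[\xi(\varrho^{(\delta)})^\Theta]$ as the Bogovski\u{\i} of a divergence, again uniformly bounded. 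The boundary contributions at $\partial J'$ are harmless due to the time cutoff $\psi$.

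The main obstacle, and the source of the restriction on $\Theta$, is the convective term $\int \varrho^{(\delta)} v^{(\delta)}\otimes v^{(\delta)}:\nabla b\dxt$. By H\"older and Calder\'on--Zygmund,
\begin{align*}
 \bigg|\int \varrho^{(\delta)}|v^{(\delta)}|^2|\nabla b|\,\dxt\bigg|
 &\lesssim \int \|\varrho^{(\delta)}\|_{L^\gamma(B')}\,\|v^{(\delta)}\|_{L^{2^*}(B')}^2\,\|(\varrho^{(\delta)})^\Theta\|_{L^{r}(B')}\,\dt,
\end{align*}
with the H\"older exponents satisfying $\tfrac{1}{\gamma}+\tfrac{2}{2^*}+\tfrac{1}{r}=1$, i.e.\ $r=\gamma/\Theta$ iff $\Theta\leq \tfrac{2\gamma}{n}-1$. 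Combining the $L^\infty_tL^\gamma_x$ bound on $\varrho^{(\delta)}$, the $L^2_tW^{1,2}_x$ bound on $v^{(\delta)}$, and Sobolev embedding then yields an estimate that is uniform in $\delta$ exactly under the hypothesis $0<\Theta\leq\tfrac{2}{n}\gamma-1$. Assembling all contributions and absorbing the pressure by choosing $\psi,\xi\equiv 1$ on $J\times B$ gives \eqref{eq:gamma+1'}.
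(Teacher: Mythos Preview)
Your proposal is correct and is precisely the standard Lions--Feireisl local pressure estimate via a Bogovski\u{\i}-type test function, which is exactly what the paper invokes by citing \cite[Lemma 7.3]{BrSc} (the paper gives no proof of its own here). The exponent bookkeeping for the convective term is right: with $\tfrac{1}{\gamma}+\tfrac{n-2}{n}+\tfrac{\Theta}{\gamma}\leq 1$ one recovers $\Theta\leq\tfrac{2}{n}\gamma-1$, and all other terms are handled by the a~priori bounds \eqref{wWS47}--\eqref{wWS49} together with the renormalized continuity equation from Lemma~\ref{cor:ap1}.
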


In order to exclude concentrations of the pressure at the moving boundary we need the assumption $\gamma>\frac{12}{7}$.
\begin{lemma}\label{prop:higherb'}
Let $\gamma> \frac{2n(n-1)}{3n-2}$.
Let $\xi>0$ be arbitrary. There is a measurable set $A_\xi\Subset I\times\Omega(\cdot)$ such that we have for all $\delta\leq\delta_0$
\begin{equation}\label{eq:gamma+1b'}
\int_{I\times\R^3\setminus A_\xi}p_\delta(\varrho^{(\delta)})
\chi_{\Omega_{\eta^{(\delta)}}}\dd x\,\dd t\leq \xi.
\end{equation}
\end{lemma}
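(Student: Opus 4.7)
The plan is to adapt the boundary-strip argument from \cite[Lemma 6.4]{BrSc}, which in turn goes back to \cite{Kuk}, to the bulk setting treated here. The central idea is to construct, for every sufficiently small width $\xi>0$, a test function which ``sees'' the pressure only in a thin tubular neighbourhood of the moving interface $\eta^{(\delta)}(M)$, and then to absorb the resulting boundary pressure into quantities that are uniformly controlled by the energy estimates \eqref{wWS4eta}--\eqref{wWS411}.

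First, using the $C^\alpha$-convergence \eqref{eq:conetata} together with $\eta^{(\delta)}\in L^\infty(I;W^{2,q}(Q;\Omega))$ with $q>n$, I fix a signed distance function $d^{(\delta)}(t,x)$ to $\partial\Omega^{(\delta)}(t)$ which is uniformly Lipschitz in $(t,x)$, with a modulus independent of $\delta$. For a parameter $\sigma\in(0,\sigma_0)$, I then define a cut-off $\psi^{(\delta)}_\sigma(t,x)=\chi(d^{(\delta)}(t,x)/\sigma)$ where $\chi:\R\to[0,1]$ is smooth, equal to $1$ near $0$ and supported in $[-1,1]$. I take $A_\xi$ to be (the interior of) the set $\{(t,x):\psi^{(\delta)}_\sigma(t,x)=0\}$ for a well-chosen $\sigma=\sigma(\xi)$; by \eqref{eq:conetata} this set does not depend on $\delta$ in a morally relevant way (after first shrinking $\sigma$ slightly and then taking $\delta$ small). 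The complement $(I\times\Omega)\setminus A_\xi$ is thus contained in a boundary strip of width $\sigma$ around $\eta^{(\delta)}(M)$.

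Next I apply the Bogovskii operator $\bog$ on each time slice of the (uniformly Lipschitz) fluid domain $\Omega^{(\delta)}(t)$ to solve
\begin{align*}
\Div b^{(\delta)} = \psi^{(\delta)}_\sigma\,p_\delta(\varrho^{(\delta)}) - \frac{1}{|\Omega^{(\delta)}(t)|}\int_{\Omega^{(\delta)}(t)}\psi^{(\delta)}_\sigma\,p_\delta(\varrho^{(\delta)})\,dx
\end{align*}
with $b^{(\delta)}=0$ on $\partial\Omega^{(\delta)}(t)$, which, via the known $W^{1,s}\to W^{1,s}$ estimates for $\bog$ uniform in $\delta$, yields $b^{(\delta)}\in L^s(I;W^{1,s}_0(\Omega^{(\delta)}))$ with matching time regularity. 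Extending $b^{(\delta)}$ by zero to $I\times\Omega$ I then use $(\phi,b)=(0,b^{(\delta)})$ as a test pair in the momentum equation \eqref{eq:mom:delta} (valid because $b^{(\delta)}|_{\partial\Omega^{(\delta)}}=0$ and so the coupling condition is trivially satisfied with $\phi=0$ on $M$ for this variation, after a standard approximation). This gives the pressure-on-strip identity
\begin{align*}
\int_I\int_{\Omega^{(\delta)}}\psi^{(\delta)}_\sigma p_\delta(\varrho^{(\delta)})\,dx\,dt = \text{(mean term)} + \sum_k \mathrm{J}_k,
\end{align*}
where the $\mathrm{J}_k$'s are the remaining momentum-equation terms tested against $b^{(\delta)}$.

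The decisive step is estimating each $\mathrm{J}_k$ by a power of $\sigma$ times a quantity uniformly bounded in $\delta$, so that one can choose $\sigma$ small enough to make the right-hand side $\leq\xi$. The most delicate pieces are (i) the convective term $\int\varrho^{(\delta)}v^{(\delta)}\otimes v^{(\delta)}:\nabla b^{(\delta)}$, where I use the uniform bounds \eqref{wWS47}--\eqref{wWS49}, the Sobolev embedding $W^{1,2}\hookrightarrow L^{2n/(n-2)}$ and an $L^\infty(L^\gamma)\cap L^2(L^{3\gamma/(3-\gamma)})$-interpolation for $\varrho^{(\delta)}$ (the last obtained from Lemma \ref{prop:higher'}), and (ii) the time-derivative--type term $\int\varrho^{(\delta)}v^{(\delta)}\cdot\partial_t b^{(\delta)}$, where one uses that $\partial_t b^{(\delta)}$ depends on $\partial_t\eta^{(\delta)}$ through the moving geometry, so that the trace bound $\partial_t\eta^{(\delta)}\in L^2(I;L^2(M))$ (obtained from \eqref{eq:conetatt} via the trace theorem) is exactly what is needed to control $\|\partial_t b^{(\delta)}\|_{L^2(I;L^s)}$ by Bogovskii. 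A careful tracking of exponents in (i) is where the hypothesis $\gamma>\frac{2n(n-1)}{3n-2}$ enters: it is precisely the threshold at which the convective term can be closed with $b^{(\delta)}$ of $W^{1,s}$-regularity for some $s$ admissible in all the other estimates. The elastic and dissipative terms pair against $\phi=0$, and so are absent; $DE(\eta^{(\delta)})$-type contributions do not appear at all. The mean term is handled by the bound $\int_{\Omega^{(\delta)}}p_\delta(\varrho^{(\delta)})\leq C$ from \eqref{wWS47} together with $|\Omega^{(\delta)}(t)|$ bounded below uniformly.

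The main obstacle I anticipate is exactly this bookkeeping of exponents in (i)--(ii), since the integrability of the pressure near the moving boundary is worse than in the interior and the test function $b^{(\delta)}$ is only as regular as Bogovskii permits; the constraint $\gamma>\frac{2n(n-1)}{3n-2}$ should drop out of matching Hölder-interpolation inequalities on the boundary strip against the uniform energy bounds, exactly as in \cite[Lemma 6.4]{BrSc}. Once (i)--(ii) are done, a diagonal argument in $\sigma$ and $\delta$ produces the required set $A_\xi$ and finishes the proof.
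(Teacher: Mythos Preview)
Your proposal has a genuine gap in the construction of the test function. You write that you solve
\[
\Div b^{(\delta)} = \psi^{(\delta)}_\sigma\, p_\delta(\varrho^{(\delta)}) - \frac{1}{|\Omega^{(\delta)}(t)|}\int_{\Omega^{(\delta)}(t)}\psi^{(\delta)}_\sigma\, p_\delta(\varrho^{(\delta)})\,\dd x
\]
via Bogovskii, but testing the momentum equation with this $b^{(\delta)}$ produces
\[
\int_I\int_{\Omega^{(\delta)}} p_\delta(\varrho^{(\delta)})\,\Div b^{(\delta)}\,\dd x\,\dd t
=\int_I\int_{\Omega^{(\delta)}}\psi^{(\delta)}_\sigma\, \big(p_\delta(\varrho^{(\delta)})\big)^2\,\dd x\,\dd t - (\text{mean term}),
\]
not $\int \psi_\sigma\, p_\delta$ as you claim. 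Worse, to even apply the Bogovskii operator in $L^s$ you need $p_\delta(\varrho^{(\delta)})$ to lie in some $L^s$ on the boundary strip a~priori, which is precisely the equi-integrability you are trying to establish; the argument is circular. The approach of \cite[Lemma~7.4]{BrSc} (and the underlying idea of \cite{Kuk}) does not put the pressure into the right-hand side of the divergence problem: the test function is built geometrically from the distance to the moving boundary so that $\Div b$ is (essentially) a cut-off of the strip alone, and the smallness in $\sigma$ then comes from Hardy-type or strip-measure estimates on the remaining terms.

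There is a second, related issue. You invoke only the trace bound $\partial_t\eta^{(\delta)}\in L^2(I;L^2(M))$, citing \cite[Lemma~6.4]{BrSc}. That is the $\varepsilon$-level argument, where the density sits in $L^\infty(I;L^\beta)$ with $\beta$ large, and the resulting Bogovskii/convective bookkeeping is lax. At the $\delta$-level the density is only in $L^\infty(I;L^\gamma)$ with $\gamma$ possibly close to $\tfrac{2n(n-1)}{3n-2}$, and the paper needs the sharper trace control $\partial_t\eta^{(\delta)}\in L^2(I;L^q(M))$ for all $q<4$ (which follows from \eqref{eq:conetatt} via $W^{1,2}(Q)\hookrightarrow L^q(\partial Q)$ for $q<\tfrac{2(n-1)}{n-2}$). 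This is exactly the input that the paper singles out, and it is what makes the exponent matching for the time-derivative and convective terms close under the stated hypothesis on $\gamma$; with only $L^2(M)$ you will not be able to close the estimate in the borderline regime. In short: fix the right-hand side of the divergence problem (or switch to the geometric test function of \cite{Kuk}/\cite[Lemma~7.4]{BrSc}), and use the full trace embedding into $L^q(M)$, $q<4$.
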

\begin{proof}
The proof is exactly as in \cite[Lemma 7.4]{BrSc} provided we know that  for all $q<4$
\begin{align*}
\partial_t\eta^{(\varepsilon)}\in L^2(I;L^q(M))
\end{align*}
uniformly in $\varepsilon$. This follows from \eqref{eq:conetatt} due to the trace theorem.
\end{proof}
Lemma~\ref{prop:higher'} and Lemma~\ref{prop:higherb'} imply equi-integrability of the sequence  $p_\delta(\varrho^{(\delta)})\chi_{\Omega^{(\delta)}}$. This yields the existence of a function $\overline p$ such that (for a subsequence)
\begin{align}\label{eq:limp'}
p_\delta(\varrho^{(\delta)})\rightharpoonup\overline p\quad\text{in}\quad L^{1}(I\times\Omega),\\
\label{1301}
\delta(\varrho^{(\delta)})^{\beta}+\delta(\varrho^{(\delta)})^{2}\rightarrow0\quad\text{in}\quad L^1(I\times\Omega).
\end{align}
Similarly to Corollary \ref{prop:higherb0} we have the following. 
\begin{corollary}\label{prop:higherb0'}
Let $\xi>0$ be arbitrary. There is a measurable set $A_\xi\Subset I\times\Omega(\cdot)$ such that
\begin{equation}\label{eq:gamma+1b0'}
\int_{I\times\Omega(.) \setminus A_\xi}\overline p\,\dd x\,\dd t\leq \xi.
\end{equation}
\end{corollary}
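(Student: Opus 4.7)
The plan is to obtain this corollary as a direct consequence of Lemma~\ref{prop:higherb'} combined with the weak $L^1$-convergence \eqref{eq:limp'}, in exact analogy with the derivation of Corollary~\ref{prop:higherb0} from Lemma~\ref{prop:higherb} on the $\varepsilon$-level in Section~\ref{subsec:del}. No new analytical input is required; the work has already been done in the two preceding lemmas.

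Given $\xi>0$, the first step will be to apply Lemma~\ref{prop:higherb'} to extract a measurable set $A_\xi\Subset I\times\Omega(\cdot)$ together with a threshold $\delta_0=\delta_0(\xi)>0$ so that
\begin{equation*}
\int_{(I\times\R^n)\setminus A_\xi} p_\delta(\varrho^{(\delta)})\,\chi_{\Omega^{(\delta)}}\dxt \;\leq\; \xi \qquad\text{for every }\delta\leq\delta_0.
\end{equation*}
Next, after extending $p_\delta(\varrho^{(\delta)})$ (resp.\ $\overline{p}$) by zero outside $\Omega^{(\delta)}$ (resp.\ outside $\Omega(\cdot)$), the convergence \eqref{eq:limp'} is to be read as a weak convergence in $L^1(I\times\R^n)$. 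The uniform convergence $\eta^{(\delta)}\to\eta$ from \eqref{eq:conetata} yields $\chi_{\Omega^{(\delta)}}\to\chi_{\Omega(\cdot)}$ pointwise a.e., and together with the equi-integrability furnished by Lemmas~\ref{prop:higher'} and~\ref{prop:higherb'} this forces the zero extension of $\overline{p}$ to be supported in $I\times\Omega(\cdot)$.

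Finally, since $\chi_{(I\times\R^n)\setminus A_\xi}\in L^\infty(I\times\R^n)$, I would test the weak $L^1$-convergence against it to conclude
\begin{equation*}
\int_{(I\times\Omega(\cdot))\setminus A_\xi}\overline{p}\dxt \;=\; \lim_{\delta\to 0}\int_{(I\times\R^n)\setminus A_\xi} p_\delta(\varrho^{(\delta)})\,\chi_{\Omega^{(\delta)}}\dxt \;\leq\; \xi,
\end{equation*}
which is exactly the desired bound. I foresee no essential obstacle: the only mildly delicate point is to verify that $\overline{p}$ really vanishes outside $I\times\Omega(\cdot)$, but this is immediate because for any compact subset $K$ of $(I\times\Omega)\setminus\overline{I\times\Omega(\cdot)}$ one has $K\cap(I\times\Omega^{(\delta)})=\emptyset$ for all sufficiently small $\delta$, so that $\int_K \overline{p}\dxt=\lim_\delta\int_K p_\delta(\varrho^{(\delta)})\chi_{\Omega^{(\delta)}}\dxt=0$.
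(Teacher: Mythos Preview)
Your proposal is correct and follows exactly the route the paper intends: the paper merely states ``Similarly to Corollary~\ref{prop:higherb0}'' without further detail, and you have supplied precisely those details --- take the set $A_\xi$ from Lemma~\ref{prop:higherb'}, then pass to the limit using the weak $L^1$-convergence \eqref{eq:limp'} tested against the bounded function $\chi_{(I\times\Omega)\setminus A_\xi}$. Your remark about the support of $\overline p$ is a helpful clarification but not strictly needed here, since \eqref{eq:limp'} is already formulated on $I\times\Omega$ with the convention that $p_\delta(\varrho^{(\delta)})$ is extended by zero.
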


Using \eqref{eq:limp'} and the convergences \eqref{eq:conveta}--\eqref{15:02''} we can pass to the limit in \eqref{eq:mom:delta} and \eqref{eq:con:delta} and obtain
\begin{align}\label{eq:apulim'}
\begin{aligned}
&\int_I\frac{\dd}{\dt}\int_{\Omega(t)}\varrho v \cdot b\dx-\int_{\Omega(t)} \Big(\varrho v\cdot \partial_t b +\varrho v\otimes v:\nabla b\Big)\dxt
\\
&+\int_I\int_{\Omega(t)}\mathbb S(\nabla v):\nabla b \dxt-\int_I\int_{\Omega(t)}
\overline p\,\Div b\dxt\\
&+\int_I\bigg(-\int_Q \varrho_s\partial_t\eta\,\partial_t \phi\dy + \langle DE(\eta),\phi\rangle+\langle D_2R(\eta,\partial_t\eta),\phi\rangle\bigg)\dt
\\&=\int_I\int_{\Omega(t)}\varrho f_f\cdot b\dxt+\int_I\int_Q f_s\cdot \phi\,\dd x\dt
\end{aligned}
\end{align} 
for all test-functions $(\phi,b)$ with $\phi= b \circ \eta$, $\phi(T,\cdot)=0$ and $b(T,\cdot)=0$. Moreover, the following holds:
\begin{align}\label{eq:apvarrholim}
&\int_I\frac{\dd}{\dt}\int_{\Omega(t)}\varrho \psi\dxt-\int_I\int_{\Omega(t)}\Big(\varrho\partial_t\psi
+\varrho v\cdot\nabla\psi\Big)\dxt=0
\end{align}
for all $\psi\in C^\infty(\overline{I}\times\R^3)$.

It remains to show strong convergence of $\varrho^{(\delta)}$. 
We define the $L^\infty$-truncation
\begin{align}\label{eq:Tk'}
T_k(z):=k\,T\Big(\frac{z}{k}\Big)\quad z\in\R,\,\, k\in\N.
\end{align}
Here $T$ is a smooth concave function on $\R$ such that $T(z)=z$ for $z\leq 1$ and $T(z)=2$ for $z\geq3$. we clearly have
\begin{align}
 T_k(\varrho^{(\delta)})&\weakto {T}^{1,k}\quad\text{in}\quad C_w(I;L^p( \R^3))\quad\forall p\in[1,\infty),\label{eq:Tk1'}\\
\big(T_k'(\varrho^{(\delta)})\varrho^{(\delta)}-T_k(\varrho^{(\delta)})\big)\Div v^{(\delta)}&\rightharpoonup{T}^{2,k}
\quad\text{in}\quad L^2(I\times\R^3),\label{eq:Tk2'}
\end{align}
for some limit functions ${T}^{1,k}$ and ${T}^{2,k}$.
Now we have to show that 
\begin{align}\label{eq:flux'}
\begin{aligned}
\int_{I\times\Omega_{\eta^{(\delta)}}}&\big( p_\delta(\varrho^{(\delta)})-(\lambda+2\mu)\Div v^{(\delta)}\big)\,T_k(\varrho^{(\delta)})\dxt\\&\longrightarrow\int_{I\times\Omega_{\eta}} \big( \overline{p}-(\lambda+2\mu)\Div v\big)\,T^{1,k}\dxt.
\end{aligned}
\end{align}
For this step we are able to use the theory established in \cite{Li2} on a local level. As in
\cite[Subsection 7.1]{BrSc} one can first prove a localised version of \eqref{eq:flux'}
and then use Lemma \ref{prop:higherb'} and Corollary \ref{prop:higherb0'} to deduce the global version.

The next aim is to prove that $\varrho$ is a renormalized solution (in the sense of Definition~\ref{def:ren}).
In order to do so it suffices to use the continuity equation and \eqref{eq:flux'} again on the whole space.
Following line by line the arguments from \cite[Subsection 7.2]{BrSc} we have
\begin{align}\label{eq:Tk''}
\partial_t T^{1,k}+\Div\big( T^{1,k}v\big)+T^{2,k}= 0
\end{align}
in the sense of distributions on $I\times\R^n$. Note that we extended
$\varrho$ by zero to $\R^n$. 
The next step is to show
\begin{align}\label{eq.amplosc''}
\limsup_{\delta\rightarrow0}\int_{I\times\Omega}|T_k(\varrho^{(\delta)})-T_k(\varrho)|^{q}\dxt\leq C,
\end{align}
where $C$ does not depend on $k$ and $q>2$ will be specified later. The proof of \eqref{eq.amplosc''} follows exactly the arguments from the classical setting with fixed boundary
 (see \cite{F}) using \eqref{eq:flux'} and the uniform bounds on $v^{(\delta)}$ (with the only exception that we do not localise).
Using  \eqref{eq.amplosc''} and arguing as in \cite[Sec. 7.2]{BrSc} we obtain the renormalised continuity equation.
As in \cite[Sec. 7.3]{BrSc} we can use the latter one to show strong convergence of the density and as in the end of Theorem \ref{thm:eps} we then extend the existence interval until the first collision, which finishes the proof.
\
%The limit passage in the shell energy can be performed using the method from Section \ref{sec:5}.
%At this stage the estimates become absolutely critical. 

\section*{Compliance with Ethical Standards}\label{conflicts}

\smallskip
\par\noindent 
{\bf Funding}. This research was partly funded by:  
\\ (i) Primus Research Programme of  Charles University, Prague (grant number PRIMUS/19/SCI/01).\\
(ii) The program GJ19-11707Y of the Czech national grant agency
(GA\v{C}R).
%\todo[inline]{Any other grant to mention?}
%{\color{blue} (iii) The work of DB was partially supported by the Grant LDS 2012-05 of Leopoldina (German National Academy of Science).}

\smallskip
\par\noindent
{\bf Conflict of Interest}. The authors declare that they have no conflict of interest.
 
\bibliographystyle{alpha}
\bibliography{biblio}

\end{document}